\newtheorem{thm}{Theorem}[section]
\newtheorem{cor}[thm]{Corollary}
\newtheorem{lem}[thm]{Lemma}
\newtheorem{prop}[thm]{Proposition}
\theoremstyle{definition}
\theoremstyle{definition}
\newtheorem{defi}[thm]{Definition}
\theoremstyle{remark}
\newtheorem{rem}[thm]{Remark}
\newcommand{\bburl}[1]{\textcolor{blue}{\url{#1}}}
\numberwithin{equation}{section}
\newcommand\hyph{\nobreak\hskip0pt-\nobreak\hskip0pt\relax}
\DeclareMathOperator{\Sp}{Sp}
\DeclareMathOperator{\SL}{SL}
\DeclareMathOperator{\GL}{GL}
\DeclareMathOperator{\PGL}{PGL}
\DeclareMathOperator{\Hom}{Hom}
\DeclareMathOperator{\Gal}{Gal}
\DeclareMathOperator{\Ext}{Ext}
\newcommand{\FF}{F} 
\newcommand{\EE}{E} 
\newcommand{\cont}[1]{#1^{\vee}} 
\newcommand{\sdual}[1]{#1^{\sigmaEF}} 
\newcommand{\sigmaEF}{\sigma} 
\newcommand{\omegaEF}{\omega_{\EE/\FF}} 
\newcommand{\Fl}{\overline{\mathbf{F}}_{\ell}}
\newcommand{\Ql}{\overline{\mathbf{Q}}_{\ell}}
\newcommand{\Zl}{\overline{\mathbf{Z}}_{\ell}}
\DeclareMathOperator{\cInd}{c-Ind}
\DeclareMathOperator{\Ind}{Ind}
\DeclareMathOperator{\Nm}{Nm}
\DeclareMathOperator{\Irr}{Irr}
\DeclareMathOperator{\Nilp}{Nilp}
\DeclareMathOperator{\Scusp}{Scusp}
\DeclareMathOperator{\MScusp}{MScusp}
\DeclareMathOperator{\Mod}{Mod}
\DeclareMathOperator{\tr}{tr}
\DeclareMathOperator{\im}{Im}
\DeclareMathOperator{\WDRep}{WDRep}
\newcommand{\tomegaEF}{\widetilde{\omega}_{\EE/\FF}} 
\DeclareMathOperator{\St}{St} 
\DeclareMathOperator{\Spe}{Sp} 
\DeclareMathOperator{\Rep}{Rep}
\title{Modulo $\ell$ distinction problems}
\author{Peiyi Cui}
\address{Peiyi Cui, Morningside Center of Mathematics, Chinese Academy of Sciences, Zhongguancun East Road, Haidian District, Beijing 100190, P.R.C.}
\email{\textcolor{blue}{\href{peiyi.cuimath@gmail.com}{peiyi.cuimath@gmail.com}}}
\author{Thomas Lanard}
\address{Thomas Lanard, CNRS, Laboratoire de mathématiques de Versailles, Université Paris-Saclay, UVSQ, 78000, Versailles, France}
\email{\textcolor{blue}{\href{thomas.lanard@uvsq.fr}{thomas.lanard@uvsq.fr}}}
\author{Hengfei Lu}
\address{Hengfei Lu, School of Mathematical Sciences, Beihang University, 9 Nansan Street, Shahe Higher Education Park, Changping, Beijing 102206, P.R.C.}
\email{\textcolor{blue}{\href{luhengfei@buaa.edu.cn}{luhengfei@buaa.edu.cn}}}
\subjclass[2020]{22E50}
\keywords{modular representation, distinction problem, Prasad's conjecture}
\date{\today}
\begin{document}

\begin{abstract}
    Let $\FF$ be a non-archimedean local field of characteristic different from 2 and residual characteristic $p$. This paper concerns the $\ell$-modular representations of a connected reductive group $G$ distinguished by a Galois involution, with $\ell$ an odd prime different from $p$. We start by proving a general theorem allowing to lift supercuspidal $\Fl$-representations of $\GL_n(F)$ distinguished by an arbitrary closed subgroup $H$ to a distinguished supercuspidal $\Ql$-representation. Given a quadratic field extension $E/F$ and an irreducible $\Fl$-representation $\pi$ of $\GL_n(E)$, we verify the Jacquet conjecture in the modular setting that if the Langlands parameter $\phi_\pi$ is irreducible and conjugate-self-dual, then $\pi$ is either $\GL_n(F)$-distinguished or $(\GL_{n}(F),\omega_{E/F})$-distinguished (where $\omegaEF$ is the quadratic character of $F^\times$ associated to the quadratic field extension $E/F$ by the local class field theory), but not both, which extends one result of S\'echerre to the case $p=2$. We give another application of our lifting theorem for supercuspidal representations distinguished by a unitary involution, extending one result of Zou to $p=2$.
    After that, we give a complete classification of the $\GL_2(F)$-distinguished representations of $\GL_2(E)$. Using this classification we discuss a modular version of the Prasad conjecture for $\PGL_2$. We show that the ``classical'' Prasad conjecture fails in the modular setting. We propose a solution using non-nilpotent Weil-Deligne representations. Finally, we apply the restriction method of Anandavardhanan and Prasad to classify the $\SL_2(F)$-distinguished {modular} representations of $\SL_2(E)$.
\end{abstract}

\maketitle

\tableofcontents

\section{Introduction}

Let $\FF$ be a non-archimedean local field of characteristic different from 2 and residual characteristic $p$ and $G$ the $\FF$-points of a reductive group defined over $\FF$. Let $R$ be an algebraically closed field of characteristic different from $p$. A representation $\pi$ of a group $G$, with coefficients in $R$, is said to be \emph{distinguished} with respect to a subgroup $H$ of $G$ if it admits a non-trivial $H$-invariant linear form. More generally, if $\chi$ is a character of $H$, we will say that $\pi$ is $(H,\chi)$-distinguished if there exists a non-trivial linear functional on the space of $\pi$ on which $H$ acts via $\chi$, i.e. $f:\pi \to R$ such that
\[ f(\pi(h)v)=\chi(h)f( v)\]
for all $h \in H$ and $v \in \pi$.

Distinguished representations are central objects in the study of the \emph{relative Langlands program}. The distinction problem is closely related to the Langlands functorial conjectures and it should be possible to characterize the $(H,\chi)$\hyph distinguished representations as the images with respect to a functorial transfer to $G$ from a third group $G'$ in many cases. There exists a rich literature, such as \cite{anandavardhanan2003distinguished,S-V,lu2018pacific,lu2020ANT}, trying to classify all $H$-distinguished complex representations of $G$. Furthermore, when $\theta$ is the Galois involution of order $2$ and $H=G^{\theta},$ Dipendra Prasad \cite{prasad2015arelative} constructs $\chi_H$ a quadratic character of $H$ and gives a precise conjecture for the multiplicity $\dim_\mathbb{C}\Hom_H(\pi,\chi_H)$ in terms of the enhanced Langlands parameter of $\pi$.

\bigskip

More recently, mathematicians have been interested in modular representations of $p$-adic groups, which are smooth representations with coefficients in $\Fl$, with $\ell \neq p$. The theory of $\ell$-modular representations has been initiated by Vignéras in \cite{vigneras}. These works are motivated by a modular local Langlands program and studying congruences between automorphic forms (which have been used to prove many remarkable theorems of arithmetic geometry).

\bigskip

For modular representations, much remains to be done for the distinction problems.  Sécherre and Venketasubramanian have examined the pair $(G,H) = (\GL_n(F), \GL_{n-1}(F))$ in \cite{SecherreVenketasubramanian}. Sécherre \cite{vincent2019supercusp} also investigated the pair $(\GL_{n}(E),\GL_{n}(F))$ for supercuspidal representations with $p$ odd, where $E$ is a quadratic extension of $F$. Ongoing work by Kurinczuk, Matringe and Sécherre aims at extending the results of the pair $(\GL_{n}(E), \GL_{n}(F))$ to all representations (see \cite{KMS}).

\bigskip

In this paper, we consider a quadratic field extension $\EE/\FF$ of locally compact non-archimedean local fields of characteristic different from 2 and residual characteristic $p$. Denote by $q_F$ (resp. $q_E$)
the cardinality of the residue field of $\FF$ (resp. $\EE$). Let $\ell$ be an odd prime different from $p$. We are interested in the distinction problems for the pairs $(\GL_{n}(E),\GL_{n}(F))$ and $(\SL_{2}(E),\SL_{2}(F))$. We also give a modular version of the Prasad conjecture for $\PGL_2$.

\bigskip

We would like to point out that the ongoing work \cite{KMS} of Kurinczuk, Matringe and Sécherre also gives a classification of all irreducible $\GL_2(F)$-distinguished representations of $\GL_{2}(E)$ (at least for $p\neq 2$). However, our methods and theirs are completely different. They use the gamma factors and the epsilon factors whereas in this article we use Mackey theory.

\subsection{Distinguished supercuspidal representations of $\GL_n$}

A $\Ql$-representation $(\tilde{\pi},\tilde{V})$ of a $p$-adic group $G$ is said to be \emph{$\ell$-integral} if it admits a free $\Zl$-submodule $L$ of $\tilde{V}$, stable by $G$, such that $L \otimes_{\Zl} \Ql = \tilde{V}$. If $(\pi,V)$ is an irreducible $\ell$-integral $\Ql$-representation and $L$ is a $\Zl[G]$-lattice of $V$ of finite type, then the semi-simplification of $L \otimes_{\Zl} \Fl$ doesn't depend on the choice of $L$ and is called the \emph{reduction modulo $\ell$} of $\pi$ (we refer to \cite{vigneras} for more details). If $\pi$ is an $\Fl$-representation of $G$, an $\ell$-integral irreducible $\Ql$-representation $\tilde{\pi}$ is called a $\Ql$-lift of $\pi$ if its reduction modulo $\ell$ is $\pi$.

Our first result, discussed in Section \ref{secSupercuspGLn}, is the following.

\begin{thm}[Theorem \ref{thmLiftDistinguished}]
    Let $H$ be a closed subgroup of $\GL_n(F)$. Let $\pi$ be a supercuspidal $\Fl$-representation of $\GL_n(F)$ which is $H$-distinguished. Then there exists  a $\Ql$-lift $\tilde{\pi}$ of $\pi$ such that it is supercuspidal and distinguished by $H$.\label{thm1.1}
\end{thm}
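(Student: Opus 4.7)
My plan is to reduce the lifting problem to a finite-dimensional question at the type level via Mackey theory for compact induction, and then resolve that finite-dimensional question using the fine structure of supercuspidal types for $\GL_n$. By the modular type theory of Vign\'eras and M\'inguez--S\'echerre for $G := \GL_n(F)$, the supercuspidal $\Fl$-representation $\pi$ has the form $\pi = \cInd_J^G \Lambda$ for an extended maximal simple type $(J,\Lambda)$, with $J$ open and compact modulo the center of $G$ and $\Lambda$ irreducible and finite-dimensional. Choosing a $J$-stable $\Zl$-lattice in $\Lambda$ and using the explicit factorization $\Lambda \cong \kappa \otimes \sigma$ (with $\kappa$ a $\beta$-extension and $\sigma$ inflated from a cuspidal representation of the finite reductive quotient $J/J^1 \cong \GL_m(\mathbf{k})$), one produces an irreducible $\Ql$-lift $\tilde{\Lambda}$ of $\Lambda$ that is again an extended maximal simple type. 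Then $\tilde{\pi} := \cInd_J^G \tilde{\Lambda}$ is automatically an irreducible supercuspidal $\Ql$-representation lifting $\pi$, so the problem reduces to choosing $\tilde{\Lambda}$ so that $\tilde{\pi}$ is $H$-distinguished.

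The next step is to translate distinction to the type. Since $J$ is open and $H$ is closed in $G$, combining Frobenius reciprocity with the Mackey decomposition of $\Res_J \Ind_H^G \mathbf{1}$ yields
\[
\Hom_H\bigl(\cInd_J^G \Lambda,\,\mathbf{1}\bigr)\;\cong\;\prod_{g \in H\backslash G / J}\Hom_{J\cap g^{-1}Hg}\bigl(\Lambda,\,\mathbf{1}\bigr),
\]
and the identical formula holds for $\tilde{\pi}$. The hypothesis that $\pi$ is $H$-distinguished therefore singles out a double coset $g_0 \in H\backslash G / J$ together with a non-zero $K$-invariant linear form $\mu$ on $\Lambda$, where $K := J \cap g_0^{-1} H g_0$. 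Producing the required lift then reduces to exhibiting a $\tilde{\Lambda}$ for which $\Hom_K(\tilde{\Lambda},\mathbf{1})$ is still non-zero, since the displayed isomorphism for $\tilde{\pi}$ will then provide a non-zero $H$-invariant form.

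The main obstacle is this final step: a purely finite-dimensional lifting question for the compact-mod-center group $K$, which can contain $\ell$-torsion so that no abstract upper-semicontinuity of Hom spaces is available. The strategy I would pursue is to exploit the decomposition $\Lambda = \kappa \otimes \sigma$: the restriction $\kappa|_{J^1}$ is pro-$p$ hence pro-$\ell'$ and lifts uniquely, while $\sigma$ admits a canonical Deligne--Lusztig lift $\tilde{\sigma}$; setting $\tilde{\Lambda} := \tilde{\kappa}\otimes\tilde{\sigma}$ for the corresponding canonical $\tilde{\kappa}$, one must verify that $\mu$ deforms to a $K$-invariant form on $\tilde{\Lambda}$. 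This verification rests on analyzing how the intersection $K$ respects the tensor factorization and on the rigidity of the canonical lifts of $\kappa$ and $\sigma$; carrying it out uniformly in the closed subgroup $H$ is where the technical heart of the argument lies.
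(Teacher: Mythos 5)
Your reduction to the type level is the same as the paper's: writing $\pi = \cInd_J^G\Lambda$ for an extended maximal simple type and then combining Mackey theory with Frobenius reciprocity to obtain a non-zero $(J\cap g^{-1}Hg)$-invariant form on $\Lambda$ is exactly how the argument begins. Where you diverge, however, is in how you propose to finish.

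The step you identify as the ``technical heart'' --- picking a canonical lift $\tilde\Lambda=\tilde\kappa\otimes\tilde\sigma$ and then arguing that $\mu$ deforms to a $K$-invariant form on it --- is a genuine gap, and I do not think it can be repaired along the lines you sketch. The problem is twofold. First, there is no reason the canonical Deligne--Lusztig lift should be the one that is distinguished: the type $\Lambda$ typically admits many non-isomorphic $\Ql$-lifts, distinction is not preserved under arbitrary character twists, and it is entirely possible that the ``canonical'' lift fails to be $K$-distinguished while some other lift succeeds. (Indeed, later in the paper, Lemma~3.7 and Proposition~3.8 show that when $H=\GL_n(\FF)$ inside $\GL_n(\EE)$, \emph{all} $\sigma$-selfdual lifts are distinguished together --- but this uniformity relies on Schur's lemma for the irreducible Galois parameter and has no analogue for a general closed $H$.) Second, even if you knew which lift to take, ``$\mu$ deforms'' asks for surjectivity of reduction modulo $\ell$ on $K$-invariant forms, i.e.\ that $\Hom_K(\tilde L,\Zl)\otimes\Fl\twoheadrightarrow\Hom_K(\Lambda,\Fl)$; this fails in general because $K=J\cap g^{-1}Hg$ is an arbitrary compact-mod-center subgroup with possible $\ell$ in its pro-order, so $H^1(K,-)$ obstructions are live. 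Your own remark that ``no abstract upper-semicontinuity is available'' is exactly where the approach breaks.

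What the paper does instead is to avoid choosing any particular lift at all. After passing to $I=\langle J\cap H',\varpi_F\rangle$ so as to control the central element, it invokes the projective envelope $\overline{\mathcal P}$ of $\Lambda$ in $\Rep^{\bar\alpha}_{\Fl}(J)$ and its unique projective lift $\mathcal P$ over $\Zl$ (Proposition~3.3). The decisive structural input --- available precisely because the $J^0/J^1$-part of $\Lambda$ is \emph{cuspidal} --- is that $\overline{\mathcal P}$ has all Jordan--H\"older constituents isomorphic to $\Lambda$ and that $\mathcal P\otimes\Ql$ is the direct sum of \emph{all} $\Ql$-lifts of $\Lambda$. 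Projectivity of $\mathcal P$ then transports the non-vanishing of $\Hom_J(\overline{\mathcal P},\cInd_I^J\bar\chi)$ to non-vanishing over $\Zl$ and hence over $\Ql$, so at least one of the lifts appearing in the direct sum must be $K$-distinguished. This is a non-constructive existence argument that sidesteps both of the obstructions above. You should replace your ``canonical lift + deformation'' step with this projective-envelope mechanism (or supply an argument for why the canonical lift is always the right one, which would require genuinely new input).
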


\begin{rem}
    Theorem \ref{thm1.1}  is valid even for $F$ of characteristic 2.
\end{rem}

The proof depends on the type theory and the existence of a projective envelope for supercuspidal types. This is a very general result allowing us to transfer the problem from modular representations to complex representations. As an immediate consequence, we get:

\begin{cor} [Corollary \ref{symplecticvanishing}]
    There is no supercuspidal $\Fl$-representation of $\GL_{2n}(F)$ distinguished by $\Sp_{2n}(F)$.
\end{cor}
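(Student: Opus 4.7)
The plan is to argue by contradiction, using Theorem \ref{thm1.1} to reduce the $\Fl$-statement to the analogous statement over $\Ql$ (equivalently, $\mathbf{C}$), where it is a classical result. Suppose for contradiction that there exists a supercuspidal $\Fl$-representation $\pi$ of $\GL_{2n}(F)$ admitting a nonzero $\Sp_{2n}(F)$-invariant linear form. Since $\Sp_{2n}(F)$ is a closed subgroup of $\GL_{2n}(F)$, Theorem \ref{thm1.1} applies with $H = \Sp_{2n}(F)$ and produces a $\Ql$-lift $\tilde{\pi}$ of $\pi$ which is again supercuspidal and which is still $\Sp_{2n}(F)$-distinguished.

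It then suffices to quote the theorem of Heumos--Rallis which asserts that no irreducible supercuspidal complex representation of $\GL_{2n}(F)$ is $\Sp_{2n}(F)$-distinguished (indeed, the space of $\Sp_{2n}(F)$-invariant linear forms detects a Shalika-type model, which forces the representation to occur in a proper parabolic induction). Transporting this vanishing across an isomorphism $\Ql \cong \mathbf{C}$ and applying it to $\tilde{\pi}$ yields the desired contradiction.

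The entire argument is a direct formal consequence of Theorem \ref{thm1.1} combined with a well-known complex input, so there is essentially no technical obstacle in the corollary itself; the real work is already contained in Theorem \ref{thm1.1}. The only care needed is to verify that $\Sp_{2n}(F)$ genuinely qualifies as a closed subgroup for the hypothesis of Theorem \ref{thm1.1} (which is immediate, since it is the group of $F$-points of a closed algebraic subgroup) and to check that $\Sp_{2n}(F)$-distinction is preserved under the chosen lift (which is built into the conclusion of Theorem \ref{thm1.1}).
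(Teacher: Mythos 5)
Your proposal is correct and follows essentially the same route as the paper: apply Theorem \ref{thm1.1} with $H=\Sp_{2n}(F)$ to lift a hypothetical distinguished supercuspidal $\Fl$-representation to a distinguished supercuspidal $\Ql$-representation, and then invoke the vanishing of $\Sp_{2n}(F)$-invariant forms on complex supercuspidals. One small point: the paper also allows $F$ of positive characteristic $p\neq 2$, and in that case it cites a result of Prasad rather than Heumos--Rallis (whose theorem is stated in characteristic zero); your argument as written only covers the characteristic-zero case, so to match the paper's generality you would need to supplement the Heumos--Rallis input with the positive-characteristic reference.
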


We also give two other applications. The first one, is to study supercuspidal representations of $\GL_{n}(E)$ distinguished by a Galois involution. For $\ell \neq 2$, we prove the \emph{Dichotomy Theorem} and the \emph{Disjunction Theorem}. Let $\omegaEF$ be the quadratic character of $\FF^{\times}$ associated to the field extension $\EE/\FF$ by the local class field theory and let $\sigmaEF$ be the non-trivial $\FF$-automorphism of $\EE$. A representation $\pi$ of $\GL_n(\EE)$ is said $\sigmaEF$-selfdual if $\cont{\pi} \simeq \sdual{\pi}$, where $\cont{\pi}$ is the contragredient representation of $\pi$.


\begin{thm}[Theorem \ref{thmDichotomy}]
    Let $\pi$ be a supercuspidal $\Fl$-representation of $\GL_n(\EE)$. Then $\pi$ is $\sigmaEF$-selfdual if and only if it is distinguished or $\omegaEF$-distinguished by $\GL_n(F)$. Moreover, $\pi$ cannot be both distinguished and $\omegaEF$-distinguished by $\GL_n(F)$.
\end{thm}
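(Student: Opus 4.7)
The plan is to reduce the statement to the classical complex dichotomy and disjunction theorem for supercuspidal $\Ql$-representations of $\GL_n(\EE)$ (due to Flicker and Kable) by means of Theorem~\ref{thmLiftDistinguished}, together with the fact that contragredient and Galois twist commute with reduction mod $\ell$ on supercuspidal representations. In the applications of Theorem~\ref{thmLiftDistinguished} we take $G = \GL_n(\EE)$ with closed subgroup $H = \GL_n(\FF)$. Distinction by a non-trivial character $\chi$ of order prime to $\ell$ is handled by lifting $\chi$ canonically via Teichm\"uller, extending the lift to $G$, and twisting $\pi$ by it.

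The disjunction is formal: if $\pi$ were simultaneously $\GL_n(\FF)$-distinguished and $(\GL_n(\FF), \omegaEF)$-distinguished, then $\FF^\times \subset Z(\GL_n(\EE))$ would act on a non-zero invariant linear form through both $1$ and $\omegaEF$, forcing $\omegaEF = 1$ on $\FF^\times$. Since $\ell$ is odd, $\omegaEF$ remains a non-trivial quadratic $\Fl$-character, a contradiction.

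For the ``if'' direction, suppose $\pi$ is $(\GL_n(\FF), \chi)$-distinguished with $\chi \in \{1, \omegaEF\}$. Applying Theorem~\ref{thmLiftDistinguished} after the twist outlined above, we obtain a supercuspidal $\Ql$-lift $\tilde\pi$ of $\pi$ which is $(\GL_n(\FF), \tilde\chi)$-distinguished. The classical dichotomy gives $\cont{\tilde\pi} \simeq \sdual{\tilde\pi}$, and reducing mod $\ell$ yields $\cont\pi \simeq \sdual\pi$.

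For the ``only if'' direction, assume $\pi$ is $\sigmaEF$-selfdual. The goal is to exhibit a $\sigmaEF$-selfdual supercuspidal $\Ql$-lift $\tilde\pi$, so that the classical dichotomy forces $\tilde\pi$ to be distinguished or $\omegaEF$-distinguished; reducing the corresponding invariant linear form mod $\ell$ then transfers the property to $\pi$. Starting from an arbitrary supercuspidal $\Ql$-lift $\tilde\pi_0$, the representations $\cont{\tilde\pi_0}$ and $\sdual{\tilde\pi_0}$ are two supercuspidal $\Ql$-lifts of the same irreducible $\cont\pi \simeq \sdual\pi$, hence differ by an unramified character $\chi$ of $\GL_n(\EE)$ that is trivial mod $\ell$ and therefore of $\ell$-power order. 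Writing $\tilde\pi := \tilde\pi_0 \otimes \mu$, the $\sigmaEF$-selfduality condition becomes $\mu \cdot \sdual\mu = \chi$, and such an unramified $\mu$ of $\ell$-power order exists because $\ell \neq 2$ permits extraction of a square root of $\chi(\varpi_\EE)$ among $\ell$-power roots of unity. The principal obstacle is this last step, namely producing the $\sigmaEF$-selfdual supercuspidal lift uniformly in the residual characteristic $p$, including $p = 2$; this is precisely where the characteristic-independent nature of Theorem~\ref{thmLiftDistinguished} plays the critical role and where the extension of S\'echerre's work is concentrated.
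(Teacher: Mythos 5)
Your high-level strategy—reduce to the complex dichotomy via a $\Ql$-lift and go back down via reduction mod $\ell$—agrees with the paper's. However, two of your three sub-arguments have genuine gaps.

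\textbf{Disjunction.} Your central-character argument is not correct in general. If $f$ is a $\GL_n(\FF)$-invariant form and $g$ is a $(\GL_n(\FF),\omegaEF)$-equivariant form, these are two \emph{different} functionals, and the scalar through which $z\in\FF^\times\subset Z(\GL_n(\EE))$ acts on $g$ is $\omegaEF(\det(zI_n))=\omegaEF(z)^n$, not $\omegaEF(z)$. So coexistence of $f$ and $g$ only forces $\omega_\pi|_{\FF^\times}=1$ and $\omega_\pi|_{\FF^\times}=\omegaEF^n$, i.e.\ $\omegaEF^n=1$ on $\FF^\times$. For $n$ even this is vacuous—in particular it says nothing for $\GL_2$, the case the paper explicitly develops. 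The disjunction is a genuinely nontrivial statement; the paper's Proposition~\ref{proDisjonction} deduces it from the complex disjunction theorem after showing (Proposition~\ref{proLiftDistComplete}) that \emph{all} $\sigma$-selfdual $\Ql$-lifts of a distinguished $\pi$ are distinguished, which in turn rests on Lemma~\ref{lemconj} (a bilinear-form sign argument via Schur's lemma). Your proof is missing this entire mechanism.

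\textbf{Existence of a $\sigma$-selfdual lift.} You start from an arbitrary supercuspidal $\Ql$-lift $\tilde\pi_0$ and assert that $\cont{\tilde\pi_0}$ and $\sdual{\tilde\pi_0}$, being two supercuspidal $\Ql$-lifts of the same $\Fl$-representation, "hence differ by an unramified character." That implication is false: supercuspidal $\Ql$-lifts of a fixed supercuspidal $\Fl$-representation need not lie in a single inertial class; the number of inertial classes of such lifts is a power of $\ell$ (possibly $>1$). This is exactly the point where the paper's Proposition~\ref{proLiftSdual} does work: it first notes that since this power of $\ell$ is odd (using $\ell\neq 2$), the involution $\delta$ (contragredient composed with $\sigma$-twist) acting on the set of inertial classes of lifts must have a fixed point. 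Only \emph{after} choosing a lift in such a $\delta$-stable inertial class does one get the desired unramified twist equation $\tilde\pi^\sigma=\tilde\pi^\vee\otimes\tilde\chi$, and even then the paper has to carefully control $\ell$-integrality and the orders $f(\pi),f(\tilde\pi)$ to extract the right square root. Your version elides the odd-cardinality fixed-point step entirely, and your final remark attributing this step to Theorem~\ref{thmLiftDistinguished} is off: that theorem lifts \emph{distinguished} representations, whereas what is needed here is the separate lifting result for $\sigma$-selfdual representations, proved by a different (counting) argument.
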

For the complex representations, it is called the Jacquet conjecture proved by Kable in \cite{kable}. We first show that one can lift $\sigmaEF$-selfdual supercuspidal $\Fl$-representations to $\sigmaEF$-selfdual supercuspidal  $\Ql$-representations. This follows from the fact that since $\ell$ is odd, the number of inertial classes of such lifts is odd, hence has a fixed point under $\sigmaEF$-duality. Then we prove that all these supercuspidal lifts share the same sign due to Schur's Lemma. Combining with Theorem \ref{thm1.1}, this allows us to deduce the Jacquet conjecture for modular representations from complex representations.

\begin{rem}
    When $p\neq 2$, the distinguished supercuspidal representations were studied by Sécherre in \cite{vincent2019supercusp} (with no restriction on $\ell$). Our method is different from the one used in \cite{vincent2019supercusp} and works for $\ell \neq 2$ but for any $p$. Therefore, combining the results in this paper and in \cite{vincent2019supercusp} gives the complete result for distinguished supercuspidal representations for all $\ell$ and $p$ with $\ell \neq p$.
\end{rem}

We also have a characterization using the local Langlands correspondence of Vignéras.

\begin{prop}[Proposition \ref{proDistConj}]
    Let $\pi$ be a $\sigmaEF$-selfdual supercuspidal $\Fl$\hyph representation of $\GL_n(\EE)$ and $\varphi$ its Langlands parameter. Then $\pi$ is distinguished (resp. $\omegaEF$-distinguished) by $\GL_n(F)$ if and only if $\varphi_\pi$ is conjugate-orthogonal (resp. conjugate-symplectic).
\end{prop}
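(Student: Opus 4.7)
The plan is to deduce the modular statement from its complex counterpart via the lifting theorem (Theorem~\ref{thm1.1}) together with the compatibility of Vignéras' local Langlands correspondence with reduction modulo $\ell$. The complex analogue is classical: for a supercuspidal $\overline{\mathbf{Q}}_\ell$-representation $\tilde\pi$ of $\GL_n(E)$, distinction by $\GL_n(F)$ (resp.\ $(\GL_n(F),\omega_{E/F})$-distinction) is equivalent to $\varphi_{\tilde\pi}$ being conjugate-orthogonal (resp.\ conjugate-symplectic); this follows from the work of Kable combined with the matching of signs under the local Langlands correspondence.

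The steps are as follows. First, by Theorem~\ref{thmDichotomy} one side of the equivalence already tells us that $\pi$ is either $\GL_n(F)$-distinguished or $(\GL_n(F),\omega_{E/F})$-distinguished, and not both. Assuming $\pi$ is $\GL_n(F)$-distinguished, apply Theorem~\ref{thm1.1} with $H=\GL_n(F)$ to obtain a supercuspidal $\overline{\mathbf{Q}}_\ell$-lift $\tilde\pi$ which is itself $\GL_n(F)$-distinguished; by the complex result recalled above, $\varphi_{\tilde\pi}$ is conjugate-orthogonal. Next, since $\pi$ is supercuspidal its parameter $\varphi_\pi$ is irreducible, and the compatibility of Vignéras' correspondence with reduction modulo $\ell$ identifies $\varphi_\pi$ with the semisimple reduction of $\varphi_{\tilde\pi}$. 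The same scheme treats the $\omega_{E/F}$-distinguished case, producing a conjugate-symplectic lift. For the converse direction, the dichotomy theorem shows that $\pi$ falls into exactly one of the two distinction categories, and the two possible signs of the parameter are mutually exclusive (since $\ell$ is odd, an irreducible parameter cannot be simultaneously conjugate-orthogonal and conjugate-symplectic), so the implications in both directions match up.

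The main obstacle is step three, namely showing that the sign (conjugate-orthogonal versus conjugate-symplectic) is preserved under reduction modulo~$\ell$ for an irreducible $\ell$-integral Weil--Deligne representation. Concretely, if $\tilde\varphi$ is irreducible and carries a non-degenerate $\sigma$-invariant bilinear form $\tilde B$ of a given sign, one rescales $\tilde B$ so that it takes values in $\overline{\mathbf{Z}}_\ell$ and its reduction $\overline B$ remains non-zero on some $\ell$-integral lattice; because the reduction $\varphi_\pi$ is again irreducible (as $\pi$ is supercuspidal), Schur's lemma forces $\overline B$ to be non-degenerate, and the symmetry type is inherited from $\tilde B$. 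The hypothesis $\ell\neq 2$ is essential here: it guarantees that the symmetric and alternating parts do not mix upon reduction, so the sign is rigid. Once this descent of signs is established, the equivalences in the proposition follow immediately from the complex Jacquet conjecture and the dichotomy theorem.
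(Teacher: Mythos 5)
Your proof is correct and follows essentially the same route as the paper, which likewise reduces to the complex Jacquet conjecture by combining a distinguished supercuspidal lift (Theorem~\ref{thmLiftDistinguished}, packaged as Proposition~\ref{proLiftDistComplete}) with the dichotomy and disjunction. The sign-preservation step you flag as the main obstacle is exactly the content and method of Lemma~\ref{lemconj}: reduce the conjugate-invariant bilinear form on a $\Zl$-lattice, invoke Schur's lemma for non-degeneracy of the reduction, and use $\ell\neq 2$ to keep the symmetry type rigid.
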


The last application, is for representations distinguished by a unitary involution. Let $\ell\neq 2$ and $\varsigma$ be a \emph{unitary involution} of $\GL_n(\EE)$, that is $\varsigma(g)=\varepsilon \sigma({}^{t}g^{-1}) \varepsilon^{-1}$ where $\varepsilon$ is a hermitian matrix in $M_n(\EE)$.

\begin{thm}[Theorem \ref{thmDistUnitary}]
    Let $\pi$ be a supercuspidal $\Fl$-representation of $\GL_n(\EE)$ and $\varsigma$ a unitary involution of $G$. Then $\pi$ is distinguished by $\GL_n(\EE)^{\varsigma}$ if and only if $\pi^{\sigma}\simeq \pi$.
\end{thm}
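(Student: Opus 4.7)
The plan is to reduce both directions to Zou's theorem for complex representations. The forward implication uses Theorem \ref{thm1.1} to lift distinction, while the backward implication requires lifting the $\sigma$-fixed property.

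Assume first $\pi$ is distinguished by $\GL_n(\EE)^\varsigma$. By Theorem \ref{thm1.1}, there exists a supercuspidal $\Ql$-lift $\tilde\pi$ of $\pi$ that is also distinguished by $\GL_n(\EE)^\varsigma$. Zou's theorem for complex representations then yields $\tilde\pi^\sigma \simeq \tilde\pi$. Since the functor $\rho \mapsto \rho^\sigma$ commutes with reduction modulo $\ell$ (as $\sigma$ acts on $\GL_n(\EE)$ as a group automorphism), we deduce $\pi^\sigma \simeq \pi$.

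Conversely, assume $\pi^\sigma \simeq \pi$. The first step is to exhibit a supercuspidal $\Ql$-lift $\tilde\pi$ with $\tilde\pi^\sigma \simeq \tilde\pi$. This mirrors the $\sigma$-duality lifting argument used in the proof of Theorem \ref{thmDichotomy}: the set of inertial equivalence classes of supercuspidal $\Ql$-lifts of $\pi$ is finite and of odd cardinality (the count comes from the structure of projective envelopes of supercuspidal types combined with $\ell$ odd), and the hypothesis $\pi^\sigma \simeq \pi$ makes the involution $\tilde\rho \mapsto \tilde\rho^\sigma$ act on this finite set. Oddness forces a $\sigma$-fixed inertial class, and the remaining ambiguity by unramified twists can be resolved to produce an actual $\sigma$-fixed lift, again thanks to $\ell \neq 2$. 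Applying Zou's complex theorem to $\tilde\pi$ provides a nonzero $\GL_n(\EE)^\varsigma$-invariant linear form. Rescaling this form so that it takes values in $\Zl$ but not in $\ell\Zl$ on a $\GL_n(\EE)$-stable lattice, and then passing to the reduction modulo $\ell$, yields a nonzero $\GL_n(\EE)^\varsigma$-invariant form on $\pi$; since the supercuspidal reduction is irreducible, this shows $\pi$ is distinguished.

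The main obstacle is the production of a $\sigma$-fixed lift in the backward direction. The odd parity of the number of supercuspidal inertial lifts of $\pi$ and the control over the action of unramified twists on $\sigma$-fixed inertial classes are the crucial ingredients, and both rely on $\ell \neq 2$; once the lift is in hand, everything else (Theorem \ref{thm1.1}, Zou's complex theorem, reduction of linear forms modulo $\ell$) can be invoked as a black box. Notice also that the argument is independent of the residual characteristic $p$, which is the mechanism through which the result extends to $p=2$.
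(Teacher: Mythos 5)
Your proposal follows the same strategy as the paper's proof: lift distinction via Theorem \ref{thmLiftDistinguished}, lift $\sigma$-invariance via a fixed-point argument on the odd-cardinality set of inertial classes of supercuspidal $\Ql$-lifts (this is exactly Proposition \ref{proLiftSigmaInv}), then invoke the complex result and pass back through reduction modulo $\ell$. The only small inaccuracy is attributing the complex ($\Ql$) case to Zou --- the paper cites Hakim--Murnaghan and Lapid--Mao for that, with Zou's theorem covering the modular case for $p\neq 2$ and the positive-characteristic case --- but this does not affect the correctness of the argument.
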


This extends the result of \cite{Zou} to the case $p=2$ when $F$ has characteristic zero.

\subsection{Modular distinguished representations for $(\GL_{2}(E),\GL_{2}(F))$}

In section \ref{secGL2}, we complete the classification of all representations of $\GL_{2}(E)$ distinguished by $\GL_{2}(F)$.

Let $\chi_1,\chi_2$ be two characters of $\EE^{\times}$. We use Mackey theory to show that the principal series representation $\pi(\chi_1,\chi_2)$ is distinguished by $\GL_2(F)$ if and only if either $\chi_1\chi_2^\sigma=\mathbf{1}$ or $\chi_1|_{F^\times}=\chi_2|_{F^\times}=\mathbf{1}$ with $\chi_1\neq\chi_2$. (See Lemma \ref{lemPrincSer} for more details.)

To get a complete classification, we are just missing irreducible subquotients of non-irreducible principal series. Denote by $\nu^{1/2}$ the unramified character of $\EE^{\times}$ sending a uniformizer to $q_\EE^{1/2}$  where the square root $q_\EE^{1/2}$ of $q_\EE$ is fixed. Let $\chi$ be a character of $\EE^{\times}$. When $q_{\EE} \not\equiv -1 \pmod{\ell}$, we denote by $\St_{\chi}$ the twisted Steinberg representation, that is the unique generic irreducible subquotient of $\pi(\chi \nu^{-1/2},\chi \nu^{1/2})$. In the case $q_{\EE} \equiv -1 \pmod{\ell}$, the generic irreducible subquotient of $\pi(\chi \nu^{-1/2},\chi \nu^{1/2})$ is the special representation, denoted by $\Spe_{\chi}$.

\begin{thm}[Theorem \ref{thmCuspDist}]
    Let $\chi$ be a character of $\EE^{\times}$.
    \label{thm1.8}
    \begin{enumerate}
        \item If $\ell \nmid q_{\EE}^2-1$, then $\St_\chi$ is $\GL_2(F)$-distinguished if and only if $\chi_{|{F^\times}}=\omegaEF$.
        \item If $\ell \mid q_{\EE}+1$, then $\Spe_\chi$ is $\GL_2(F)$-distinguished if and only if $\ell \mid q_{\FF}+1$ and $\chi_{|{F^\times}}=\omegaEF$ or $\nu^{1/2}_{|{F^\times}}$.
        \item If $\ell \mid q_{\EE}-1$, then $\St_\chi$ is $\GL_2(F)$-distinguished if and only if $\chi_{|{F^\times}}=\omegaEF$ or $\chi_{|{F^\times}}=\mathbf{1}$ with $\ell \mid q_{\FF}-1$.
    \end{enumerate}
\end{thm}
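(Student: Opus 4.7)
The plan is to study the composition factors of the principal series $\pi_\chi := \pi(\chi\nu^{-1/2},\chi\nu^{1/2})$ through the long exact sequence obtained by applying $\Hom_{\GL_2(F)}(-,\mathbf{1})$ to a Jordan--H\"older filtration, and to read off distinction of $\St_\chi$ and $\Spe_\chi$ using Lemma \ref{lemPrincSer}. Two preliminary observations drive the argument. Since $\nu$ is $\sigmaEF$-stable, one has $\chi_1\chi_2^{\sigmaEF}=\chi\chi^{\sigmaEF}$ for $\chi_1=\chi\nu^{-1/2}$ and $\chi_2=\chi\nu^{1/2}$, so this equals $\mathbf{1}$ precisely when $\chi|_{F^\times}\in\{\mathbf{1},\omegaEF\}$; this explains the appearance of $\omegaEF$ in all three parts. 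The second clause in Lemma \ref{lemPrincSer}, namely $\chi_1|_{F^\times}=\chi_2|_{F^\times}=\mathbf{1}$, requires $\nu|_{F^\times}$ to become trivial mod $\ell$, which happens precisely in the non-banal regime $\ell\mid q_F\pm1$; it supplies the extra possibility $\chi|_{F^\times}=\nu^{1/2}|_{F^\times}$ in part (2). Finally $\chi\circ\det$ is $\GL_2(F)$-distinguished iff $\chi|_{F^\times}=\mathbf{1}$.

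In the banal case (1), $\pi_\chi$ has length two,
$$0 \to \chi\circ\det \to \pi_\chi \to \St_\chi \to 0,$$
and the preliminary observations show that $\pi_\chi$ being distinguished forces $\chi|_{F^\times}\in\{\mathbf{1},\omegaEF\}$. The long exact sequence then immediately gives distinction of $\St_\chi$ when $\chi|_{F^\times}=\omegaEF$, since the sub is not distinguished. When $\chi|_{F^\times}=\mathbf{1}$, both $\pi_\chi$ and $\chi\circ\det$ are distinguished, and the key point is to show that the restriction map $\Hom_{\GL_2(F)}(\pi_\chi,\mathbf{1}) \to \Hom_{\GL_2(F)}(\chi\circ\det,\mathbf{1})$ is non-zero, which forces $\Hom_{\GL_2(F)}(\St_\chi,\mathbf{1})=0$. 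I would verify this by an explicit Mackey computation: the invariant functional on $\pi_\chi$ is the one associated to the open $\GL_2(F)$-orbit on $B\backslash\GL_2(E)$, and a direct integration shows that it does not vanish on the constant sections, which span the sub $\chi\circ\det$.

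For parts (2) and (3) the same template applies, but the composition series of $\pi_\chi$ may grow to length three and one-dimensional subquotients can appear with multiplicity. The plan is to identify the generic subquotient ($\Spe_\chi$ in (2) and $\St_\chi$ in (3)), write down the filtration explicitly, and iterate the long exact sequence while using the two preliminary computations to detect distinction of each subquotient. The main obstacle throughout is the non-vanishing problem first encountered in part (1): whenever several subquotients are simultaneously distinguished, one must establish that the corresponding restriction map in the long exact sequence is non-zero. For these steps I would again rely on the explicit Mackey decomposition along the two $\GL_2(F)$-orbits of $\mathbf{P}^1(E)$, and the careful bookkeeping of which of $\omegaEF$, $\mathbf{1}$ and $\nu^{1/2}|_{F^\times}$ coincide mod $\ell$ in each of the sub-cases accounts for the asymmetries between parts (2) and (3).
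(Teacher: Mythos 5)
Your outline is coherent, but it hits a genuine gap at the decisive step. You place $\chi\circ\det$ as the \emph{sub} and $\St_\chi$ as the \emph{quotient} of $\pi_\chi=\pi(\chi\nu^{-1/2},\chi\nu^{1/2})$. Applying $\Hom_{\GL_2(F)}(-,\mathbf{1})$ then yields
\[
0\to\Hom(\St_\chi,\mathbf{1})\to\Hom(\pi_\chi,\mathbf{1})\to\Hom(\chi\circ\det,\mathbf{1})\to\Ext^1(\St_\chi,\mathbf{1})\to\cdots,
\]
so that $\Hom(\St_\chi,\mathbf{1})$ sits \emph{first}. When $\chi|_{F^\times}=\mathbf{1}$ both $\pi_\chi$ and $\chi\circ\det$ are distinguished, and to conclude $\Hom(\St_\chi,\mathbf{1})=0$ you need the restriction map $\Hom(\pi_\chi,\mathbf{1})\to\Hom(\chi\circ\det,\mathbf{1})$ to be non-zero, or equivalently a vanishing of $\Ext^1(\St_\chi,\mathbf{1})$. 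Your proposal is to show the first by a ``direct integration'' of the open-orbit functional against the constant section. This is not a routine integration: the open-orbit functional on all of $\pi_\chi$ is a regularized (a priori divergent) integral over $E^\times\backslash\GL_2(F)$, and its non-vanishing on the bottom constituent is exactly the delicate point. In the $\Fl$ setting the meromorphic-continuation machinery one would use over $\mathbb{C}$ is not available, so this assertion needs a genuine proof that you do not supply, and the alternative $\Ext^1(\St_\chi,\mathbf{1})=0$ is not obviously easier. The paper sidesteps the issue entirely by working with the \emph{opposite} induction $\pi(\chi\nu^{1/2},\chi\nu^{-1/2})$, in which $\St_\chi$ is the \emph{sub} and $\chi\circ\det$ the \emph{quotient}. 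Then $\Hom(\St_\chi,\mathbf{1})$ sits \emph{last} in the sequence against $\Ext^1_{\GL_2(F)}(\chi\circ\det,\mathbf{1})$, and the only input needed is Lemma \ref{lemExt}, a very concrete cocycle computation showing this $\Ext^1$ vanishes in the category of representations with trivial central character (using $\ell\neq 2$). After that the dimension of $\Hom(\St_\chi,\mathbf{1})$ is pure bookkeeping from Lemma \ref{lemPrincSer}, with no non-vanishing claim to verify.

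Two further remarks. For part (2) the paper filters through the intermediate $J_\chi$ and repeats the same $\Ext^1$-vanishing trick; the one-dimensional subquotient $(\chi\circ\det)\nu_2$ depends on whether $\nu_2|_{\GL_2(F)}$ is $\mathbf{1}_2$ or $\omegaEF$, i.e.\ on whether $E/F$ is ramified, which is exactly the distinction that gives the condition $\ell\mid q_F+1$. For part (3), the key simplification that your template misses is that when $\ell\mid q_E-1$, $\nu^{1/2}$ is trivial mod $\ell$ and $\pi(\chi\nu^{-1/2},\chi\nu^{-1/2})$ is in fact \emph{semisimple}, equal to $(\chi\circ\det)\oplus\St_\chi$. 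This dissolves the entire extension problem, and the answer drops out of the multiplicity formula in Lemma \ref{lemPrincSer}, which equals $2$ precisely when $\ell\mid q_F-1$.
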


\begin{rem}
    From Theorem \ref{thm1.8}, we would like to highlight the fact that we have in the modular case new phenomena that do not appear in the complex setting.
    \begin{itemize}
        \item When $q_{\FF}\equiv 1 \pmod{\ell}$, the Steinberg representation $\St$ is both $\GL_2(F)$-distinguished and $(\GL_2(F),\omega_{E/F})$-distinguished.
        \item When $q_{\EE}\equiv -1 \pmod{\ell}$ and $\EE/\FF$ is unramified (that is $q_{\FF}^2\equiv -1 \pmod{\ell}$ and so $q_{\FF}\not\equiv -1 \pmod{\ell}$) the special representation $\Spe$ is neither $\GL_2(F)$-distinguished nor $(\GL_2(F),\omega_{E/F})$\hyph distinguished. This has been mentioned by Vincent Sécherre in \cite[Rem. 2.8]{vincent2019supercusp}.
        \item When $q_{\EE} \equiv -1 \pmod{\ell}$ and $\EE/\FF$ is ramified (that is $q_{\FF} \equiv -1 \pmod{\ell}$) the special representation $\Spe$ is $(\GL_2(F),\omega_{E/F})$-distinguished and is also $(\GL_2(F),\nu^{1/2}_{|{F^\times}})$-distinguished.
    \end{itemize}
\end{rem}

\subsection{The modulo $\ell$ Prasad conjecture}

In section \ref{secPrasad}, we discuss the Prasad conjecture for modular representations. In \cite{prasad2015arelative}, Dipendra Prasad proposed a conjecture for the multiplicity $\dim_\mathbb{C} \Hom_{G(F)}(\pi,\chi_G) $ under the local Langlands conjecture, where $G$ is a quasi-split reductive group defined over $F$, $\pi$ is an irreducible smooth representation of $G(E)$ lying in a generic $L$-packet and $\chi_G$ is a quadratic character depending on $G$ and the quadratic extension $E/F$. Since the local Langlands correspondence for the $\ell$-modular representations of $G(F)$ has not been set up in general, except for $G=\GL_n$, we are concerned only with the simplest case where $G=\PGL_2$.

\bigskip

The modulo $\ell$ local Langlands correspondence for $\GL_2$ has been defined by Vignéras in \cite{VignerasLanglands}, and is a map $V : \Irr_R(\GL_2(\EE)) \to \Nilp_{R}(W_{\EE},\GL_2)$, where $\Nilp_{R}(W_{\EE},\GL_2)$ denotes the set of isomorphism classes of nilpotent semisimple Weil-Deligne representations. Let $\pi$ be an irreducible representation of $\GL_2(\EE)$ with central character $\omega_{\pi}$. Since $\det V(\pi)=\omega_{\pi}$ as characters of $W_E$, it induces a map
\[
    PV : \Irr_R(\PGL_2(\EE)) \to \Nilp_{R}(W_{\EE},\SL_2).
\]
Using this correspondence, the Prasad conjecture is not valid for $\ell$-modular representations  (at least in the non-banal case). Actually, it does not work for any map $L : \Irr_R(\PGL_2(\EE)) \to \Nilp_{R}(W_{\EE},\SL_2)$ having the same semisimple part as $PV$ (see Section \ref{secIssuePrasad}).

\bigskip

In order to propose a solution for $\ell$-modular representations, we will consider non-nilpotent Weil-Deligne representations as in \cite{MatringeKurinczuk}. Let $\WDRep_{R}(W_{\EE},\GL_2)$ denote the set of isomorphism classes of semisimple Weil-Deligne representation of $\GL_2$. Kurinczuk and Matringe define in \cite{MatringeKurinczuk} an equivalence relation $\sim$ on $\WDRep_{R}(W_{\EE},\GL_2)$, and we denote by $[\WDRep_{R}(W_{\EE},\GL_2)]$ the quotient set.

Then we define an injection from $\Nilp_{\Fl}(W_{\EE},\SL_2)$ to $[\WDRep_{\Fl}(W_{\EE},\SL_2)]$ (which is not the trivial one in the non-banal case) in the following way. Let $\chi$ be a quadratic character of $\EE^\times$. If $\ell \mid q_\EE-1$, we denote by $\Psi_{\St,\chi} \in \Nilp_{\Fl}(W_{\EE},\SL_2)$, the Weil-Deligne representation $\Psi_{\St,\chi} = (\chi\nu^{-1/2} \oplus \chi\nu^{-1/2},N)$ with $N = \begin{pmatrix}
        0 & 1 \\
        0 & 0
    \end{pmatrix}$. If $\ell \mid q_\EE+1$, let $\Psi_{\Spe,\chi} := (\chi\nu^{-1/2} \oplus \chi\nu^{1/2},0)$. We define an injection

\[P:  \Nilp_{\Fl}(W_{\EE},\SL_2) \hookrightarrow [\WDRep_{\Fl}(W_{\EE},\SL_2)]\]
by
\[
    P(\Psi) = \left\{
    \begin{array}{ll}

        {\left[\chi\nu^{-1/2} \oplus \chi\nu^{-1/2},\begin{pmatrix}
                                                                0 & 1 \\
                                                                1 & 0
                                                            \end{pmatrix}\right]} & \mbox{if } \ell \mid q_\EE-1 \mbox{ and } \Psi = \Psi_{         \St,\chi} \\
        {\left[\chi\nu^{-1/2} \oplus \chi\nu^{1/2},\begin{pmatrix}
                                                           0 & 1 \\
                                                           1 & 0
                                                       \end{pmatrix}\right]}  & \mbox{if } \ell \mid q_\EE+1 \mbox{ and } \Psi = \Psi_{\Spe,\chi}         \\

        [\Psi]                                                      & \mbox{otherwise.}
    \end{array}
    \right.
\]

Then we prove a modulo $\ell$  version of the Prasad conjecture, using our modified injection $P$:

\begin{thm}[Theorem \ref{thmPrasadModular}]
    Let $\pi$ be an irreducible generic $\Fl$-representation of $\PGL_2(\EE)$. Then $\pi$ is $\omegaEF$-distinguished by $\PGL_2(F)$ if and only if there exists $\Psi_{F} \in \WDRep_{\Fl}(W_{\FF},\SL_2)$ such that $\Psi_{F|W_{\EE}} \sim P\circ PV(\pi)$.
\end{thm}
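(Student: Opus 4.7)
The proof proceeds by a case-by-case analysis over the irreducible generic $\Fl$-representations of $\PGL_2(\EE)$. Since such a representation has trivial central character, it is either a supercuspidal representation, an irreducible principal series $\pi(\chi,\chi^{-1})$, or (in the non-banal case $\ell \mid q_\EE^2 - 1$) a twist $\St_\chi$ or $\Spe_\chi$ with $\chi^2 = \mathbf{1}$. For each class, the plan is to compute explicitly when $\pi$ is $\omegaEF$-distinguished by $\PGL_2(F)$, using Proposition \ref{proDistConj} for the supercuspidal case, Lemma \ref{lemPrincSer} for principal series, and Theorem \ref{thmCuspDist} for Steinberg and special representations; then to compute $P \circ PV(\pi)$ explicitly as a class in $[\WDRep_{\Fl}(W_\EE, \SL_2)]$; and finally to determine when this class admits a representative extending to $W_\FF$ under the Kurinczuk-Matringe equivalence $\sim$.

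For the supercuspidal case, the statement follows directly from Proposition \ref{proDistConj}: $\pi$ is $\omegaEF$-distinguished if and only if its Langlands parameter $\varphi_\pi$ is conjugate-symplectic, which is precisely the condition for $\varphi_\pi$ to extend to an $\SL_2$-valued (symplectic) parameter of $W_\FF$. The irreducible principal series case reduces to matching the explicit Mackey criterion of Lemma \ref{lemPrincSer} (together with its $\omegaEF$-twisted variant) against the extension criterion for a semisimple sum $\chi \oplus \chi^{-1}$ on $W_\EE$; since $P$ acts trivially on these classes and the equivalence $\sim$ is mild on semisimple parameters, the verification is a direct character computation on $\EE^\times / \FF^\times$.

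The main obstacle lies in the Steinberg and special representations in the non-banal setting, where $P$ is genuinely non-trivial and the equivalence $\sim$ is used in an essential way. For $\St_\chi$ with $\ell \mid q_\EE - 1$, Theorem \ref{thmCuspDist} gives $\omegaEF$-distinction if and only if $\chi_{|{F^\times}} = \omegaEF$, or $\chi_{|{F^\times}} = \mathbf{1}$ with $\ell \mid q_\FF - 1$. The modified class $P(\Psi_{\St,\chi})$, with monodromy $N = \begin{pmatrix} 0 & 1 \\ 1 & 0 \end{pmatrix}$ in place of the classical nilpotent matrix, has semisimple monodromy with eigenvalues $\pm 1$ that may be swapped by the non-trivial element of $\Gal(\EE/\FF)$; this extra flexibility is precisely what accommodates the new case $\chi_{|{F^\times}} = \mathbf{1}$ with $\ell \mid q_\FF - 1$, which would be impossible with the nilpotent normalization used in $PV$. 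An analogous but more delicate verification for $\Spe_\chi$ when $\ell \mid q_\EE + 1$ splits into the unramified and ramified subcases of $\EE/\FF$, matching exactly the new phenomena highlighted in the remark following Theorem \ref{thm1.8}. The key technical input throughout is a characterization, given a semisimple Weil-Deligne representation on $W_\EE$, of when it admits an $\sim$-equivalent representative extending to $W_\FF$ — a question that reduces to the Galois action on the monodromy operator combined with the standard extension criterion for semisimple Weil parameters.
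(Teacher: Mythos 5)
Your high-level strategy matches the paper's: decompose the irreducible generic representations of $\PGL_2(\EE)$ into supercuspidal, irreducible principal series, Steinberg, and special, and in each case match a distinction criterion against an extension criterion for $P\circ PV(\pi)$. The paper's proof is literally this cross-referencing, citing Proposition \ref{proPrasadSupercusp} for supercuspidals, Lemmas \ref{lemPrincSer} and \ref{lemLiftWDPrincSeries} for irreducible principal series, and Theorem \ref{thmCuspDist} together with Lemmas \ref{lemLiftWDSp}, \ref{lemLiftWDSt}, \ref{lemLiftWDBanal} for Steinberg and special. Two points in your write-up deserve attention.

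First, for the supercuspidal case you take a genuinely different route. The paper lifts $\pi$ to a distinguished supercuspidal $\Ql$-representation via Proposition \ref{proLiftDistComplete} and then invokes the \emph{complex} Prasad conjecture for $\PGL_2$, sidestepping any direct claim about modular parameters. You instead invoke Proposition \ref{proDistConj} (distinguished iff conjugate-orthogonal, $\omegaEF$-distinguished iff conjugate-symplectic) and assert that conjugate-symplectic is ``precisely the condition'' for $\varphi_\pi$ to extend to an $\SL_2$-valued parameter of $W_\FF$. That extension criterion (a GGP-type statement for $\Fl$-coefficients) is not proved in the paper and is not obvious; it is exactly what the paper avoids by lifting. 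Your route is plausible but the key equivalence must be justified --- otherwise it is circular, since for complex coefficients that equivalence is essentially the content of Theorem \ref{Prasad:PGL} combined with Kable's theorem.

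Second, there is a concrete slip in the Steinberg analysis. You state that for $\St_\chi$ with $\ell \mid q_\EE - 1$, Theorem \ref{thmCuspDist} gives $\omegaEF$-distinction iff $\chi_{|F^\times} = \omegaEF$, or $\chi_{|F^\times} = \mathbf{1}$ with $\ell \mid q_\FF - 1$. But Theorem \ref{thmCuspDist}(3) gives that criterion for \emph{plain} $\GL_2(F)$-distinction, not $\omegaEF$-distinction. The statement of Theorem \ref{thmPrasadModular} concerns $\omegaEF$-distinction, so one must first twist by a character of $\EE^\times$ extending $\omegaEF$; the correct criterion has $\mathbf{1}$ and $\omegaEF$ interchanged: $\chi_{|F^\times}=\mathbf{1}$, or $\chi_{|F^\times}=\omegaEF$ with $\ell \mid q_\FF - 1$. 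This is exactly what Lemma \ref{lemLiftWDSt} produces on the parameter side. The ``new case'' you highlight as what the injection $P$ accommodates is correspondingly the wrong one: it should be $\chi_{|F^\times}=\omegaEF$ with $\ell \mid q_\FF - 1$. The conceptual observation (that the non-nilpotent monodromy in $P(\Psi_{\St,\chi})$ gives the extra flexibility missing from the $PV$ parameter) is right, but the arithmetic on each side must match after the $\omegaEF$-twist, and as written it does not.
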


\begin{rem}
    \begin{enumerate}
        \item In the banal case i.e., $\ell \nmid q_\EE^2-1$, $[\WDRep_{\Fl}(W_{\EE},\SL_2)]=\Nilp_{\Fl}(W_{\EE},\SL_2)$ and $P$ is the trivial injection, so we found the ``classical'' Prasad conjecture.
        \item When $\ell \mid q_\EE+1$, $\ell \mid q_\FF + 1$ and $\pi = \Spe_\chi$ for  a quadratic character $\chi$ of $\EE^{\times}$  such that $\chi_{|\FF^\times}=\omegaEF \nu_{|\FF^\times}^{1/2}$, then the representation $\Psi_F$ is not the Langlands parameter of any representation of $\PGL_2(\FF)$ (nor it is in the image of $P$).
    \end{enumerate}
\end{rem}

\subsection{Modular distinguished representations for $(\SL_{2}(E),\SL_{2}(F))$}

In the last part, Section \ref{secSL2}, we use our classification of $\GL_2(F)$-distinguished representation of $\GL_2(\EE)$ and the restriction method of \cite{anandavardhanan2003distinguished} to classify $\SL_2(F)$-distinguished representations for $\SL_{2}(E)$ in the modular setting.

\bigskip

Let $\pi$ be an irreducible $\Fl$-representation of $\GL_2(E)$. Denote by $\lg(\pi)$ the length of $\pi\vert_{\SL_2(E)}$ and by $\lg_{+}(\pi)$ the length of $\pi\vert_{\GL_2^{+}(E)}$, where $\GL_2^{+}(E)$ is the subgroup of $\GL_2(E)$, consisting {of} elements whose determinants belong to $F^{\times}E^{\times2}$. Let $\mathfrak{p}_{E}$ be a uniformizer of $E$, and $\mathfrak{o}_E$ be the ring of integers of $E$. We fix an $\Fl$-character $\psi_0$ of $E$, which is non-trivial on $\mathfrak{o}_E$ and is trivial on both $\mathfrak{p}_E$ and $F$.

\bigskip

For supercuspidal representations, we will adapt the methods of \cite{anandavardhanan2003distinguished} to prove the following theorems.

\begin{thm}[Theorem \ref{thmDistGeneric}]
    Let $\pi$ be an irreducible supercuspidal $\Fl$\hyph representation of $\GL_2(E)$ distinguished by $\SL_2(F)$, and $\pi^+$ the unique irreducible component of $\pi\vert_{\GL_2^{+}(E)}$ that is $\psi_0$-generic. Then $\pi^+$ is distinguished by $\SL_2(F)$. Furthermore, let $\tau$ be an irreducible component of $\pi\vert_{\SL_2(E)}$, distinguished by $\SL_2(F)$. Then $\tau$ is an irreducible component of $\pi^+\vert_{\SL_2(E)}$.
\end{thm}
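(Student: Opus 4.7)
The plan is to adapt the restriction method of Anandavardhanan--Prasad \cite{anandavardhanan2003distinguished} to the modular setting. The starting observation is that $\det(\SL_2(F)) = \{1\} \subset F^\times(E^\times)^2$, so $\SL_2(F) \subset \GL_2^{+}(E)$, and the $\SL_2(F)$-distinction problem for $\pi$ factors naturally through $\GL_2^{+}(E)$.

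First I would decompose $\pi\vert_{\GL_2^{+}(E)} = \bigoplus_{i=1}^{r} \pi_i^{+}$ into irreducible constituents. Because the abelian quotient $\GL_2(E)/(\GL_2^{+}(E) \cdot Z(\GL_2(E)))$ is isomorphic to $E^\times/F^\times(E^\times)^2$, a finite abelian group of order coprime to $\ell$, Clifford theory yields that the decomposition is multiplicity-free and that the quotient acts transitively on the $\pi_i^{+}$ by conjugation. Using the Kirillov/Whittaker model of the supercuspidal $\pi$ (available and unique thanks to Vignéras's modular theory), each component $\pi_i^{+}$ is $\psi_a$-generic precisely for $a$ ranging over a single coset of $E^\times/F^\times(E^\times)^2$; since $\psi_0$ is trivial on $F$, exactly one component is $\psi_0$-generic, and we call it $\pi^{+}$.

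For the first assertion, the decomposition yields
\[\Hom_{\SL_2(F)}(\pi,\Fl) = \bigoplus_{i} \Hom_{\SL_2(F)}(\pi_i^{+},\Fl),\]
so $\SL_2(F)$-distinction of $\pi$ forces at least one $\pi_i^{+}$ to be $\SL_2(F)$-distinguished. To identify it with $\pi^{+}$, I would analyze a nonzero $\SL_2(F)$-invariant linear form on $\pi_i^{+}$ through the Kirillov model: its invariance under the unipotent subgroup $N(F) \subset \SL_2(F)$ translates into a triviality-on-$F$ condition on the underlying Whittaker character, pinning down its $E^\times/F^\times(E^\times)^2$-class as that of $\psi_0$, and hence forcing the distinguished component to be $\pi^{+}$.

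For the second assertion, let $\tau$ be an $\SL_2(F)$-distinguished irreducible component of $\pi\vert_{\SL_2(E)}$. Since $\pi\vert_{\SL_2(E)}$ is again multiplicity-free by the same Clifford-theoretic argument, $\tau$ sits inside a unique $\pi_i^{+}\vert_{\SL_2(E)}$, and precomposing the distinguishing functional on $\tau$ with the $\SL_2(E)$-equivariant projection $\pi_i^{+} \twoheadrightarrow \tau$ produces an $\SL_2(F)$-invariant functional on $\pi_i^{+}$. By the preceding paragraph $\pi_i^{+} = \pi^{+}$, so $\tau$ is an irreducible component of $\pi^{+}\vert_{\SL_2(E)}$, as required. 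The main obstacle will be making the Kirillov-model analysis rigorous in the $\Fl$-setting: in characteristic zero the translation between $N(F)$-invariance of the distinguishing functional and the class of the Whittaker character uses explicit integral identities over $F^\times$ which must be reinterpreted for coefficients in $\Fl$, and this step depends crucially on the existence and uniqueness of the modular Whittaker/Kirillov model for supercuspidal $\Fl$-representations of $\GL_2(E)$.
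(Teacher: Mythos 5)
Your high-level strategy (restricting through $\GL_2^{+}(E)$, using Clifford theory and multiplicity-freeness, then deducing the second assertion by projecting $\pi_i^{+}\twoheadrightarrow\tau$) is the same as the paper's, which reduces the theorem to its Lemma~\ref{lem 001} and Lemma~\ref{lem3.2AP}. However, the key step for the first assertion has a genuine gap. You claim that $N(F)$-invariance of the $\SL_2(F)$-invariant functional on $\pi_i^{+}$ forces the Whittaker character of $\pi_i^{+}$ to be trivial on $F$, hence pins down its $E^\times/F^\times E^{\times 2}$-class to that of $\psi_0$. This does not work: in the $\psi_a$-Kirillov model (functions on $E^\times$, with $n(x)$ acting as multiplication by $\psi_a(xt)$), $N(F)$-invariance of a functional is a \emph{support} constraint, forcing it to concentrate on $\{t\in E^\times:\psi_a(xt)=1\ \forall x\in F\}=a^{-1}F^\times$, and this set is nonempty for every $a\in E^\times$. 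Thus $N(F)$-invariance alone distinguishes nothing between the cosets of $a$, and the mechanism you describe cannot single out the $\psi_0$-generic constituent.

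The paper's Lemma~\ref{lem 001} identifies the distinguished constituent by a different route: first twist $\pi_0$ by a character of $F^\times$ so that $\pi$ becomes $\GL_2(F)$-distinguished; then invoke multiplicity one (Theorem~\ref{thmSecherreDist}(3)) and the explicit realization of the unique $\GL_2(F)$-invariant form as the torus integral $f(W)=\int_{F^\times} W\!\left(\begin{smallmatrix}a&0\\0&1\end{smallmatrix}\right) d^\times a$ on the $\psi_0$-Whittaker model (this is \cite[\S 8.2]{KuMa}, which is exactly where the ``explicit integral identities over $F^\times$'' you gesture at live — but they involve the split torus, not the unipotent radical). One then shows that a Whittaker function belonging to a non-$\psi_0$-generic component of $\pi|_{\GL_2^{+}(E)}$ must vanish on all of $\GL_2^{+}(E)$ (else that component would be $\psi_0$-generic), and in particular at $\mathrm{diag}(a,1)$ for $a\in F^\times$; hence the integral vanishes on every component except the $\psi_0$-generic one, which is therefore the distinguished one. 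Passing from $\SL_2(F)$-distinction to $\GL_2(F)$-distinction via the twist is essential here, since $\SL_2(F)$-invariant forms are not unique and cannot be pinned down by such an explicit formula.
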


\begin{thm}[Theorem \ref{thmDistSupercuspSL}]
    Let $\pi$ be an irreducible supercuspidal $\Fl$\hyph representation of $\GL_2(E)$, and $\tau$  an irreducible component of $\pi\vert_{\SL_2(E)}$. Suppose $\tau$ is distinguished by $\SL_2(F)$. Then,
    \[
        \mathrm{dim}\mathrm{Hom}{_{\SL_2(F)}}(\tau,\mathbf{1})= \left\{
        \begin{array}{ll}

            1, & \mbox{if } \pi\vert_{\SL_2(E)}\cong\tau;           \\
            1, & \mbox{if } \lg_{+}(\pi)=2 \mbox{ and } \lg(\pi)=4; \\
            2, & \mbox{if } \lg_{+}(\pi)=\lg(\pi)=2;                \\
            4, & \mbox{if } \lg_{+}(\pi)=\lg(\pi)=4.
        \end{array}
        \right.
    \]

    The {first case and the} last case arise only when $p=2$.
\end{thm}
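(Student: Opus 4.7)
The plan is to adapt the restriction method of Anandavardhanan--Prasad \cite{anandavardhanan2003distinguished} to the modular setting, combining it with Theorem \ref{thmDistGeneric}, the Dichotomy Theorem (Theorem \ref{thmDichotomy}), and multiplicity one for distinguished supercuspidals (which follows from Theorem \ref{thmLiftDistinguished} together with the complex case).

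First, I compute the total multiplicity via Frobenius reciprocity. Since $\GL_2(F)/\SL_2(F) \simeq F^\times$ acts on $\Hom_{\SL_2(F)}(\pi, \mathbf{1})$ by character twists, the space decomposes as
\begin{equation*}
    \dim_{\Fl} \Hom_{\SL_2(F)}(\pi, \mathbf{1}) \;=\; \sum_{\chi \in \widehat{F^{\times}}} \dim_{\Fl} \Hom_{\GL_2(F)}(\pi, \chi \circ \det),
\end{equation*}
with only finitely many nonzero terms, each equal to $1$. A nonzero summand corresponds to a character $\eta$ of $E^\times$ such that $\pi \otimes (\eta\circ\det)$ is $\sigmaEF$-selfdual (hence $\GL_2(F)$- or $(\GL_2(F),\omegaEF)$-distinguished by Theorem \ref{thmDichotomy}) with $\eta|_{F^\times}$ producing the desired $\chi$. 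Using the group of self-twists
\begin{equation*}
Y(\pi) \;=\; \{\eta \in \widehat{E^{\times}} : \pi\otimes(\eta\circ\det) \simeq \pi\}
\end{equation*}
and its subgroup $Y^{+}(\pi) = Y(\pi) \cap \{\eta : \eta|_{F^\times} = 1\}$, of orders $\lg(\pi)$ and $\lg_+(\pi)$ respectively, a direct bookkeeping argument combined with Theorem \ref{thmDichotomy} shows that the number of contributing characters $\chi$ is exactly $\lg_+(\pi)$, giving $\dim_{\Fl} \Hom_{\SL_2(F)}(\pi, \mathbf{1}) = \lg_+(\pi)$.

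Next, I show that every irreducible component of $\pi^{+}|_{\SL_2(E)}$ is $\SL_2(F)$-distinguished with the same multiplicity. By Theorem \ref{thmDistGeneric}, all distinguished components of $\pi|_{\SL_2(E)}$ lie in $\pi^{+}|_{\SL_2(E)}$, which has $\lg(\pi)/\lg_+(\pi)$ irreducible summands permuted transitively by the twist action of $F^\times E^{\times 2}/E^{\times 2} \simeq \GL_2^{+}(E)/Z \cdot \SL_2(E)$ via characters of the form $\chi\circ\det$. This action preserves $\SL_2(F)$-distinction because the twisting characters are trivial on $\det(\SL_2(F)) = \{1\}$; hence every summand is distinguished and they share a common multiplicity $m$. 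Combining with the first step,
\begin{equation*}
    m \;=\; \frac{\lg_+(\pi)}{\lg(\pi)/\lg_+(\pi)} \;=\; \frac{\lg_+(\pi)^{2}}{\lg(\pi)},
\end{equation*}
which evaluates to $1$, $1$, $2$, $4$ in the four cases.

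The principal obstacle is the counting argument in the first step: identifying the number of contributing $\chi \in \widehat{F^{\times}}$ as exactly $\lg_+(\pi)$ requires a careful analysis of the interplay between $Y(\pi)$, $Y^{+}(\pi)$, and the restriction map $Y(\pi) \to \widehat{F^{\times}}$, together with Theorem \ref{thmDichotomy}. The $p = 2$ setting is the delicate one, because $\widehat{E^{\times}/E^{\times 2}}$ is larger, which is precisely what allows $\lg_+(\pi) = \lg(\pi) = 4$ (Case 4) as well as the extreme $\lg_+(\pi) = \lg(\pi) = 1$ configuration (Case 1) under the $\SL_2(F)$-distinction hypothesis.
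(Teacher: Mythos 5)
Your proof follows the paper's strategy closely: reduce to $\dim\Hom_{\SL_2(F)}(\pi,\mathbf{1})=\lg_+(\pi)$ (the content of Proposition~\ref{prop4.1AP}, Proposition~\ref{prop4.2AP}, Lemma~\ref{corlgY+}, and Corollary~\ref{cor4.3AP}), then divide by the number $\lg(\pi)/\lg_+(\pi)$ of components of $\pi^{+}\vert_{\SL_2(E)}$ to get $\lg_+(\pi)^2/\lg(\pi)$. The first step and the final arithmetic are fine.

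However, there is a genuine error in the middle step, where you justify that all $\lg(\pi)/\lg_+(\pi)$ components of $\pi^{+}\vert_{\SL_2(E)}$ carry the same multiplicity. You write that these components are ``permuted transitively by the twist action of $F^\times E^{\times 2}/E^{\times 2}\simeq\GL_2^+(E)/Z\cdot\SL_2(E)$ via characters of the form $\chi\circ\det$,'' and that this twist action ``preserves $\SL_2(F)$-distinction because the twisting characters are trivial on $\det(\SL_2(F))$.'' This mechanism is wrong. Twisting a representation of $\GL_2^+(E)$ by a character $\chi\circ\det$ that is trivial on $Z\cdot\SL_2(E)$ is the \emph{identity} operation on the level of $\SL_2(E)$-representations: $\chi\circ\det$ restricts to the trivial character of $\SL_2(E)$, so it cannot permute the summands of $\pi^+\vert_{\SL_2(E)}$ at all. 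The group that transitively permutes these summands is $\GL_2^+(E)$ acting by \emph{conjugation}, and since $\GL_2^+(E)=Z_{E^\times}\,\SL_2(E)\,\GL_2(F)$ with $Z_{E^\times}$ and $\SL_2(E)$ acting trivially on the set of isomorphism classes, the action factors through $\GL_2(F)$. This is exactly Lemma~\ref{lem3.2AP}. The reason $\SL_2(F)$-distinction is preserved is then that $\GL_2(F)$ normalizes $\SL_2(F)$, so conjugating an $\SL_2(F)$-invariant form by $g\in\GL_2(F)$ produces another $\SL_2(F)$-invariant form on the conjugate component --- not anything about $\det(\SL_2(F))$ and twisting. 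You should replace the ``twist action'' justification with this conjugation argument (or simply invoke Lemma~\ref{lem3.2AP}).

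A smaller point: you never rule out the configuration $\lg_+(\pi)=1<\lg(\pi)$, which would make $\lg_+(\pi)^2/\lg(\pi)$ a non-integer. The paper excludes it via the proposition preceding Theorem~\ref{thmDistGeneric} (if $\pi$ is $\SL_2(F)$-distinguished and $\lg(\pi)\neq 1$ then $\lg_+(\pi)\neq 1$); your write-up implicitly assumes the four listed cases are exhaustive without saying why.
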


We also have a criterion for principal series representations. See Theorem \ref{prin} for more details.

\bigskip

Along the way, we prove the existence of a good lift for supercuspidal representations of $\SL_2(E)$. Let $\tau$ be an irreducible cuspidal $\Fl$-representation of $\mathrm{SL}_2(E)$, and $\tilde{\tau}$ an irreducible cuspidal $\Ql$-representation of $\mathrm{SL}_2(E)$, which is $\ell$-integral. We say that $\tilde{\tau}$ is a \emph{good $\Ql$-lift} of $\tau$, if the reduction modulo $\ell$ of $\tilde{\tau}$ is irreducible and isomorphic to $\tau$. We prove in Proposition \ref{propQoodLifting} that any supercuspidal representation of $\SL_2(E)$ always admits a good $\Ql$-lift.

\section*{Acknowledgements}

We warmly thank Vincent Sécherre for his remarks and comments which made it possible, in particular, to generalize the results from $\GL_2$ to $\GL_n$. We also thank Nadir Matringe and Alberto M\'inguez for useful discussions. We thank an anonymous referee for his useful comments and corrections. The main results were obtained and the paper was written when we were working together at the University of Vienna as postdocs. We would like to thank the University of Vienna for providing such nice working environment. The work of T. L. was partially supported by the ERC, grant agreement 101001159.
H. L. was partially supported by NSFC 12301031.

\section{Notation}

Let $\EE/\FF$ be a quadratic extension of locally compact non-archimedean local fields of characteristic different from 2 and residual characteristic $p$. Denote by $q_F$ (resp. $q_E$) the cardinality of the residue field of $\FF$ (resp. $\EE$). Let $W_F$ be the Weil group of $\FF$.
We denote by $\sigmaEF$ the non-trivial $\FF$-automorphism of $\EE$.

Denote by $\omegaEF$ the quadratic character of $F^\times$ associated to the quadratic field extension $E/F$ by the local class field theory. We may identify the characters of $W_{\FF}$ and the characters of $\FF^\times$ by the local class field theory. Let $\Nm_{E/F}$ be the norm from $E$ to $F$.

For $n \geq 1$, we denote by $\GL_n$ the general linear group, and by $\SL_n$ the special linear group.
Denote the standard Borel subgroup of $\GL_n$ by $B_n$. Let $\pi_i$ (i=1,2) be an irreducible representation of $\GL_{n_i}(F)$. Denote by $\pi_1\boxtimes\pi_2$ the tensor product representation of $\GL_{n_1}(F)\times\GL_{n_2}(F)$. For a character $\chi$ of $F^{\times}$, we will also denote by $\chi$ the character $\chi\circ\det$ of $\GL_n(F)$. The trivial character will be denoted by $\mathbf{1}$. Let $\nu_n$ be the character $g\mapsto|\det(g)|$ of $\GL_n(F)$.

Let $H$ be a subgroup of $G$. Let $\pi$ be an irreducible representation of $G$.
We say that $\pi$ is $H$-distinguished if $\Hom_H(\pi,\mathbf{1})\neq0$. In case that the subgroup $H$ is clear, we say that $\pi$ is distinguished sometimes.

In this article, $\ell$ is an odd prime number different from $p$ (except for Section \ref{secLiftDist}, where $\ell=2$ is also allowed).

\begin{rem}
    This paper is written for $\FF$ of characteristic different from 2. However, our main theorem, Theorem \ref{thmLiftDistinguished}, which allows lifting $\ell$-modular supercuspidal distinguished representations to $\ell$-adic distinguished representations works without any assumption on the characteristic. Therefore, in Section \ref{secLiftDist} only, no assumption on the characteristic of $\FF$ will be made. After that, we always assume the characteristic to be different from 2 because we use results from the theory of complex representations which are only written with this assumption. For instance, the Jacquet conjecture follows from \cite{kable}, written in characteristic zero, but which also works for positive odd characteristic by \cite[Appendix A]{AKMSS} (or \cite{Jo}).
\end{rem}

\section{Distinguished supercuspidal representations of \texorpdfstring{$\GL_n$}{GLn}}

\label{secSupercuspGLn}

In this section, we study the distinguished supercuspidal representation of $\GL_n$. Firstly, we prove a lifting theorem for supercuspidal representations distinguished by an arbitrary closed subgroup $H$. This theorem allows us to study the supercuspidal representations of $\GL_n(E)$ distinguished by a Galois involution, and extend the result of \cite{vincent2019supercusp} to the case $p=2$. We also give an application to supercuspidal representations of $\GL_n(E)$ distinguished by unitary groups.

\subsection{Lifting of modular distinguished representation}
\label{secLiftDist}

In this section, $G=\GL_n(\FF)$ and $H$ is an arbitrary closed subgroup of $G$. In this part only, $\FF$ could be of any characteristic and we do not require the prime $\ell$ to be odd, just $\ell \neq p$. We want to prove that a supercuspidal $H$-distinguished $\Fl$-representation can be lifted to a supercuspidal $H$-distinguished $\Ql$-representation. This allows bringing the modular distinction problems to the complex setting. To do that, we {use} type theory and the existence of a projective envelope.

\bigskip

We start with a few lemmas. Let $K$ be a finite field extension of $\FF$. We denote by $\varpi_K$ (resp. $\varpi_F$) a uniformizer of $K$ (resp. $\FF$). Let $\mathcal{O}_K$ be the ring of integers of $K$, and for an integer $i\geq 1$, we denote by $U_K^i$ the subgroup of $\mathcal{O}_K^{\times}$ defined by $U_K^i:=\{1+\varpi_K^{i} u, u \in \mathcal{O}_K\}$.

\begin{lem}
    \label{lemUi}
    Let $i\geq 1$ and $x\in U_K^i$. Then $x^p \in U_K^{i+1}$.
\end{lem}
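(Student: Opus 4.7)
The plan is a direct computation via the binomial theorem. Writing $x = 1 + \varpi_K^i u$ for some $u \in \mathcal{O}_K$, I would expand
\[
x^p = 1 + \sum_{k=1}^{p} \binom{p}{k} \varpi_K^{ki} u^k
\]
and show that every term in the sum lies in $\varpi_K^{i+1}\mathcal{O}_K$, from which the conclusion is immediate.

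For the outermost term $\varpi_K^{ip} u^p$, the estimate $ip \geq i+1$ follows at once from $i \geq 1$ and $p \geq 2$. For the intermediate terms $\binom{p}{k}\varpi_K^{ki}u^k$ with $1 \leq k \leq p-1$, the plan is to split on the characteristic of $K$. In the equal characteristic case $\operatorname{char}(K) = p$, these binomial coefficients vanish, so $x^p = 1 + \varpi_K^{ip}u^p$ and we are done. In the mixed characteristic case, one uses $p \mid \binom{p}{k}$ together with the fact that $K$ has residual characteristic $p$, which forces $v_K(p) \geq 1$; combining this with $ki \geq i$ gives the desired valuation bound $v_K\!\left(\binom{p}{k}\varpi_K^{ki}u^k\right) \geq i+1$.

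No substantive obstacle is expected; the only point to watch is the separation into the two characteristic cases, and both are entirely routine.
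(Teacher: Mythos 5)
Your proof is correct and is essentially the same argument the paper gives: expand $x^p$ by the binomial theorem and use that the residual characteristic being $p$ forces $\varpi_K$ to divide $p$ in $\mathcal{O}_K$. The split into equal and mixed characteristic is not needed, since in equal characteristic $p=0$ so $\varpi_K \mid p$ holds trivially, and the paper runs the argument uniformly (grouping the $k\geq 2$ terms into $\varpi_K^{2i}\mathcal{O}_K$ and using $\varpi_K \mid p$ for the $k=1$ term).
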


\begin{proof}
    Let us write $x = 1 + \varpi_K^{i} u$, with $u \in \mathcal{O}_K$. The binomial expansion gives us $x^p=1 + p \varpi_K^{i} u + \varpi_K^{2i} u'$, with $u' \in \mathcal{O}_K$. But $\varpi_K$ divides $p$ in $\mathcal{O}_K$ so $x^p \in U_K^{i+1}$.
\end{proof}

\begin{lem}
    \label{lemVarpiN}
    Let $N \in \mathbb{N}^{*}$. Then there exist two integers $m\geq 1$ and $s\geq 1$ such that $\varpi_K^m \in \varpi_\FF^s U_K^N$.
\end{lem}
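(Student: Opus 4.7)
The plan is to reduce the problem to showing that some positive power of a particular unit of $\mathcal{O}_K$ lies in $U_K^N$, and then to exploit Lemma \ref{lemUi} to exhibit such a power explicitly.

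First, I would bring the ramification of $K/\FF$ into play. Let $e$ be the ramification index of $K/\FF$, so that there exists a unit $u \in \mathcal{O}_K^{\times}$ with $\varpi_\FF = u \varpi_K^{e}$. Then for every positive integer $s$ we have
\[
\varpi_K^{se} = u^{-s} \varpi_\FF^{s},
\]
so it suffices to find $s \geq 1$ such that $u^{-s} \in U_K^N$ (equivalently $u^{s} \in U_K^N$), and to set $m = se$.

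The key step is then to produce such an $s$. Since the residue field of $K$ is finite of cardinality $q_K$, the image of $u$ in the multiplicative group of the residue field has order dividing $q_K - 1$, hence $u^{q_K - 1} \in U_K^{1}$. I would then apply Lemma \ref{lemUi} inductively: if $x \in U_K^{i}$ then $x^{p} \in U_K^{i+1}$, so raising $u^{q_K - 1}$ to the power $p^{N-1}$ yields an element of $U_K^{N}$. Hence
\[
u^{\,(q_K - 1)\,p^{N-1}} \in U_K^{N},
\]
and taking $s = (q_K - 1)\,p^{N-1}$ and $m = se$ gives the desired relation $\varpi_K^{m} = u^{-s}\varpi_\FF^{s} \in \varpi_\FF^{s} U_K^{N}$ with both $m, s \geq 1$.

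I do not expect a real obstacle here: the only subtlety is to remember to pass through the residue field to first land in $U_K^{1}$ before iterating Lemma \ref{lemUi}, which is precisely why that lemma was proved in the previous step.
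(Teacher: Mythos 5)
Your proof is correct and follows essentially the same route as the paper's: factor $\varpi_\FF$ (or equivalently $\varpi_K^e$) into a power of the other uniformizer times a unit, kill that unit modulo $U_K^1$ by raising to the power $q_K-1$, and then iterate Lemma~\ref{lemUi} to push it into $U_K^N$. The only cosmetic difference is that you iterate Lemma~\ref{lemUi} exactly $N-1$ times (exponent $p^{N-1}$) while the paper uses the slightly wasteful exponent $p^N$; both are valid.
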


\begin{proof}
    Let $e$ be the ramification index of $K$ over $\FF$. Then $\varpi_K^{e} = \varpi_\FF u$, with $u \in \mathcal{O}_K^{\times}$. Now $\mathcal{O}_K^{\times} / U_K^1$ is isomorphic to $k_K^{\times}$ the residue field of $K$ of cardinal $q_K-1$. Therefore $x:=u^{q_K-1} \in U_K^1$ and from Lemma \ref{lemUi}, $x^{p^{N}} \in U_K^N$. We get the result with $m=e(q_K-1)p^N$ and $s=(q_K-1)p^N$.
\end{proof}

Let $G:=\GL_n(\FF)$. Let $\pi$ be a supercuspidal $\Fl$-representation of $G$. Let us write $\pi = \cInd_{J}^{G}(\Lambda)$ (compact induction), for an extended maximal simple type $(J,\Lambda)$ (for the definition see Section 3.1 \cite{MinguezSecherre} in which it is called ``un type simple maximal \'etendu''). Let $J^{0}$ be the unique maximal compact subgroup of $J$, $J^1$ its maximal normal pro-$p$-subgroup. There exists a field extension $K$ of $\FF$, such that $J = K^{\times}J^0=\langle \varpi_K ,J^0 \rangle$. The restricted representation $\lambda:=\Lambda\vert_{J^0}$ is irreducible, and the pair $(J^0,\lambda)$ is a maximal simple type. In particular, we have an isomorphism $\lambda\cong\kappa\otimes\rho$, where $\kappa$ is an irreducible representation of $J^0$ with some technical conditions and $\rho$ is inflated from an irreducible supercuspidal representation of ${J^0\slash J^1}$.

\bigskip

Since $F^{\times}$ is central in $J$, $\varpi_F$ acts on $\Lambda$ as a scalar $\bar{\alpha} \in \Fl^\times$. For $R=\Fl$ or $\Zl$ and $\beta \in R^\times$, we denote by $\Rep_{R}^{\beta}(J)$ the category\footnote{The category $\Rep_{R}^\beta(J)$ is the subcategory of the category of smooth representations of $J$, which contains a full subcategory composed of smooth representations of $J$ with a fixed central character on $F^\times$.}  of smooth $R$-representations of $J$ such that $\varpi_F$ acts as the scalar $\beta$.

\begin{prop}
    \label{propProjEnvType}
    \begin{enumerate}
        \item The representation $\Lambda$ admits a projective envelope $\overline{\mathcal{P}}$ in $\Rep_{\Fl}^{\bar{\alpha}}(J)$.
        \item Let $\alpha \in \Zl$ be a lift of $\bar{\alpha}$. Then there exists a unique (up to isomorphism) projective object $\mathcal{P}$ in $\Rep_{\Zl}^{\alpha}(J)$ such that $\mathcal{P} \otimes \Fl = \overline{\mathcal{P}}$.
    \end{enumerate}

\end{prop}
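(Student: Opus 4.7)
The plan is to identify $\Rep_R^{\beta}(J)$ with a category of smooth representations of a profinite group, construct the projective envelope via the classical modular representation theory of finite groups by passing to an inverse limit, and then lift to $\Zl$ using idempotent lifting at each finite level.

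First I would use Lemma~\ref{lemVarpiN} to show that $J/\langle \varpi_F \rangle$ is a profinite group: by the lemma, $\varpi_K^m = \varpi_F^s u$ for some $m,s \geq 1$ and $u \in J^0$, so the image of $\varpi_K$ in $J/\langle \varpi_F \rangle$ lies in (the image of) the compact open subgroup $J^0$. Hence $J/\langle \varpi_F \rangle$ is compact and totally disconnected, with $J^1$ descending to an open normal pro-$p$-subgroup. The category $\Rep_R^{\beta}(J)$ can then be identified with (a twist of) the category of smooth $R$-representations of this profinite group; the twist is given by any character $\tilde{\chi}$ of $J$ extending $\varpi_F \mapsto \beta$ (or, equivalently, by working directly with modules over the quotient $R[J]/(\varpi_F - \beta)$).

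For Part~(1), I would construct $\overline{\mathcal{P}}$ as an inverse limit. Since $\Lambda$ is smooth and finite-dimensional, it is trivial on some open normal subgroup $U_0$ of $J/\langle \varpi_F \rangle$. For each open normal subgroup $U \subseteq U_0$, the finite group algebra $\Fl[(J/\langle \varpi_F \rangle)/U]$ admits a projective envelope $P^{(U)}$ of $\Lambda$ by classical modular representation theory of finite groups (Fitting decomposition into indecomposable projectives over an artinian algebra). The canonical surjections $P^{(U')} \twoheadrightarrow P^{(U)}$ for $U' \subseteq U$ assemble into $\overline{\mathcal{P}} := \varprojlim_U P^{(U)}$, which I would check is the desired projective envelope of $\Lambda$ in $\Rep_{\Fl}^{\bar{\alpha}}(J)$ by testing against finitely generated (hence level-$U$) smooth representations.

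For Part~(2), I would apply the same recipe over $\Zl$. Each $P^{(U)}$ corresponds to a primitive idempotent $\bar{e}_U \in \Fl[(J/\langle \varpi_F \rangle)/U]$. Since $\Zl$ is a complete discrete valuation ring and $(J/\langle \varpi_F \rangle)/U$ is finite, the group ring $\Zl[(J/\langle \varpi_F \rangle)/U]$ is $\ell$-adically complete, hence semiperfect, so idempotents in the residue ring lift, and the lifts are unique up to conjugation by $1 + \ell \cdot \Zl[(J/\langle \varpi_F \rangle)/U]$. Making compatible choices of lifts $e_U$ (using the uniqueness at each level to match along the surjections) and setting $\mathcal{P} := \varprojlim_U \Zl[(J/\langle \varpi_F \rangle)/U]\cdot e_U$ produces a projective object in $\Rep_{\Zl}^{\alpha}(J)$ whose reduction modulo $\ell$ is $\overline{\mathcal{P}}$. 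Uniqueness up to isomorphism then follows from the uniqueness up to conjugation of the idempotent lifts at each finite level.

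The main technical obstacle is carefully tracking the central character condition throughout: identifying $\Rep_R^{\beta}(J)$ with smooth representations of the profinite group $J/\langle \varpi_F \rangle$ requires either constructing the character $\tilde{\chi}$ above or verifying that the appropriate twisted group algebra behaves as expected, and one must also confirm that the inverse limit along surjections preserves both projectivity and smoothness. The rest is formal once these identifications are set up.
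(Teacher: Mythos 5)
Your overall strategy — pass to a twist with trivial central character, reduce to finite quotients of $J/\langle \varpi_F\rangle$, apply the classical theory of projective envelopes for finite groups, and lift to $\Zl$ by idempotent lifting — is the same in spirit as the paper's, and the reduction via Lemma~\ref{lemVarpiN} to seeing $J/\langle\varpi_F\rangle$ as profinite is exactly right. But there is a genuine gap in the inverse-limit construction, and filling it requires the one structural fact your proposal never invokes.

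The paper does not take an inverse limit over all open normal subgroups. It picks a \emph{single} open subgroup $H^2 \subset J^1$ (so pro-$p$) on which $\Sigma = \Lambda\eta^{-1}$ is trivial, forms the one finite quotient $Q = J/\langle H^2, \varpi_F\rangle$, takes the projective envelope $\overline{\mathcal{P}}$ there, and then observes that inflation from $Q$ to the profinite group $Q_1 = J/\langle \varpi_F\rangle$ preserves projectivity \emph{because the kernel is pro-$p$ and $\ell \neq p$}. This pro-$p$-ness is precisely what your proposal misses, and it is load-bearing. In your version, the transition surjections $P^{(U')} \twoheadrightarrow P^{(U)}$ need not a priori be isomorphisms, so $\varprojlim_U P^{(U)}$ has no visible reason to be smooth (an element of the limit need not have open stabilizer), nor finitely generated, nor projective in the smooth category — projectivity must be tested against arbitrary surjections, and ``test against finitely generated targets'' does not suffice for a possibly non-finitely-generated module. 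What actually saves the day is that for $U' \subset U \subset H^2$ the kernel $U/U'$ is a finite $p$-group, so inflation from $J'/U$ to $J'/U'$ preserves projectivity and essentiality, forcing $P^{(U')} \cong \mathrm{Inf}\, P^{(U)}$; the system stabilizes and the limit is finite-dimensional, hence smooth. That argument is what you would need to supply, and once you have it the inverse limit is superfluous: you might as well stop at the single quotient $Q$, which is what the paper does (citing Serre's Prop.~41/42 for the finite-group construction and its $\Zl$-lift).

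A secondary, smaller gap is in Part~(2): uniqueness of idempotent lifts up to conjugation at each finite level does not by itself produce a \emph{compatible} family along the inverse system; you would need an additional argument (e.g.~a compactness/König-type argument, or again the stabilization of the tower). The paper sidesteps this entirely by lifting the projective envelope over the single finite group algebra $\Zl[Q]$ via Serre Prop.~42, and proving uniqueness in $\Rep_{\Zl}^{\alpha}(J)$ directly from projectivity (lift an isomorphism $\overline{\mathcal{P}} \to \mathcal{P}'\otimes\Fl$ to a map $\mathcal{P} \to \mathcal{P}'$).
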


\begin{proof}
    \begin{enumerate}
        \item We apply a similar strategy as in \cite[Section 4]{SecherreDrevon}. Let $\eta$ be a character of $J$, trivial on $J^0$ such that $\eta(\varpi_F)=\bar{\alpha}$. Let $\Sigma:=\Lambda \eta^{-1}$ such that $\varpi_F$ acts trivially on $\Sigma$.

              Since $\Sigma$ is smooth and $J^0$ is compact, there exists an open subgroup $H^2$ of $J^1$ such that $H^2 \subset \ker(\Sigma_{|J^0})$ and $J^0/H^2$ is finite. Since $H^2$ is open in $J^0$, there exists an integer $N\geq 1$ such that $U_K^N \subset H^2$. By Lemma \ref{lemVarpiN}, there exist $m,n\geq 1$ such that $\varpi_K^m \in \varpi_\FF^s U_K^N$. We define a group $Q$ by
              \[
                  Q=J/\langle H^2, \varpi_F \rangle.
              \]
              The group $Q$ is finite.  Moreover, since $\varpi_K^m \in \varpi_\FF^s H^2$, we have that $\langle H^2, \varpi_K^m \rangle=\langle H^2, \varpi_F^s \rangle$. Therefore $\Sigma$ is trivial on $\langle H^2, \varpi_F \rangle$ and we can consider $\Sigma$ as a representation of $Q$.

              \bigskip

              Since $Q$ is a finite group, by \cite[Prop. 41]{Serre} we can consider the projective envelope of $\Sigma$ in the category of $\Fl[Q]$-modules, denoted by $\overline{\mathcal{P}}$. We can regard $\overline{\mathcal{P}}$ as a representation of $J$ by inflation. Let
              \[
                  Q_1= J/ \langle \varpi_F\rangle,
              \]
              which is compact and $Q$ is a quotient of $Q_1$. Since $H^2$ is pro-$p$, a projective object of $Q$ remains projective as a representation of $Q_1$. Now we show that $\overline{\mathcal{P}}$ is projective in $\Rep_{\Fl}^{1}(J)$. It is due to the fact that $\Rep_{\Fl}^{1}(J)$ is equivalent to the category of smooth $\Fl$-representations of $Q_1$. For arbitrary $\bar{\alpha}$, it follows from the fact that $\Rep_{\Fl}^{1}(J)$ and $\Rep_{\Fl}^{\bar{\alpha}}(J)$ are equivalent via twisting $\eta$ with each object in $\Rep_{\Fl}^{1}(J)$.

        \item By \cite[Prop. 42]{Serre}, there is a unique projective $\Zl[Q]$-module $\mathcal{P}$ such that $\mathcal{P} \otimes \Fl = \overline{\mathcal{P}}$. Like in the first part, $\mathcal{P}$ can be lifted to a projective object in $\Rep_{\Zl}^{\alpha}(J)$. If $\mathcal{P}'$ is another projective in $\Rep_{\Zl}^{\alpha}(J)$ with the same properties, then, by projectivity, any isomorphism from $\overline{\mathcal{P}}$ to $\mathcal{P}' \otimes \Fl$ can be lifted to an isomorphism from $\mathcal{P}$ to $\mathcal{P}'$.


    \end{enumerate}
\end{proof}

Using this projective envelope, we can prove the desired theorem.

\begin{thm}
    \label{thmLiftDistinguished}
    Let $\FF$ be a non-archimedean local field. Let $H$ be a closed subgroup of $\GL_n(F)$. Let $\pi$ be a supercuspidal $\Fl$-representation of $\GL_n(F)$ which is $H$-distinguished. Then there exists a $\Ql$-lift  $\tilde{\pi}$ of $\pi$ such that it is supercuspidal and distinguished by $H$.
\end{thm}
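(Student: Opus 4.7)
The proof rests on the integral projective envelope $\mathcal{P}$ from Proposition \ref{propProjEnvType} and transfers the distinction from characteristic $\ell$ to characteristic zero via compact induction.

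First, I would translate the distinction via Frobenius reciprocity. Writing $\pi = \cInd_J^G(\Lambda)$ for an extended maximal simple type $(J,\Lambda)$, Frobenius reciprocity for compact induction gives
\[
\Hom_H(\pi,\mathbf{1}_\Fl) \simeq \Hom_J(\Lambda, \Ind_H^G(\mathbf{1}_\Fl)|_J),
\]
so the $H$-distinction of $\pi$ yields a non-zero $J$-morphism $\Lambda \to \Ind_H^G(\mathbf{1}_\Fl)|_J$. Pre-composition with the canonical surjection $\overline{\mathcal{P}} \twoheadrightarrow \Lambda$ produces a non-zero element $\bar f \in \Hom_J(\overline{\mathcal{P}}, \Ind_H^G(\mathbf{1}_\Fl)|_J)$.

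Next, I would lift $\bar f$ to $\Zl$. The reduction map $\Ind_H^G(\mathbf{1}_\Zl) \twoheadrightarrow \Ind_H^G(\mathbf{1}_\Fl)$ is surjective, since every smooth $\Fl$-valued $H$-invariant function factors through a countable discrete quotient $H\backslash G/K$ for some open compact subgroup $K$ and therefore admits a pointwise $\Zl$-valued lift. Combining this with the projectivity of $\mathcal{P}$ in $\Rep_{\Zl}^{\alpha}(J)$, applied to the short exact sequence obtained from multiplication by $\ell$ on the part on which $\varpi_F$ acts as $\alpha$, we deduce that the induced map
\[
\Hom_J(\mathcal{P}, \Ind_H^G(\mathbf{1}_\Zl)|_J) \longrightarrow \Hom_J(\overline{\mathcal{P}}, \Ind_H^G(\mathbf{1}_\Fl)|_J)
\]
is surjective. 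Hence $\bar f$ lifts to some $f \colon \mathcal{P} \to \Ind_H^G(\mathbf{1}_\Zl)|_J$, and Frobenius reciprocity produces a non-zero $H$-invariant $\Zl$-form on $\cInd_J^G(\mathcal{P})$. Tensoring with $\Ql$ yields an $H$-invariant $\Ql$-form on $\cInd_J^G(\mathcal{P}_\Ql)$.

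Because $J/\langle\varpi_F\rangle$ is compact and $\mathcal{P}$ is finitely generated over $\Zl$, the $\Ql$-representation $\mathcal{P}_\Ql$ is finite-dimensional and semisimple, decomposing as $\bigoplus_i n_i \tilde\Lambda_i$ with $\tilde\Lambda_i$ irreducible. Therefore some $\cInd_J^G(\tilde\Lambda_i)$ is $H$-distinguished.

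The delicate step, which I expect to be the main obstacle, is to ensure that such $\tilde\Lambda_i$ can be chosen to be a $\Ql$-lift of $\Lambda$, so that $\cInd_J^G(\tilde\Lambda_i)$ is an actual $\Ql$-lift of $\pi$. To handle this, I would consider $V := f_\Ql(\mathcal{P}_\Ql) \subseteq \Ind_H^G(\mathbf{1}_\Ql)|_J$ together with its saturated $\Zl$-lattice $\widetilde{M} := V \cap \Ind_H^G(\mathbf{1}_\Zl)|_J$; saturation ensures that $\widetilde{M}/\ell\widetilde{M}$ embeds $J$-equivariantly into $\Ind_H^G(\mathbf{1}_\Fl)|_J$ and contains $\im(\bar f) = \Lambda$. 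The semisimplicity of $V$ as a $\Ql$-representation of the compact group $J/\langle \varpi_F\rangle$, combined with the irreducibility of $\Lambda$, then forces some irreducible summand of $V$ to admit a saturated lattice whose reduction is exactly $\Lambda$; such a summand is a $\Ql$-lift of $\Lambda$, and via Frobenius produces the desired supercuspidal $H$-distinguished $\Ql$-lift $\tilde\pi$ of $\pi$.
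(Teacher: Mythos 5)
The overall architecture of your argument matches the paper's: write $\pi = \cInd_J^G(\Lambda)$, transfer distinction to a non-vanishing $\Hom$ out of $\Lambda$, precompose with the projective envelope $\overline{\mathcal{P}}$, lift via projectivity of $\mathcal{P}$ over $\Zl$, tensor with $\Ql$, and locate an $H$-distinguished lift $\tilde{\Lambda}$ of $\Lambda$ inside $\mathcal{P}_{\Ql}$. However, your treatment of the step you yourself flag as delicate has a genuine gap, and it is precisely the place where the paper's argument has real content.

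You have established (correctly) that the reduction of some irreducible summand $\tilde\Lambda_i$ of $V = f_{\Ql}(\mathcal{P}_{\Ql})$ \emph{contains} $\Lambda$ as a composition factor, and that this $\tilde\Lambda_i$ is $J\cap H'$-distinguished. But for $\cInd_J^G(\tilde\Lambda_i)$ to be a $\Ql$-lift of $\pi$, you need $r_\ell(\tilde\Lambda_i)$ to be \emph{exactly} $\Lambda$, not merely to contain it. Your closing appeal to ``semisimplicity of $V$ combined with irreducibility of $\Lambda$ forces some irreducible summand to admit a saturated lattice whose reduction is exactly $\Lambda$'' is not a valid inference: for a general profinite-by-$\mathbf{Z}$ group $J$, an irreducible $\Ql$-representation $\delta$ with $\Lambda \subset r_\ell(\delta)$ can perfectly well have other composition factors in $r_\ell(\delta)$, and nothing in the saturation argument rules this out. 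Brauer--Nesbitt only controls the semisimplification, and since $\overline{\mathcal{P}}$ is the projective envelope, the summands of $\mathcal{P}_{\Ql}$ are by reciprocity exactly those irreducible $\Ql$-representations whose reduction \emph{contains} $\Lambda$ — not, a priori, those that reduce to $\Lambda$.

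What closes this gap in the paper is the \emph{supercuspidality} of the type: because $\Lambda|_{J^0} \cong \kappa\otimes\rho$ with $\rho$ inflated from a supercuspidal representation of the finite group $J^0/J^1$, one has the crucial fact that if $\Lambda$ is a composition factor of $r_\ell(\delta)$ for some irreducible integral $\Ql$-representation $\delta$ of $J$, then $r_\ell(\delta) = \Lambda$. This gives the structural description that $\mathcal{P}_{\Ql}$ is a direct sum of genuine $\Ql$-lifts of $\Lambda$, after which the conclusion is immediate. Your proof never invokes this and therefore cannot conclude; you would need to add precisely this ingredient (the paper uses it via the cited structure results for projective envelopes of supercuspidal types). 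As a secondary remark, the paper also routes the distinction through the admissible representation $\cInd_I^J\bar\chi$ with $I = \langle J\cap H', \varpi_F\rangle$ rather than working directly inside $\Ind_H^G(\mathbf{1})|_J$; your more direct route via a saturated lattice in $\Ind_H^G(\mathbf{1}_{\Zl})|_J$ is a reasonable alternative for the lifting-of-$\Hom$ step, but it does not substitute for the missing supercuspidality argument.
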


\begin{proof}
    We write $\pi = \cInd_{J}^{G}(\Lambda)$, for an extended maximal simple type $(J,\Lambda)$. We have $\Lambda\in \Rep_{\Fl}^{\bar{\alpha}}(J)$ for $\bar{\alpha}\in\Fl^{\times}$. Let $\bar{\chi}$ be an $\Fl$-character of $F^{\times}$ such that $\bar{\chi}$ is trivial on $\mathcal{O}_F^{\times}$ and $\bar{\chi}(\varpi_F)=\bar{\alpha}$. Frobenius reciprocity and Mackey formula give us
    \[
        0\neq \Hom_{H}(\pi,\mathbf{1})\simeq \prod_{g\in J\backslash G/H} \Hom_{J^{g}\cap H}(\Lambda^{g},\mathbf{1}).
    \]
    Thus there exists a $g\in G$ such that $\Hom_{J^{g}\cap H}(\Lambda^{g},\mathbf{1}) \neq 0$. Let $H':=H^{g^{-1}}$ such that $J^g \cap H = (J\cap H')^g$. Therefore

    \[
        0\neq \Hom_{J^{g}\cap H}(\Lambda^{g},\mathbf{1}) \simeq \Hom_{J\cap H'}(\Lambda,\mathbf{1}).
    \]
    Define $I=\langle J\cap H',\varpi_F\rangle$ and so $\langle \varpi_F \rangle\subset I\subset J.$  Applying Frobenius reciprocity, we have
    $$\Hom_{J\cap H'}(\Lambda,\mathbf{1})\cong\Hom_{I}(\Lambda,\Ind_{J\cap H'}^{I}\mathbf{1}).$$
    Let $\phi$ be a non-trivial element in $\Hom_{I}(\Lambda,\Ind_{J\cap H'}^{I}\mathbf{1})$. Let $V$ be the representation space of $\Lambda$. Taking the restriction to $\langle \varpi_F\rangle$, we have
    $$0\neq \phi\in \Hom_{\langle \varpi_F\rangle}(\Lambda,\Ind_{J\cap H'}^{I}\mathbf{1}).$$
    Since $\langle\varpi_F\rangle$ acts as the character $\bar{\chi}$ on $\Lambda$, $\phi(V)$ is contained in the $\bar{\chi}$-socle of $\mathrm{res}_{\langle\varpi_F\rangle}^{I}\Ind_{J\cap H'}^{I}\mathbf{1}$, which is the maximal subrepresentation where $\langle \varpi_F\rangle$ acts as a multiple of $\bar{\chi}$. Meanwhile,
    $$\Ind_{J\cap H'}^{I}\mathbf{1}\subset\Ind_{\{1\}}^{\langle \varpi_F\rangle}\mathbf{1}.$$
    By Frobenius reciprocity, $\dim\Hom_{\langle \varpi_F\rangle}(\bar{\chi},\Ind_{\{1\}}^{\langle \varpi_F\rangle}\mathbf{1})=1$. On the other hand, $J\cap H'$ acts trivially on $\Ind_{J\cap H'}^{I}\mathbf{1}$. Since $\Lambda$ is $J\cap H'$-distinguished, one can inflate $\bar{\chi}$ to a character of $I$, still denoted by $\bar{\chi}$. Then the $\bar{\chi}$-socle of $\Ind_{J\cap H'}^{I}\mathbf{1}$ is equivalent to $\bar{\chi}$. Hence
    $$\phi\in \Hom_{I}(\Lambda,\bar{\chi})\neq 0.$$
    By Frobenius reciprocity and the fact that  $J/ I$ is compact, we have
    \begin{equation}
        \label{nonvan:chi}
        0 \neq \Hom_{I}(\Lambda,\bar{\chi})\cong \Hom_{J}(\Lambda,\cInd_{I}^{J}\bar{\chi}),\end{equation}
    where $\cInd_{I}^{J}\bar{\chi}\in \Rep_{\Fl}^{\bar{\alpha}}(J)$ is admissible.

    Let $\alpha$ be a root of unity in $\Zl$ whose image (after modulo $\ell$) in $\Fl$  is equal to $\bar{\alpha}$, and denote by $\chi$ a $\Zl$-lift of $\bar{\chi}$ such that $\chi$ maps $\varpi_F$ to $\alpha$. Set $\tilde{\chi}=\chi\otimes_{\Zl}\Ql$. From Proposition \ref{propProjEnvType}, let $\overline{\mathcal{P}}$ be the projective envelope of $\Lambda$ in $\Rep_{\Fl}^{\bar{\alpha}}(J)$ and $\mathcal{P}$ a projective object in $\Rep_{\Zl}^{\alpha}(J)$ such that $\mathcal{P} \otimes \Fl = \overline{\mathcal{P}}$. Set $\Lambda|_{J^0}=\kappa\otimes\rho$. Since $\rho$ corresponds to a supercuspidal representation of $J^0/J^1$, a similar argument to \cite[Chapitre III, 2.9]{vigneras} gives us the structure of the projective envelope. Let us give some more details. From the structure of the projective envelope for finite groups, we see that if $\delta$ is an irreducible integral $\Ql$-represenation of $J$ then the multiplicity of $\delta$ in $\mathcal{P} \otimes \Ql$ is equal to the multiplicity of $\Lambda$ in $r_\ell(\delta)$ (see \cite[Prop. 4.9]{SecherreDrevon}). Moreover, if $\Lambda'$ is a subquotient of $\mathcal{P} \otimes \Fl$,  then \cite[Lem. 4.13]{SecherreDrevon} says that there exists $\delta$ an irreducible integral $\Ql$-represenation of $J$ such that its modulo $\ell$ reduction contains $\Lambda$ and $\Lambda'$. We show that $\Lambda\cong\Lambda'$ by proving that $\delta$ is an extended maximal simple type and that $\Lambda=r_\ell(\delta)$. In fact, there is an irreducible component $\delta_0$ of $\delta\vert_{J^0}$, such that $\kappa\otimes\rho$ is a subquotient of $r_{\ell}(\delta_0)$. Let $\tilde{\kappa}$ be a $\Ql$-lift of $\kappa$ as in \cite[Chapitre III,4.20]{vigneras}. Denote by $\tilde{\eta}$ the restriction $\tilde{\kappa}\vert_{J^1}$. Since the $\tilde{\eta}$-isotypic part of $\delta_0$ is non-trivial, by taking $\tilde{\kappa}$-restriction as defined in \cite[Definition 8.1 (c)]{Vig01b}, there is an irreducible $\Ql$-representation $\tilde{\rho}$ of $J^0\slash J^1$ such that $\delta_0\cong\tilde{\kappa}\otimes\tilde{\rho}$. Then $\rho$ is a subquotient of $r_{\ell}(\tilde{\rho})$. Since $\rho$ is supercuspidal, by \cite[Prop. 5.7]{Helm} we know $\tilde{\rho}$ is supercuspidal and $\rho=r_{\ell}(\tilde{\rho})$, which implies that $\delta_0$ is a maximal simple type and $r_{\ell}(\delta_0)=\kappa\otimes\rho$. Hence $\delta$ is an extended maximal simple type. We conclude that $r_{\ell}(\delta)=\Lambda$. Therefore, we have the following structure of $\mathcal{P}$:
    \begin{enumerate}
        \item $\overline{\mathcal{P}}=\mathcal{P} \otimes_{\Zl} \Fl$ is indecomposable of finite length, with each irreducible component isomorphic to $\Lambda$.
        \item Let $\widetilde{\mathcal{P}} := \mathcal{P} \otimes_{\Zl} \Ql$ be a $\Ql$-lift of $\mathcal{P}$. Then $\widetilde{\mathcal{P}}$ is isomorphic to the direct sum of extended maximal simple types, which are all the $\Ql$-lifts of $\Lambda$.
    \end{enumerate}

    By (1)  and \eqref{nonvan:chi}, we get that
    \begin{equation}
        \label{equaPQFl}
        \Hom_{\Fl[J]}(\overline{\mathcal{P}},\cInd_{I}^{J}\bar{\chi})\neq 0.
    \end{equation}

    By \S I,9.3,(viii) of \cite{vigneras}, $\cInd_{I}^{J}\chi$ is a $\Zl[J]$-lattice of $\cInd_{I}^{J}\tilde{\chi}$, and the reduction modulo-$\ell$ gives a surjective morphism from $\cInd_{I}^{J}\chi$ to $\cInd_{I}^{J}\bar{\chi}$. By the projectivity of $\mathcal{P}$, \eqref{equaPQFl} implies that
    $$\Hom_{J}(\mathcal{P},\cInd_{I}^{J}\chi)\neq 0.$$
    Hence by tensoring with $\Ql$ on both sides,
    \begin{equation}
        \label{equaPQl}
        0\neq \Hom_{J}(\mathcal{P}\otimes\Ql,\cInd_{I}^{J}\tilde{\chi})\subset \Hom_{J}(\mathcal{P}\otimes\Ql,\Ind_{J\cap H'}^{J}\mathbf{1}).
    \end{equation}




    From (2) and  \eqref{equaPQl} there exists  a $\Ql$-lifts $\widetilde{\Lambda}$ of $\Lambda$ such that $\Hom_{\Ql[J]}(\widetilde{\Lambda},\Ind_{J\cap H'}^{J}\mathbf{1}) \neq 0$. Hence $\Hom_{J\cap H'}(\widetilde{\Lambda},\mathbf{1}) \neq 0$.

    Let $\widetilde{\pi}:=\cInd_{J}^{G}(\widetilde{\Lambda})$, where $\tilde{\Lambda}$ is a $\Ql$-extended maximal simple type (the discussion before about $\delta$ shows that $\tilde{\Lambda}$ is an extended maximal simple type). Hence $\widetilde{\pi}$ is irreducible and supercuspidal. Since the reduction modulo $\ell$ commutes with $\cInd$, we deduce that the representation $\widetilde{\pi}$ is a $\Ql$-lift of $\pi$. We are left to prove that it is $H$-distinguished. As before

    \[
        \Hom_{J^{g}\cap H}(\widetilde{\Lambda}^{g},\mathbf{1}) \simeq \Hom_{J\cap H'}(\widetilde{\Lambda},\mathbf{1}) \neq 0
    \]
    and Mackey formula and Frobenius reciprocity give us that $\widetilde{\pi}$ is $H$-distinguished.
\end{proof}

The following corollary is an immediate consequence of Theorem \ref{thmLiftDistinguished}.
\begin{cor}\label{symplecticvanishing}
    When the characteristic of $\FF$ is different from 2, there is no supercuspidal $\Fl$-representation of $\GL_{2n}(F)$ distinguished by $\Sp_{2n}(F)$.
\end{cor}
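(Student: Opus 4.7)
The plan is to argue by contradiction using Theorem \ref{thmLiftDistinguished} to reduce to the well-known vanishing statement in the complex (equivalently, $\Ql$) setting. Suppose there exists a supercuspidal $\Fl$-representation $\pi$ of $\GL_{2n}(F)$ which is $\Sp_{2n}(F)$-distinguished. Since $\Sp_{2n}(F)$ is a closed subgroup of $\GL_{2n}(F)$, the lifting theorem applies directly.

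First, I would invoke Theorem \ref{thmLiftDistinguished} with $H = \Sp_{2n}(F)$ to produce a $\Ql$-lift $\widetilde{\pi}$ of $\pi$ which is both supercuspidal and $\Sp_{2n}(F)$-distinguished. Then I would cite the classical theorem of Heumos--Rallis, which states that no irreducible supercuspidal complex representation of $\GL_{2n}(F)$ admits a nonzero $\Sp_{2n}(F)$-invariant linear form (originally established when $\FF$ has characteristic zero, and extended to positive characteristic different from 2; this is exactly the hypothesis on $\FF$ needed in the corollary). Since $\Ql$ is an algebraically closed field of characteristic zero, the Heumos--Rallis result transfers verbatim to $\Ql$-coefficients. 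Applying this to $\widetilde{\pi}$ yields a contradiction.

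The main obstacle is essentially notational: one must ensure that the statement in the complex setting indeed applies to the field $\Ql$ and over any non-archimedean local field of residual characteristic $p$ with characteristic different from $2$. This is standard, because the Heumos--Rallis argument is purely group-theoretic and transfers to any algebraically closed coefficient field of characteristic zero; the restriction on the characteristic of $\FF$ (which propagates to the hypothesis of the corollary) comes from their use of the symplectic structure on $F^{2n}$. Once this is in place, the deduction from Theorem \ref{thmLiftDistinguished} is immediate and requires no further work.
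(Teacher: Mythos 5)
Your approach is exactly the paper's: apply Theorem \ref{thmLiftDistinguished} with $H=\Sp_{2n}(F)$ to obtain a supercuspidal, $\Sp_{2n}(F)$-distinguished $\Ql$-lift, and then invoke the non-existence result over $\Ql$. One small imprecision: you assert that the Heumos--Rallis argument ``transfers verbatim'' to $F$ of positive characteristic, but that is not the route the paper takes, and it is not obviously true. Heumos--Rallis is stated only for $F$ of characteristic zero; the paper cites a separate result of Prasad (\cite[Thm.~1 and \S 8]{prasad2019generic}) to handle positive characteristic $p\neq 2$. (The change of coefficient field from $\mathbb{C}$ to $\Ql$ is indeed harmless; it is the change of the base field $F$ that requires a distinct reference.) With that citation added, your argument coincides with the paper's.
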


\begin{proof}
    There is no complex supercuspidal representation of $\GL_{2n}(F)$ distinguished by $\Sp_{2n}(F)$. When the field $\FF$ has characteristic zero, this follows from \cite[Thm. 3.2.2]{HeumosRallis}; and when the field $F$ is of characteristic $p$ with $p\neq2$, it follows from \cite[Thm. 1]{prasad2019generic} (see \cite[Section 8]{prasad2019generic}). Therefore, the result follows from Theorem \ref{thmLiftDistinguished}.
\end{proof}

\subsection{The Langlands correspondence modulo \texorpdfstring{$\ell$}{l} for \texorpdfstring{$\GL_n$}{GLn}}

In the rest of this paper we will use the modulo $\ell$ local Langlands correspondence to study distinguished representations. We recall in this section some important results about it.

\bigskip

The first step to define a modulo \texorpdfstring{$\ell$}{l} local Langlands correspondence for $\GL_n$ is to define the correspondence for supercuspidal representations. The supercuspidal representations correspond to irreducible representations of the Weil group. This bijection is defined in the modulo $\ell$ case using the modulo $\ell$ reduction. This leads to the semisimple Langlands correspondence using the supercuspidal support of a smooth representation.

\bigskip

Let $R$ be an algebraically closed field of characteristic different from $p$.
We denote by $\Irr_{R}(\GL_n(\FF))$ the set of isomorphism classes of smooth irreducible representations of $\GL_n(\FF)$ over $R$ and by $\Scusp_{R}(\GL_n(\FF))$ the subset of supercuspidal $R$-representations.
Let $\Irr_{R}(\GL_\FF):=\cup_{n\geq 1}\Irr_{R}(\GL_n(\FF))$ and $\Scusp_{R}(\GL_\FF) := \cup_{n\geq 1}\Scusp_{R}(\GL_n(\FF))$.

Let $W_\FF$ be the Weil group of $\FF$. It is the subgroup of $\Gal(\overline{\FF}/\FF)$, where $\overline{\FF}$ is a separable algebraic closure of $\FF$, generated by the inertia subgroup $I_F$ and a geometric Frobenius element $\mathrm{Frob}$. Its topology is such that $I_F$ has the topology induced from $\Gal(\overline{\FF}/\FF)$, $I_F$ is open and the multiplication by $\mathrm{Frob}$ is a homeomorphism. Let $\Irr_{R}(W_{\FF})(n)$ be the set of isomorphism classes of smooth irreducible $R$-representations of $W_\FF$ of dimension $n$.
Finally, we denote by $\Mod_{R}(W_{\FF})$ the set of isomorphism classes of semisimple smooth $R$-representations of $W_\FF$ of finite dimension.

\bigskip

In \cite{VignerasLanglands} Vignéras defines a bijection using the modulo $\ell$ reduction of the local Langlands correspondence over $\Ql$

\begin{thm}[{\cite[Cor. 2.5]{VignerasLanglands}}]
    There exist unique Langlands bijections on $\Fl$
    \[
        \Scusp_{\Fl}(\GL_n(\FF)) \leftrightarrow \Irr_{\Fl}(W_{\FF})(n)
    \]
    which are compatible with modulo $\ell$ reduction.
\end{thm}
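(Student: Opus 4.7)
The plan is to construct the bijection by passing through the $\Ql$--local Langlands correspondence and reducing modulo $\ell$ on both sides. Given a supercuspidal $\Fl$--representation $\pi$ of $\GL_n(\FF)$, the first step is to produce a supercuspidal $\Ql$--lift $\tilde{\pi}$. This is available via the type-theoretic construction already used in the proof of Theorem \ref{thmLiftDistinguished}: any extended maximal simple type $(J,\Lambda)$ for $\pi$ lifts to a $\Ql$--extended maximal simple type $(J,\widetilde{\Lambda})$, and compact induction yields an irreducible supercuspidal $\Ql$--representation $\tilde{\pi} = \cInd_J^G(\widetilde{\Lambda})$ whose mod $\ell$ reduction is $\pi$. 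Applying the known $\Ql$--LLC to $\tilde{\pi}$ produces an irreducible continuous $n$-dimensional $\Ql$--representation $\tilde{\varphi}$ of $W_\FF$, and the candidate bijection sends $\pi$ to the semisimplified reduction $r_\ell(\tilde{\varphi})$.

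For this to make sense one must verify that $\tilde{\varphi}$ is $\ell$-integral and that $r_\ell(\tilde{\varphi})$ is still irreducible. Integrality follows from the compatibility of the $\Ql$--LLC with central characters: the central character of $\tilde{\pi}$ is $\ell$-integral (as it is pinned down by $\Lambda$), and it corresponds to $\det \tilde{\varphi}$, together with a standard argument that continuous irreducible representations of $W_\FF$ with integral determinant are themselves integral up to twist. The irreducibility of $r_\ell(\tilde{\varphi})$ is the subtle point: I would argue it via the Bushnell--Henniart parametrization, matching the admissible/simple-pair datum attached to $\tilde{\pi}$ with that of $\tilde{\varphi}$, and checking that the condition ``$\tilde{\pi}$ reduces to a \emph{supercuspidal} $\Fl$--representation'' (and not merely a cuspidal one) translates on the Galois side into ``the restriction of $\tilde{\varphi}$ to wild inertia, and more generally its components along the filtration of $W_\FF$, remain irreducible after reduction''.

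The remaining steps are structural. Independence of the choice of lift $\tilde{\pi}$ follows from the fact that any two supercuspidal $\Ql$--lifts of $\pi$ differ by an unramified twist by a character whose reduction is trivial, which corresponds on the Galois side to twisting $\tilde{\varphi}$ by such a character and does not alter $r_\ell(\tilde{\varphi})$. Surjectivity is obtained by reversing the procedure: given an irreducible $\Fl$--representation $\varphi$ of $W_\FF$ of dimension $n$, one lifts it to an irreducible $\Ql$--representation (using that $\ell \neq p$, so obstructions coming from wild inertia vanish), applies the inverse $\Ql$--LLC, and verifies that the resulting supercuspidal $\Ql$--representation is $\ell$-integral with supercuspidal mod $\ell$ reduction. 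Injectivity follows from the corresponding statement on the $\Ql$ side combined with the fact that two distinct supercuspidal $\Fl$--representations cannot share a supercuspidal $\Ql$--lift. Finally, uniqueness of the bijection is immediate: compatibility with mod $\ell$ reduction pins down the image of each $\pi$ once one knows some lift, so any two compatible bijections must agree.

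The main obstacle I expect is the irreducibility of $r_\ell(\tilde{\varphi})$. In general the semisimplified mod $\ell$ reduction of an irreducible $\Ql$--representation of $W_\FF$ may decompose, and characterising exactly those $\tilde{\varphi}$ whose reduction stays irreducible is the heart of the theorem. I would handle this by reducing to the tame case via restriction to wild inertia and exploiting that \emph{supercuspidality} of $r_\ell(\tilde{\pi})$ (as opposed to mere cuspidality) is precisely the condition on the $\GL_n$--side that rules out the degenerations on the Galois side which would produce a reducible reduction.
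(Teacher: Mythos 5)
The statement you are asked to prove is not proved in the paper at all; it is cited verbatim from Vign\'eras \cite{VignerasLanglands} (her Corollary 2.5), and the authors make no attempt to reprove it. So there is no ``paper's own proof'' to compare against, only the reference.

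Your proposal does sketch roughly the right strategy --- lift to $\Ql$, apply the $\ell$-adic local Langlands correspondence, reduce modulo $\ell$, and check well-definedness, bijectivity and uniqueness --- and this is indeed the shape of Vign\'eras' construction. However, you correctly identify the central difficulty (showing that $r_\ell(\tilde{\varphi})$ is irreducible, and conversely that every irreducible $\Fl$-parameter arises this way from a $\Ql$-lift whose $\GL_n$-side reduction is genuinely supercuspidal) and then leave it entirely as a gesture. Saying that one ``would argue it via the Bushnell--Henniart parametrization'' and ``reduce to the tame case via restriction to wild inertia'' names the right toolbox but does no work: the precise matching of the type-theoretic datum $(J,\Lambda)$ with the Galois-side admissible pair, and the verification that supercuspidality of the reduction on the $\GL_n$ side is equivalent to irreducibility of the reduction on the Weil-group side, is exactly the content of Vign\'eras' proof and cannot be waved away. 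Without it you have a commutative square only on the $\Ql$ level, not a bijection on the $\Fl$ level. Two smaller inaccuracies: (i) integrality of $\tilde{\varphi}$ does not require the argument you give --- any irreducible smooth $\Ql$-representation of $W_\FF$ is integral precisely when the determinant of a Frobenius lift is an $\ell$-adic unit, which follows at once from compactness of $I_\FF$, so the detour through central characters is unnecessary; (ii) the assertion that two supercuspidal $\Ql$-lifts of a fixed $\pi$ differ by an unramified twist with trivial reduction is itself a non-trivial fact from type theory (essentially the content of \cite[Prop. 1.3]{MinguezSecherreJacquetLanglands} used elsewhere in the paper) and should be attributed rather than treated as obvious. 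In short, the proposal is a reasonable reading guide to Vign\'eras' argument but does not constitute a proof; for the purposes of this paper the correct move is simply to cite \cite{VignerasLanglands}.
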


Let $\MScusp_{\Fl}(\GL_\FF)$ be the set of formal finite sums $\pi_1+\cdots+\pi_r$ of elements of $\Scusp_{\Fl}(\GL_\FF)$. The previous bijections induce a bijection
\[
    \MScusp_{\Fl}(\GL_\FF) \to \Mod_{\Fl}(W_{\FF}).
\]

\bigskip

Let $\pi \in \Irr_{\Fl}(\GL_{n}(\FF))$. Then $\pi$ appears as a subquotient of the parabolic induction of a representation $\pi_1 \boxtimes \cdots \boxtimes \pi_r$, where $\pi_i \in \Scusp_{\Fl}(\GL_{n_i}(\FF))$ and $\sum n_i =n$. This defines a surjective map with finite fibers, called the \textit{supercuspidal support},
\[
    sc : \Irr_{\Fl}(\GL_\FF) \to \MScusp_{\Fl}(\GL_\FF)
\]
by $sc(\pi)=\pi_1 + \cdots + \pi_r$.

\bigskip

Combining the supercuspidal support and the previous bijections (see \cite[Thm. 1.6]{VignerasLanglands}) we get a map, that we call the semisimple Langlands correspondence modulo $\ell$,
\[
    V_{ss} : \Irr_{\Fl}(\GL_{\FF}) \to \Mod_{\Fl}(W_{\FF}).
\]

\bigskip

The previous map $V_{ss}$ is a surjection but not a bijection. To get a bijection one needs to introduce Weil-Deligne representations.

\bigskip

Let us start by recalling the definition of Weil-Deligne representations. Let $\nu : W_{\FF} \to R^{\times}$ be the unique character trivial on the inertia subgroup of $W_{\FF}$, sending a geometric Frobenius element to $q_{\FF}^{-1}$.

\begin{defi}
    We call a semisimple Weil-Deligne representation of $\GL_n$  over $R$ a couple $(\varphi,N)$ with
    \begin{enumerate}{}
        \item $\varphi : W_{\FF} \to \GL_n(R)$ a smooth representation with image composed of semisimple elements.
        \item $N \in M_n(R)$ is a matrix such for all $w\in W_{\FF}$, $\varphi(w)N = \nu(w)N\varphi(w)$.
    \end{enumerate}
\end{defi}

A morphism of Weil-Deligne representations from $(\varphi,N)$ to $(\varphi',N')$ is a morphism of $W_{\FF}$-representation $f\in \Hom_{W_\FF}(\varphi,\varphi')$ such that $f\circ N = N' \circ f$. It is an isomorphism if $f$ is an isomorphism of $W_{\FF}$-representations. We denote by $\WDRep_{R}(W_{\FF},\GL_n)$ the set of isomorphism classes of semisimple Weil-Deligne representation of $\GL_n$. When $R=\Ql$, every $N$ as above is nilpotent. However, this is not the case when $R=\Fl$. Therefore, we denote by $\Nilp_{R}(W_{\FF},\GL_n)$ the subset composed of the elements $(\phi,N)$ with $N$ nilpotent.

\begin{thm}[{\cite[Thm. 1.8.2]{VignerasLanglands}}]
    \label{thmBijNilp}
    Let $\varphi \in \Mod_{\Fl}(W_{\FF})$. The set consisting of $\pi \in \Irr_{\Fl}(\GL_{\FF})$ such that $V_{ss}(\pi)=\varphi$ is in bijection with the nilpotent endomorphisms $N$ of $\pi$, up to conjugation by an isomorphism of $\varphi$, such that $\varphi(w)N = \nu(w)N\varphi(w)$, for all $w\in W_{\FF}$.
\end{thm}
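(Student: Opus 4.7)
The plan is to reduce the statement to two classifications that are already known and to match them: on one side the Vign\'eras classification of irreducibles with a given semisimple Langlands parameter, and on the other side the classification of nilpotent operators by multisegments through a quiver-theoretic argument.

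First I would decompose $\varphi$ along the orbits of the twisting action $\sigma \mapsto \sigma \otimes \nu$ on $\Irr_{\Fl}(W_\FF)$. Grouping isotypic components by orbit, write $\varphi = \bigoplus_{\mathcal{O}} \varphi_{\mathcal{O}}$. The intertwining relation $\varphi(w) N = \nu(w) N \varphi(w)$ says exactly that $N$ is a morphism of Weil representations from $\varphi$ to $\varphi \otimes \nu^{-1}$, hence $N$ preserves each $\varphi_{\mathcal{O}}$ and in fact maps the $\sigma$-isotypic component into the $(\sigma \otimes \nu)$-isotypic component. On the automorphic side, the same decomposition appears because the supercuspidal support (paired with the Langlands bijection for supercuspidals) distributes over the orbits via the compatibility with parabolic induction and the fact that two distinct orbits do not link. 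This reduces the problem to a single orbit $\mathcal{O}$.

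Next, fix an orbit and enumerate its elements as $\sigma_k := \sigma \otimes \nu^k$ (with $k$ ranging over $\Z$ if the orbit is infinite, over $\Z/d\Z$ if $\nu^d$ fixes $\sigma$). Writing $\varphi_{\mathcal{O}} = \bigoplus_k m_k \sigma_k$, Schur's lemma identifies $N$ with a tuple $(N_k)$ of linear maps $N_k : R^{m_k} \to R^{m_{k+1}}$, and $\Aut(\varphi_{\mathcal{O}}) \cong \prod_k \GL_{m_k}(\Fl)$ acts by $(g_k) \cdot (N_k) = (g_{k+1} N_k g_k^{-1})$. Conjugacy classes of nilpotent tuples under this action are precisely the isomorphism classes of nilpotent representations of the type-$A$ quiver (linear or cyclic, according to the orbit), which by the standard Gabriel/Jordan normal form theorem are parametrized by multisegments supported on $\mathcal{O}$. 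Meanwhile, Vign\'eras's classification of $\Irr_{\Fl}(\GL_\FF)$ with given supercuspidal support (once transported through the supercuspidal Langlands bijection) identifies the fiber $V_{ss}^{-1}(\varphi)$ with multisegments supported on the multiset of $\sigma_k$'s. Matching the two parametrizations yields the desired bijection.

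The main obstacle will be the non-banal case when the orbit is finite, i.e.\ $\sigma \otimes \nu^d \cong \sigma$ for some minimal $d \geq 1$ (this occurs precisely when $\ell \mid q_\FF^d - 1$ and the character by which $\nu^d$ twists becomes inner). The quiver on that orbit becomes cyclic rather than an $A_\infty$-line, and the combinatorics of the nilpotent classes on a cyclic quiver must be compared carefully with Vign\'eras's classification of modular irreducibles with cyclic supercuspidal support (the ``aperiodic'' versus ``periodic'' segments). Once one verifies that the multisegment combinatorics coincide on both sides, the bijection is automatic. The remaining verifications (matching characters via the dimension count $\sum m_k = n$, and naturality with respect to isomorphisms of $\varphi$) are straightforward bookkeeping.
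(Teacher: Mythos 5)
The paper itself gives no proof of this statement --- it is cited verbatim from \cite[Thm.~1.8.2]{VignerasLanglands} and then used; there is no in-paper argument to compare against.

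Taken on its own, the broad strategy you outline --- decompose $\varphi$ into $\nu$-twist orbits, observe that the commutation relation $\varphi(w)N=\nu(w)N\varphi(w)$ makes $N$ a graded degree-$(+1)$ map across each orbit, identify the resulting nilpotent data with a representation of a type-$A$ (linear or cyclic) quiver and hence with a multisegment, and match against a Zelevinsky-style classification of modular irreducibles with a given cuspidal support --- is consistent with how the result is actually established in the literature, and you correctly single out the cyclic (non-banal) orbit as the crux. The problem is that your sketch defers exactly the step that carries the entire content of the theorem to ``once one verifies that the multisegment combinatorics coincide on both sides, the bijection is automatic'' and ``straightforward bookkeeping.'' Establishing that, for a fixed cuspidal support, irreducible $\Fl$-representations of $\GL_n(\FF)$ are classified by multisegments in a way compatible with the supercuspidal Langlands bijection is precisely the hard part in the modular setting: the complex-coefficient proof via Langlands quotients or Zelevinsky socles does not transfer, because unicity of irreducible quotients or subrepresentations of parabolically induced representations can fail modulo $\ell$, and one has to invoke the modular Zelevinsky classification of Vign\'eras and M\'inguez--S\'echerre to make the matching work. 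A complete write-up would have to state and align these two classifications carefully; as written, your sketch takes that alignment for granted, which amounts to re-citing the theorem it is meant to prove.
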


Hence, we get a bijection $\Irr_R(\GL_{\FF}) \to \Nilp_{R}(W_{\FF},\GL)$. Vignéras defines the ``Langlands'' bijection in \cite[Section 1.8]{VignerasLanglands} using modulo $\ell$ reduction (in a non-naive way), and we call this bijection the $V$ correspondence :

\[
    V : \Irr_R(\GL_{\FF}) \to \Nilp_{R}(W_{\FF},\GL).
\]

\subsection{Supercuspidal representations of $\GL_n(\EE)$ distinguished by a Galois involution}

\label{secSupercuspDist}

Let $\EE/\FF$ be a quadratic extension of locally compact non-archimedean local fields of characteristic different from 2 and residual characteristic $p$. In this section we are interested in $\GL_n(F)$-distinguished supercuspidal $\ell$-modular representations of $\GL_n(\EE)$. A $\GL_n(F)$-distinguished supercuspidal representation is $\sigmaEF$-selfdual, that is $\cont{\pi} \simeq \sdual{\pi}$ (see \cite[Thm. 4.1]{vincent2019supercusp}). Let us assume that $\ell \neq 2$ and let $\pi$ be a $\sigmaEF$-selfdual supercuspidal representation. The main goal of this section is to prove the \emph{Dichotomy Theorem} that $\pi$ is either distinguished or $\omegaEF$-distinguished, and the \emph{Disjunction Theorem} that $\pi$ cannot be both distinguished and $\omegaEF$-distinguished. We call it the Jacquet conjecture in the modular setting. For $p\neq 2$ this is proved in \cite[Thm. 10.8]{vincent2019supercusp}. Using Theorem \ref{thmLiftDistinguished}, when $\ell \neq 2$, we will give a different proof of this result that also works for $p=2$ when $\FF$ has characteristic zero.

\begin{rem}
    In this section we assume $\ell \neq 2$. When $\ell=2$ then $p\neq 2$ and the distinguished supercuspidal representations are studied in \cite{vincent2019supercusp}. Therefore, combining the results of this section and of \cite{vincent2019supercusp} gives a complete result for supercuspidal distinguished representations for all $\ell$ and $p$ with $\ell \neq p$ in characteristic zero.
\end{rem}

\bigskip

With Theorem \ref{thmLiftDistinguished}, we know that we can find a distinguished $\Ql$-lift of a distinguished supercuspidal $\Fl$-representation whose Langlands parameter is conjugate-orthogonal in the sense of \cite[\S7]{GGP}. With the following lemma, we will show that all the $\sigmaEF$-selfdual $\Ql$-lift of a $\sigmaEF$-selfdual supercuspidal representations are either all distinguished or all $\omegaEF$-distinguished.

\begin{lem}\label{lemconj}
    Let $\pi$ be a $\sigma$-selfdual irreducible supercuspidal $\Fl$-representation of $\GL_n(E)$. Suppose that $\tilde{\pi}_1$ and $\tilde{\pi}_2$ are two $\sigma$-selfdual irreducible supercuspidal $\Ql$-representations of $\GL_n(E)$ such that both $\tilde{\pi}_i$ have modulo $\ell$ reduction isomorphic to $\pi$. Then $\tilde{\pi}_1$ and $\tilde{\pi}_2$ share the same sign, i.e., $\tilde{\pi}_1$ and $\tilde{\pi}_2$ are either conjugate-symplectic or conjugate-orthogonal at the same time, but not both.
\end{lem}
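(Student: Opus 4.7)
The plan is to extract, for each $\sigma$-selfdual lift $\tilde{\pi}_i$, a nonzero bilinear form realising its $\sigma$-selfduality, reduce these forms modulo $\ell$ to obtain $(G,\sigma)$-invariant bilinear forms on $\pi$, and then compare them via Schur's lemma on $\pi$.

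\textbf{Step 1: extracting the signs.} The $\sigma$-selfduality $\cont{\tilde{\pi}_i}\simeq\sdual{\tilde{\pi}_i}$ together with Schur's lemma for the absolutely irreducible representation $\tilde{\pi}_i$ yields a nonzero $\Ql$-bilinear form $\tilde{B}_i$ on the space of $\tilde{\pi}_i$, unique up to scalar, satisfying $\tilde{B}_i(\tilde{\pi}_i(g)v,\tilde{\pi}_i(\sigma(g))w)=\tilde{B}_i(v,w)$ for every $g\in\GL_n(E)$. The swapped form $(v,w)\mapsto\tilde{B}_i(w,v)$ is again $(G,\sigma)$-invariant, hence equals $\epsilon_i\tilde{B}_i$ for some $\epsilon_i\in\Ql^{\times}$. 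As the swap squares to the identity, $\epsilon_i^{2}=1$, and $\epsilon_i\in\{\pm 1\}$ is precisely the sign of $\tilde{\pi}_i$.

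\textbf{Step 2: reducing modulo $\ell$.} I would fix $\Zl$-integral lattices $L_i\subset\tilde{\pi}_i$ with $L_i\otimes_{\Zl}\Fl\simeq\pi$. Writing $\tilde{B}_i$ as a $G$-equivariant isomorphism from $\tilde{\pi}_i$ to $\sdual{(\cont{\tilde{\pi}_i})}$, I can rescale by a suitable power of $\ell$ so that it sends $L_i$ into the natural dual lattice of $L_i$ but not into $\ell$ times it. The reduction modulo $\ell$ is then a nonzero $G$-equivariant map $\bar{f}_i\colon\pi\to\sdual{(\cont{\pi})}$, which must be an isomorphism since $\pi$ is irreducible, and corresponds to a nonzero $(G,\sigma)$-invariant bilinear form $\bar{B}_i$ on $\pi$. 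The identity $\tilde{B}_i(w,v)=\epsilon_i\tilde{B}_i(v,w)$ passes to $\bar{B}_i(w,v)=\bar{\epsilon}_i\bar{B}_i(v,w)$ in $\Fl$, where $\bar{\epsilon}_i$ denotes the image of $\epsilon_i$ modulo $\ell$.

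\textbf{Step 3: comparison on $\pi$.} Because $\pi$ is absolutely irreducible over $\Fl$ and $\sigma$-selfdual, Schur's lemma gives $\dim_{\Fl}\Hom_{G}(\pi,\sdual{(\cont{\pi})})=1$, so $\bar{B}_1=\lambda\bar{B}_2$ for a unique $\lambda\in\Fl^{\times}$. Computing $\bar{B}_1(w,v)$ in two ways yields
\[
\bar{\epsilon}_1\bar{B}_1(v,w)=\bar{B}_1(w,v)=\lambda\bar{B}_2(w,v)=\lambda\bar{\epsilon}_2\bar{B}_2(v,w)=\bar{\epsilon}_2\bar{B}_1(v,w),
\]
so $\bar{\epsilon}_1=\bar{\epsilon}_2$ in $\Fl$. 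Since $\ell\neq 2$, the reduction map $\{\pm 1\}\to\Fl$ is injective, which forces $\epsilon_1=\epsilon_2$ and proves the claim.

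The main delicate point is the integrality step in Step 2: one has to calibrate the scaling of $\tilde{B}_i$ together with the lattice $L_i$ so that the reduction $\bar{B}_i$ remains nonzero on $\pi$. This is routine for the admissible supercuspidal representations $\tilde{\pi}_i$, using the standard compatibility between smooth duals and $\ell$-adic reduction of lattices, but does require care.
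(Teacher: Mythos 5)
Your Step~1 identifies the wrong sign, and this is where the argument breaks down. The statement is about the conjugate-orthogonal/conjugate-symplectic dichotomy of the \emph{Langlands parameter} $\tilde{\varphi}_i:W_\EE\to\GL_n(\Ql)$ in the sense of GGP \S7, whereas you extract a sign $\epsilon_i$ from the naive symmetry of a $(\GL_n(\EE),\sigma)$-invariant bilinear form on the (infinite-dimensional) space of the automorphic representation $\tilde{\pi}_i$. These are genuinely different invariants. On the Galois side the correct symmetry condition is twisted by $\varphi(s^2)$ for $s\in W_\FF\setminus W_\EE$ (this twist is forced, since the naive swap fails to remain $(W_\EE,s)$-invariant, as $s^2$ need not be central in $W_\EE$); on the group side $\sigma^2=\mathrm{id}$ makes the naive swap invariant, so your $\epsilon_i$ is well defined, but it is not the GGP sign. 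The case $n=1$ shows the discrepancy concretely: for a character $\tilde{\chi}$ of $\EE^\times$ with $\tilde{\chi}\tilde{\chi}^{\sigma}=\mathbf{1}$, any bilinear form on a one-dimensional space is symmetric, so $\epsilon=+1$ always, while the GGP sign of $\tilde{\chi}$ equals $\tilde{\chi}(s^2)$, which is $-1$ exactly when $\tilde{\chi}|_{\FF^\times}=\tomegaEF$. Thus the assertion ``$\epsilon_i$ is precisely the sign of $\tilde{\pi}_i$'' is unjustified and, in general, false.

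The remedy is to carry out your Steps~2--3 on the Galois side rather than the automorphic side, which is exactly what the paper does: take the finite-dimensional Langlands parameters $\tilde{\varphi}_i$ on spaces $V_i$, the GGP-invariant forms $B_i$, choose $\Zl$-lattices $L_i$ with $L_i\otimes\Fl\simeq V$ (the space of $\varphi_\pi$), pair them with the dual lattices determined by $B_i$, and reduce modulo $\ell$ to get nondegenerate forms $\bar{B}_i$ on $V$. Then Schur's lemma on the \emph{irreducible finite-dimensional} $W_\EE$-module $V$ pins down the sign, and $\ell\neq 2$ ensures the sign is preserved under reduction. This avoids entirely the delicate scaling/integrality issues you flag at the end (smooth duals, admissibility) since everything happens on finite-dimensional spaces, and, more importantly, it works with the invariant the lemma is actually about.
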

\begin{proof}

    Let us denote by $\varphi$ (resp. $\tilde{\varphi}_i$) the Langlands parameter of $\pi$ (resp. $\tilde{\pi}_i$). For $i\in \{1,2\}$, since $\tilde{\pi}_2$ is $\sigma$-selfdual, $\tilde{\varphi}_i$ is either conjugate-symplectic or conjugate-orthogonal, and let us denote by $B_i$ the associated bilinear form. We also denote by $b_i$ the sign of $B_i$. Let us denote by $V_i$ the space of $\tilde{\varphi}_i$ and by $V$ the space of $\varphi$. The non-degenerate bilinear form $B_i:V_i \times V_i \to \Ql$ induces an isomorphism of $W_E$ representation $f_i : V_i^s \to V_i^{\vee}$, where $s \in W_F \setminus W_E$.

    Since $\tilde{\varphi}_i$ is a $\Ql$-lift of $\varphi$, there exists an $\Zl$-lattice $L_i$ of $V_i$ such that $L_i \otimes \Ql \simeq V_i$ and $L_i \otimes \Fl \simeq V$ (as $W_E$ representations). Let $M_i$ be the $\Zl$-lattice of $V_i$ such that $f_i$ induces an isomorphism $L_i^s \to M_i^{\vee}$. The modulo $\ell$ reduction does not depend on the lattice, so we also have $M_i \otimes \Ql \simeq V_i$ and $M_i \otimes \Fl \simeq V$. Let $B_{i,\Zl}$ be the restriction of $B_i$ to $L_i\times M_i$. Its image is a $\Zl$-lattice of $\Ql$. And by definition of $L_i$ and $M_i$ it is non-degenerate as a $\Zl$-bilinear form. Therefore, its modulo $\ell$ reduction gives a non-degenerate bilinear form $\bar{B}_i$ on $V$. As $\ell \neq 2$ the sign $b_i$ is preserved by reduction modulo $\ell$, and is the sign of $\bar{B}_i$. Moreover, $\pi$ is supercuspidal, so $\varphi$ is irreducible. Schur's lemma implies that the sign of a bilinear form on $V$ is unique. Therefore $b_1=b_2$ and this concludes the proof.
\end{proof}

Theorem \ref{thmLiftDistinguished} and Lemma \ref{lemconj} imply the following proposition.

\begin{prop}
    \label{proLiftDistComplete}
    Let $\pi$ be a supercuspidal $\Fl$-representation of $\GL_n(\EE)$. Then the following assertions are equivalent:
    \begin{enumerate}
        \item $\pi$ is distinguished.
        \item There exists a $\Ql$-lift $\tilde{\pi}$ of $\pi$ which is distinguished.
    \end{enumerate}

    Moreover, when these conditions are satisfied, then all the $\sigmaEF$-selfdual $\Ql$-lifts $\tilde{\pi}$ of $\pi$ are distinguished.
\end{prop}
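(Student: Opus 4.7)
The statement has two parts: the equivalence (1)\,$\Leftrightarrow$\,(2) and the ``moreover'' assertion. I would prove them separately, both as consequences of the machinery already built in the section.

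The direction (1)\,$\Rightarrow$\,(2) follows directly from Theorem~\ref{thmLiftDistinguished}, applied with ambient group $\GL_n(\EE)$ and closed subgroup $H=\GL_n(\FF)$: it supplies a supercuspidal, $\GL_n(\FF)$-distinguished $\Ql$-lift of $\pi$. For (2)\,$\Rightarrow$\,(1) I would run a type-theoretic argument mirroring the proof of Theorem~\ref{thmLiftDistinguished} in reverse. Write $\tilde\pi=\cInd_J^{\GL_n(\EE)}\tilde\Lambda$ and $\pi=\cInd_J^{\GL_n(\EE)}\Lambda$ with $\tilde\Lambda$ an extended maximal simple type whose reduction modulo $\ell$ is $\Lambda$. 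Frobenius reciprocity together with the Mackey formula produces, from a non-zero element of $\Hom_{\GL_n(\FF)}(\tilde\pi,\Ql)$, a non-zero element of $\Hom_{J^g\cap H}(\tilde\Lambda^g,\Ql)$ for some $g$. Since $\tilde\Lambda^g$ is finite-dimensional, any $\Zl$-lattice in it is finitely generated, so one can rescale the form to take values in $\Zl$ but not in $\ell\Zl$; reducing modulo $\ell$ yields a non-zero element of $\Hom_{J^g\cap H}(\Lambda^g,\Fl)$. Frobenius reciprocity and Mackey once more deliver the $\GL_n(\FF)$-distinction of $\pi$.

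For the ``moreover'' part, assume (1) (equivalently (2)). Theorem~\ref{thmLiftDistinguished} yields one distinguished supercuspidal $\Ql$-lift $\tilde\pi_0$, whose Langlands parameter is conjugate-orthogonal by the complex Jacquet conjecture (Kable's theorem). Let $\tilde\pi$ be any $\sigmaEF$-selfdual $\Ql$-lift of $\pi$. Lemma~\ref{lemconj} applied to the pair $(\tilde\pi_0,\tilde\pi)$ shows that they share the same sign, so $\tilde\pi$ is also conjugate-orthogonal, and Kable's dichotomy forces $\tilde\pi$ to be distinguished.

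The proposition is essentially a formal consequence of Theorem~\ref{thmLiftDistinguished}, Lemma~\ref{lemconj}, and Kable's classical theorem; I do not foresee any serious obstacle. The one technical subtlety to watch for is in (2)\,$\Rightarrow$\,(1): one should not attempt to rescale a linear form directly on a $\Zl$-lattice of the infinite-dimensional $\tilde\pi$ (where lattices are not finitely generated over $\Zl$ and the image of a form could fill $\Ql$), but instead descend via Mackey to the finite-dimensional type $\tilde\Lambda$, where rescaling is harmless.
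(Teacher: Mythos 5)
Your reconstruction is essentially correct, and the overall architecture matches the paper's: (1)\,$\Rightarrow$\,(2) from Theorem~\ref{thmLiftDistinguished}, some reduction-mod-$\ell$ argument for (2)\,$\Rightarrow$\,(1), and the ``moreover'' assertion via Lemma~\ref{lemconj} plus Kable's dichotomy. That said, two points are worth flagging.

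For (2)\,$\Rightarrow$\,(1) the paper does not rerun Mackey theory in reverse; it simply invokes the reduction-of-invariant-linear-forms argument of Kurinczuk--Matringe \cite[Thm.~3.4]{KuMa}. Your type-theoretic alternative is a legitimate route for supercuspidals and sidesteps the lattice issues you correctly identify (an admissible $\Zl$-lattice of the infinite-dimensional $\tilde{\pi}$ is not finitely generated, so the naive rescaling is problematic there). What it costs you is that you have to justify that $\tilde{\pi}$ can be written $\cInd_J^{G}\tilde{\Lambda}$ with the \emph{same} group $J$ and with $\tilde{\Lambda}\otimes\Fl\cong\Lambda$; this is true because the reduction modulo $\ell$ of an extended maximal simple type over $\Ql$ is again an extended maximal simple type and compact induction commutes with reduction, but it should be said. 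The citation route is shorter and more robust (it doesn't use types at all), while your route keeps the argument self-contained.

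For the ``moreover'' part, your argument (apply Lemma~\ref{lemconj} to $\tilde{\pi}_0$ and $\tilde{\pi}$, then invoke Kable) is exactly what the paper does --- but only in residue characteristic-zero base fields. In positive characteristic the paper instead cites \cite[Thm.~10.11]{vincent2019supercusp}. Your version would still be valid, since the paper notes earlier that Kable's dichotomy extends to odd positive characteristic via \cite[Appendix~A]{AKMSS} or \cite{Jo}, but you should at least flag that you are using this extension; the authors evidently preferred the direct citation in that case. So there is no gap, just an unstated appeal to the positive-characteristic version of the complex dichotomy.
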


\begin{proof}

    The implication $(1) \Rightarrow (2)$ is given by Theorem \ref{thmLiftDistinguished} applied with $\GL_n(\EE)$ and $H=\GL_n(\FF)$. The other direction $(2) \Rightarrow (1)$ follows from an argument of reduction of invariant linear forms as in \cite[Thm. 3.4]{KuMa}.

    To conclude, when $\FF$ has characteristic zero, if $\pi$ is distinguished, then all the $\sigmaEF$-selfdual $\Ql$-lifts of $\pi$ are distinguished by Lemma \ref{lemconj}. For $\FF$ of positive characteristic, this follows from \cite[Thm. 10.11]{vincent2019supercusp}.
\end{proof}

We can now prove the disjunction theorem.

\begin{prop}
    \label{proDisjonction}
    Let $\pi$ be a supercuspidal $\Fl$-representation of $\GL_n(\EE)$ distinguished by $\GL_n(\FF)$. Then
    \[
        \dim\Hom_{\GL_n(\FF)}(\pi,\omegaEF)=0.
    \]
\end{prop}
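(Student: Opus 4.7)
The plan is to argue by contradiction, assuming $\pi$ is simultaneously $\GL_n(\FF)$-distinguished and $(\GL_n(\FF),\omegaEF)$-distinguished, and to derive a contradiction by transferring the problem to supercuspidal complex representations, where the disjunction is known by the classical Jacquet conjecture of Kable.

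First, by Proposition \ref{proLiftDistComplete} I obtain a supercuspidal $\Ql$-lift $\tilde{\pi}_1$ of $\pi$ that is $\GL_n(\FF)$-distinguished. By the theorem of Kable, $\tilde{\pi}_1$ is automatically $\sigmaEF$-selfdual and its Langlands parameter $\tilde{\varphi}_1$ is conjugate-orthogonal.

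Next I would produce a second supercuspidal $\Ql$-lift $\tilde{\pi}_2$ of $\pi$ witnessing the $(\GL_n(\FF),\omegaEF)$-distinction. The cleanest route is to twist: since $\omegaEF$ has finite order, it extends to a smooth character $\mu$ of $\EE^\times$ (any character of a closed subgroup of a locally compact abelian group extends, by Pontryagin duality). The twist $\pi \otimes (\mu^{-1}\circ\det)$ is then a $\GL_n(\FF)$-distinguished supercuspidal $\Fl$-representation; applying Theorem \ref{thmLiftDistinguished} to it and twisting back by a $\Ql$-lift of $\mu$ yields a supercuspidal $\Ql$-lift $\tilde{\pi}_2$ of $\pi$ that is $(\GL_n(\FF),\omegaEF)$-distinguished. (Alternatively, the proof of Theorem \ref{thmLiftDistinguished} adapts verbatim by replacing the trivial character of $H$ throughout by $\omegaEF$, which has a $\Zl$-lift because it is of finite order.) By the complex Jacquet conjecture, $\tilde{\pi}_2$ is again $\sigmaEF$-selfdual, and its Langlands parameter $\tilde{\varphi}_2$ is conjugate-symplectic.

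Both $\tilde{\pi}_1$ and $\tilde{\pi}_2$ are now $\sigmaEF$-selfdual supercuspidal $\Ql$-lifts of the same $\Fl$-representation $\pi$, so Lemma \ref{lemconj} forces them to share the same sign. But $\tilde{\varphi}_1$ is conjugate-orthogonal and $\tilde{\varphi}_2$ is conjugate-symplectic, which is the sought-after contradiction. The only delicate point to secure is the construction of $\tilde{\pi}_2$ as an $(\GL_n(\FF),\omegaEF)$-distinguished lift; once this is in place, Lemma \ref{lemconj} together with Kable's theorem finishes the argument immediately.
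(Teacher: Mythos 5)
Your proof is correct and takes essentially the same route as the paper: the paper also constructs an $\tomegaEF$-distinguished supercuspidal $\Ql$-lift via the same twisting trick, and then derives the contradiction from the complex disjunction theorem together with the sign-preservation statement that you extract directly from Lemma \ref{lemconj} and Kable's theorem, and that the paper packages as the ``moreover'' clause of Proposition \ref{proLiftDistComplete}. The only small point to tidy is that the $\Ql$-lift of your extending character $\mu$ should be chosen so that its restriction to $F^\times$ equals the canonical lift $\tomegaEF$ (lift $\tomegaEF$ to a finite-order character of $E^\times$ first and take $\mu$ to be its reduction), so that the resulting $\tilde{\pi}_2$ is genuinely $\tomegaEF$-distinguished and hence $\sigmaEF$-selfdual.
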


\begin{proof}
    We prove the result by contradiction. Let us assume that $\pi$ is $\omegaEF$-distinguished. Let $\chi$ be a character of $\EE^{\times}$ extending $\omegaEF$. Let $\pi':=\pi(\chi^{-1} \circ \det)$. Then $\pi'$ is a supercuspidal representation and is distinguished since $\pi$ is $\omegaEF$-distinguished. Applying Theorem \ref{thmLiftDistinguished} to $\pi'$ we find a lift $\tilde{\pi}$ of $\pi$ which is $\tomegaEF$-distinguished, where $\tomegaEF$ is the canonical $\ell$-adic lift of $\omegaEF$, that is the $\Ql$-character of $\FF^{\times}$ of kernel $\Nm_{\EE/\FF}(\FF^{\times})$.

    With Theorem \ref{thmLiftDistinguished} we also have a distinguished supercuspidal $\Ql$-lift of $\pi$.
    Thanks to  Proposition \ref{proLiftDistComplete}, all $\sigma$-selfdual $\Ql$-lifts are distinguished by $\GL_{n}(\FF)$. In particular, $\tilde{\pi}$ is $\GL_{n}(\FF)$-distinguished, which
    contradicts the fact that a $\sigmaEF$-selfdual supercuspidal $\Ql$-representation cannot be both distinguished and $\omegaEF$-distinguished.
\end{proof}

We are left to prove the dichotomy theorem. To do that, we need a theorem analogous to Theorem \ref{thmLiftDistinguished} but for $\sigmaEF$-selfdual representations.

\begin{prop}
    \label{proLiftSdual}
    Let $\pi$ be a $\sigmaEF$-selfdual supercuspidal representation of $\GL_n(\EE)$ over $\Fl$. Then there exists a $\Ql$-lift $\tilde{\pi}$ of $\pi$ which is $\sigmaEF$-selfdual.
\end{prop}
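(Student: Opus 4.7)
The plan is to prove this by a parity argument: I would exhibit a finite set of odd cardinality on which $\sigmaEF$-duality acts as an involution, and use that a fixed point of such an involution produces the desired $\sigmaEF$-selfdual lift.

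First, using the type-theoretic machinery of Mínguez-Sécherre, I would write $\pi = \cInd_J^G \Lambda$ for an extended maximal simple type $(J,\Lambda)$, so that supercuspidal $\Ql$-lifts of $\pi$ correspond bijectively to $\Ql$-lifts of $\Lambda$ via compact induction. By Proposition \ref{propProjEnvType}(2), these lifts are precisely the irreducible constituents of $\mathcal{P} \otimes_{\Zl} \Ql$, and hence form a finite set. Let $\mathcal{L}$ denote the set of inertial equivalence classes of $\Ql$-supercuspidal lifts of $\pi$. Since $\sigmaEF$-duality commutes with unramified twisting and $\pi$ is $\sigmaEF$-selfdual, the operation $\tilde{\pi} \mapsto \sdual{\cont{\tilde{\pi}}}$ descends to a well-defined involution on $\mathcal{L}$.

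The central step will be verifying that $|\mathcal{L}|$ is odd. Writing $\Lambda|_{J^0} = \kappa \otimes \rho$ where $\rho$ is inflated from a cuspidal $\Fl$-representation of the finite reductive group $J^0/J^1 \simeq \GL_m(k_K)$, I would reduce the count to the number of $\Ql$-lifts of $\rho$, modulo the equivalence induced by the freedom in extending from $J^0$ to $J$ and in unramified twisting of $\pi$. Using the Deligne-Lusztig parametrization of cuspidal representations of finite general linear groups, the number of such lifts is a power of $\ell$, and since $\ell$ is odd this gives the desired oddness of $|\mathcal{L}|$. Once this is in hand, the involution on $\mathcal{L}$ must have a fixed point: there exists a lift $\tilde{\pi}$ with $\sdual{\cont{\tilde{\pi}}} \simeq \tilde{\pi} \otimes \chi$ for some unramified character $\chi$ of $\EE^\times$.

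Finally, I would adjust $\tilde{\pi}$ within its inertial class to obtain actual $\sigmaEF$-selfduality. Setting $\tilde{\pi}' = \tilde{\pi} \otimes \chi'$ for an unramified character $\chi'$, a direct calculation (using that $\eta^\sigma(\varpi_\EE) = \eta(\varpi_\EE)$ for any unramified character $\eta$) shows that $\sdual{\cont{\tilde{\pi}'}} \simeq \tilde{\pi}'$ if and only if $(\chi')^2 = \chi$. Since $\ell \neq 2$, the pro-$\ell$ group $1 + \ell\Zl$ is uniquely $2$-divisible and square roots in $\Ql^\times$ are unconstrained, so such a $\chi'$ can be chosen compatibly with the requirement that $\tilde{\pi}'$ remains a $\Ql$-lift of $\pi$. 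The main obstacle is the counting in the second step: proving that the set of inertial equivalence classes of $\Ql$-supercuspidal lifts has odd cardinality requires fine bookkeeping of type-theoretic parameters and the reduction map on the level of cuspidal representations of finite reductive groups.
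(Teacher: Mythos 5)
Your overall strategy — count inertial classes of supercuspidal $\Ql$-lifts, observe that $\sigmaEF$-duality acts as an involution on this odd-cardinality set, take a fixed point, and then adjust by an unramified twist — is exactly the paper's approach. The first two steps are essentially identical to the paper's (though the paper simply cites the fact that the number of inertial classes of lifts is a power of $\ell$, from Mínguez–Sécherre, rather than re-deriving it through Deligne–Lusztig bookkeeping as you propose).

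However, the final adjustment step has a real gap. You claim that because $1+\ell\Zl$ is uniquely $2$-divisible, one can choose an unramified $\chi'$ with $(\chi')^2 = \chi$ ``compatibly with the requirement that $\tilde{\pi}' = \tilde{\pi}\otimes\chi'$ remains a $\Ql$-lift of $\pi$.'' This is precisely the delicate point, and unique $2$-divisibility does not resolve it. The unramified square root $\chi'$ of $\chi$ is determined up to the quadratic unramified character, and there is no reason a priori that either choice has a reduction $\bar{\chi}'$ lying in the finite cyclic group $X_u(\pi)$ of unramified self-twists of $\pi$. If $\bar{\chi}' \notin X_u(\pi)$, then $\tilde{\pi}\otimes\chi'$ reduces modulo $\ell$ to $\pi\otimes\bar{\chi}' \neq \pi$, so it is \emph{not} a lift of $\pi$, even though it is $\sigmaEF$-selfdual. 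Concretely, if $f(\pi)=1$ and $\chi=\mathbf{1}$ but $\tilde\chi\neq\mathbf{1}$, the two square roots of $\tilde\chi$ reduce to $\mathbf{1}$ and to the unramified quadratic character; only the first keeps the reduction equal to $\pi$, but that one need not satisfy $(\chi')^2=\tilde\chi$. The paper circumvents this by a two-stage twist: first twist by any unramified $\tilde\eta$ with $\tilde\eta^2=\tilde\chi$ to get a $\sigmaEF$-selfdual $\tilde\pi'$ whose reduction may be $\pi\otimes\eta^{-1}\neq\pi$, and then correct by further twisting by the Teichmüller lift $\tilde\rho$ of $\eta$. The $\sigmaEF$-selfduality survives the second twist only because of a nontrivial comparison between $f(\tilde\pi')$ and $f(\pi)$ (namely $f(\tilde\pi')=f(\pi)\ell^k$), which together with $o(\tilde\rho)\mid 2f(\pi)$ and $\ell$ odd gives $\tilde\rho^2\in X_u(\tilde\pi')$. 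You would need an argument of this kind; the appeal to $2$-divisibility alone is not sufficient.
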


\begin{proof}

    Let us denote by $\delta$ the action of $\sigmaEF$-duality on $R$-representations, that is $V^\delta = (V^\sigmaEF)^{\vee}$. A representation is $\sigmaEF$-selfdual if and only if it is $\delta$ invariant.

    Let $[\pi]$ be the inertial class of $\pi$. Let $\mathcal{I}$ be the set of inertial classes of supercuspidal $\Ql$-lifts of $\pi$. This is a finite set of cardinal a power of $\ell$ (see \cite[Prop. 1.3]{MinguezSecherreJacquetLanglands}). In particular, since $\ell \neq 2$, this cardinal is odd. The action of $\delta$ induces an action on inertial classes of representations and since $\pi$ is $\delta$-invariant, $\delta$ induces an action on $\mathcal{I}$. But the cardinal of $\mathcal{I}$ is odd and $\delta$ is an involution, therefore there exists an inertial class which is $\delta$-invariant, that is there exists $\tilde{\pi}$ a supercuspidal $\Ql$-lift of $\pi$ such that $[\tilde{\pi}^\delta]=[\tilde{\pi}]$. Let $\tilde{\chi}$ be an unramified character such that $\tilde{\pi}^\sigmaEF=\tilde{\pi}^{\vee} \otimes \tilde{\chi}$.

    Since $\tilde{\chi}$ is unramified, it is trivial on the kernel of the norm map, that is $\tilde{\chi} = \tilde{\chi}^{\sigmaEF}$. Therefore, there exists an unramified character $\tilde{\eta}$ such that $\tilde{\chi} = \tilde{\eta} \tilde{\eta}^{\sigmaEF} = \tilde{\eta}^2$. Since $\tilde{\pi}$ is $\ell$-integral, $\tilde{\chi}$ is also $\ell$-integral and so is $\tilde{\eta}$. Let $\tilde{\pi}':=\tilde{\pi} \otimes \tilde{\eta}^{-1}$ such that $\tilde{\pi}'$ is $\sigmaEF$-selfdual. Let us denote by $\pi'=\pi \otimes \eta^{-1}$ its reduction modulo $\ell$ where $\eta$ is the reduction modulo $\ell$ of $\tilde{\eta}$. Since $\tilde{\pi}'$ is $\sigmaEF$-selfdual so is $\pi'$. The representations $\pi$ and $\pi'=\pi \otimes \eta^{-1}$ are both $\sigmaEF$-selfdual. Thus $\pi = \pi \otimes \eta^{-2}$.

    Let $X_u(\pi)$ be the set of unramified characters $\xi$ such that $\pi = \pi \otimes \xi$. It is contained in the cyclic group of order $n$ composed of the character $\xi$ such that $\xi^n=1$ and is characterized by its order $f(\pi)$. Let $o(\eta)$ be the order of $\eta$. We have $o(\eta) \mid 2f(\pi)$. Let $\tilde{\rho}$ be the canonical $\Ql$-lift of $\eta$. In particular $o(\tilde{\rho})=o(\eta) \mid 2f(\pi)$. Let $\tilde{\pi}'':=\tilde{\pi}' \otimes \tilde{\rho}$. It is a supercuspidal $\Ql$-lift of $\pi$. To finish the proof, we are left to prove that it is $\sigmaEF$-selfdual. The representation $\tilde{\pi}'$ being $\sigmaEF$-selft dual, we just need to prove that $\tilde{\pi}'' = \tilde{\pi}'' \otimes \tilde{\rho}^2$, that is that $o(\tilde{\rho}) \mid 2f(\tilde{\pi}')$. The explicit formulas for $f(\tilde{\pi}')$ and $f(\pi)$ show that $f(\tilde{\pi}')=f(\pi)\ell^k$ for some integer $k$ (see for instance \cite[Rem. 3.21]{MinguezSecherre}). Thus $2f(\pi) \mid 2f(\tilde{\pi}')$ and so $o(\tilde{\rho}) \mid 2f(\tilde{\pi}')$. Therefore $\tilde{\pi}''$ is $\sigmaEF$-selfdual.
\end{proof}

\begin{thm}
    \label{thmDichotomy}
    Let $\pi$ be a supercuspidal $\Fl$-representation of $\GL_n(\EE)$. Then $\pi$ is $\sigmaEF$-selfdual if and only if it is distinguished or $\omegaEF$-distinguished by $\GL_n(F)$. Moreover, $\pi$ cannot be both distinguished and $\omegaEF$-distinguished by $\GL_n(F)$.
\end{thm}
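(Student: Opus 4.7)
The plan is to prove the dichotomy half by reducing to Kable's complex-coefficient theorem via a carefully chosen $\sigmaEF$-selfdual $\Ql$-lift, and to read off the disjunction half directly from Proposition \ref{proDisjonction}, which is already in place.

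For the easy direction of the equivalence, I would observe that if $\pi$ is either $\GL_n(\FF)$-distinguished or $(\GL_n(\FF),\omegaEF)$-distinguished, then $\pi$ is $\sigmaEF$-selfdual. This is a general statement about supercuspidal representations distinguished by a Galois involution and follows exactly as in \cite[Thm. 4.1]{vincent2019supercusp}; the argument uses only the behaviour of Hom spaces under $\sigmaEF$-duality and so is valid in the modular setting with $\ell \neq 2$. Conversely, assume $\pi$ is $\sigmaEF$-selfdual. I would invoke Proposition \ref{proLiftSdual} to choose a supercuspidal $\Ql$-lift $\tilde{\pi}$ of $\pi$ that is itself $\sigmaEF$-selfdual. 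The Jacquet conjecture over $\Ql$, proved by Kable \cite{kable} in characteristic zero and extended to positive odd characteristic via \cite[Appendix A]{AKMSS} (or \cite{Jo}), then guarantees that $\tilde{\pi}$ is either $\GL_n(\FF)$-distinguished or $(\GL_n(\FF),\tomegaEF)$-distinguished, where $\tomegaEF$ is the canonical $\Ql$-lift of $\omegaEF$.

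It remains to descend this distinction from $\tilde{\pi}$ to $\pi$. For this I would apply the reduction modulo $\ell$ argument for invariant linear forms used in the proof of Proposition \ref{proLiftDistComplete} and modelled on \cite[Thm. 3.4]{KuMa}: choose a non-zero $(\GL_n(\FF),\tilde{\chi})$-equivariant functional on $\tilde{\pi}$, with $\tilde{\chi} \in \{\mathbf{1}, \tomegaEF\}$, normalize it so that its restriction to a $\Zl[\GL_n(\FF)]$-stable lattice in $\tilde{\pi}$ takes values in $\Zl$ but not all in the maximal ideal, and reduce modulo $\ell$. Since $\tilde{\chi}$ has finite order prime to $\ell$, its reduction is the corresponding character $\chi \in \{\mathbf{1},\omegaEF\}$, so the reduced functional is a non-zero $(\GL_n(\FF),\chi)$-equivariant form on $\pi$. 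This gives $\pi$ distinguished or $\omegaEF$-distinguished, completing the dichotomy, while the disjunction is exactly Proposition \ref{proDisjonction}.

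The main obstacle is already handled by Proposition \ref{proLiftSdual}: producing a $\sigmaEF$-selfdual lift is where the hypothesis $\ell \neq 2$ is essential, since the argument there uses that the set of inertial classes of supercuspidal $\Ql$-lifts of $\pi$ has cardinality a power of $\ell$, forcing the $\sigmaEF$-duality involution to admit a fixed inertial class, and then that any such fixed inertial class contains an honestly $\sigmaEF$-selfdual element after an unramified twist. Once this lift exists, the remaining steps are standard: Kable provides the dichotomy upstairs, and the reduction of invariant linear forms transports it to $\pi$ without further input.
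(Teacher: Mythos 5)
Your proposal is correct and follows exactly the paper's own argument: obtain a $\sigmaEF$-selfdual supercuspidal $\Ql$-lift via Proposition \ref{proLiftSdual}, apply Kable's dichotomy over $\Ql$ (with the positive-characteristic extensions), descend the invariant functional by the reduction-mod-$\ell$ argument of \cite[Thm. 3.4]{KuMa}, and quote Proposition \ref{proDisjonction} for the disjunction and \cite[Thm. 4.1]{vincent2019supercusp} for the reverse implication. The only addition is that you spell out the lattice-normalization step of the descent, which the paper leaves implicit.
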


\begin{proof}
    Assume that $\pi$ is $\sigmaEF$-selfdual. Then there exists a $\sigmaEF$-selfdual lift $\tilde{\pi}$ by Proposition \ref{proLiftSdual}. From the dichotomy theorem in the complex case (see \cite{kable} in characteristic zero and \cite[Thm. A.2]{AKMSS} in positive characteristic, or \cite{Jo}), $\tilde{\pi}$ is either distinguished or $\tomegaEF$-distinguished. Since the reduction modulo $\ell$ preserves distinction (see \cite[Thm. 3.4]{KuMa}), it is also true for $\pi$. The disjunction is Proposition \ref{proDisjonction} and the reciprocity follows from Theorem \ref{thmSecherreDist}.
\end{proof}

\begin{prop}
    \label{proDistConj}
    Let $\pi$ be a $\sigmaEF$-selfdual supercuspidal $\Fl$-representation of $\GL_n(\EE)$ and $\varphi_\pi$ its Langlands parameter. Then $\pi$ is distinguished if and only if $\varphi_\pi$ is conjugate-orthogonal and $\pi$ is $\omegaEF$-distinguished if and only if $\varphi_\pi$ is conjugate-symplectic.
\end{prop}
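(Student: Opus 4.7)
The plan is to reduce the proposition to its complex analogue by means of the lifting machinery already developed in this section. Using Proposition \ref{proLiftSdual}, I would pick a $\sigmaEF$-selfdual supercuspidal $\Ql$-lift $\tilde{\pi}$ of $\pi$, and let $\tilde{\varphi}$ denote its Langlands parameter. Since the modulo $\ell$ Langlands correspondence for supercuspidals is compatible with reduction (\cite[Cor.~2.5]{VignerasLanglands}), $\tilde{\varphi}$ is an $\ell$-integral $\Ql$-lift of $\varphi_\pi$, and both parameters are irreducible and conjugate-self-dual.

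The second step is to transfer the sign from $\tilde{\varphi}$ to $\varphi_\pi$. This is essentially the content of the argument inside the proof of Lemma \ref{lemconj}: since $\ell \neq 2$, the non-degenerate $W_E$-bilinear form witnessing the conjugate-self-duality of $\tilde{\varphi}$ descends, after choosing a suitable pair of $W_E$-stable lattices, to a non-degenerate bilinear form on $\varphi_\pi$ of the same sign; Schur's lemma, applied to the irreducible $\varphi_\pi$, pins down that sign uniquely. Hence $\tilde{\varphi}$ is conjugate-orthogonal (resp.\ conjugate-symplectic) if and only if $\varphi_\pi$ is.

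Next I invoke the complex Jacquet conjecture (Kable \cite{kable}, extended to positive odd characteristic via \cite[Thm.~A.2]{AKMSS}, or \cite{Jo}): the $\sigmaEF$-selfdual supercuspidal $\Ql$-representation $\tilde{\pi}$ is $\GL_n(\FF)$-distinguished if and only if $\tilde{\varphi}$ is conjugate-orthogonal, and $\tomegaEF$-distinguished if and only if $\tilde{\varphi}$ is conjugate-symplectic. Combined with Proposition \ref{proLiftDistComplete}, which transports $\GL_n(\FF)$-distinction between $\tilde{\pi}$ and $\pi$, this establishes the orthogonal half of the statement. For the symplectic half I would twist: choosing a character $\chi$ of $\EE^\times$ with $\chi_{|\FF^\times}=\omegaEF$ and setting $\pi' := \pi \otimes (\chi^{-1}\circ\det)$, one has that $\pi$ is $\omegaEF$-distinguished if and only if $\pi'$ is $\GL_n(\FF)$-distinguished, and $\varphi_\pi$ is conjugate-symplectic if and only if $\varphi_{\pi'}=\varphi_\pi\otimes\chi^{-1}$ is conjugate-orthogonal. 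Alternatively, one can combine the orthogonal case with the dichotomy result of Theorem \ref{thmDichotomy} and the complex analogue of Proposition \ref{proDisjonction} to read off the symplectic case.

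The main obstacle I expect is the sign-descent step: one must verify that the sign of $\tilde{\varphi}$ is well-defined independently of the chosen lift and truly passes through reduction modulo $\ell$, rather than merely the existence of some modular bilinear form. This is exactly what Lemma \ref{lemconj} accomplishes, and it leans essentially on $\ell \neq 2$ together with the irreducibility of $\varphi_\pi$. Once this ingredient is admitted, the proposition follows by assembling the lifting statements and the complex Jacquet conjecture, with no further calculations required.
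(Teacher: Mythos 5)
Your proposal is correct and follows essentially the same route as the paper's proof: lift to a $\sigmaEF$-selfdual supercuspidal $\Ql$-representation, invoke the complex Jacquet conjecture of Kable (with the positive-characteristic extensions), and descend via Proposition~\ref{proLiftDistComplete}. The paper's own proof is even terser---it simply cites Theorem~\ref{thmDichotomy}, Proposition~\ref{proLiftDistComplete}, and \cite{kable}---and does not spell out the sign-transfer step from $\tilde{\varphi}$ to $\varphi_\pi$; your observation that this step is exactly the lattice-reduction argument in the proof of Lemma~\ref{lemconj} (relying on $\ell\neq 2$ and the irreducibility of $\varphi_\pi$ via Schur's lemma) is a useful clarification rather than a deviation, and your handling of the symplectic half by either twisting or the disjunction theorem is also consistent with what the paper has already established.
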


\begin{proof}
    A $\sigmaEF$-selfdual supercuspidal $\Fl$-representation is either distinguished or $\omegaEF$-distinguished by Theorem \ref{thmDichotomy}. By Proposition \ref{proLiftDistComplete} we can lift a distinguished representation to a distinguished representation. Hence the result follows from the complex case in \cite{kable} (the results of \cite{kable} are still valid in positive characteristic by \cite[Appendix A]{AKMSS} or \cite{Jo}).
\end{proof}

\subsection{Supercuspidal representations of $\GL_n(\EE)$ distinguished by unitary groups}

Another example of application of Theorem \ref{thmLiftDistinguished} is for supercuspidal representations distinguished by a unitary involution. Let $G=\GL_n(\EE)$ and assume $\ell \neq 2$. Let $\varepsilon$ be an hermitian matrix in $M_n(\EE)$ ($\sigmaEF({}^{t}\varepsilon)=\varepsilon$). We denote by $\varsigma$ the \emph{unitary involution} on $G$ defined by $\varsigma(g)=\varepsilon \sigma({}^{t}g^{-1}) \varepsilon^{-1}$. In this section, we are interested in supercuspidal $\Fl$-representations of $G$ distinguished by $G^{\varsigma}$. In particular, we are going to prove that a supercuspidal representation $\pi$ of $G$ is distinguished by $G^\varsigma$ if and only if it is $\sigmaEF$-invariant. When $p\neq 2$, this is proved by Zou in \cite{Zou}.

\begin{prop}
    \label{proLiftSigmaInv}
    Let $\pi$ be a $\sigma$-invariant supercuspidal representation of $\GL_n(\EE)$ over $\Fl$. Then there exists a $\Ql$-lift $\tilde{\pi}$ of $\pi$ which is $\sigma$-invariant.
\end{prop}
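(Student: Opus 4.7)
The plan is to transfer the problem to the Galois side via the modulo $\ell$ local Langlands correspondence. More precisely, I will construct a $\sigma$-invariant irreducible $\Ql$-lift of the Langlands parameter of $\pi$, and then invoke the mod $\ell$ LLC over $\EE$ in reverse to obtain a $\sigma$-invariant supercuspidal $\Ql$-lift of $\pi$ itself.

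Let $\varphi : W_\EE \to \GL_n(\Fl)$ be the Langlands parameter of $\pi$, which is irreducible. Since $\pi^\sigma \simeq \pi$, we have $\varphi^\sigma \simeq \varphi$, where $\varphi^\sigma(w) := \varphi(sws^{-1})$ for a fixed $s \in W_\FF \setminus W_\EE$. Pick an intertwiner $T \in \GL_n(\Fl)$ realizing this isomorphism. Applying the intertwining relation twice shows that $\varphi(s^2)^{-1} T^2$ commutes with $\varphi(W_\EE)$, so by Schur's lemma $T^2 = \lambda \varphi(s^2)$ for some $\lambda \in \Fl^\times$. Since $\Fl$ is algebraically closed, I can rescale $T$ by a square root of $\lambda^{-1}$ to achieve $T^2 = \varphi(s^2)$. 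The formulas $\Phi|_{W_\EE} = \varphi$ and $\Phi(s) = T$ then define a continuous representation $\Phi : W_\FF \to \GL_n(\Fl)$; it is irreducible because its restriction to $W_\EE$ already is.

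Next, by the construction of the modulo $\ell$ Langlands correspondence in \cite{VignerasLanglands}, every irreducible continuous $\Fl$-representation of $W_\FF$ of dimension $n$ is the reduction modulo $\ell$ of an irreducible continuous $\Ql$-representation. Pick such a lift $\tilde{\Phi}$ of $\Phi$ and set $\tilde{\varphi} := \tilde{\Phi}|_{W_\EE}$; by construction $\tilde{\varphi}$ is $\sigma$-invariant and reduces to $\varphi$. The key remaining point is the irreducibility of $\tilde{\varphi}$. By Clifford theory for the index-$2$ inclusion $W_\EE \subset W_\FF$, if $\tilde{\varphi}$ were reducible it would split as $\tilde{\varphi}_1 \oplus \tilde{\varphi}_1^\sigma$ with $\tilde{\varphi}_1 \not\simeq \tilde{\varphi}_1^\sigma$, each of dimension $n/2$. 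Reducing modulo $\ell$, $\varphi$ would then be the sum of two non-zero representations of strictly smaller dimension, contradicting its irreducibility.

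Finally, applying the modulo $\ell$ LLC for $\GL_n(\EE)$, the irreducible $\sigma$-invariant parameter $\tilde{\varphi}$ corresponds to a supercuspidal $\Ql$-representation $\tilde{\pi}$ of $\GL_n(\EE)$, which lifts $\pi$ by compatibility of the correspondence with reduction modulo $\ell$, and which satisfies $\tilde{\pi}^\sigma \simeq \tilde{\pi}$ since $\tilde{\varphi}^\sigma \simeq \tilde{\varphi}$. I expect the most delicate point to be the lifting step for the $\Fl$-representation $\Phi$ of the full Weil group $W_\FF$ to an irreducible $\Ql$-representation; however, this is precisely what underlies Vignéras's construction of the $\Fl$-LLC for $\GL_n(\FF)$, so no additional work is required.
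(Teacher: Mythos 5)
Your proof is correct, but it takes a genuinely different route from the paper's. The paper stays on the automorphic side: the set of inertial classes of supercuspidal $\Ql$-lifts of $\pi$ has cardinality a power of $\ell$, hence odd, so $\sigma$ fixes some class $[\tilde\pi]$; writing $\tilde\pi^{\sigma}\simeq\tilde\pi\otimes\tilde\chi$ with $\tilde\chi$ unramified, the paper then shows $\tilde\chi\in X_u(\tilde\pi)$ by an arithmetic comparison of the orders $o(\tilde\chi)$, $o(\chi)$, $f(\pi)$, $f(\tilde\pi)$, using $\ell$ odd a second time. You instead pass to the Galois side: extend the irreducible parameter $\varphi$ of $\pi$ to an irreducible $\Phi$ of $W_\FF$ by normalizing an intertwiner via Schur's lemma, lift $\Phi$ to an irreducible $\Ql$-representation $\tilde\Phi$ of $W_\FF$, restrict to $W_\EE$, exclude reducibility of $\tilde\Phi|_{W_\EE}$ by Clifford theory together with mod $\ell$ reduction, and transfer back through the Langlands correspondence. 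Your argument is conceptually transparent and, notably, makes no use of $\ell\neq 2$ (every scalar of $\Fl^{\times}$ has a square root since $\Fl$ is algebraically closed). The trade-off is that you invoke the full modulo $\ell$ correspondence for $\GL_n(\EE)$ as a black box, including its $\sigma$-equivariance and compatibility with reduction, together with the lifting of irreducible $\Fl$-representations of $W_\FF$ to $\Ql$ — all correct facts from \cite{VignerasLanglands}, but heavier inputs than the paper's self-contained counting on the automorphic side. It would be worth stating explicitly that the existence of the irreducible $\ell$-adic lift $\tilde\Phi$ of $\Phi$ is a Galois-side analogue of, and not the same statement as, the supercuspidal lifting theorem proved in Section 3.1, even though both are part of Vign\'eras's package.
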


\begin{proof}

    We start with the same argument as in the proof of Proposition \ref{proLiftSdual}. The number of inertial classes of supercuspidal $\Ql$-lifts of $\pi$ is a power of $\ell$, hence is odd. The involution $\sigma$ permutes its elements, so there exists an inertial class $[\tilde{\pi}]$ which is $\sigma$-invariant. Let $\tilde{\chi}$ be an unramified character such that $\tilde{\pi}^\sigma=\tilde{\pi} \otimes \tilde{\chi}$.

    Let us prove that $\tilde{\pi}$ is in fact $\sigma$-invariant. As in the proof of  Proposition \ref{proLiftSdual}, let $X_u(\tilde{\pi})$ (resp. $X_u(\pi)$) be the set of unramified $\Ql$-characters $\tilde{\xi}$ (resp. unramified $\Fl$-characters $\xi$) such that $\tilde{\pi} = \tilde{\pi} \otimes \tilde{\xi}$ (resp. $\pi = \pi \otimes \xi$). It is characterized by its order $f(\tilde{\pi})$ (resp. $f(\pi)$). Let $o(\tilde{\chi})$ be the order of $\tilde{\chi}$. To finish the proof, we need to prove that $o(\tilde{\chi})$ divides $f(\tilde{\pi})$. Indeed, this implies that $\tilde{\chi} \in X_u(\tilde{\pi})$ and $\tilde{\pi}^\sigma \simeq \tilde{\pi} \otimes \tilde{\chi} \simeq \tilde{\pi}$.

    Let $\chi$ be the modulo $\ell$ reduction of $\tilde{\chi}$ of order $o(\chi)$. By taking the modulo $\ell$ reduction we have that $\pi \simeq \pi \otimes \chi$, thus $\chi \in X_u(\pi)$ and $o(\chi) \mid f(\pi)$. Also there exists an integer $a\in \mathbb{N}$ such that $o(\tilde{\chi}) = o(\chi) \ell^{a}$. The explicit formulas for $f(\tilde{\pi})$ and $f(\pi)$ show that there exists an integer $b \in \mathbb{N}$ such that $f(\tilde{\pi}) = f(\pi) \ell^{b}$ and $f(\pi)$ is prime to $\ell$.

    Since $\tilde{\chi}$ is unramified, it is trivial on the kernel of the norm map and $\tilde{\chi}=\tilde{\chi}^{\sigma}$. In particular, $\tilde{\pi}\simeq (\tilde{\pi} \otimes \tilde{\chi})^{\sigma} \simeq \tilde{\pi} \otimes \tilde{\chi}^2$ and $o(\tilde{\chi}) \mid 2f(\tilde{\pi})$. Since $\ell$ is odd, we get that $a \leq b$. At the end, $o(\chi) \mid f(\pi)$ and $\ell^a \mid \ell^b$ thus $o(\tilde{\chi}) \mid f(\tilde{\pi})$ and this finishes the proof.

\end{proof}

\begin{thm}
    \label{thmDistUnitary}
    Let $\pi$ be a supercuspidal $\Fl$-representation of $G=\GL_n(\EE)$ and $\varsigma$ a unitary involution of $G$. Then $\pi$ is distinguished by $G^{\varsigma}$ if and only if $\pi^{\sigma}\simeq \pi$.
\end{thm}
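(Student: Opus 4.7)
The plan is to reduce both directions to the complex setting, using the lifting theorems already established, together with the complex analogue due to Zou.

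For the forward implication, suppose $\pi$ is $G^{\varsigma}$-distinguished. I would apply Theorem \ref{thmLiftDistinguished} with $H=G^{\varsigma}$ (which is a closed subgroup of $G = \GL_n(\EE)$ viewed as a $p$-adic group): this produces a supercuspidal $\Ql$-lift $\tilde{\pi}$ of $\pi$ which is itself $G^{\varsigma}$-distinguished. The complex version of the theorem (Zou's result from \cite{Zou} when $p\neq 2$, and the characteristic zero complex case for $p=2$) then gives $\tilde{\pi}^{\sigmaEF}\simeq \tilde{\pi}$. Since the $\sigmaEF$-twist commutes with reduction modulo $\ell$ and isomorphism of integral $\Ql$-representations is preserved after reduction of any stable lattice, we conclude $\pi^{\sigmaEF}\simeq \pi$.

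For the reverse implication, suppose $\pi^{\sigmaEF}\simeq \pi$. I would invoke Proposition \ref{proLiftSigmaInv} to obtain a $\sigmaEF$-invariant supercuspidal $\Ql$-lift $\tilde{\pi}$ of $\pi$. Applying the complex case of the theorem in the other direction, $\tilde{\pi}$ is $G^{\varsigma}$-distinguished. It then remains to descend distinction under reduction modulo $\ell$: this is exactly the argument of reduction of invariant linear forms (as in the proof of Proposition \ref{proLiftDistComplete}, using \cite[Thm.~3.4]{KuMa}), which applies to any closed subgroup $H$ and any supercuspidal representation, and therefore shows that $\pi$ is $G^{\varsigma}$-distinguished.

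The main obstacle is ensuring that the complex input is available in the generality needed, in particular for $p=2$ when $\FF$ has characteristic zero (which is precisely the new case compared with \cite{Zou}). The lifting strategy forces us to use a complex statement valid at the relevant residual characteristic; fortunately the classical arguments over $\C$ do not really rely on $p$ being odd, and the result extends. Once this is granted, the two directions are formally symmetric: one direction needs the general lifting of distinction (Theorem \ref{thmLiftDistinguished}), the other needs the specific lifting of $\sigmaEF$-invariance (Proposition \ref{proLiftSigmaInv}), and in each case one transfers the conclusion back by reduction modulo $\ell$.
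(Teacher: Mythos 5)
Your proposal matches the paper's proof: lift to $\Ql$ via Theorem \ref{thmLiftDistinguished} and Proposition \ref{proLiftSigmaInv}, invoke the complex result, and descend by reduction modulo $\ell$ using \cite[Thm.~3.4]{KuMa}. One small precision you leave implicit: for $\EE$ of positive characteristic the paper does not rely on the lifting argument at all but simply cites Zou's \cite[Thm.~1.1]{Zou} directly, so the "complex input" only needs to be supplied in characteristic zero (where \cite{HakimMurnaghan} and \cite{lapid2012unitary} cover all residue characteristics including $p=2$).
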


\begin{proof}
    When $\EE$ has characteristic zero, since the modulo $\ell$ reduction preserves $\sigma$-invariance and the property of being distinguished for supercuspidal representations (\cite[Thm. 3.4]{KuMa}), the result follows from the complex case (see \cite[Section 4, Corollary]{HakimMurnaghan} and \cite[Thm. 0.2]{lapid2012unitary}) by Theorem \ref{thmLiftDistinguished} and Proposition \ref{proLiftSigmaInv}. For a field $\EE$ of positive characteristic this follows from \cite[Thm. 1.1]{Zou}.
\end{proof}

\begin{rem}
    The implication, $\pi$ distinguished by $G^{\varsigma}$ implies $\pi$-invariant, is also proved in \cite[Thm. 4.1]{Zou} similarly.
\end{rem}

\section{The \texorpdfstring{$\GL_2(F)$}{GL2(F)}-distinguished representations}

\label{secGL2}

This section is devoted to the study of $\GL_2(\FF)$-distinguished $\ell$-modular representations of $\GL_2(\EE)$, with $\ell \neq 2$. The case of supercuspidal representations has been dealt with in Section \ref{secSupercuspDist}. Using Mackey theory, we will describe non-supercuspidal distinguished representations, hence giving a complete classification of all $\GL_2(\FF)$-distinguished representations.

\bigskip

Before we start, let us recall a result of Sécherre about $\ell$-modular $\GL_2(\FF)$-distinguished representations of $\GL_2(\EE)$. Denote by $\mathbf{1}_2$ the trivial character of $\GL_2(\FF)$.

\begin{thm}[{\cite[Thm. 4.1]{vincent2019supercusp}}]
    \label{thmSecherreDist}

    Let $\pi$ be a distinguished irreducible $\ell$-modular representation of $\GL_2(\EE)$. Then:
    \begin{enumerate}
        \item The central character of $\pi$ is trivial on $\FF^{\times}$.
        \item The contragredient representation $\cont{\pi}$ is distinguished.
        \item The space $\Hom_{\GL_2(\FF)}(\pi,\mathbf{1}_2)$ has dimension 1.
        \item The representation $\pi$ is $\sigmaEF$-selfdual, that is $\cont{\pi} \simeq \sdual{\pi}$.
    \end{enumerate}
\end{thm}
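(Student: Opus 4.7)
My plan is to prove (1) by a direct computation and (2), (3), (4) by adapting the classical Gelfand--Kazhdan and Flicker methods to the $\ell$-modular setting. Set $G := \GL_2(\EE)$, $H := \GL_2(\FF)$, and fix a nonzero $\lambda \in \Hom_H(\pi,\mathbf{1}_2)$. Assertion (1) is immediate: for $z \in \FF^\times$ one has $zI_2 \in H$, so $\omega_\pi(z)\lambda(v) = \lambda(\pi(zI_2)v) = \lambda(v)$ for all $v \in \pi$, forcing $\omega_\pi(z) = 1$.

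For (4), I would use the anti-involution $\tau\colon g \mapsto \sigmaEF({}^{t}g^{-1})$ of $G$. This $\tau$ stabilizes $H$, and every $(H,H)$-double coset of $G$ is $\tau$-stable---this is the classical Gelfand pair property for $(G,H)$, provable via rational canonical form over $\FF$ and independent of the coefficient field. The Gelfand--Kazhdan lemma then forces every $H$-bi-invariant $\Fl$-valued distribution on $G$ to be $\tau$-invariant. Applying this to matrix coefficient distributions of $\pi$ and adapting Flicker's argument, one uses the $\tau$-symmetry to construct a nondegenerate $G$-equivariant pairing $\sdual{\pi} \times \pi \to \Fl$, yielding an isomorphism $\sdual{\pi} \simeq \cont{\pi}$ and hence (4).

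Assertion (2) then follows formally: because $\sigmaEF$ stabilizes $H$, the form $\lambda \circ \sigmaEF$ realizes distinction of $\sdual{\pi}$, and via (4) this transfers to $\cont{\pi}$. Assertion (3) is the classical Gelfand--Kazhdan multiplicity one statement: the $\tau$-invariance of matrix coefficient distributions forces $\dim_R \Hom_H(\pi,\mathbf{1}_2) \leq 1$, and combined with the distinction hypothesis this yields equality.

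The step I expect to be the main obstacle is the adaptation of the matrix-coefficient argument over $\Fl$. In the complex case Flicker uses analytic tools (Whittaker models, Bessel functions, unitary structures) that are unavailable in the modular setting. The cleanest modular substitute is to work purely algebraically with smooth compactly supported $\Fl$-valued functions and the $\Fl$-valued Haar measure on $G$ (which exists because $\ell \neq p$), exploiting the admissibility theory of smooth irreducible $\Fl$-representations of $p$-adic groups from \cite{vigneras}. The $\tau$-invariance of each $H$-double coset is purely geometric and transfers without change; the real technical work is carrying the distributional and nonvanishing arguments through in positive characteristic.
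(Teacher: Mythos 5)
The paper does not prove this statement: Theorem~\ref{thmSecherreDist} is quoted verbatim from S\'echerre (Theorem~4.1 of \cite{vincent2019supercusp}), with the remark that the complex supercuspidal case goes back to Hakim \cite{hakim1994} and Prasad \cite{prasad1992}. Your attempt is therefore an independent reproof. The Gelfand--Kazhdan/Flicker circle of ideas you invoke is indeed the right one, but as written the proposal contains a concrete error and a genuine gap.

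The concrete error: the map $g \mapsto \sigmaEF({}^tg^{-1})$ is an \emph{automorphism} of $\GL_2(\EE)$ (it is the composite of the automorphism $g\mapsto {}^tg^{-1}$ with $\sigmaEF$), not an anti-involution. The Gelfand--Kazhdan machinery requires the anti-automorphism $\tau(g)=\sigmaEF({}^tg)$; it is this $\tau$ that fixes every $(H,H)$-double coset and for which the geometric lemma and the resulting multiplicity bound are stated.

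The genuine gap concerns item~(4). Flicker's construction of a $\GL_2(\EE)$-invariant pairing between $\pi$ and $\sdual\pi$ by integrating $\lambda(\pi(g)v)\,\lambda(\pi(\sigmaEF(g))w)$ over the group modulo its center and $\GL_2(\FF)$ requires matrix coefficients to be compactly supported modulo the center, which holds precisely when $\pi$ is supercuspidal. The theorem, however, is asserted for \emph{all} irreducible distinguished $\ell$-modular representations of $\GL_2(\EE)$, including irreducible principal series and the (twisted) Steinberg/special representations, where this integral does not converge. For those one must either check $\sigmaEF$-selfduality directly from the explicit distinction criteria (which is in essence what this paper reproves independently in Section~\ref{secGL2}) or argue through the cuspidal support. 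Your sketch does not address the non-cuspidal case at all, and ``construct a nondegenerate $G$-equivariant pairing'' is left unexplained even in the cuspidal case.

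A structural remark: your ordering (4)$\Rightarrow$(2)$\Rightarrow$(3) makes the hardest statement a prerequisite for the easy ones. In fact (2) needs neither (4) nor Flicker: for irreducible representations of $\GL_n$ over any algebraically closed field of characteristic prime to $p$, $\cont\pi\cong\pi\circ\iota$ where $\iota(g)={}^tg^{-1}$; since $\iota$ is defined over $\FF$ it stabilizes $\GL_2(\FF)$ and gives $\Hom_{\GL_2(\FF)}(\cont\pi,\mathbf{1})\cong\Hom_{\GL_2(\FF)}(\pi,\mathbf{1})$ directly. Then the Gelfand--Kazhdan bound $\dim\Hom_{H}(\pi,\mathbf{1})\cdot\dim\Hom_{H}(\cont\pi,\mathbf{1})\le 1$ combined with (2) yields (3). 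Item~(4) is the substantive extra statement and should be isolated as such. Your assessment that the geometric part of Gelfand--Kazhdan (the $\tau$-stability of double cosets) transfers to $\Fl$ coefficients unchanged is correct, and the non-vanishing of Flicker's pairing over $\Fl$ is a fixable technical point; the two issues above, however, are not.
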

For the complex supercuspidal representations, it was proved by Hakim \cite{hakim1994} and Prasad \cite{prasad1992}.

\subsection{Non-supercuspidal representations}

\label{secNonSupercuspDist}
In Section \ref{secSupercuspDist}, we studied when supercuspidal representations are distinguished. In this section, we will deal with non-supercuspidal representations including the principal series representations.

\bigskip

Let us start with an easy lemma that {is needed} for the rest of this section.
\begin{lem}
    \label{lemChiSigma}
    Let $\chi$ be a character of $\EE^{\times}$. Then $\chi \chi^{\sigma}=\mathbf{1}$ if and only if $\chi_{|{F^\times}}=\mathbf{1}$ or $\chi_{|{F^\times}}=\omegaEF$.
\end{lem}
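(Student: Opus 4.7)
The plan is to rewrite the condition $\chi\chi^\sigma = \mathbf{1}$ on $E^\times$ as a condition on $\chi|_{F^\times}$ involving the norm map, and then invoke local class field theory. The starting observation is that for any $x\in E^\times$ we have
\[
(\chi\chi^\sigma)(x) \;=\; \chi(x)\,\chi(\sigma(x)) \;=\; \chi(x\,\sigma(x)) \;=\; \chi\bigl(\Nm_{E/F}(x)\bigr),
\]
and since $\Nm_{E/F}(x) \in F^\times$, this equals $\chi|_{F^\times}\bigl(\Nm_{E/F}(x)\bigr)$. Hence $\chi\chi^\sigma = \mathbf{1}$ if and only if $\chi|_{F^\times}$ is trivial on the subgroup $\Nm_{E/F}(E^\times) \subset F^\times$.

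Next I would apply local class field theory. For the quadratic extension $E/F$, the norm group $\Nm_{E/F}(E^\times)$ has index $2$ in $F^\times$, and by definition $\omegaEF$ is the nontrivial character of $F^\times$ whose kernel is exactly $\Nm_{E/F}(E^\times)$. Consequently, the characters of $F^\times$ that are trivial on $\Nm_{E/F}(E^\times)$ are precisely $\mathbf{1}$ and $\omegaEF$. This yields both implications in one line.

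The main (and essentially only) subtlety is just being careful that $\chi\chi^\sigma$ is considered as a character of $E^\times$, and that the norm identity $x\sigma(x) = \Nm_{E/F}(x)$ is what converts the condition on $E^\times$ into a condition on the restriction $\chi|_{F^\times}$. There is no real obstacle; the argument is short enough that a few lines should suffice.
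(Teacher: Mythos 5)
Your proof is correct and follows exactly the same route as the paper's: reduce $\chi\chi^\sigma=\mathbf{1}$ to $\chi|_{F^\times}$ being trivial on $\Nm_{E/F}(E^\times)$ via the identity $x\sigma(x)=\Nm_{E/F}(x)$, then invoke local class field theory to identify $\mathbf{1}$ and $\omega_{E/F}$ as the only characters of $F^\times$ killing the norm group. The only difference is that you spell out the first equivalence in detail, while the paper states it directly.
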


\begin{proof}
    The condition $\chi \chi^{\sigma}=\mathbf{1}$ is equivalent to $\chi_{|F^\times}$ being trivial on the norm group $\Nm_{\EE/\FF}(\EE^{\times})$. By the local class field theory the only two characters of $F^\times$ trivial on $\Nm_{\EE/\FF}(\EE^{\times})$ are $\mathbf{1}$ and $\omegaEF$.
\end{proof}

Let $\chi_1,\chi_2$ be two characters of $\EE^{\times}$. Denote by $\pi(\chi_1,\chi_2)$ the principal series representation of $\GL_2(E)$ induced from $(\chi_1,\chi_2)$, that is, $\pi(\chi_1,\chi_2)$ is the normalized parabolic induction of $\chi_1\boxtimes\chi_2$ from the standard Borel subgroup to $\GL_2(E)$.

\begin{lem}
    \label{lemPrincSer}
    Let $\pi=\pi(\chi_1,\chi_2)$ be a principal series representation of $\GL_2(E)$. Then
    $\pi$ is distinguished by $\GL_2(F)$ if and only if  either $\chi_1\chi_2^\sigma=\mathbf{1}$ or $\chi_1|_{F^\times}=\chi_2|_{F^\times}=\mathbf{1}$ with $\chi_1\neq\chi_2$.

    Moreover, when $\pi$ is distinguished by $\GL_2(F)$ we have
    \[\dim\Hom_{\GL_2(F)}(\pi,\mathbf{1}_2)=\begin{cases}
            2, & \mbox{ if } \ell \mid q_F-1, \chi_1=\chi_2 \mbox{ and } \chi_1|_{F^\times}=\mathbf{1} ; \\
            1, & \mbox{ otherwise}.
        \end{cases} \]
\end{lem}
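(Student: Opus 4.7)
The plan is to apply Mackey theory to the restriction of $\pi = \Ind_{B(E)}^{\GL_2(E)}((\chi_1\boxtimes\chi_2)\delta_{B,E}^{1/2})$ to $\GL_2(F)$, using the orbit decomposition of $\mathbb{P}^1(E) = B(E)\backslash\GL_2(E)$ under the action of $\GL_2(F)$. This action has exactly two orbits: the closed orbit $\mathbb{P}^1(F)$, with point-stabilizer $B(F)$, and the open orbit $\Omega = \mathbb{P}^1(E)\setminus\mathbb{P}^1(F)$, whose point-stabilizer, after a suitable conjugation by an element of $\GL_2(E)$, embeds into the diagonal torus $T(E)$ as the subgroup $\{(z,z^\sigma) : z \in E^{\times}\}$ and is thereby identified with $E^{\times}$. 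This produces a short exact sequence of $\GL_2(F)$-modules
\[
0 \to \cInd_{E^{\times}}^{\GL_2(F)}(\chi_1\chi_2^\sigma) \to \pi|_{\GL_2(F)} \to \pi^c \to 0,
\]
where the open piece carries the character $\chi_1\chi_2^\sigma$ of $E^{\times}$ (since $|z/z^\sigma|_E^{1/2} = 1$), and, using $|a|_E^{1/2} = |a|_F$ for $a \in F^{\times}$, the closed piece identifies with the normalized $\GL_2(F)$-principal series $\pi^c \simeq \pi_F(\chi_1|_{F^\times}\nu_F^{1/2}, \chi_2|_{F^\times}\nu_F^{-1/2})$.

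Applying $\Hom_{\GL_2(F)}(-,\mathbf{1})$ to this sequence and invoking Frobenius reciprocity for compact induction (respectively Bernstein's second adjoint for the closed piece), the open orbit contributes $\Hom_{E^\times}(\chi_1\chi_2^\sigma,\mathbf{1})$, which is one-dimensional exactly when $\chi_1\chi_2^\sigma = \mathbf{1}$, and the closed orbit contributes $\Hom_{\GL_2(F)}(\pi^c,\mathbf{1})$, which is one-dimensional exactly when $\chi_1|_{F^\times}\nu_F^{1/2} = \nu_F^{1/2}$ and $\chi_2|_{F^\times}\nu_F^{-1/2} = \nu_F^{-1/2}$, i.e., when $\chi_1|_{F^\times} = \chi_2|_{F^\times} = \mathbf{1}$. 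The long exact sequence then yields the distinction criterion: $\pi$ is $\GL_2(F)$-distinguished iff $\chi_1\chi_2^\sigma = \mathbf{1}$ or $\chi_1|_{F^\times} = \chi_2|_{F^\times} = \mathbf{1}$. By Lemma \ref{lemChiSigma}, these two conditions overlap precisely when $\chi_1 = \chi_2$, which is why the statement equivalently phrases the criterion with the constraint $\chi_1 \neq \chi_2$ in the second clause, the case $\chi_1 = \chi_2$ being already subsumed in the first.

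For the multiplicity, the long exact sequence gives the upper bound $\dim\Hom_{\GL_2(F)}(\pi,\mathbf{1}) \leq 2$, and equality is possible only when both orbit contributions are simultaneously present, which forces $\chi_1 = \chi_2 = \chi$ with $\chi|_{F^\times} = \mathbf{1}$. Whether this upper bound is attained depends on the connecting homomorphism $\Hom_{E^\times}(\mathbf{1},\mathbf{1}) \to \Ext^1_{\GL_2(F)}(\pi^c,\mathbf{1})$. The main technical step, and the main obstacle, is to show that this connecting map vanishes exactly when $\ell \mid q_F - 1$: in the non-banal case the two characters defining $\pi^c = \pi_F(\nu_F^{1/2},\nu_F^{-1/2})$ collapse modulo $\ell$, enlarging $\Ext^1_{\GL_2(F)}(\pi^c,\mathbf{1})$ in a way that accommodates the open-orbit functional as a genuine extension to $\pi$, giving multiplicity $2$; in the banal case $\ell \nmid q_F - 1$ the rigidity of the principal series forces the connecting map to be nonzero and the dimension collapses back to $1$. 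Away from the multiplicity-$2$ case, the long exact sequence directly produces the unique invariant functional (coming either from the open or from the closed orbit), and the rest is Mackey-plus-Frobenius bookkeeping.
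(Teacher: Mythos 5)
Your Mackey/orbit decomposition for the distinction criterion matches the paper's exactly: the same two-term filtration of $\pi|_{\GL_2(F)}$ by the open and closed orbits, the same identification of the open piece with $\cInd_{E^\times}^{\GL_2(F)}\chi_1\chi_2^\sigma$ and the closed piece with the $\GL_2(F)$-principal series, and the same reading of the resulting long exact sequence. The observation that the overlap of the two conditions is precisely $\chi_1=\chi_2$ via Lemma~\ref{lemChiSigma} is also in the paper. Up to this point the proposal is correct and follows the paper's route.

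The multiplicity computation, however, has a genuine gap, and this is exactly the part the paper has to work hardest on. You correctly reduce the question to whether the connecting map $\Hom_{E^\times}(\mathbf{1},\mathbf{1})\to\Ext^1_{\GL_2(F)}(\pi^c,\mathbf{1})$ vanishes, and you correctly state when the answer should be multiplicity $2$ ($\ell\mid q_F-1$, $\chi_1=\chi_2$, $\chi_1|_{F^\times}=\mathbf{1}$). But you explicitly flag this as ``the main technical step, and the main obstacle'' and then offer only a heuristic in its place, and the heuristic is in the wrong direction: saying that in the non-banal case $\Ext^1_{\GL_2(F)}(\pi^c,\mathbf{1})$ is \emph{enlarged} does not explain why the connecting map \emph{vanishes}; a larger $\Ext^1$ gives the connecting map more room to be nonzero. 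Indeed, even in the cases where the multiplicity is $1$ the group $\Ext^1_{F^\times\times F^\times}(\chi_1\boxtimes\chi_2,\mathbf{1})$ is nonzero (by \cite[Prop.\ 8.4]{SecherreDrevon}), so one cannot conclude anything about the connecting map from the size of $\Ext^1$ alone; a direct computation of the map, or a different argument, is needed.

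The paper takes a different route for the multiplicity and avoids computing the connecting map. When $\chi_1\neq\chi_2$, only one of the two conditions can hold and the exact sequence forces dimension $1$. When $\chi_1=\chi_2=\chi$ with $\chi|_{F^\times}=\mathbf{1}$, it splits into three cases: if $q_E\not\equiv 1\pmod\ell$, $\pi(\chi,\chi)$ is irreducible and Theorem~\ref{thmSecherreDist}(3) caps the multiplicity at $1$; if $q_E\equiv 1$ and $q_F\equiv -1\pmod\ell$, $\pi(\chi,\chi)$ decomposes as the one-dimensional summand $(\chi\nu^{-1/2})\circ\det$ (not distinguished, since $\nu^{-1/2}|_{F^\times}\neq\mathbf{1}$) plus $\St_{\chi\nu^{-1/2}}$ (multiplicity $\leq 1$ again by Theorem~\ref{thmSecherreDist}), so the total is $1$; and if $q_F\equiv 1\pmod\ell$, the paper produces two explicit, visibly linearly independent $\GL_2(F)$-invariant functionals (one supported on the closed orbit, one coming from the $\GL_2(E)$-invariant measure on $P^1(E)$, which exists precisely because $\delta_{B(E)}=\mathbf{1}$ mod $\ell$ and which vanishes on the measure-zero subset $P^1(F)$), giving dimension $2$. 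To complete your proof you would need to supply one of these arguments (or an honest computation of the connecting map), not just assert the expected answer.
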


\begin{proof} Recall that $B_2(E)$ is the standard Borel subgroup of $\GL_2(E)$.
    Note that $\GL_2(E)=B_2(E)\GL_2(F)\sqcup B_2(E)\eta\GL_2(F)$ with $\eta=\begin{pmatrix}
            1 & \delta \\1&-\delta
        \end{pmatrix}$ and $E=F[\delta]$. Denote by $S(X)$ the Schwartz space of $X$
    consisting of locally constant functions defined on $X$.
    There is a short exact sequence
    \[0\to S(B_2(E)\eta\GL_2(F))\to S(\GL_2(E))\to S(B_2(E)\GL_2(F))\to0 \]
    of $\GL_2(F)$-modules and $\pi$ can be embedded inside $S(\GL_2(\EE))$.
    Due to the geometric lemma, one has a short exact sequence
    \[0\to \cInd_{E^\times}^{\GL_2(\FF)}\chi_1\chi_2^\sigma\to\pi\to \Ind_{B_2(F)}^{\GL_2(\FF)}\delta_{B_2(E)}^{1/2}\delta_{B_2(F)}^{-1/2}(\chi_1\boxtimes\chi_2)\to0 \]
    with $\delta_{B_2(E)}^{1/2}=\delta_{B_2(F)}$.
    Applying the functor $\Hom_{\GL_2(F)}(-,\mathbf{1}_2)$,
    \[
        0\to \Hom_{F^\times\times F^\times}(\chi_1\boxtimes\chi_2,\mathbf{1})\to \Hom_{\GL_2(F)}(\pi,\mathbf{1}_2)\to \Hom_{E^\times}(\chi_1\chi_2^\sigma,\mathbf{1})\to \Ext^1_{F^\times\times F^\times}(\chi_1\boxtimes\chi_2,\mathbf{1})
    \]
    where $\Ext^1_{F^\times\times F^\times}$ denotes the Ext functor in the category of the finite dimensional representations of $F^\times\times F^\times$. Thanks to \cite[Prop. 8.4]{SecherreDrevon} that $\Ext_{F^\times}^1(\chi,\mathbf{1})\neq0$ if and only if $\chi$ is trivial, one has that $\Hom_{\GL_2(F)}(\pi,\mathbf{1})\neq0$ if and only if $\chi_1\chi_2^\sigma=\mathbf{1}$ or $\chi_1|_{F^\times}=\chi_2|_{F^\times}=\mathbf{1}$. If $\chi_1\chi_2^\sigma=\mathbf{1}$ and $\chi_1|_{F^\times}=\chi_2|_{F^\times}=\mathbf{1}$, then $\chi_1=\chi_2$. Thus $\pi$ is distinguished by $\GL_2(F)$ if and only if  either $\chi_1\chi_2^\sigma=\mathbf{1}$ or $\chi_1|_{F^\times}=\chi_2|_{F^\times}=\mathbf{1}$ with $\chi_1\neq\chi_2$.

    \bigskip

    Now, let us assume that $\pi$ is distinguished by $\GL_2(F)$, that is $\chi_1\chi_2^\sigma=\mathbf{1}$ or $\chi_1|_{F^\times}=\chi_2|_{F^\times}=\mathbf{1}$.

    If $\chi_1\neq\chi_2$, only one of the conditions $\chi_1\chi_2^\sigma=\mathbf{1}$ or $\chi_1|_{F^\times}=\chi_2|_{F^\times}=\mathbf{1}$ can be true. We then see from the previous exact sequence that $\Hom_{\GL_2(F)}(\pi,\mathbf{1}_2) = \Hom_{F^\times\times F^\times}(\chi_1\boxtimes\chi_2,\mathbf{1})$ or $\Hom_{\GL_2(F)}(\pi,\mathbf{1}_2) = \Hom_{E^\times}(\chi_1\chi_2^\sigma,\mathbf{1})$. Therefore
    \[
        \dim\Hom_{\GL_2(F)}(\pi,\mathbf{1}_2)=1.
    \]

    We are left to study the case $\chi_1=\chi_2$ with $\chi_1|_{F^\times}=\mathbf{1}$. Let $\chi:=\chi_1=\chi_2$. Note that if $q_\EE \not\equiv 1 \pmod{\ell}$ then $\pi(\chi,\chi)$ is irreducible. Thus by Theorem \ref{thmSecherreDist}, $\dim\Hom_{\GL_2(F)}(\pi(\chi,\chi),\mathbf{1}_2)=1$. So we assume that $q_\EE \equiv 1 \pmod{\ell}$. We have two cases:
    \begin{enumerate}
        \item Suppose $q_\FF \equiv -1 \pmod{\ell}$. In this case $(\nu^{-1/2})_{|\FF^{\times}}\neq \mathbf{1}$. The representation $\pi(\chi,\chi)$ is a direct sum of two irreducible representations: the character $(\chi \nu^{-1/2}) \circ \det$ and a twisted Steinberg $\St_{\chi \nu^{-1/2}}$ (see for instance \cite[Thm. 3]{vignerasGL2}). As $\dim\Hom_{\GL_2(F)}((\chi \nu^{-1/2}) \circ \det,\mathbf{1}_2)=0$ and
              \[
                  \dim\Hom_{\GL_2(F)}(\St_{\chi \nu^{-1/2}},\mathbf{1}_2) \leq 1
              \]
              by Theorem \ref{thmSecherreDist}, we get that $\dim\Hom_{\GL_2(F)}(\pi(\chi,\chi),\mathbf{1}_2)=1$.
        \item Suppose $q_\FF \equiv 1 \pmod{\ell}$. This time ${(\nu^{-1/2})}_{|\FF^{\times}}= \mathbf{1}$. Therefore,
              \[\dim\Hom_{\GL_2(F)}(\pi(\chi,\chi),\mathbf{1}_2)=\dim\Hom_{\GL_2(F)}(V(\mathbf{1},\mathbf{1}),\mathbf{1}_2)\]
              where $V(\mathbf{1},\mathbf{1})$ denotes the non-normalized induction.
    \end{enumerate}

    Recall that
    \[\GL_2(E)=B_2(E)\GL_2(F)\sqcup B_2(E)\eta\GL_2(F) \]
    where $\eta$ represents the open orbit in double coset $B_2(E)\backslash\GL_2(E)/\GL_2(F) $.
    There is a short exact sequence
    \[0\to \cInd_{E^\times}^{\GL_2(\FF)}\mathbf{1}\to V(\mathbf{1},\mathbf{1})\to \Ind_{B_2(F)}^{\GL_2(\FF)}\delta_{B_2(E)}^{1/2}\delta_{B_2(F)}^{-1/2}\to0. \]
    Let $f_1$ be a linear functional defined on a subset consisting of functions in $V(\mathbf{1},\mathbf{1})$ supported on the closed orbit $B_2(E)\GL_2(F)$ given by
    \[f_1(\varphi)=\int_{B_2(F)\backslash\GL_2(F) }\varphi(x)dx  \]
    for $\varphi\in V(\mathbf{1},\mathbf{1})$ supported on $B_2(E)\GL_2(F)$. There is a natural extension of $f_1$ to the whole space $V(\mathbf{1},\mathbf{1})$, still denoted by $f_1$, due to the embedding
    \[0\to \Hom_{\GL_2(\FF)}(\Ind_{B_2(F)}^{\GL_2(F)}(\delta_{B_2(F)}^{1/2}),\mathbf{1}_2)\to\Hom_{\GL_2(\FF)}(V(\mathbf{1},\mathbf{1}),\mathbf{1}_2) . \]
    Then $f_1$ gives a $\GL_2(F)$-invariant linear functional on $\pi$ with support $B_2(E)\GL_2(\FF)$.
    Note that there is a $\GL_2(E)$-invariant distribution on $P^1(E)=B_2(E)\backslash\GL_2(E)$ if and only if $\delta_{B(E)}=\mathbf{1}$, i.e., $q_\EE \equiv 1 \pmod{\ell}$ (see \cite[Chapter VII.6, Thm 3]{bourbaki}).
    Denote by $d\mu$ the $\GL_2(E)$-invariant measure on $P^1(E)$ and so it is $\GL_2(F)$-invariant. Then the restriction of $d\mu$ on $P^1(F)=B_2(F)\backslash \GL_2(F)$ is zero since $P^1(F)$ has measure zero with respect to $d\mu$. Therefore $d\mu$ and $f_1$ generate two different $\GL_2(F)$-invariant linear functionals on $V(\mathbf{1},\mathbf{1})$.
    Thus $\dim\Hom_{\GL_2(F)}(V(\mathbf{1},\mathbf{1}),\mathbf{1}_2)=2$ (the dimension is bounded by 2 by the exact sequence at the beginning of the proof).
\end{proof}

Let us come back to the criterion for being distinguished for irreducible non\hyph supercuspidal representations. By Lemma \ref{lemPrincSer}, we have the result for all irreducible principal series. We refer to \cite{vignerasGL2} or \cite{vigneras} for the following facts about $\pi(\chi_1,\chi_2)$. Note that $\pi(\chi_1,\chi_2)$ is reducible if and only if $\chi_1=\nu \chi_2$ or $\chi_2=\nu \chi_1$. Hence let $\chi$ be a character of $\EE^{\times}$ and we are left to study the irreducible sub-quotients of $\pi(\chi \nu^{-1/2},\chi \nu^{1/2})$. If $q_{\EE} \not\equiv -1 \pmod{\ell}$, then $\pi(\chi \nu^{-1/2},\chi \nu^{1/2})$ has length 2 with irreducible sub-quotients $\chi$ and $\St_{\chi}$. If $q_{\EE} \equiv -1 \pmod{\ell}$, the length is 3 {and} the sub-quotients are $\chi$, $\chi \nu_2$ and $\Spe_{\chi}$.

\begin{lem}
    \label{lemExt}
    Let $\chi$ be a quadratic character of $\FF^{\times}$. Then $\Ext^1_{\GL_2(F)}(\mathbf{1}_2,\chi \circ \det)=0$ where $\Ext^1_{\GL_2(F)}$ is the Ext functor in the category of the smooth representations of $\GL_2(F)$ with trivial central character.
\end{lem}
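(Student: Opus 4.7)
The plan is to classify the smooth extensions $0 \to \chi \circ \det \to V \to \mathbf{1}_2 \to 0$ inside the category of smooth $\GL_2(F)$-representations with trivial central character, and to show each such $V$ splits. The leverage is that the constraints (triviality of the central character and smoothness) together with the perfectness of $\SL_2(F)$ reduce the problem to classifying extensions on a small, explicit abelian $2$-group, where vanishing is elementary.

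First I would restrict $V$ to $\SL_2(F)$. On this subgroup both $\chi \circ \det$ and $\mathbf{1}_2$ restrict to the trivial representation, so $V|_{\SL_2(F)}$ corresponds to a class in $\Ext^1_{\SL_2(F)}(\mathbf{1},\mathbf{1})$, which is just the space of smooth group homomorphisms $\SL_2(F) \to (R,+)$. Since $F$ is infinite, $\SL_2(F)$ is a perfect abstract group, so this space vanishes and $V|_{\SL_2(F)}$ is trivial. Consequently $V$ descends to a representation of $G/\SL_2(F) \cong F^\times$ through the determinant.

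Next, the center of $\GL_2(F)$ maps into $G/\SL_2(F) \cong F^\times$ as $z \mapsto z^2$, so the trivial central character condition forces $V$ to be inflated from the quotient $A := F^\times / F^{\times 2}$, a finite elementary abelian $2$-group.

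The problem is thereby reduced to showing $\Ext^1_A(\mathbf{1}, \chi) = 0$ for the induced character $\chi$ of $A$ (which makes sense because $\chi^2 = \mathbf{1}$). If $\chi$ is non-trivial on $A$, the commutativity of $A$ together with the cocycle identity forces any $1$-cocycle to be a coboundary: picking $y_0 \in A$ with $\chi(y_0) = -1 \neq 1$, one divides by $\chi(y_0) - 1 = -2 \in R^\times$. If $\chi$ is trivial on $A$, an extension is a smooth homomorphism $A \to (R,+)$, which vanishes because every element of $A$ has order dividing $2$ and $2$ is invertible in $R$. Both cases use $\ell \neq 2$. The only non-formal input is the perfectness of $\SL_2(F)$ for $F$ infinite, which is the main conceptual step but is classical; everything else is straightforward cocycle bookkeeping.
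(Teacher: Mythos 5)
Your proof is correct and follows essentially the same approach as the paper's: kill the extension on $\SL_2(F)$ by perfectness, descend through the determinant, and use the trivial central character together with $\ell\neq 2$ (and commutativity) to finish on $F^{\times}$. The only difference is organizational—you reduce to the finite $2$-group $F^{\times}/F^{\times 2}$ up front for both cases, while the paper does the cocycle computation on $F^{\times}$ directly and only invokes that quotient in the $\chi=\mathbf{1}$ case.
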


\begin{proof}
    Let $M$ be an extension of $\mathbf{1}_2$ by $\chi \circ \det$ in the category of the smooth representations of $\GL_2(F)$ with trivial central character. The representation $M$ has the form
    \[g\mapsto\begin{pmatrix}
            \chi(\det(g)) & \alpha(g) \\0&1
        \end{pmatrix} \]
    for $g\in\GL_2(F)$. If $g_1,g_2 \in\GL_2(F)$, then $\alpha(g_1g_2)=\alpha(g_1)+\chi(\det(g_1))\alpha(g_2)$. Since $M$ has trivial central character, $\alpha(z)=0$ for every element $z$ in the center of $\GL_2(F)$. It is obvious that $\alpha(g)=0$ for all $g$ in $\SL_2(F)$ since $\SL_2(F)$ is a perfect group.  Let $g \in \GL_2(F)$ and $t = \det(g) \in F^\times$. By writing $g=g_1g_2$ with $g_1=\left(\begin{smallmatrix}
                t & 0 \\0&1
            \end{smallmatrix}\right)$ and $g_2=g_1^{-1}g \in \SL_2(F)$ we see that $\alpha(g)=\alpha(g_1)$. Let $\bar{\alpha}:F^\times \to \Fl$ be defined by $\bar{\alpha}(t)=\alpha(\left(\begin{smallmatrix}
                    t & 0 \\0&1
                \end{smallmatrix}\right))$. Then for all $g \in \GL_2(F)$, $\alpha(g)=\bar{\alpha}(\det(g))$. Moreover, $\bar{\alpha}$ is a cocycle satisfying $\bar{\alpha}(t_1t_2)=\bar{\alpha}(t_1)+\chi(t_1)\bar{\alpha}(t_2)$, for $t_1,t_2 \in F^\times$.

    Let $t_1,t_2 \in F^\times$. The equality $\bar{\alpha}(t_1t_2)=\bar{\alpha}(t_2t_1)$ gives us that $\bar{\alpha}(t_1)+\chi(t_1)\bar{\alpha}(t_2)=\bar{\alpha}(t_2)+\chi(t_2)\bar{\alpha}(t_1)$. If $\chi \neq \mathbf{1}$, then there exists $t_1$ such that $\chi(t_1) \neq 1$. Therefore, for all $t_2 \in F^\times$, we have $\bar{\alpha}(t_2)=c(\chi(t_2)-1)$ where $c=\bar{\alpha}(t_1)/(\chi(t_1)-1)$ is a constant. Thus, for all $g \in \GL_2(F)$, $\alpha(g)=c\cdot (\chi(\det(g))-1)$ is a coboundary and the extension $M$ splits.

    We are left with the case $\chi=\mathbf{1}$. In this case $\bar{\alpha}:F^\times \to \Fl$ is a morphism of group. Moreover, $\alpha$ is trivial on the center of $\GL_2(F)$ so the morphism $\bar{\alpha}$ is trivial on $F^{\times 2}$ the subgroup of $F^\times$ of square elements. Since $F^\times / F^{\times 2}$ is a 2-group and $\ell \neq 2$, we get that $\bar{\alpha}$ is trivial, and M also splits.
\end{proof}

\begin{rem}\begin{enumerate}
        \item 	If $\ell=2$, then Lemma \ref{lemExt} does not hold any more.
        \item If there is no restriction for the central character, one can easily find the nontrivial extension of $\mathbf{1}_2$ by $\mathbf{1}_2$, which is of the form
              \[g\mapsto\begin{pmatrix}
                      1 & \ln |\det(g)| \\0&1
                  \end{pmatrix} \]
              where $\ln$ is the natural logarithm function defined on the multiplicative group consisting of positive real numbers.
    \end{enumerate}
\end{rem}

\begin{thm}
    \label{thmCuspDist}
    Let $\chi$ be a character of $\EE^{\times}$.

    \begin{enumerate}
        \item If $q_{\EE}^2\not\equiv 1 \pmod{\ell}$, then $\St_\chi$ is distinguished if and only if $\chi_{|{F^\times}}=\omegaEF$.
        \item If $q_{\EE} \equiv -1 \pmod{\ell}$, then $\Spe_\chi$ is distinguished if and only if $q_F\equiv -1 \pmod{\ell}$ and $\chi_{|{F^\times}}=\omegaEF$ or $\nu^{1/2}_{|{F^\times}}$.
        \item If $q_{\EE}\equiv 1 \pmod{\ell}$, then $\St_\chi$ is distinguished if and only if $\chi_{|{F^\times}}=\omegaEF$ or $\chi_{|{F^\times}}=\mathbf{1}$ with $q_F\equiv 1 \pmod{\ell}$.
    \end{enumerate}

\end{thm}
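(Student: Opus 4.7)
The plan is to apply $\Hom_{\GL_2(F)}(-,\mathbf{1}_2)$ to the Jordan--Hölder filtration of $\pi := \pi(\chi\nu^{-1/2},\chi\nu^{1/2})$ as a $\GL_2(E)$-representation (whose subquotients are recalled just before Lemma \ref{lemExt}), and combine the resulting long exact sequences with Lemma \ref{lemPrincSer} (to compute $\dim\Hom_{\GL_2(F)}(\pi,\mathbf{1}_2)$), Lemma \ref{lemExt} (to kill the relevant $\Ext^1$ obstructions between characters of $\GL_2(F)$), and Theorem \ref{thmSecherreDist} (to constrain the admissible $\chi|_{F^\times}$).

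\textbf{Necessary conditions.} By Theorem \ref{thmSecherreDist}, any distinguished irreducible representation has central character trivial on $F^\times$ and is $\sigma$-selfdual. Since $\St_\chi^\vee\simeq\St_{\chi^{-1}}$ and $\St_\chi^\sigma\simeq\St_{\chi^\sigma}$, $\sigma$-selfduality forces $\chi\chi^\sigma=\mathbf{1}$, hence $\chi|_{F^\times}\in\{\mathbf{1},\omega_{E/F}\}$ by Lemma \ref{lemChiSigma}. For $\Spe_\chi$ in case 2, the degeneracy $\nu^2=\mathbf{1}$ yields the extra isomorphism $\Spe_\alpha\simeq\Spe_{\alpha\nu}$, so $\sigma$-selfduality weakens to $\chi\chi^\sigma\in\{\mathbf{1},\nu\}$; combined with the central character condition $\chi^2|_{F^\times}=\mathbf{1}$, this produces an enlarged list of admissible $\chi|_{F^\times}$ in the ramified subcase of case 2.

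\textbf{Cases 1 and 3.} The Jordan--Hölder sequence is $0\to\chi\circ\det\to\pi\to\St_\chi\to 0$, and applying Hom yields
\[
0\to\Hom_{\GL_2(F)}(\St_\chi,\mathbf{1}_2)\to\Hom_{\GL_2(F)}(\pi,\mathbf{1}_2)\to\Hom_{\GL_2(F)}(\chi\circ\det,\mathbf{1}_2)\to\Ext^1_{\GL_2(F)}(\St_\chi,\mathbf{1}_2).
\]
Noting that $\chi\circ\det$ is $\GL_2(F)$-distinguished iff $\chi|_{F^\times}=\mathbf{1}$ and using Lemma \ref{lemPrincSer} to compute $\dim\Hom(\pi,\mathbf{1}_2)$, most subcases follow by dimension counting (in case 1 banal, the second condition of Lemma \ref{lemPrincSer} would force $q_F^2\equiv 1\pmod\ell$, which is excluded, so $\dim\Hom(\pi,\mathbf{1}_2)=1$ exactly when $\chi|_{F^\times}\in\{\mathbf{1},\omega_{E/F}\}$). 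When both $\chi\circ\det$ and $\pi$ are distinguished with matching dimensions, the subtle point is whether the restriction $\Hom(\pi,\mathbf{1}_2)\to\Hom(\chi\circ\det,\mathbf{1}_2)$ is zero or an isomorphism; I would settle this via the explicit invariant functional produced from the open-orbit piece $\cInd_{E^\times}^{\GL_2(F)}\chi\chi^\sigma$ in the geometric lemma filtration (as in the proof of Lemma \ref{lemPrincSer}), showing it does not vanish on the subrepresentation $\chi\circ\det$, hence $\Hom(\St_\chi,\mathbf{1}_2)=0$. In case 3 with $\chi|_{F^\times}=\mathbf{1}$ and $q_F\equiv 1\pmod\ell$, the dimension of $\Hom(\pi,\mathbf{1}_2)$ jumps to 2, and the extra dimension is precisely what descends to $\St_\chi$, giving the second sub-case.

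\textbf{Case 2 (the main obstacle).} Here $\pi$ has length 3 with subquotients $\chi\circ\det$, $\chi\nu\circ\det$ and $\Spe_\chi$. I would pick a filtration $0\to M\to\pi\to\Spe_\chi\to 0$ where $M$ has the two character subquotients, apply Hom twice, and treat the unramified and ramified subcases of $E/F$ separately. Lemma \ref{lemExt} applied to the short exact sequence defining $M$ ensures that $M|_{\GL_2(F)}$ splits as a direct sum of its distinguished character pieces, so $\dim\Hom_{\GL_2(F)}(M,\mathbf{1}_2)$ equals the number of character subquotients with trivial restriction to $F^\times$. Matching this against $\dim\Hom(\pi,\mathbf{1}_2)$ via Lemma \ref{lemPrincSer}, and arguing that the distinguishing functional on $\pi$ (constructed explicitly from the open orbit via the geometric lemma) restricts non-trivially to $M$ whenever at least one character subquotient is distinguished, I obtain $\Hom(\Spe_\chi,\mathbf{1}_2)=0$ in those cases; conversely, $\Spe_\chi$ inherits distinction from $\pi$ exactly in the subcases where no character subquotient absorbs it, which turn out to be $\ell\mid q_F+1$ and $\chi|_{F^\times}\in\{\omega_{E/F},\nu^{1/2}|_{F^\times}\}$. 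The main difficulty lies in verifying that the distinguishing functional on $\pi$ genuinely restricts non-trivially to $M$ whenever a character subquotient is distinguished (rather than being absorbed by the quotient $\Spe_\chi$), which requires the explicit geometric-lemma construction analogous to the second half of the proof of Lemma \ref{lemPrincSer}.
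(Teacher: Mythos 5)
The core obstruction in your plan is the direction of the short exact sequence. Taking $\pi=\pi(\chi\nu^{-1/2},\chi\nu^{1/2})$ and the sequence $0\to\chi\circ\det\to\pi\to\St_\chi\to 0$, the long exact sequence for $\Hom_{\GL_2(F)}(-,\mathbf{1}_2)$ reads
\[
0\to\Hom(\St_\chi,\mathbf{1}_2)\to\Hom(\pi,\mathbf{1}_2)\to\Hom(\chi\circ\det,\mathbf{1}_2)\to\Ext^1_{\GL_2(F)}(\St_\chi,\mathbf{1}_2),
\]
and the last term is an $\Ext^1$ out of an infinite-dimensional representation, which Lemma~\ref{lemExt} says nothing about (it only handles extensions between \emph{characters} of $\GL_2(F)$). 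The paper sidesteps this by passing to the representation $\pi(\chi\nu^{1/2},\chi\nu^{-1/2})$, whose filtration has $\St_\chi$ (or $J_\chi$) as the \emph{sub}representation and $\chi\circ\det$ as the \emph{quotient}, so the obstruction in the Hom sequence becomes $\Ext^1_{\GL_2(F)}(\chi\circ\det,\mathbf{1}_2)$, which Lemma~\ref{lemExt} kills. You partially recognize this: you flag the ambiguity when $\chi|_{F^\times}=\mathbf{1}$ and propose to settle it by checking the invariant functional from the open orbit does not vanish on the character subspace. That can be made to work, but it is genuinely extra work that the paper's choice of exact sequence renders unnecessary; you should either carry it out or switch to the paper's direction.

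The more serious issue is in Case~2. You propose a filtration $0\to M\to\pi\to\Spe_\chi\to 0$ with $M$ containing both character constituents, so that $\Spe_\chi$ is a \emph{quotient} of $\pi$. No such filtration exists. The length-three principal series $\pi(\chi\nu^{\pm 1/2},\chi\nu^{\mp 1/2})$ is uniserial with $\Spe_\chi$ sitting strictly in the middle of the Loewy filtration; the socle and cosocle are each one of the two one-dimensional constituents (in the paper's ordering, cosocle $\chi\circ\det$ and socle $(\chi\circ\det)\nu_2$, and the reverse for the other ordering). Consequently the only proper nonzero submodules are a single character and a length-two piece $J_\chi$, neither of which is your $M$, and there is no surjection $\pi\twoheadrightarrow\Spe_\chi$. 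The paper's argument respects this: it applies $\Hom(-,\mathbf{1}_2)$ to the two-step filtration $0\to J_\chi\to\pi\to\chi\circ\det\to 0$ and then $0\to(\chi\circ\det)\nu_2\to J_\chi\to\Spe_\chi\to 0$, each time putting a \emph{character} in the quotient position so that Lemma~\ref{lemExt} controls the $\Ext^1$. Your plan to "apply Hom twice" with $M$ as a two-dimensional submodule would have to start from a module structure that $\pi$ does not have.

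Finally, in Case~3 the paper's argument is genuinely simpler than an $\Ext$-chase: when $q_E\equiv 1\pmod\ell$ the characters $\chi\nu^{\pm 1/2}$ coincide, the principal series is semisimple, $\pi=(\chi\circ\det)\oplus\St_\chi$, and $\Hom$ is computed termwise, so the multiplicity-two phenomenon from Lemma~\ref{lemPrincSer} transfers directly. Your exact-sequence framing would still be valid here because the sequence splits, but you should say so explicitly rather than treating it as another instance of the (non-split) Case~1 argument.

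In short: the toolkit you assembled (Lemma~\ref{lemPrincSer}, Lemma~\ref{lemExt}, Theorem~\ref{thmSecherreDist}) is exactly right, but you need to (i) flip the short exact sequence so the character is the quotient rather than the sub, and (ii) in Case~2 replace the nonexistent filtration by the two nested filtrations with the character on top at each step.
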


\begin{proof}
    Suppose that $q_{\EE}\equiv -1 \pmod{\ell}$. If $\Spe_\chi$ is distinguished then by Theorem \ref{thmSecherreDist} (2), $\Spe_\chi$ has trivial central character and so $\chi_{|{F^\times}}^2=1$. Thus we may assume that $\chi_{|{F^\times}}^2=\mathbf{1}$. The principal series $\pi(\chi\nu^{1/2},\chi\nu^{-1/2})$ has length $3$. There are two exact sequences
    \[0\to J_\chi \to \pi(\chi\nu^{1/2},\chi\nu^{-1/2}) \to \chi \circ \det \to 0  \]
    and
    \[0\to (\chi \circ \det)\nu_2\to J_\chi \to \Spe_{\chi} \to 0  \]
    of $\GL_2(E)$-modules. Taking the functor $\Hom_{\GL_2(F)}(-,\mathbf{1}_2)$, we have the following exact sequence
    \[ 0 \to \Hom_{\GL_2(F)}(\chi \circ \det,\mathbf{1}_2) \to \Hom_{\GL_2(F)}(\pi(\chi\nu^{1/2},\chi\nu^{-1/2}),\mathbf{1}_2)\]
    \[     \to \Hom_{\GL_2(F)}(J_\chi,\mathbf{1}_2) \to \Ext^1_{\GL_2(F)}(\chi \circ \det,\mathbf{1}_2) \]
    where $\Ext^1_{\GL_2(F)}$ is the Ext functor in the category of the smooth representations of $\GL_2(F)$ with trivial central character. By Lemma \ref{lemExt}
    \[
        \Ext_{\GL_2(F)}^1(\mathbf{1}_2,\chi \circ \det)=0.
    \]
    If $\Hom_{\GL_2(F)} (\pi(\chi\nu^{1/2},\chi\nu^{-1/2}),\mathbf{1}_2)=0$ then $\Hom_{\GL_2(F)}(J_\chi,\mathbf{1}_2)=0$ and $\Hom_{\GL_2(F)}(\Spe_{\chi},\mathbf{1}_2)=0$. Hence $\Spe_{\chi}$ is not distinguished.

    Therefore we can assume that $\pi(\chi\nu^{1/2},\chi\nu^{-1/2})$ is distinguished. By Lemma \ref{lemPrincSer} we get that $\chi \chi^{\sigma}=\mathbf{1}$ (that is $\chi_{|{F^\times}}=\mathbf{1}$ or $\chi_{|{F^\times}}=\omegaEF$ by Lemma \ref{lemChiSigma}) or $\chi_{|{F^\times}}\nu^{1/2}_{|{F^\times}}=\chi_{|{F^\times}}\nu^{-1/2}_{|{F^\times}}=\mathbf{1}$ (which can only happen if $\nu_{|{F^\times}}=\mathbf{1}$ that is if $q_F \equiv -1 \pmod{\ell}$). Moreover, in this case, Lemma \ref{lemPrincSer} gives us
    \[
        \dim\Hom_{\GL_2(F)} (\pi(\chi\nu^{1/2},\chi\nu^{-1/2}),\mathbf{1}_2)=1.
    \]

    We have three cases to study.
    \begin{itemize}
        \item Suppose $\chi_{|{F^\times}}=\mathbf{1}$. In this case $\dim\Hom_{\GL_2(F)}(J_\chi,\mathbf{1}_2)=0$ and so $\Spe_{\chi}$ is not distinguished.
        \item Suppose $\chi_{|{F^\times}}=\omegaEF$. Now $\Hom_{\GL_2(F)}(\chi \circ \det,\mathbf{1}_2)=0$ and so $\dim\Hom_{\GL_2(F)}(J_\chi,\mathbf{1}_2)=1$. If $q_F\equiv -1 \pmod{\ell}$, then $\nu_2|_{\GL_2(F)}=\mathbf{1}_2$. Furthermore, the exact sequence
              \[0\to \Hom_{\GL_2(F)}(\Spe_\chi,\mathbf{1}_2)\to \Hom_{\GL_2(F)}(J_\chi,\mathbf{1}_2)\to \Hom_{\GL_2(F)}((\chi \circ \det)\nu_2,\mathbf{1}_2)=0  \]
              implies that $\dim\Hom_{\GL_2(F)}(\Spe_\chi,\mathbf{1}_2)=\dim\Hom_{\GL_2(F)}(J_\chi,\mathbf{1}_2)=1$. If $q_F^2\equiv -1 \pmod{\ell}$, i.e., $E/F$ is unramified, then $\nu_2|_{\GL_2(F)}=\omega_{E/F}$. In this case $\dim\Hom_{\GL_2(F)}((\chi \circ \det)\nu_2,\mathbf{1}_2)=1$ and so $\Hom_{\GL_2(F)}(\Spe,\omega_{E/F})=0$.

        \item Suppose $\chi_{|{F^\times}}=\nu^{1/2}_{|{F^\times}}$ and $q_F\equiv -1 \pmod{\ell}$. In the same way, $\Hom_{\GL_2(F)}(\chi \circ \det,\mathbf{1}_2)=0$ and so $\dim\Hom_{\GL_2(F)}(J_\chi,\mathbf{1}_2)=1$. Since $\Hom_{\GL_2(F)}((\chi \circ \det)\nu_2,\mathbf{1}_2)=0$ we get that $\Spe_\chi$ is distinguished.
    \end{itemize}

    \bigskip

    Suppose $q_{\EE}\equiv 1 \pmod{\ell}$. In this case $\pi(\chi \nu^{-1/2},\chi \nu^{-1/2})$ is reducible and semisimple: $\pi(\chi \nu^{-1/2},\chi \nu^{-1/2})=(\chi \circ \det)\oplus \St_\chi$. If $\St_\chi$ is distinguished then so is $\pi(\chi \nu^{-1/2},\chi \nu^{-1/2})$. By Lemma \ref{lemPrincSer}, $\chi \chi^{\sigma}=\mathbf{1}$ so $\chi_{|{F^\times}}=\mathbf{1}$ or $\chi_{|{F^\times}}=\omegaEF$ (Lemma \ref{lemChiSigma}). If $\chi_{|{F^\times}}=\omegaEF$, then $\dim\Hom_{\GL_2(F)}(\chi \circ \det,\mathbf{1}_2)=0$. Thanks to Lemma \ref{lemPrincSer} $\dim\Hom_{\GL_2(F)}(\pi(\chi\nu^{-1/2},\chi\nu^{-1/2}),\mathbf{1}_2)\geq 1$ and so $\St_\chi$ is distinguished. Now if $\chi_{|{F^\times}}=\mathbf{1}$, $\dim\Hom_{\GL_2(F)}(\chi \circ \det,\mathbf{1}_2)=1$ and from Lemma \ref{lemPrincSer}
    \[\dim\Hom_{\GL_2(F)}(\pi(\chi\nu^{-1/2},\chi\nu^{-1/2}),\mathbf{1}_2)=
        \begin{cases}
            2, & \mbox{ if } q_{\FF}\equiv 1 \pmod{\ell};  \\
            1, & \mbox{ if } q_{\FF}\equiv -1 \pmod{\ell}.
        \end{cases}
    \]
    Thus $\St_\chi$ is distinguished if and only if $q_{\FF}\equiv 1 \pmod{\ell}$.

    \bigskip

    The last case is $q_{\EE}^2\not\equiv 1 \pmod{\ell}$. We will do a similar argument as $q_{\EE}\equiv -1 \pmod{\ell}$. We have an exact sequence
    \[0\to \St_\chi \to \pi(\chi\nu^{1/2},\chi\nu^{-1/2}) \to \chi \circ \det \to 0  \]
    of $\GL_2(E)$-modules. If $\St_\chi$ is distinguished then so is $\pi(\chi\nu^{-1/2},\chi\nu^{1/2})$, so $\chi_{|{F^\times}}=\mathbf{1}$ or $\chi_{|{F^\times}}=\omegaEF$. Taking the functor $\Hom_{\GL_2(F)}(-,\mathbf{1}_2)$, in the category of the smooth representations of $\GL_2(F)$ with trivial central character, we have
    \[ 0 \to \Hom_{\GL_2(F)}(\chi \circ \det,\mathbf{1}_2) \to \Hom_{\GL_2(F)}(\pi(\chi\nu^{1/2},\chi\nu^{-1/2}),\mathbf{1}_2)\]
    \[ \to \Hom_{\GL_2(F)}(\St_\chi,\mathbf{1}_2) \to \Ext^1_{\GL_2(F)}(\chi \circ \det,\mathbf{1}_2).\]
    By Lemma \ref{lemExt} $\Ext_{\GL_2(F)}^1(\mathbf{1}_2,\chi \circ \det)=0$. By Lemma \ref{lemPrincSer} $\dim\Hom_{\GL_2(F)} (\pi(\chi\nu^{1/2},\chi\nu^{-1/2}),\mathbf{1}_2)=1$. Therefore, $\St_\chi$ is distinguished if and only if $\chi_{|{F^\times}}=\omegaEF$.
\end{proof}

\begin{rem}
    It can be seen from Theorem \ref{thmCuspDist} that there are in the modular case new phenomena that do not appear in the complex setting.
    \begin{itemize}
        \item When $q_{\FF}\equiv 1 \pmod{\ell}$, the Steinberg representation $\St$ is both $\GL_2(F)$-distinguished and $(\GL_2(F),\omega_{E/F})$-distinguished.
        \item When $q_{\EE}\equiv -1 \pmod{\ell}$ and $\EE/\FF$ is unramified (that is $q_{\FF}^2\equiv -1 \pmod{\ell}$ and so $q_{\FF}\not\equiv -1 \pmod{\ell}$) the special representation $\Spe$ is neither $\GL_2(F)$-distinguished nor $(\GL_2(F),\omega_{E/F})$\hyph distinguished. This has been mentioned by Vincent Sécherre in \cite[Rem. 2.8]{vincent2019supercusp}.
        \item When $q_{\EE}\equiv -1 \pmod{\ell}$ and $\EE/\FF$ is ramified (and so $q_{\FF}\equiv -1 \pmod{\ell}$) the special representation $\Spe$ is $(\GL_2(F),\omega_{E/F})$-distinguished and is also $(\GL_2(F),\nu^{1/2}_{|{F^\times}})$-distinguished.
    \end{itemize}
\end{rem}

\section{The Prasad conjecture for \texorpdfstring{$\ell$}{l}-modular representations of \texorpdfstring{$\PGL_2$}{PGL2}}

\label{secPrasad}

In \cite{prasad2015arelative}, Dipendra Prasad proposed a conjecture for the multiplicity $\dim \Hom_{G(F)}(\pi,\chi_G) $ under the local Langlands conjecture, where $G$ is a quasi-split reductive group defined over $F$, $\pi$ is an irreducible smooth representation of $G(E)$ lying inside a generic $L$-package and $\chi_G$ is a quadratic character depending on $G$ and the quadratic extension $E/F$.
Let us recall briefly the Prasad conjecture for $\GL_n$, which has been verified for the complex representations due to the work of Flicker, Prasad, Kable, Matringe and so on.
\begin{thm}[The Prasad conjecture for $\GL_{n}$]
    Let $\pi$ be a generic irreducible complex representation of $\GL_{n}(\EE)$ with Langlands parameter $\phi_\pi$. Let $\chi_G=\omega_{E/F}^{n+1}$. Let $U_{n,E/F}$ be the quasi-split unitary group. If $\Hom_{\GL_n(\FF)}(\pi,\chi_G)$ is nonzero, then
    \begin{enumerate}
        \item $\pi^\vee\cong\pi^\sigma$;
        \item there exists a parameter $\tilde{\phi}$ of $U_{n,E/F}$ such that $\tilde{\phi}|_{WD_E}=\phi_\pi$;
        \item $\dim_{\mathbb{C}}\Hom_{\GL_{n}(\FF)}(\pi,\chi_G)=|F(\phi_\pi)| $
              where
              \[
                  F(\phi_\pi):=\left\{\tilde{\phi}:WD_\FF\rightarrow{}^LU_{n,E/F}\Big|\tilde{\phi}|_{WD_\EE}=\phi_\pi \right\}
              \]
              and $|F(\phi_\pi)|$ denotes its cardinality.
    \end{enumerate}
    Conversely, if there exists a parameter $\tilde{\phi}$ of $U_{n,E/F}$ such that $\tilde{\phi}|_{WD_E}=\phi_\pi$, then $\Hom_{\GL_n(\FF)}(\pi,\chi_G)$ is nonzero.
\end{thm}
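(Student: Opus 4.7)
The plan is to establish items (1)--(3) and the converse in turn, using classical results of Flicker, Kable, and Matringe in the complex setting.

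For item (1), I would give a direct contragredient argument. Starting from a non-zero $\ell \in \Hom_{\GL_n(\FF)}(\pi,\chi_G)$, the anti-involution $g \mapsto \sigmaEF({}^{t}g^{-1})$ of $\GL_n(\EE)$ stabilizes $\GL_n(\FF)$, and transporting $\ell$ along it produces a non-zero intertwining $\pi \to (\sdual\pi)^{\vee} \otimes \eta$ for some character $\eta$. By Schur's lemma (using irreducibility of $\pi$) this is an isomorphism, and the choice $\chi_G = \omegaEF^{n+1}$ forces a central-character relation that collapses $\eta$ to the trivial character, yielding $\cont\pi \cong \sdual\pi$.

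The technical heart of the theorem is item (2). I would first treat the supercuspidal case by invoking the Jacquet conjecture proved by Kable: a $\sigmaEF$-selfdual supercuspidal representation is either $\GL_n(\FF)$-distinguished or $(\GL_n(\FF),\omegaEF)$-distinguished, but not both. Via the local Langlands correspondence this dichotomy is equivalent to $\phi_\pi$ being conjugate-orthogonal or conjugate-symplectic. The tuning $\chi_G = \omegaEF^{n+1}$ is designed so that the sign matching produces exactly the condition for $\phi_\pi$ to extend to a homomorphism $WD_\FF \to {}^{L}U_{n,\EE/\FF}$. To pass from the supercuspidal case to arbitrary generic $\pi$, I would proceed by a Mackey-style analysis of parabolic induction, following Blanc--Delorme and Matringe: distinction of an induced representation is controlled by distinction of the inducing data together with Galois-invariance, which mirrors precisely the recursive structure of unitary Langlands parameters on the dual side.

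For item (3), Flicker's multiplicity one theorem gives $\dim_{\mathbb{C}}\Hom_{\GL_n(\FF)}(\pi,\chi_G) \leq 1$, and one compares this with a direct count of extensions to show agreement with $|F(\phi_\pi)|$. For the converse direction, I would appeal to Matringe's study of the local Asai $L$-function: it has a pole at $s = 0$ precisely when $\pi$ is $(\GL_n(\FF),\chi_G)$-distinguished, and on the Galois side this same pole encodes existence of an extension $\tilde\phi$ of $\phi_\pi$ to a parameter of $U_{n,\EE/\FF}$. The main obstacle throughout is the Jacquet conjecture in item (2); Kable's proof proceeds through a delicate relative trace formula comparison and there is no purely local shortcut, so it remains the unavoidable technical input of the whole argument.
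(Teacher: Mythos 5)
The paper does not actually prove this theorem. In the surrounding text it is introduced with ``Let us recall briefly the Prasad conjecture for $\GL_n$, which has been verified for the complex representations due to the work of Flicker, Prasad, Kable, Matringe and so on'' --- the statement is recorded as known background, with no proof supplied beyond the citation. There is therefore no ``paper's own proof'' for your sketch to be compared against; the honest answer, had you seen the paper, is simply to cite the literature.

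That said, your sketch is a reasonable account of how the result is established across those references, and you correctly locate the technical core: item (1) is the elementary contragredient/Schur argument originally due to Flicker; item (3) and the converse rest on Flicker's multiplicity-one theorem together with Matringe's exceptional-pole criterion for the Asai $L$-function, which translates distinction into the conjugate-orthogonality of $\phi_\pi$ and hence into extendability to a parameter of $U_{n,E/F}$; and item (2) reduces to the dichotomy of Jacquet's conjecture (Kable for supercuspidals, extended to the generic case by Anandavardhanan--Kurinczuk--Matringe--S\'echerre--Stevens and Matringe via induction from discrete data) with the twist $\chi_G=\omegaEF^{n+1}$ precisely matching the parity convention for conjugate-orthogonal versus conjugate-symplectic parameters. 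Two small caveats worth recording: Kable's theorem gives only the dichotomy (distinguished or $\omegaEF$-distinguished, not both), whereas pinning down \emph{which} sign is conjugate-orthogonal requires further work beyond \cite{kable}; and the reduction from generic to supercuspidal is not purely Mackey-theoretic --- it passes through the classification of generic representations as products of discrete-series segments and a non-trivial combinatorial matching on both sides. Neither caveat affects the overall correctness of the plan, but both are places where ``following Blanc--Delorme and Matringe'' hides substantial content.
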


Since the local Langlands correspondence for the $\ell$-modular representations of $G(F)$ has not been set up in general, except for $G=\GL_n$, we are concerned only with the simplest case where $G=\PGL_2$.

\begin{thm}[The Prasad conjecture for $\PGL_2$]
    Let $\pi$ be an irreducible generic complex representation of $\PGL_2(\EE)$ with Langlands parameter $\phi_\pi$. Then $\Hom_{\PGL_2(\FF)}(\pi,\omega_{E/F})$ is nonzero if and only if
    there exists a parameter $\tilde{\phi}$ of $\PGL_2(\FF)$ such that $\tilde{\phi}|_{WD_F}=\phi_\pi$.
    \label{Prasad:PGL}
\end{thm}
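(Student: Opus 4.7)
The plan is to prove Theorem \ref{Prasad:PGL} by analyzing each type of irreducible generic representation $\pi$ of $\PGL_2(E)$ separately: supercuspidal, principal series, and twisted Steinberg. Since $\pi$ has trivial central character, $\det\phi_\pi=\mathbf{1}$, so $\phi_\pi$ takes values in the dual group $\SL_2(\mathbb{C})$; an extension $\tilde{\phi}$ to $WD_F$ is an $\SL_2(\mathbb{C})$-valued homomorphism of $WD_F$ restricting to $\phi_\pi$ on $WD_E$. (I read the restriction in the statement as $\tilde{\phi}|_{WD_E}=\phi_\pi$.) A first preliminary step is to translate the extension condition into a concrete form. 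For irreducible $\phi_\pi$, the existence of an extension inside $\SL_2$ rather than inside $\GL_2$ with determinant $\omegaEF$ is exactly the condition that $\phi_\pi$ be conjugate-symplectic as a $2$-dimensional representation of $WD_E$; the obstruction lives in $H^2(\Gal(E/F),Z(\SL_2))=\mathbb{Z}/2$ and distinguishes the two possible extensions of a $\sigma$-selfdual parameter.

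For the supercuspidal case I would invoke the complex Jacquet conjecture proved by Kable \cite{kable}: a $\sigma$-selfdual supercuspidal representation $\pi$ of $\GL_2(E)$ with trivial central character is $\GL_2(F)$-distinguished iff $\phi_\pi$ is conjugate-orthogonal, and $(\GL_2(F),\omegaEF)$-distinguished iff $\phi_\pi$ is conjugate-symplectic. Combined with the preliminary step, this matches $(\PGL_2(F),\omegaEF)$-distinction with the existence of $\tilde{\phi}$ exactly. One needs to note that if $\pi$ is not $\sigma$-selfdual then neither side of the equivalence holds, which reduces to Prasad's vanishing argument for non-selfdual Langlands parameters.

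For the principal series $\pi=\pi(\chi,\chi^{-1})$ with $\phi_\pi=\chi\oplus\chi^{-1}$, I would adapt the Mackey argument used in the proof of Lemma \ref{lemPrincSer}, replacing the trivial character by $\omegaEF$. The result is that $\pi$ is $(\PGL_2(F),\omegaEF)$-distinguished iff either $\chi^\sigma=\chi^{-1}$ (i.e.\ $\chi\chi^\sigma=\mathbf{1}$) or $\chi_{|F^\times}=\omegaEF$. On the parameter side, these correspond respectively to $\tilde{\phi}=\Ind_{W_E}^{W_F}\chi$ (whose determinant is $\chi_{|F^\times}\cdot\omegaEF$, hence trivial precisely when $\chi_{|F^\times}=\omegaEF$ or when $\chi$ extends as a character), and to $\tilde{\phi}=\tilde{\chi}\oplus\tilde{\chi}^{-1}$ for a character $\tilde{\chi}$ of $W_F$ extending $\chi$. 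A similar, shorter, case-by-case analysis treats $\St_\chi$ (with $\chi^2=\mathbf{1}$): here $\phi_\pi$ involves the monodromy piece and the extension condition translates to $\chi_{|F^\times}=\omegaEF$, matching the known distinction criterion for complex Steinberg representations (Flicker).

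The main obstacle will be organizing the parameter side cleanly, in particular the two mechanisms by which a reducible $\phi_\pi$ can extend to $WD_F$ (as a direct sum of extensions of characters, or as an induction from $W_E$), and tracking the $\SL_2$-valued constraint throughout so that the twist by $\omegaEF$ appears exactly where the Mackey computation demands it. A secondary subtlety is the correct identification, in the irreducible supercuspidal case, of the cohomological obstruction with the sign of conjugate-duality; this is where Schur's lemma (as in Lemma \ref{lemconj}) enters to guarantee uniqueness of the sign.
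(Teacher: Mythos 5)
The paper does not actually prove Theorem~\ref{Prasad:PGL}: it is recalled as a known theorem of the complex theory (the surrounding text attributes the Prasad conjecture for these groups to Flicker, Prasad, Kable and Matringe) and is then invoked as an input in the proof of Proposition~\ref{proPrasadSupercusp}. So there is no paper proof to compare against; you are supplying one from scratch, and your overall route — reduce to $\GL_2$ via Kable's dichotomy for supercuspidals and Mackey theory for the non-cuspidal cases, then match against a parameter-lifting computation on the $\SL_2(\mathbb{C})$ side — is the natural and standard one.

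Two of your three case computations are wrong, both from losing track of a twist. \emph{Principal series:} with $\chi_1=\chi$, $\chi_2=\chi^{-1}$, the open-orbit condition $\chi_1\chi_2^{\sigma}=\mathbf{1}$ of Lemma~\ref{lemPrincSer} becomes $\chi\,(\chi^{\sigma})^{-1}=\mathbf{1}$, i.e.\ $\chi=\chi^{\sigma}$, not $\chi\chi^{\sigma}=\mathbf{1}$ as you wrote. By Lemma~\ref{lemChiSigma} your condition $\chi\chi^\sigma=\mathbf{1}$ is the same as $\chi|_{F^\times}\in\{\mathbf{1},\omegaEF\}$, which neither implies nor is implied by $\chi=\chi^\sigma$: taking $\chi$ with $\chi|_{F^\times}=\mathbf{1}$ but $\chi\neq\chi^\sigma$ your criterion wrongly asserts distinction, while taking $\chi=\chi_F\circ\Nm_{E/F}$ with $\chi_F^2\notin\{\mathbf{1},\omegaEF\}$ your criterion wrongly asserts non-distinction. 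The correct criterion is $\chi=\chi^\sigma$ or $\chi|_{F^\times}=\omegaEF$, and your parameter-side dictionary must also be swapped: $\chi=\chi^\sigma$ yields the split extension $\tilde\chi\oplus\tilde\chi^{-1}$, while $\chi|_{F^\times}=\omegaEF$ forces $\chi^\sigma=\chi^{-1}$ (because $\chi\chi^\sigma=\chi\circ\Nm_{E/F}$ is then trivial) and yields $\Ind_{W_E}^{W_F}\chi$, whose determinant $\omegaEF\cdot\chi|_{F^\times}$ is trivial exactly in this case. \emph{Steinberg:} you state the criterion $\chi|_{F^\times}=\omegaEF$, which is the criterion for distinction by the \emph{trivial} character; twisting by an extension of $\omegaEF$ shows that $\St_\chi$ (with $\chi$ quadratic) is $(\PGL_2(F),\omegaEF)$-distinguished iff $\chi|_{F^\times}=\mathbf{1}$, matching the lifting condition of Lemma~\ref{lemLiftWDBanal}. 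Finally, for supercuspidals the equivalence ``$\phi_\pi$ extends to an $\SL_2(\mathbb{C})$-valued parameter of $W_F$ iff $\phi_\pi$ is conjugate-symplectic'' is true but does not follow from a bare appeal to $H^2(\Gal(E/F),Z(\SL_2))$: one must actually identify the sign $\det(A)/\lambda\in\{\pm 1\}$ governing the $\SL_2$-lift (where $A$ intertwines $\phi_\pi^\sigma$ with $\phi_\pi$ and $A^2=\lambda\,\phi_\pi(s^2)$ for a fixed $s\in W_F\setminus W_E$) with the conjugate-duality sign of \cite{GGP}, and this identification is the real content of that step.
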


\bigskip

First, we will show that the Prasad conjecture is not valid in the $\ell$-modular setting (when $\ell$ is non-banal), i.e. there exists a parameter $\tilde{\phi}$ of $\PGL_2(\FF)$ such that $\tilde{\phi}|_{WD_F}=\phi_\pi$, however the generic representation $\pi$ is not $\omega_{E/F}$-distinguished by $\PGL_2(F)$. Then, we provide a potential solution. To do that we define a non-trivial injection $P$ from nilpotent Weil-Deligne representations $\Nilp_{\Fl}(W_{\EE},\SL_2)$ to equivalence classes of non-nilpotent one $[\WDRep_{\Fl}(W_{\EE},\SL_2)]$. Composing the local Langlands correspondence of Vignéras $PV$ with $P$ gives us a modular version of the Prasad conjecture.  That is, an irreducible generic $\Fl$-representation $\pi$ of $\PGL_2(\EE)$ is $\omegaEF$-distinguished if and only if there exists $\Psi_{F} \in \WDRep_{\Fl}(W_{\FF},\SL_2)$ such that $\Psi_{F|W_{\EE}} \sim P\circ PV(\pi)$.

\subsection{The Langlands correspondence for $\PGL_2$}

Firstly, we recall how to get a Langlands correspondence for $\PGL_2$ using the correspondence for $\GL_2$.

\bigskip

Let $R=\Ql$ or $\Fl$. We denote by $\WDRep_{R}(W_{\EE},\SL_2)$ the subset of $\WDRep_{R}(W_{\EE},\GL_2)$ composed of the elements $(\varphi,N)$ such that $\im(\varphi) \subset \SL_2(R)$ and $\tr(N)=0$. Let $\Nilp_{R}(W_{\EE},\SL_2) := \WDRep_{R}(W_{\EE},\SL_2) \cap \Nilp_{R}(W_{\EE},\GL_2)$.

\begin{rem}
    Let $(\varphi_1,N_1)$ and $(\varphi_2,N_2)$ be two semisimple Weil-Deligne representations with $\im(\varphi_1) \subset \SL_2(R)$ and $\im(\varphi_2) \subset \SL_2(R)$. If $(\varphi_1,N_1)$ and $(\varphi_2,N_2)$ are isomorphic in $\GL_2$ then they are also isomorphic in $\SL_2$. Indeed, let $A \in \GL_2(R)$ such that $A^{-1} \varphi_1 A = \varphi_2$ and $A^{-1} N_1 A= N_2$. Since $R$ is algebraically closed, let $\alpha \in R$ such that $\alpha^2 = \det(A)^{-1}$. Let $B:= \alpha A$. Then $B \in \SL_2(R)$ and $B^{-1} \varphi_1 B = \varphi_2$, $B^{-1} N_1 B= N_2$.
\end{rem}

Let $L$ be any map
\[
    L : \Irr_R(\GL_2(\EE)) \to \Nilp_{R}(W_{\EE},\GL_2)
\]
such that $L$ sends the central character to the determinant (that is, if $\pi \in \Irr_R(\GL_2(\EE))$ and $\omega_\pi$ is the central character of $\pi$, then $\omega_\pi$ corresponds to $\det(L(\pi))$ via local class field theory). Then $L$ induces a map

\[
    PL : \Irr_R(\PGL_2(\EE)) \to \Nilp_{R}(W_{\EE},\SL_2)
\]
making the following diagram commutes

\[
    \xymatrix{
        \Irr_R(\PGL_2(\EE)) \ar[d] \ar[r]^{PL} & \Nilp_{R}(W_{\EE},\SL_2) \ar[d]\\
        \Irr_R(\GL_2(\EE)) \ar[r]^{L} & \Nilp_{R}(W_{\EE},\GL_2)}
\]
where the vertical map on the left-hand side is given by the projection $\GL_2(\EE) \to \PGL_2(\EE)$ and the vertical map on the right-hand side is the inclusion $\Nilp_{R}(W_{\EE},\SL_2) \subset \Nilp_{R}(W_{\EE},\GL_2)$.

In particular, since the $V$ correspondence sends the central character to the determinant (\cite[Lem. 6.4]{MatringeKurinczuk}) we get a map $PV : \Irr_R(\PGL_2(\EE)) \to \Nilp_{R}(W_{\EE},\SL_2)$.

\subsection{Problem with the modular Prasad Conjecture}

\label{secIssuePrasad}

We shall show that the Prasad conjecture does not hold for $\ell$-modular representations with the $V$ correspondence. One may think that it might work with a different choice of bijection in Theorem \ref{thmBijNilp}. We will show in this subsection that such kind of bijection does not exist; see Corollary \ref{corIssuePrasad}.

\bigskip

Let $L$ be a map
\[
    L : \Irr_{\Fl}(\PGL_2(\EE)) \to \Nilp_{\Fl}(W_{\EE},\SL_2)
\]
such that the semisimple part is given by $V_{ss}$.

\bigskip

When $q_E \equiv -1 \pmod{\ell}$, the image under $L$ of the special representation $\Spe$, has $\nu^{-1/2} \oplus \nu^{1/2}$ for the semisimple part.


\begin{lem}
    \label{lemLiftLangSp}
    Let $\EE$ be a quadratic extension of $\FF$. Let us assume that $q_E \equiv -1 \pmod{\ell}$. Then there exists $\Phi_{\FF} \in \Nilp_{\Fl}(W_{\FF},\SL_2)$ such that $\Phi_{\FF|W_{\EE}}=L(\Spe)$.
\end{lem}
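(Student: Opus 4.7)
The strategy is to construct $\Phi_F$ explicitly as a direct sum of two unramified $\Fl$-characters of $W_F$, equipped with the same nilpotent monodromy appearing in $L(\Spe)$.

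Since $\Fl$ is algebraically closed, I would pick an element $\alpha \in \Fl^\times$ with $\alpha^2 = q_F^{-1}$ and define an unramified $\Fl$-character $\nu_F^{1/2}$ of $W_F$ by $\nu_F^{1/2}(\mathrm{Frob}_F) = \alpha$, where $\mathrm{Frob}_F$ is a geometric Frobenius. Using the standard compatibility $\nu_F|_{W_E} = \nu_E$ from local class field theory together with the relation $\mathrm{Frob}_E = \mathrm{Frob}_F^{f}$, where $f \in \{1,2\}$ is the residual degree of $E/F$, I would choose $\alpha$ so that $\nu_F^{1/2}|_{W_E} = \nu_E^{1/2}$ matches the fixed square root used on the $W_E$-side.

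I would then set $\phi_F := \nu_F^{-1/2} \oplus \nu_F^{1/2}$ and $\Phi_F := (\phi_F, N)$, where $N$ is the nilpotent monodromy matrix of $L(\Spe)$. The determinant of $\phi_F$ is trivial, so $\Phi_F$ takes values in $\SL_2(\Fl)$, and by construction the restriction $\Phi_F|_{W_E}$ has semisimple part $\nu_E^{-1/2} \oplus \nu_E^{1/2}$ and the same matrix $N$, hence equals $L(\Spe)$ as a Weil--Deligne representation of $W_E$. It remains to verify that $(\phi_F, N)$ satisfies the Weil--Deligne monodromy relation $\phi_F(w) N = \nu_F(w) N \phi_F(w)$ on the whole of $W_F$, not just on $W_E$.

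The main obstacle is precisely this final monodromy check. In the normalization used in the paper, $V(\Spe) = \Psi_{\Spe, \mathbf{1}} = (\nu_E^{-1/2} \oplus \nu_E^{1/2}, 0)$, so $N = 0$ and the relation is trivially satisfied, which already suffices to produce $\Phi_F$. For a more general $L$, the hypothesis $q_E \equiv -1 \pmod{\ell}$ forces $\nu_E^2 = 1$ on $W_E$, so the relevant nilpotent matrix $N$ must be strictly off-diagonal in the eigenbasis for $\phi_F$; the required identity on $W_F$ then reduces to a short matrix computation whose key input is the explicit choice of the square root $\nu_F^{1/2}$ made in the first step, with the ramified and unramified subcases of $E/F$ handled separately.
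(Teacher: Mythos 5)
Your construction reproduces the paper's idea — lift each summand of the semisimple part to an unramified $\Fl$-character of $W_{\FF}$ and keep the nilpotent matrix — but you stop precisely at the point where the content lies, and the hand-waving in your last paragraph misdiagnoses the difficulty. The verification of $\phi_F(w) N = \nu_F(w) N \phi_F(w)$ on all of $W_F$ does \emph{not} hinge on the choice of square root $\nu_F^{1/2}$: if $\phi_F = \mathrm{diag}(\nu_F^{-1/2}, \nu_F^{1/2})$, then $\phi_F(w) N \phi_F(w)^{-1} = \nu_F(w)^{\pm 1} N$ according as $N$ is strictly lower- or strictly upper-triangular, and this sign is unchanged if you replace $\nu_F^{1/2}$ by its negative. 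What matters is the \emph{ordering} of the two diagonal characters relative to $N$, i.e.\ whether $N$ raises the $\nu_F$-weight by $1$. For one of the two nonzero nilpotent directions this fails with your fixed ordering, and keeping ``the same matrix $N$'' gives $\phi_F(w)N\phi_F(w)^{-1} = \nu_F(w)^{-1}N$, which equals $\nu_F(w)N$ only when $\nu_F^2 = \mathbf{1}$ — false in the unramified case since $q_F^2 \equiv -1 \pmod{\ell}$. This is why the paper's own proof does not use a single formula for $\Phi_F$, but instead \emph{enumerates} the three elements of $\Nilp_{\Fl}(W_E,\SL_2)$ with semisimple part $\nu^{-1/2}\oplus\nu^{1/2}$ and exhibits a separate $W_F$-lift for each, reordering the diagonal entries of the semisimple part as needed so that $N$ goes uphill in the $\nu_F$-filtration.

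There is also a more serious point you do not touch. The nilpotent which sends the $\nu_E^{1/2}$-eigenspace to the $\nu_E^{-1/2}$-eigenspace exists over $W_E$ only because $\nu_E^2 = \mathbf{1}$, and any $W_F$-lift with $\det = \mathbf{1}$ and nonzero $N$ must take the form $\chi\oplus\nu_F\chi$ with $\chi^2 = \nu_F^{-1}$; for its restriction to produce \emph{this} nilpotent one needs a quadratic character $\zeta$ of $W_F$ with $\zeta|_{W_E} = \nu_E$, which forces $\nu_F^2 = \mathbf{1}$ and therefore fails when $E/F$ is unramified. So the asserted ``short matrix computation with the ramified and unramified subcases handled separately'' cannot be filled in for that subcase along the route you describe. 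Since the lemma is stated for an \emph{arbitrary} $L$ with the given semisimple part, you cannot reduce to the case $L = V$ (where $N = 0$) as your third paragraph does: that only proves the lemma for one specific $L$, not the statement at hand.
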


\begin{proof}
    There are 3 elements in $\Nilp_{\Fl}(W_{\EE},\SL_2)$ with semisimple part $\nu^{-1/2} \oplus \nu^{1/2}$ : $(\nu^{-1/2} \oplus \nu^{1/2},0)$, $(\nu^{-1/2} \oplus \nu^{1/2},N)$ and $(\nu^{1/2} \oplus \nu^{3/2},N)$, with $N = \begin{pmatrix}
            0 & 1 \\
            0 & 0
        \end{pmatrix}$. Let $\nu_{\FF}$ be a lift of $\nu$. The following  Weil-Deligne representations in $\Nilp_{\Fl}(W_{\FF},\SL_2)$, $(\nu_{\FF}^{-1/2} \oplus \nu_{\FF}^{1/2},0)$, $(\nu_{\FF}^{-1/2} \oplus \nu_{\FF}^{1/2},N)$ and $(\nu_{\FF}^{1/2} \oplus \nu_{\FF}^{3/2},N)$ are respective lifts of the previous 3 elements of $\Nilp_{\Fl}(W_{\EE},\SL_2)$. Hence, one of them is a lift of $L(\Spe)$.
\end{proof}

\begin{cor}
    \label{corIssuePrasad}
    In the non banal case, the Prasad conjecture is not true for any map $L$ as above.
\end{cor}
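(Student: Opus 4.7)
The plan is to produce an explicit counterexample in the non-banal regime, using the special representation $\Spe$ of $\PGL_2(\EE)$ as witness. The natural sub-case to work in is $\ell \mid q_\EE + 1$ with $\EE/\FF$ unramified, so that $q_\FF^2 \equiv -1 \pmod \ell$ and in particular $q_\FF \not\equiv -1 \pmod \ell$. Since $\Spe$ has trivial central character, it naturally descends to $\PGL_2(\EE)$, so it is a legitimate test representation for the $\PGL_2$-Prasad conjecture.

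On the Galois side, I would invoke Lemma \ref{lemLiftLangSp} to produce a Weil--Deligne representation $\Phi_\FF \in \Nilp_{\Fl}(W_\FF, \SL_2)$ such that $\Phi_{\FF \mid W_\EE} = L(\Spe)$. In other words, $L(\Spe)$ really does extend to a parameter of $\PGL_2(\FF)$, so the ``classical'' Prasad conjecture (Theorem \ref{Prasad:PGL}) applied through $L$ would predict that $\Spe$ is $\omega_{E/F}$-distinguished by $\PGL_2(\FF)$. It is important here that the conclusion of Lemma \ref{lemLiftLangSp} depends only on the semisimple part $\nu^{-1/2} \oplus \nu^{1/2}$ of $L(\Spe)$, which is forced by the hypothesis on $L$; thus the same prediction holds for \emph{every} choice of $L$ covered by the statement.

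On the representation-theoretic side, I would directly show that the prediction fails. Fixing any character $\tilde{\omega}$ of $\EE^\times$ with $\tilde{\omega}_{\mid \FF^\times} = \omegaEF$, one has
\[
    \Hom_{\PGL_2(\FF)}(\Spe, \omegaEF) \;\simeq\; \Hom_{\GL_2(\FF)}\bigl(\Spe_{\tilde{\omega}^{-1}}, \mathbf{1}_2\bigr),
\]
which reduces the question to $\GL_2(\FF)$-distinction of the twisted special representation $\Spe_{\tilde{\omega}^{-1}}$. Theorem \ref{thmCuspDist}(2) then says that such a $\Spe_{\tilde{\omega}^{-1}}$ is $\GL_2(\FF)$-distinguished only when $\ell \mid q_\FF + 1$; but this fails in our setting. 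Hence $\Spe$ is not $\omegaEF$-distinguished by $\PGL_2(\FF)$, contradicting what $L$ predicts.

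The only subtle point, really the heart of the argument, is to interpret the ``classical Prasad conjecture via $L$'' as the assertion that $\pi$ is $\omegaEF$-distinguished if and only if $L(\pi)$ extends to a parameter of $\PGL_2(\FF)$, which is the natural translation of Theorem \ref{Prasad:PGL} to the $\ell$-modular setting. Once that reading is fixed, the contradiction between Lemma \ref{lemLiftLangSp} and Theorem \ref{thmCuspDist}(2) proves the corollary for every admissible $L$.
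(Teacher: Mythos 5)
Your proposal is correct and follows essentially the same route as the paper: the counterexample is $\Spe$ with $\ell \mid q_\EE+1$ and $\EE/\FF$ unramified, Lemma \ref{lemLiftLangSp} gives a lift of $L(\Spe)$ on the Galois side, and Theorem \ref{thmCuspDist}(2) rules out $\omegaEF$-distinction on the automorphic side. The extra step you supply — the reduction of $\PGL_2(\FF)$-distinction to $\GL_2(\FF)$-distinction of the twist $\Spe_{\tilde{\omega}^{-1}}$, and the remark that Lemma \ref{lemLiftLangSp} only sees the semisimple part fixed by the hypothesis on $L$ — is exactly what the paper leaves implicit, and is a useful clarification rather than a different method.
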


\begin{proof}
    If $q_\EE \equiv -1 \pmod{\ell}$ and $\EE / \FF$ is unramified, then by Theorem \ref{thmCuspDist}, the special representation $\Spe$ is not $\omegaEF$-distinguished. However, the semisimple part of $L(\Spe)$ is $\nu^{-1/2} \oplus \nu^{1/2}$. By Lemma \ref{lemLiftLangSp} the Langlands parameter $L(\Spe)$ of $\Spe$ can be lifted to $W_\FF$.
\end{proof}

\subsection{Non-nilpotent Weil-Deligne representations}

To fix the issue with the Prasad conjecture in the modular setting discussed in the previous paragraph, we want to modify the $V$ correspondence. In \cite{MatringeKurinczuk}, Kurinczuk and Matringe modify the local Langlands of Vignéras using non-nilpotent Weil-Deligne representations. We will do something similar to solve our problem.

\bigskip

Kurinczuk and Matringe \cite[Def. 4.8]{MatringeKurinczuk} defined an equivalence relation $\sim$ on $\WDRep_{R}(W_{\EE},\GL_2)$. We recall the definition here. Let $(\Phi,U)$ and $(\Phi',U')$  be two Weil-Deligne representations (up to isomorphism) in $\WDRep_{R}(W_{\EE},\GL_2)$. Then
\begin{enumerate}
    \item If $(\Phi,U)$ and $(\Phi',U')$ are indecomposable (as Weil-Deligne representations), we say that $(\Phi,U) \sim (\Phi',U')$ if there exists $\lambda \in R^{\times}$ such that
          \[(\Phi',U') \simeq (\Phi,\lambda U).\]
    \item In the general case, $(\Phi,U) \sim (\Phi',U')$ if one can decompose $(\Phi,U) = \bigoplus_{i=1}^{r} (\Phi_i,U_i)$ and $(\Phi',U') = \bigoplus_{i=1}^{r} (\Phi_i',U_i')$ with indecomposable summands such that $(\Phi_i,U_i) \sim (\Phi'_i,U'_i)$.
\end{enumerate}

We denote by $[(\Phi,U)]$ the equivalence class of $(\Phi,U)$ and by
\[[\WDRep_{R}(W_{\EE},\GL_2)] := \WDRep_{R}(W_{\EE},\GL_2)/{\sim}.\]
We define also an equivalence relation $\sim$ on $\WDRep_{R}(W_{\EE},\SL_2)$ through the inclusion $\WDRep_{R}(W_{\EE},\SL_2) \subset \WDRep_{R}(W_{\EE},\GL_2)$ and we denote by $[\WDRep_{R}(W_{\EE},\SL_2)] := \WDRep_{R}(W_{\EE},\SL_2)/{\sim}$.

\bigskip

Let $\chi$ be a quadratic character of $\EE^\times$. If $\ell \mid q_\EE-1$, we denote by $\Psi_{\St,\chi} \in \Nilp_{\Fl}(W_{\EE},\SL_2)$, the Weil-Deligne representation $\Psi_{\St,\chi} = (\chi\nu^{-1/2} \oplus \chi\nu^{-1/2},N)$ with $N = \begin{pmatrix}
        0 & 1 \\
        0 & 0
    \end{pmatrix}$. And if $\ell \mid q_\EE+1$, we denote by $\Psi_{\Spe,\chi} \in \Nilp_{\Fl}(W_{\EE},\SL_2)$, the Weil-Deligne representation $\Psi_{\Spe,\chi} = (\chi\nu^{-1/2} \oplus \chi\nu^{1/2},0)$. We define an injection

\[P:  \Nilp_{\Fl}(W_{\EE},\SL_2) \hookrightarrow [\WDRep_{\Fl}(W_{\EE},\SL_2)]\]
by
\[
    P(\Psi) = \left\{
    \begin{array}{ll}

        {\left[\chi\nu^{-1/2} \oplus \chi\nu^{-1/2},\begin{pmatrix}
                                                                0 & 1 \\
                                                                1 & 0
                                                            \end{pmatrix}\right]} & \mbox{if } \ell \mid q_\EE-1 \mbox{ and } \Psi = \Psi_{\St,\chi}  \\
        {\left[\chi\nu^{-1/2} \oplus \chi\nu^{1/2},\begin{pmatrix}
                                                           0 & 1 \\
                                                           1 & 0
                                                       \end{pmatrix}\right]}  & \mbox{if } \ell \mid q_\EE+1 \mbox{ and } \Psi = \Psi_{\Spe,\chi} \\

        [\Psi]                                                      & \mbox{otherwise.}
    \end{array}
    \right.
\]
Since $\Nilp_{\Fl}(W_{\EE},\SL_2)=[\Nilp_{\Fl}(W_{\EE},\SL_2)]$ due to \cite[Prop. 4.11]{MatringeKurinczuk}, $P$ is clearly an injection.

\begin{rem}
    \begin{enumerate}
        \item When $\ell$ is banal, that is $\ell \nmid q_\EE^2-1$, $P$ is just the identity, as we have $[\WDRep_{\Fl}(W_{\EE},\SL_2)] = \Nilp_{\Fl}(W_{\EE},\SL_2)$.
        \item  The map $P$ is not exactly the $CV$ map of \cite{MatringeKurinczuk}. The image is different for non banal supercuspidal representation and the Steinberg representation when $\ell \mid q_\EE -1$.
    \end{enumerate}
\end{rem}

\subsection{Lifting for non-irreducible Weil-Deligne representations}

In Section \ref{secNonSupercuspDist} we have described which of the non-supercuspidal irreducible representations are distinguished. To prove the Prasad conjecture, we also need to inquire when Weil-Deligne representations of $W_\EE$ can be lifted to $W_\FF$. This is what we do in this section.

\bigskip

We begin by giving a criterion to lift the semisimple part of a Weil-Deligne representation.

\begin{lem}
    \label{lemLiftSemiSimple}
    Let $\chi$ be a character of $\EE^{\times}$. Let $\Psi : W_{\EE} \to \SL_2(\Fl)$ defined by $\Psi = \chi \nu^{-1/2} \oplus \chi^{-1} \nu^{1/2}$. Then there exists a semisimple $\Psi_F : W_{\FF} \to \SL_2(\Fl)$ such that $\Psi_{F|W_{\EE}} = \Psi$ if and only if $\chi=\chi^{\sigmaEF}$ or $\chi\neq\chi^{\sigmaEF}$ and $\chi_{|\FF^{\times}}=\omegaEF \nu^{1/2}_{|\FF^{\times}}$.

    Moreover, if $\chi=\chi^{\sigmaEF}$ then $\Psi_F = \eta \nu^{-1/2} \oplus \eta^{-1} \nu^{1/2}$, with $\eta$ a character of $\FF^{\times}$ such that $\chi=\eta \circ \Nm_{\EE/\FF}$. And if $\chi\neq\chi^{\sigmaEF}$ and $\chi_{|\FF^{\times}}=\omegaEF \nu^{1/2}_{|\FF^{\times}}$ then $\Psi_F = \Ind_{W_{\EE}}^{W_{\FF}}(\chi \nu^{-1/2})$ (which is irreducible).
\end{lem}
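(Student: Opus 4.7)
The plan is to analyze any hypothetical lift $\Psi_F$ according to whether it is reducible or irreducible as a representation of $W_\FF$, and in each case determine the constraints on $\chi$ using local class field theory together with the $\SL_2$-valuedness (i.e.\ trivial determinant).

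In the reducible case, I would write $\Psi_F = \eta_1 \oplus \eta_2$ with $\eta_1\eta_2 = \mathbf{1}$. Restricting to $W_\EE$ and matching with $\chi\nu^{-1/2}\oplus\chi^{-1}\nu^{1/2}$, after relabeling one may assume $\eta_1|_{W_\EE} = \chi\nu^{-1/2}$. Since $\nu$ is the restriction of a character of $W_\FF$, it is $\sigmaEF$-invariant on $W_\EE$, so $\eta_1|_{W_\EE}$ is $\sigmaEF$-invariant if and only if $\chi = \chi^{\sigmaEF}$. Conversely, under $\chi=\chi^\sigmaEF$ local class field theory yields $\eta$ on $\FF^\times$ with $\chi = \eta\circ\Nm_{\EE/\FF}$, and a direct check shows $\Psi_F := \eta\nu^{-1/2}\oplus\eta^{-1}\nu^{1/2}$ does the job.

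In the irreducible case, since $\Psi_F|_{W_\EE}$ is a sum of two characters, Frobenius reciprocity (together with the fact that $W_\EE$ has index $2$ in $W_\FF$) forces $\Psi_F = \Ind_{W_\EE}^{W_\FF}(\theta)$ for some character $\theta$ of $W_\EE$ with $\theta\neq\theta^{\sigmaEF}$. Then $\Psi_F|_{W_\EE} = \theta\oplus\theta^{\sigmaEF}$, and matching (up to swap) yields $\theta = \chi\nu^{-1/2}$, with $\theta\neq\theta^\sigmaEF$ equivalent to $\chi\neq\chi^\sigmaEF$. The matching of the other summand, $\theta^\sigmaEF = \chi^{-1}\nu^{1/2}$, rearranges to $\chi\chi^\sigmaEF = \nu$, which is equivalent to $\theta|_{\FF^\times}$ being trivial on $\Nm_{\EE/\FF}(\EE^\times)$, i.e.\ $\theta|_{\FF^\times}\in\{\mathbf{1},\omegaEF\}$. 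Applying the standard formula
\[
\det\bigl(\Ind_{W_\EE}^{W_\FF}(\theta)\bigr) \;=\; \omegaEF\cdot(\theta|_{\FF^\times})
\]
the requirement $\det\Psi_F=\mathbf{1}$ singles out the unique possibility $\theta|_{\FF^\times} = \omegaEF$, which translates to $\chi|_{\FF^\times} = \omegaEF\,\nu^{1/2}|_{\FF^\times}$. Conversely, given such $\chi$, one verifies directly that $\Ind_{W_\EE}^{W_\FF}(\chi\nu^{-1/2})$ is an irreducible $\SL_2$-valued lift.

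The main subtlety I anticipate is that the condition $\chi\chi^\sigmaEF = \nu$ alone allows two possibilities, $\chi|_{\FF^\times} = \nu^{1/2}|_{\FF^\times}$ or $\chi|_{\FF^\times} = \omegaEF\nu^{1/2}|_{\FF^\times}$; the pleasant reason only the latter survives in the statement is precisely the $\SL_2$ constraint on the determinant of the induced representation, which kills the first possibility. Keeping this clean requires care with the normalization of class field theory and the identification $\nu_\FF|_{W_\EE}=\nu_\EE$ (valid in both the ramified and unramified cases), so I would spell out these identifications explicitly before carrying out the matching.
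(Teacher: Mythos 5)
Your proposal is correct and follows essentially the same strategy as the paper: split into the reducible case (where $\chi$ must factor through the norm, equivalent to $\chi = \chi^{\sigmaEF}$) and the irreducible case (where $\Psi_F$ is induced from a character of $W_\EE$, and the $\SL_2$-constraint singles out $\chi|_{\FF^\times} = \omegaEF\nu^{1/2}|_{\FF^\times}$). The only cosmetic difference is in the last step: you invoke the standard formula $\det\bigl(\Ind_{W_\EE}^{W_\FF}\theta\bigr) = \omegaEF\cdot(\theta|_{\FF^\times})$ directly, whereas the paper rederives exactly this in situ by writing out the $2\times 2$ matrix $\Ind_{W_\EE}^{W_\FF}(\mu)(s) = \left(\begin{smallmatrix}0 & \mu(s^2)\\ 1 & 0\end{smallmatrix}\right)$ for $s\in W_\FF\setminus W_\EE$ and reading off the determinant condition $\mu(s^2)=-1$, which it then identifies with $\mu|_{\FF^\times}=\omegaEF$. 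Both routes pinpoint the same dichotomy $\mu|_{\FF^\times}\in\{\mathbf{1},\omegaEF\}$ from $\mu\mu^{\sigmaEF}=\mathbf{1}$ (Lemma \ref{lemChiSigma}) and then eliminate $\mathbf{1}$ via the determinant; your version is slightly more compact at the cost of leaning on the determinant-of-induction formula as a black box, while the paper's is self-contained.
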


\begin{proof}
    We have two cases if there is a lift to $W_{\FF}$: the two-dimensional representation of $W_{\FF}$ is irreducible or it is the sum of two characters.

    Let us first study when there is a lift which is the sum of two characters. In this case we can lift $\chi$ to $W_{\FF}$. This is equivalent to $\chi=\eta \circ \Nm_{\EE/\FF}$ with $\eta$ a character of $\FF^{\times}$ also equivalent to $\chi=\chi^{\sigmaEF}$. If this condition is satisfied then the lift is $\Psi_F = \eta \nu^{-1/2} \oplus \eta^{-1} \nu^{1/2}$.

    Now we deal with the case where there is an irreducible lift. Then this lift must be $\Psi_F = \Ind_{W_{\EE}}^{W_{\FF}}(\chi \nu^{-1/2})$. Let $\mu = \chi \nu^{-1/2}$. The representation $\Psi_F$ is irreducible if and only if $\mu \neq \mu^{\sigma}$ if and only if $\chi \neq \chi^{\sigma}$. So let us assume that $\chi \neq \chi^{\sigma}$. Also if $\Psi_F$ is a lift of $\Psi$ then $\mu^{\sigma}=\chi^{-1} \nu^{1/2}$ that is $\mu \mu^{\sigma}=\mathbf{1}$ or by Lemma \ref{lemChiSigma} $\mu_{|\FF^{\times}}=\mathbf{1}$ or $\omegaEF$. If these conditions are satisfied then $\Ind_{W_{\EE}}^{W_{\FF}}(\mu)$ is an irreducible lift of $\Psi_F$ in $\GL_2(\Fl)$. We are left to prove at which condition it is in $\SL_2(\Fl)$. By the previous condition we already have $\mu \mu^{\sigma}=1$. Thus for $w\in W_{\EE}$, $\Ind_{W_{\EE}}^{W_{\FF}}(\mu)(w) \in \SL_2(\Fl)$. Let $s\in W_{\FF} \setminus W_{\EE}$. Then
    \[
        \Ind_{W_{\EE}}^{W_{\FF}}(\mu)(s)= \begin{pmatrix}
            0 & \mu(s^2) \\
            1 & 0
        \end{pmatrix}.
    \] Therefore, $\Ind_{W_{\EE}}^{W_{\FF}}(\mu)(s) \in \SL_2(\Fl)$ if and only if $\mu(s^2)=-1$ if and only if $\mu_{|\FF^{\times}}=\omegaEF$. This finishes the proof.
\end{proof}

\begin{rem}
    Note that, when $\chi$ is quadratic, the case where $\Psi_F = \Ind_{W_{\EE}}^{W_{\FF}}(\chi \nu^{-1/2})$ is irreducible can only happen when $q_{\EE}\equiv q_{\FF}\equiv -1 \pmod{\ell}$. Indeed, we have $\chi^{\sigma}=\chi \nu$ and $\chi \neq \chi^{\sigma}$. Therefore $\nu \neq \mathbf{1}$ and $q_{\EE}\not\equiv 1 \pmod{\ell}$. Also since $\chi^{\sigma}=\chi \nu$, we get that $\nu_{|\FF^{\times}}=\mathbf{1}$ and so that $q_{\FF}^2\equiv 1 \pmod{\ell}$. Hence $q_{\EE}\equiv q_{\FF}\equiv -1 \pmod{\ell}$.
\end{rem}

Now we can examine when a Weil-Deligne representation $(\Psi,N)$ can be lifted. The $N$ will be chosen such that these representations correspond under $P\circ PV$ to an irreducible generic representation of $\PGL_2$.

\begin{lem}
    \label{lemLiftWDSp}
    Let $q_{\EE}\equiv -1 \pmod{\ell}$ and $\chi$ be a quadratic character of $\EE^{\times}$. Let $\Psi \in [\WDRep_{\Fl}(W_{\EE},\SL_2)]$ defined by $\Psi = [\chi\nu^{-1/2} \oplus \chi\nu^{1/2},N]$, with $ N=\begin{pmatrix}
            0 & 1 \\
            1 & 0
        \end{pmatrix}$. Then there exists $\Psi_{F} \in \WDRep_{\Fl}(W_{\FF},\SL_2)$ such that $\Psi_{F|W_{\EE}} \sim \Psi$ in $[\WDRep_{\Fl}(W_{\EE},\SL_2)]$ if and only if $q_{\FF}\equiv -1 \pmod{\ell}$ and $\chi_{|\FF^{\times}}=\mathbf{1}$ or $\omegaEF \nu^{1/2}_{|\FF^{\times}}$.
\end{lem}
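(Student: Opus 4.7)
The plan is to analyze Weil--Deligne lifts $\Psi_F=(\Phi_F,N_F)$ by combining Lemma \ref{lemLiftSemiSimple} (which controls the semisimple part $\Phi_F$) with the Weil--Deligne relation $\Phi_F(w)N_F=\nu_F(w)N_F\Phi_F(w)$ (which controls $N_F$). Since $\chi$ is quadratic, $\chi\nu^{-1/2}\oplus\chi\nu^{1/2}$ coincides with $\chi\nu^{-1/2}\oplus\chi^{-1}\nu^{1/2}$, so Lemma \ref{lemLiftSemiSimple} leaves two possibilities for $\Phi_F$: either (A) $\Phi_F=\eta\nu_F^{-1/2}\oplus\eta^{-1}\nu_F^{1/2}$ for some character $\eta$ of $\FF^\times$ with $\eta\circ\Nm_{\EE/\FF}=\chi$ (the case $\chi=\chi^\sigma$, equivalently $\chi|_{\FF^\times}\in\{\mathbf{1},\omegaEF\}$ by Lemma \ref{lemChiSigma}), or (B) $\Phi_F=\Ind_{W_\EE}^{W_\FF}(\chi\nu^{-1/2})$, irreducible, with $\chi|_{\FF^\times}=\omegaEF\nu^{1/2}|_{\FF^\times}$.

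The first key observation is to describe the $\sim$-class of the target: the Weil--Deligne representation $(\chi\nu^{-1/2}\oplus\chi\nu^{1/2},N_0)$ with $N_0=\bigl(\begin{smallmatrix}0&1\\1&0\end{smallmatrix}\bigr)$ is indecomposable, and its $\sim$-class consists exactly of those $(\chi\nu^{-1/2}\oplus\chi\nu^{1/2},N)$ for which $N$ is anti-diagonal with \emph{both} off-diagonal entries non-zero (indeed, Weil--Deligne isomorphisms by the centralizer $\{\mathrm{diag}(a,b)\}$ of $\Phi_F|_{W_\EE}$ preserve the product of the off-diagonal entries, and $\sim$ allows rescaling $N$ by an arbitrary scalar). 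I must therefore rule out the degenerate indecomposables in which only one anti-diagonal entry is non-zero.

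Writing $N_F=\bigl(\begin{smallmatrix}a&b\\c&d\end{smallmatrix}\bigr)$ and evaluating the Weil--Deligne relation at $w\in W_\EE$ (where $\nu_E\neq\mathbf{1}$ since $q_\EE\equiv-1\pmod{\ell}$) forces $a=d=0$. In Case A, the off-diagonal conditions read $\eta^2=\nu_F^2$ (if $b\neq 0$) and $\eta^2=\mathbf{1}$ (if $c\neq 0$), so requiring both non-zero yields $\eta^2=\mathbf{1}$ and $\nu_F^2=\mathbf{1}$, hence $\chi|_{\FF^\times}=\eta^2=\mathbf{1}$ and $q_\FF\equiv\pm 1\pmod{\ell}$. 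The value $q_\FF\equiv 1\pmod{\ell}$ is incompatible with $q_\EE\equiv -1\pmod{\ell}$ (both $q_\EE=q_\FF$ and $q_\EE=q_\FF^2$ would give $q_\EE\equiv 1$), so $q_\FF\equiv -1\pmod{\ell}$. In Case B, the hypothesis on $\chi$ together with $\chi$ quadratic gives $\chi\chi^\sigma=\chi|_{\FF^\times}\circ\Nm_{\EE/\FF}=\nu_\EE$, hence $\chi^\sigma=\chi\nu_\EE$, which is exactly the condition making $\Phi_F\otimes\nu_F\cong\Phi_F$ so that a non-zero $N_F$ exists. Writing $N_F$ anti-diagonal in the basis $(e_1,se_1)$ of the induced representation and comparing the Weil--Deligne relation at an element $s\in W_\FF\setminus W_\EE$ reduces, using that $\mu(s^2)=-1$ with $\mu=\chi\nu^{-1/2}$ (the $\SL_2$-condition from Lemma \ref{lemLiftSemiSimple}), to $\nu_\FF(s)^2=1$; combined with $\nu_\FF|_{W_\EE}=\nu_\EE$ squaring to $\mathbf{1}$, this forces $\nu_\FF^2=\mathbf{1}$ on all of $W_\FF$ and hence $q_\FF\equiv -1\pmod{\ell}$.

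For the converse, given $q_\FF\equiv -1\pmod{\ell}$ and $\chi|_{\FF^\times}=\mathbf{1}$, I would pick a quadratic $\eta$ with $\eta\circ\Nm_{\EE/\FF}=\chi$ (which exists since this equation forces $\eta^2=\chi|_{\FF^\times}=\mathbf{1}$) and set $\Psi_\FF:=(\eta\nu_\FF^{-1/2}\oplus\eta\nu_\FF^{1/2},N_0)$; a direct check using $\nu_\FF^{3/2}=\nu_\FF^{-1/2}$ (valid because $q_\FF\equiv -1\pmod{\ell}$) shows this is a well-defined Weil--Deligne representation of $\SL_2$ with $\Psi_\FF|_{W_\EE}=\Psi$. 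For $\chi|_{\FF^\times}=\omegaEF\nu^{1/2}|_{\FF^\times}$, I would take $\Phi_\FF=\Ind_{W_\EE}^{W_\FF}(\chi\nu^{-1/2})$ and $N_\FF=\bigl(\begin{smallmatrix}0&-\nu_\FF(s)\\1&0\end{smallmatrix}\bigr)$; the Weil--Deligne relation is satisfied thanks to $\nu_\FF(s)^2=1$ and $\mu(s^2)=-1$, and since both anti-diagonal entries of $N_\FF$ are non-zero, its restriction to $W_\EE$ is $\sim$-equivalent to $\Psi$. The main obstacle will be tracking the $\sim$-class when passing from $W_\FF$ to $W_\EE$, ensuring that $N_\FF$ does not degenerate to a one-entry anti-diagonal upon restriction (which would place it in a different indecomposable $\sim$-class) and confirming that the various choices ($\chi|_{\FF^\times}=\omegaEF$ in particular) rejected in the forward direction genuinely admit no lift.
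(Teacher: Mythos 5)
Your proof is correct and follows essentially the same strategy as the paper: reduce via Lemma~\ref{lemLiftSemiSimple} to the three possible shapes of the semisimple part (two diagonal choices with $\chi=\chi^\sigma$ and the induced one with $\chi\neq\chi^\sigma$), impose the Weil--Deligne relation to constrain $N_F$, and then write down an explicit lift when the conditions hold (invoking \cite[Lem.~4.23]{MatringeKurinczuk} or its content to conclude the $\sim$-equivalence).

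One point where your argument is actually a bit more careful than the paper's. For the sub-case $\chi|_{F^\times}=\omegaEF$, the paper asserts that the only $\SL_2$ Weil--Deligne representation with semisimple part $\eta\nu_F^{-1/2}\oplus\eta^{-1}\nu_F^{1/2}$ (where $\eta^2=\omegaEF$) is the one with $M=0$. That is not quite right when $E/F$ is unramified: then $\omegaEF$ is unramified and, since $q_\EE=q_\FF^2\equiv-1\pmod\ell$, one has $\nu_F^2=\omegaEF$, so the $E_{12}$-entry of $M$ may be non-zero. The paper's conclusion survives because such an $M$ is nilpotent and therefore the restriction is never $\sim$-equivalent to the non-nilpotent $\Psi$; your criterion (``both anti-diagonal entries of $N_F$ must be non-zero'') handles this uniformly without the spurious claim. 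Two minor comments: first, in Case~B you invoke ``so that a non-zero $N_F$ exists'' when $\Phi_F\otimes\nu_F\cong\Phi_F$ --- that is a necessary condition, not automatically sufficient, but you do follow it with the explicit construction, so this is harmless. Second, when you conclude ``$q_\FF\equiv\pm1\pmod\ell$'' from $\nu_F^2=\mathbf{1}$, the elimination of $q_\FF\equiv1$ uses $q_\EE\in\{q_\FF,q_\FF^2\}$ and $q_\EE\equiv-1$; you spell this out in Case~A but leave it implicit in Case~B. Finally, your closing worry about $N_F$ ``degenerating upon restriction'' is a non-issue: $N_F$ is the same matrix before and after restriction, only the group of intertwining isomorphisms enlarges, so the only thing to verify is that both anti-diagonal entries are non-zero --- which your constructions do satisfy.
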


\begin{proof}

    If we have a lift $\Psi_{F}$ then we also have a lift of the semisimple part of $\Psi$. By Lemma \ref{lemLiftSemiSimple} this is possible only if $\chi=\chi^{\sigmaEF}$ (which is equivalent by Lemma \ref{lemChiSigma} to $\chi_{|\FF^{\times}}=\mathbf{1}$ or $\omegaEF$) or $\chi\neq\chi^{\sigmaEF}$ and $\chi_{|\FF^{\times}}=\omegaEF \nu^{1/2}_{|\FF^{\times}}$.

    If $\chi_{|\FF^{\times}}=\mathbf{1}$, then by Lemma \ref{lemLiftSemiSimple}, the semisimple part of $\Psi_{F}$ should be $\eta \nu^{-1/2} \oplus \eta^{-1} \nu^{1/2}$, with $\eta$ a character of $\FF^{\times}$ such that $\chi=\eta \circ \Nm_{\EE/\FF}$. Note that $\chi_{|\FF^{\times}}=\mathbf{1}$ implies that $\eta^2=\mathbf{1}$.  If $q_{\FF} \equiv -1 \pmod{\ell}$ then we can take $\Psi_{\FF} = [\eta\nu^{-1/2} \oplus \eta\nu^{1/2},N]$. If $q_{\FF} \not\equiv -1 \pmod{\ell}$ then $q_{\FF}^2 \not\equiv 1 \pmod{\ell}$ ($\EE$ is a quadratic extension of $\FF$). In this case, $[\WDRep_{\Fl}(W_{\FF},\SL_2)]=\Nilp_{\Fl}(W_{\FF},\SL_2)$. Since $N$ is not nilpotent, $\Psi$ cannot be the restriction of an element of $\Nilp_{\Fl}(W_{\FF},\SL_2)$.

    If $\chi_{|\FF^{\times}}=\omegaEF$, again by Lemma \ref{lemLiftSemiSimple}, the semisimple part of $\Psi_{F}$ should be $\eta \nu^{-1/2} \oplus \eta^{-1} \nu^{1/2}$. This time $\eta^2=\omegaEF \neq \mathbf{1}$. Thus the only Weil-Deligne representation $[\eta \nu^{-1/2} \oplus \eta^{-1} \nu^{1/2},M]$ is with $M=0$ and is not a lift of $\Psi$.

    Suppose $\chi_{|\FF^{\times}}=\omegaEF \nu^{1/2}_{|\FF^{\times}}$. We get that $\nu_{|\FF^{\times}}=1$ so $q_{\FF}\equiv -1 \pmod{\ell}$. This time Lemma \ref{lemLiftSemiSimple} tells us that the semisimple part of $\Psi_{F}$ should be $\Ind_{W_{\EE}}^{W_{\FF}}(\mu)$, with $\mu =\chi \nu^{-1/2}$. Let $ M=\begin{pmatrix}
            0  & 1 \\
            -1 & 0
        \end{pmatrix}$. First, let us show that $\Psi_F:=(\Ind_{W_{\EE}}^{W_{\FF}}(\mu),M) \in \WDRep_{\Fl}(W_{\FF},\SL_2)$. Let $s\in  W_{\FF} \setminus  W_{\EE}$. For $w \in W_{\EE}$, we get that $\Ind_{W_{\EE}}^{W_{\FF}}(\mu)(w)= \begin{pmatrix}
            \mu(w) & 0          \\
            0      & \mu^{s}(w)
        \end{pmatrix}$. Since $\mu^{\sigma}=\mu \nu$ we get

    \[
        \begin{pmatrix}
            \mu(w) & 0          \\
            0      & \mu^{s}(w)
        \end{pmatrix}\begin{pmatrix}
            0  & 1 \\
            -1 & 0
        \end{pmatrix} = \nu(w) \begin{pmatrix}
            0  & 1 \\
            -1 & 0
        \end{pmatrix} \begin{pmatrix}
            \mu(w) & 0          \\
            0      & \mu^{s}(w)
        \end{pmatrix}.
    \]
    Since $\mu(s^2)=-1$, we also have $\Ind_{W_{\EE}}^{W_{\FF}}(\mu)(s)= \begin{pmatrix}
            0 & -1 \\
            1 & 0
        \end{pmatrix}$. As we are in the case where $q_{\FF}\equiv -1 \pmod{\ell}$, the extension $\EE/\FF$ is ramified. The element $s$ must then be in the inertia subgroup $s\in I_{\FF}$ and therefore $\nu(s)=1$. This gives us:

    \[
        \begin{pmatrix}
            0 & -1 \\
            1 & 0
        \end{pmatrix}\begin{pmatrix}
            0  & 1 \\
            -1 & 0
        \end{pmatrix} = \nu(s) \begin{pmatrix}
            0  & 1 \\
            -1 & 0
        \end{pmatrix} \begin{pmatrix}
            0 & -1 \\
            1 & 0
        \end{pmatrix}.
    \]
    We have checked that $\Psi_F=(\Ind_{W_{\EE}}^{W_{\FF}}(\mu),M) \in \WDRep_{\Fl}(W_{\FF},\SL_2)$. To finish the proof we need to show that $\Psi_{F|W_{\EE}} \sim \Psi$. We have $\Psi_{F|W_{\EE}}=(\chi\nu^{-1/2} \oplus \chi\nu^{1/2},M)$. And $(\chi\nu^{-1/2} \oplus \chi\nu^{1/2},M) \sim \Psi$ by \cite[Lem. 4.23]{MatringeKurinczuk}.
\end{proof}

\begin{lem}
    \label{lemLiftWDSt}
    Let $q_{\EE}\equiv 1 \pmod{\ell}$ and $\chi$ be a quadratic character of $\EE^{\times}$. Let $\Psi \in [\WDRep_{\Fl}(W_{\EE},\SL_2)]$ defined by $\Psi = [\chi \nu^{-1/2} \oplus \chi \nu^{-1/2},N]$, with $ N=\begin{pmatrix}
            0 & 1 \\
            1 & 0
        \end{pmatrix}$. Then there exists $\Psi_{F} \in \WDRep_{\Fl}(W_{\FF},\SL_2)$ such that $\Psi_{F|W_{\EE}} \sim \Psi$ if and only if $\chi_{|\FF^{\times}}=\mathbf{1}$ or $q_{\FF}\equiv 1 \pmod{\ell}$ and $\chi_{|\FF^{\times}}=\omegaEF$.
\end{lem}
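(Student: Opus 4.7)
The plan is to mirror the structure of the proof of Lemma \ref{lemLiftWDSp}.

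First, I would prove necessity of $\chi|_{\FF^{\times}} \in \{\mathbf{1},\omegaEF\}$. Any lift $\Psi_F$ of $\Psi$ has semisimple part restricting to $\chi\nu^{-1/2}\oplus\chi\nu^{-1/2}$; writing this in the form $\chi_0\nu^{-1/2}\oplus\chi_0^{-1}\nu^{1/2}$ of Lemma \ref{lemLiftSemiSimple} forces $\chi_0=\chi$, which is consistent because $\nu \equiv \mathbf{1}$ on $W_{\EE}$ (using $q_{\EE}\equiv 1 \pmod \ell$ together with $\chi$ quadratic). I then rule out the irreducible-induction case of that lemma: $\Ind_{W_{\EE}}^{W_{\FF}}(\chi\nu^{-1/2})$ restricts to $\chi\nu^{-1/2}\oplus\chi^{\sigmaEF}\nu^{-1/2}$, and equating this unordered pair with $\chi\nu^{-1/2}\oplus\chi\nu^{-1/2}$ would give $\chi=\chi^{\sigmaEF}$, contradicting the irreducibility hypothesis $\chi\neq\chi^{\sigmaEF}$. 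Hence we fall in the ``sum of two characters'' case of Lemma \ref{lemLiftSemiSimple}, which combined with Lemma \ref{lemChiSigma} yields $\chi|_{\FF^{\times}} \in \{\mathbf{1},\omegaEF\}$.

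Second, I would construct explicit lifts in both asserted sufficient cases. Take $\Psi_F = \eta\nu^{-1/2}\oplus\eta^{-1}\nu^{1/2}$ with $\chi=\eta\circ\Nm_{\EE/\FF}$, and look for a trace-zero matrix $M\in M_2(\Fl)$ satisfying $\Psi_F(w)M=\nu(w)M\Psi_F(w)$ for all $w\in W_{\FF}$. For $\chi|_{\FF^{\times}}=\mathbf{1}$ (so $\eta^2=\mathbf{1}$), the choice $M=N$ works for both $q_{\FF}\equiv 1$ and $q_{\FF}\equiv -1 \pmod \ell$: in the first subcase $\nu\equiv\mathbf{1}$ on $W_{\FF}$ and $M$ merely has to commute with the scalar representation $\Psi_F$; in the second subcase the anti-commutation at $\mathrm{Frob}_{\FF}$ reduces to $q_{\FF}^{1/2}+q_{\FF}^{-1/2}=0$ in $\Fl$, which holds since $q_{\FF}^{1/2}$ is then a square root of $-1$. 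For $\chi|_{\FF^{\times}}=\omegaEF$ with $q_{\FF}\equiv 1 \pmod \ell$ (so $\eta^2=\omegaEF$ and $\nu\equiv\mathbf{1}$ on $W_{\FF}$), I take $M=\mathrm{diag}(1,-1)$, which commutes with the diagonal $\Psi_F$. In both cases, a direct calculation using the compatibility $\nu_{\FF}^{1/2}|_{W_{\EE}}=\nu_{\EE}^{1/2}$ identifies the semisimple part of $(\Psi_F,M)|_{W_{\EE}}$ with $\chi\nu^{-1/2}\oplus\chi\nu^{-1/2}$, and since $M$ and $N$ are both trace-zero non-nilpotent matrices and hence $\GL_2$-conjugate inside this scalar isotypic block, the restriction is isomorphic (a fortiori $\sim$-equivalent) to $\Psi$.

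Third, I would rule out the remaining case $\chi|_{\FF^{\times}}=\omegaEF$ with $q_{\FF}\equiv -1 \pmod \ell$. Here $\EE/\FF$ is necessarily unramified (else $q_{\EE}\equiv q_{\FF}\not\equiv 1$), so $\omegaEF(\mathrm{Frob}_{\FF})=-1$. Writing $\Psi_F(\mathrm{Frob}_{\FF})=\mathrm{diag}(\alpha,\beta)$ with $\alpha=\eta(\mathrm{Frob}_{\FF})q_{\FF}^{-1/2}$ and $\beta=\eta(\mathrm{Frob}_{\FF})^{-1}q_{\FF}^{1/2}$, a direct computation gives $\alpha/\beta=\eta^2(\mathrm{Frob}_{\FF})q_{\FF}^{-1}=(-1)(-1)=1$, so $\Psi_F(\mathrm{Frob}_{\FF})=\alpha I$ is a nonzero scalar matrix. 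The Weil--Deligne relation at $\mathrm{Frob}_{\FF}$ then collapses to $2\alpha M=0$, forcing $M=0$ since $\ell$ is odd. But $(\Psi_F,0)|_{W_{\EE}}$ has vanishing monodromy, whereas $\Psi$ decomposes, after diagonalising $N$, into indecomposable summands $(\chi\nu^{-1/2},1)\oplus(\chi\nu^{-1/2},-1)$; the equivalence $\sim$ of \cite[Def. 4.8]{MatringeKurinczuk} only rescales monodromy within an indecomposable piece by a \emph{nonzero} scalar, so a zero-monodromy summand cannot be $\sim$-equivalent to a nonzero-monodromy one. Hence no lift exists in this case, completing the proof.

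The main obstacle will be the bookkeeping around the square roots of $q_{\EE}$ and $q_{\FF}$ and around the indecomposable decomposition of non-nilpotent Weil--Deligne representations: one must fix the compatible choice $q_{\EE}^{1/2}=q_{\FF}$ in the unramified setting so that $\nu_{\FF}^{1/2}|_{W_{\EE}}=\nu_{\EE}^{1/2}$ holds cleanly as characters of $W_{\EE}$, and in the non-existence step one must carefully decompose the Weil--Deligne representations into indecomposable summands in order to invoke the precise definition of $\sim$.
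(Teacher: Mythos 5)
Your proposal is correct and follows essentially the same route as the paper's proof: invoke Lemma~\ref{lemLiftSemiSimple} to reduce to $\chi|_{F^\times}\in\{\mathbf{1},\omegaEF\}$, exhibit the lifts $(\eta\nu^{-1/2}\oplus\eta\nu^{1/2},N)$ and $(\eta\nu^{-1/2}\oplus\eta^{-1}\nu^{1/2},\mathrm{diag}(1,-1))$ in the two affirmative cases (the latter working because $N$ is diagonalisable), and in the excluded case $\chi|_{F^\times}=\omegaEF$, $q_F\equiv-1$, show the semisimple part becomes scalar with $\nu_F\neq\mathbf{1}$ so that the monodromy is forced to vanish. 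The only small presentational differences — ruling out the irreducible-induction branch of Lemma~\ref{lemLiftSemiSimple} by restricting to $W_E$ rather than by the paper's one-line remark that $\chi|_{F^\times}=\omegaEF\nu^{1/2}|_{F^\times}$ forces $\chi=\chi^\sigma$, and deriving $M=0$ from the Frobenius relation alone rather than from scalarity of the whole semisimple part — do not change the substance of the argument.
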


\begin{proof}

    By Lemma \ref{lemLiftSemiSimple} the semisimple part of $\Psi$ can be lifted to $W_{\FF}$ if and only if $\chi_{|\FF^{\times}}=\mathbf{1}$ or $\omegaEF$ ($\chi_{|\FF^{\times}}=\omegaEF \nu^{1/2}_{|\FF^{\times}}$ implies that $\chi=\chi^{\sigma}$). Moreover, this lift is $\eta \nu^{-1/2} \oplus \eta^{-1}\nu^{1/2}$, with $\eta$ a character of $\FF^{\times}$ such that $\chi=\eta \circ \Nm_{\EE/\FF}$.

    If $\chi_{|\FF^{\times}}=\mathbf{1}$, then $\eta^2=\mathbf{1}$. We can take $\Psi_{\FF} = (\eta\nu^{-1/2} \oplus \eta\nu^{1/2},N)$ (since $\nu_\FF^2=\mathbf{1}$) and $\Psi_{F|W_{\EE}} \sim \Psi$.

    If $\chi_{|\FF^{\times}}=\omegaEF$, this time $\eta^2=\omegaEF \neq \mathbf{1}$. If $q_{\FF}\equiv -1 \pmod{\ell}$ then $\nu_{\FF}=\omegaEF$. In this case, $\eta^{-1}=\eta \nu_{\FF}$. Thus $\eta \nu^{-1/2} \oplus \eta^{-1}\nu^{1/2} = \eta \nu^{-1/2} \oplus \eta\nu^{-1/2}$. The only  Weil-Deligne representation with this semisimple part is $(\eta \nu^{-1/2} \oplus \eta\nu^{-1/2},0)$ which is not a lift of $\Psi$. If $q_{\FF}\equiv 1 \pmod{\ell}$ let $\Psi_{\FF} := \left(\eta \nu^{-1/2} \oplus \eta^{-1}\nu^{1/2},\begin{pmatrix}
            1 & 0 \\0&-1
        \end{pmatrix}\right)$. We are left to prove that $\Psi_{F|W_{\EE}} \sim \Psi$. Let us remark that $N$ is diagonalizable. Therefore, $(\chi \nu^{-1/2} \oplus \chi \nu^{-1/2},N)$ is isomorphic to $(\chi \nu^{-1/2} \oplus \chi \nu^{-1/2},N')$ with $N'=diag(1,-1)$.

    Hence $\Psi_{F|W_{\EE}} \sim \Psi$ and we get the result.
\end{proof}

\begin{lem}
    \label{lemLiftWDBanal}
    Let $q_{\EE}^2\not\equiv 1 \pmod{\ell}$ and $\chi$ be a quadratic character of $\EE^{\times}$. Let $\Psi \in [\WDRep_{\Fl}(W_{\EE},\SL_2)]$ defined by $\Psi = [\chi\nu^{-1/2} \oplus \chi\nu^{1/2},N]$, with $ N=\begin{pmatrix}
            0 & 1 \\
            0 & 0
        \end{pmatrix}$. Then there exists $\Psi_{F} \in \WDRep_{\Fl}(W_{\FF},\SL_2)$ such that $\Psi_{F|W_{\EE}} \sim \Psi$ if and only if $\chi_{|\FF^{\times}}=\mathbf{1}$.
\end{lem}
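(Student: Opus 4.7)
The plan is to follow the same strategy as Lemmas~\ref{lemLiftWDSp} and~\ref{lemLiftWDSt}. Since $q_\EE\in\{q_\FF,q_\FF^2\}$, the banality hypothesis $\ell\nmid q_\EE^2-1$ forces $\ell\nmid q_\FF^2-1$ as well; by~\cite[Prop.~4.11]{MatringeKurinczuk} we then have $[\WDRep_{\Fl}(W_\EE,\SL_2)]=\Nilp_{\Fl}(W_\EE,\SL_2)$ and $[\WDRep_{\Fl}(W_\FF,\SL_2)]=\Nilp_{\Fl}(W_\FF,\SL_2)$, so the problem reduces to lifting the Steinberg-type nilpotent Weil-Deligne representation $\Psi$ from $W_\EE$ to $W_\FF$.

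For the forward direction, suppose $\Psi_\FF=(\phi_\FF,N_\FF)$ is such a lift. Its semisimple part restricts to $\chi\nu^{-1/2}\oplus\chi\nu^{1/2}$, so by Lemma~\ref{lemLiftSemiSimple} one of the following holds: $\chi_{|\FF^\times}=\mathbf{1}$, $\chi_{|\FF^\times}=\omegaEF$, or $\chi\neq\chi^{\sigmaEF}$ and $\chi_{|\FF^\times}=\omegaEF\nu^{1/2}_{|\FF^\times}$. The third case forces $\nu_{|\FF^\times}=\chi^2_{|\FF^\times}=\mathbf{1}$, i.e.\ $q_\FF\equiv 1\pmod{\ell}$, which is incompatible with banality. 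Next I would rule out $\chi_{|\FF^\times}=\omegaEF$: the semisimple lift is $\phi_\FF=\eta\nu_\FF^{-1/2}\oplus\eta^{-1}\nu_\FF^{1/2}$ with $\eta^2=\omegaEF\neq\mathbf{1}$, so the two characters are distinct. A non-zero nilpotent $N_\FF$ compatible with this diagonal Weil action forces the ratio of the two diagonal characters to be $\nu_\FF^{\pm 1}$, yielding $\omegaEF=\eta^{2}=\nu_\FF^{\pm 2}$. A short case analysis rules this out in the banal regime: if $\EE/\FF$ is ramified then $\omegaEF$ is ramified while $\nu_\FF^{\pm 2}$ is not, and if $\EE/\FF$ is unramified then evaluating at a Frobenius gives $-1=q_\FF^{\mp 2}$, i.e.\ $q_\FF^{4}\equiv 1\pmod{\ell}$, contradicting $q_\EE^{2}=q_\FF^{4}\not\equiv 1\pmod{\ell}$. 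The only remaining option is $N_\FF=0$, but then $\Psi_\FF|_{W_\EE}=(\chi\nu^{-1/2}\oplus\chi\nu^{1/2},0)$, whose equivalence class in $\Nilp_{\Fl}(W_\EE,\SL_2)$ is distinct from that of the non-trivial $\Psi$.

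For the reverse direction, assume $\chi_{|\FF^\times}=\mathbf{1}$ and choose a character $\eta$ of $\FF^\times$ with $\eta^2=\mathbf{1}$ and $\chi=\eta\circ\Nm_{\EE/\FF}$. Define $\Psi_\FF=(\eta\nu_\FF^{-1/2}\oplus\eta\nu_\FF^{1/2},N)$ with $N$ a non-zero nilpotent matrix satisfying $\phi_\FF(w)N=\nu_\FF(w)N\phi_\FF(w)$; this is solvable precisely because the ratio of the two diagonal characters equals $\nu_\FF^{-1}$. Then $\Psi_\FF\in\WDRep_{\Fl}(W_\FF,\SL_2)$, as $\det(\phi_\FF)=\eta^2=\mathbf{1}$ and $\tr(N)=0$. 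Restricting to $W_\EE$, one has $\eta|_{W_\EE}=\eta\circ\Nm_{\EE/\FF}=\chi$ and $\nu_\FF|_{W_\EE}=\nu$, so $\Psi_\FF|_{W_\EE}=(\chi\nu^{-1/2}\oplus\chi\nu^{1/2},N)$ is the Steinberg-type nilpotent Weil-Deligne representation and is therefore $\sim$-equivalent to $\Psi$.

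The main technical point is the case analysis ruling out $\omegaEF=\nu_\FF^{\pm 2}$ in the banal regime, which depends on whether $\EE/\FF$ is ramified and on the relation $q_\EE=q_\FF$ or $q_\FF^2$; this is straightforward bookkeeping once the three cases from Lemma~\ref{lemLiftSemiSimple} are on the table.
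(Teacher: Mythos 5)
Your proof follows the same strategy as the paper's: reduce via banality to a lifting problem in $\Nilp$, invoke Lemma~\ref{lemLiftSemiSimple} to constrain $\chi_{|\FF^\times}$, rule out the non-trivial alternatives, and exhibit the explicit lift when $\chi_{|\FF^\times}=\mathbf{1}$. You are actually somewhat more careful than the paper: the published proof only lists $\chi_{|\FF^\times}\in\{\mathbf{1},\omegaEF\}$ (implicitly relying on the remark following Lemma~\ref{lemLiftSemiSimple} to exclude the irreducible-lift case), and for $\chi_{|\FF^\times}=\omegaEF$ it simply asserts ``then $M=0$'' without spelling out that this requires $\omegaEF\neq\nu_\FF^{\pm2}$ in the banal range; you make both of these points explicit. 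One small slip: from $\chi_{|\FF^\times}=\omegaEF\nu^{1/2}_{|\FF^\times}$ and $\chi^2=\mathbf{1}$ you correctly get $\nu_{|\FF^\times}=\mathbf{1}$, but this is equivalent to $q_\FF^2\equiv 1\pmod{\ell}$, not $q_\FF\equiv 1\pmod{\ell}$; the conclusion (incompatibility with banality, since $q_\EE^2\not\equiv 1$ forces $q_\FF^2\not\equiv 1$) is unaffected.
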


\begin{proof}
    By Lemma \ref{lemLiftSemiSimple} we can lift the semisimple part if and only if $\chi_{|\FF^{\times}}=\mathbf{1}$ or $\omegaEF$ and this lift is $\eta\nu^{-1/2} \oplus \eta^{-1}\nu^{1/2}$, with $\eta$ a character of $\FF^{\times}$ such that $\chi=\eta \circ \Nm_{\EE/\FF}$. If $\chi_{|\FF^{\times}}=\mathbf{1}$, then $\eta^2=\mathbf{1}$. We can take $\Psi_{\FF} = (\eta\nu^{-1/2} \oplus \eta\nu^{1/2},N)$ and $\Psi_{F|W_{\EE}} \sim \Psi$. If $\chi_{|\FF^{\times}}=\omegaEF$, $\eta^2=\omegaEF \neq \mathbf{1}$. If $(\eta\nu^{-1/2}  \oplus \eta^{-1}\nu^{1/2} ,M)$ is a Weil-Deligne representation then $M=0$ and this is not a lift of $\Psi$.
\end{proof}

\begin{lem}
    \label{lemLiftWDPrincSeries}
    Let  $\chi$ be a character of $\EE^{\times}$. Let $\Psi \in [\WDRep_{\Fl}(W_{\EE},\SL_2)]$ defined by $\Psi = [\chi\nu^{-1/2} \oplus \chi^{-1}\nu^{1/2},N]$, with $ N=0$. Then there exists $\Psi_{F} \in \WDRep_{\Fl}(W_{\FF},\SL_2)$ such that $\Psi_{F|W_{\EE}} \sim \Psi$ if and only if $\chi=\chi^{\sigma}$ or $\chi\neq\chi^{\sigmaEF}$ and $\chi_{|\FF^{\times}}=\omegaEF \nu^{1/2}_{|\FF^{\times}}$.
\end{lem}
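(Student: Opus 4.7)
The plan is to reduce everything to Lemma \ref{lemLiftSemiSimple} by exploiting that the Deligne operator in $\Psi$ is zero. First I would observe that the equivalence relation $\sim$ on $\WDRep_{\Fl}(W_{\EE},\GL_2)$ acts on each indecomposable summand by scaling the Deligne operator by a scalar in $\Fl^{\times}$; in particular, $\sim$ cannot turn a vanishing Deligne operator into a nonzero one. Since restricting a Weil–Deligne representation from $W_{\FF}$ to $W_{\EE}$ leaves the matrix $N$ unchanged, any $\Psi_F = (\Phi_F, M) \in \WDRep_{\Fl}(W_{\FF},\SL_2)$ with $\Psi_{F|W_{\EE}} \sim \Psi$ must satisfy $M = 0$.

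Once this is established, the direct implication is immediate: $\Phi_F$ is then a semisimple homomorphism $W_{\FF} \to \SL_2(\Fl)$ with $\Phi_F|_{W_{\EE}} \simeq \chi\nu^{-1/2} \oplus \chi^{-1}\nu^{1/2}$, so Lemma \ref{lemLiftSemiSimple} forces either $\chi = \chi^{\sigmaEF}$, or $\chi \neq \chi^{\sigmaEF}$ together with $\chi_{|\FF^{\times}} = \omegaEF\nu^{1/2}_{|\FF^{\times}}$.

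For the converse, assuming either condition on $\chi$, Lemma \ref{lemLiftSemiSimple} directly supplies a semisimple $\Phi_F : W_{\FF} \to \SL_2(\Fl)$ whose restriction to $W_{\EE}$ is isomorphic to $\chi\nu^{-1/2} \oplus \chi^{-1}\nu^{1/2}$ (explicitly, $\eta\nu^{-1/2} \oplus \eta^{-1}\nu^{1/2}$ with $\chi = \eta \circ \Nm_{\EE/\FF}$ in the first case, and $\Ind_{W_{\EE}}^{W_{\FF}}(\chi\nu^{-1/2})$ in the second). I would then set $\Psi_F := (\Phi_F, 0)$, which visibly lies in $\WDRep_{\Fl}(W_{\FF},\SL_2)$ and whose restriction to $W_{\EE}$ equals $\Psi$ on the nose, hence is certainly $\sim$-equivalent to $\Psi$. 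I do not foresee any genuine obstacle in this argument; the only step requiring care is the initial observation about $\sim$ preserving the vanishing of $N$, which follows directly from the indecomposable case of the definition recalled in Section \ref{secPrasad}.
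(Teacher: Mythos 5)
Your proof is correct and takes essentially the same approach as the paper: reduce to Lemma \ref{lemLiftSemiSimple} and exhibit explicit lifts with zero Deligne operator. The preliminary observation that $\Psi_{F|W_{\EE}} \sim \Psi$ forces $M=0$ is correct but not strictly needed, since $\sim$-equivalence already preserves semisimple parts up to isomorphism, which is all that Lemma \ref{lemLiftSemiSimple} requires; the paper's terser argument works directly with this fact.
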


\begin{proof}
    If $\chi=\chi^{\sigma}$, take $\eta$ a character of $\FF^{\times}$ such that $\chi=\eta \circ Nm_{\EE/\FF}$. Then $\Psi_F = [\eta\nu^{-1/2} \oplus \eta^{-1}\nu^{1/2},N]$ is a lift. And if $\chi\neq\chi^{\sigmaEF}$, Lemma \ref{lemLiftSemiSimple} tells us that if there is a lift then $\chi_{|\FF^{\times}}=\omegaEF \nu^{1/2}_{|\FF^{\times}}$. In this case, we can take $\Psi_F = [\Ind_{W_{\EE}}^{W_{\FF}}(\chi \nu^{-1/2}),N]$.
\end{proof}

\subsection{A modulo \texorpdfstring{$\ell$}{l} Prasad conjecture for \texorpdfstring{$\PGL_2$}{PGL2}}

Now we can gather together all the results of the previous sections to prove a ``modified'' Prasad conjecture for $\PGL_2$.

\bigskip

Let $PV$ be the correspondence induced by the Vignéras correspondence
\[
    PV : \Irr_{\Fl}(\PGL_2(\EE)) \to \Nilp_{\Fl}(W_{\EE},\SL_2)
\]

Let us start with the supercuspidal representations.

\begin{prop}
    \label{proPrasadSupercusp}
    Let $\pi$ be an irreducible supercuspidal representation of $\PGL_2(\EE)$ over $\Fl$. Then $\pi$ is $\omegaEF$-distinguished if and only if its Langlands parameter $PV(\pi)$ can be lifted to $W_\FF$.
\end{prop}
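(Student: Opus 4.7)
The plan is to reduce both implications to the complex Prasad conjecture for $\PGL_2$ (Theorem \ref{Prasad:PGL}) via $\ell$-adic lifts, using the lifting machinery developed in Section \ref{secSupercuspDist}.

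For the forward direction, assume $\pi$ is $\omegaEF$-distinguished. The twist argument used in the proof of Proposition \ref{proDisjonction}, combined with Theorem \ref{thmLiftDistinguished}, yields a supercuspidal $\Ql$-lift $\tilde\pi$ of $\pi$ which is $\tomegaEF$-distinguished. Because $\pi$ has trivial central character and $\tomegaEF$-distinction forces $\omega_{\tilde\pi}|_{\FF^\times}=\tomegaEF^2=\mathbf{1}$, the central character $\omega_{\tilde\pi}$ is an $\ell$-power-order character of $\EE^\times$ trivial on $\FF^\times$; since $\ell$ is odd, it admits a square root within such characters, and twisting $\tilde\pi$ by its inverse we may assume that $\tilde\pi$ descends to $\PGL_2(\EE)$ while remaining $\tomegaEF$-distinguished. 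The complex Prasad conjecture then furnishes an extension $\widetilde\Phi_\FF\colon W_\FF \to \SL_2(\Ql)$ of $\phi_{\tilde\pi}$, and reducing modulo $\ell$ via a $W_\FF$-stable $\Zl$-lattice produces an extension of $PV(\pi)$ to $W_\FF$.

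Conversely, suppose $\varphi:=PV(\pi)$ extends to $\Phi_\FF\colon W_\FF \to \SL_2(\Fl)$. Since $\varphi$ is irreducible (as $\pi$ is supercuspidal), $\Phi_\FF$ is itself irreducible, and by Vign\'eras's modular local Langlands for $\GL_2(\FF)$ it corresponds to a supercuspidal $\Fl$-representation $\rho$ of $\PGL_2(\FF)$. Choose a supercuspidal $\Ql$-lift $\tilde\rho$ of $\rho$ (which exists by the general lifting theorem for supercuspidal types), and twist by a square root, available because $\ell$ is odd, of the inverse of its central character, so that $\tilde\rho$ descends to a representation of $\PGL_2(\FF)$ with associated Langlands parameter $\widetilde\Phi_\FF\colon W_\FF \to \SL_2(\Ql)$ lifting $\Phi_\FF$. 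The restriction $\widetilde\Phi_\FF|_{W_\EE}$ is an irreducible $\Ql$-lift of $\varphi$, corresponding via Langlands to a supercuspidal $\Ql$-representation $\tilde\pi_0$ of $\PGL_2(\EE)$ that is a lift of $\pi$. By construction its parameter extends to $W_\FF$, so by the complex Prasad conjecture $\tilde\pi_0$ is $\tomegaEF$-distinguished. Applying the reduction modulo $\ell$ for invariant linear forms as in \cite[Thm.~3.4]{KuMa}, we conclude that $\pi$ is $\omegaEF$-distinguished.

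The main technical obstacle throughout is ensuring that the $\Ql$-lifts produced have trivial determinant (equivalently, trivial central character via local class field theory), which is handled systematically by twisting with square roots of $\ell$-power-order characters; such square roots exist precisely because $\ell$ is odd.
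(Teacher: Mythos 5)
Your proof is correct and follows essentially the same route as the paper's: transferring the problem to $\Ql$ via the lifting machinery of Section~\ref{secSupercuspDist}, invoking the complex Prasad conjecture (Theorem~\ref{Prasad:PGL}), and reducing back modulo $\ell$. The paper's own proof is considerably terser and in particular does not spell out the twist by a square-root character needed to make the $\Ql$-lift descend to $\PGL_2$ (resp.\ to put the extended parameter in $\SL_2(\Ql)$); you address this correctly, using that a character of $\EE^\times$ trivial on $\FF^\times$ with trivial reduction modulo $\ell$ has $\ell$-power order, so that a compatible square root exists when $\ell$ is odd.
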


\begin{proof}
    Let $\pi$ be an irreducible supercuspidal representation of $\PGL_2(\EE)$ over $\Fl$.
    Let $\varphi:=PV(\pi)$ be the Langlands parameter of $W_\EE$ associated to $\pi$.
    By Proposition \ref{proLiftDistComplete} $\pi$ is $\omegaEF$-distinguished if and only if there exists $\tilde{\pi}$ a $\Ql$-lift of $\pi$ which is supercuspidal and $\tilde{\omega}_{\EE/\FF}$-distinguished. By Theorem \ref{Prasad:PGL}, this happens if and only if  the Langlands parameter $\tilde{\varphi}$ of $\tilde{\pi}$, can be extended to $W_\FF$. From the definition of the modulo $\ell$ Langlands correspondence, $\tilde{\varphi}$ is a $\Ql$-lift of $\varphi$. Thus $\tilde{\varphi}$ can be extended to $W_\FF$ if and only if $\varphi$ can be extended to $W_\FF$.
\end{proof}

Now let us examine the irreducible generic representations of $\PGL_2(\EE)$. Let $\pi$ be such a representation and denote by $PV(\pi)=(\Psi,N)$ its Langlands parameter. We can classify the irreducible generic representations as follows:
\begin{enumerate}
    \item $\pi$ is supercuspidal. In this case, $\Psi$ is irreducible and $N=0$.
    \item $\pi$ is an irreducible principal series. Here $\pi = \pi(\chi \nu^{-1/2},\chi^{-1}\nu^{1/2})$ with $\chi^2\neq \mathbf{1}$. We have $\Psi=\chi\nu^{-1/2} \oplus \chi^{-1}\nu^{1/2}$ and $ N=0$
    \item Suppose that $\pi$ is the unique generic subquotient of a reducible principal series. Let $\chi$ be a quadratic character of $\EE^{\times}$ such that $\pi$ is a subquotient of $\pi(\chi \nu^{-1/2},\chi\nu^{1/2})$.
          \begin{enumerate}
              \item If $q_\EE \not\equiv 1 \pmod{\ell}$, then $\pi = \St_\chi$. In this case, $\Psi=\chi\nu^{-1/2} \oplus \chi \nu^{1/2}$ and $ N=\begin{pmatrix}
                            0 & 1 \\
                            0 & 0
                        \end{pmatrix}$.
              \item If $q_\EE \equiv -1 \pmod{\ell}$, this time $\pi=\Spe_{\chi}$, $\Psi=\chi\nu^{-1/2} \oplus \chi\nu^{1/2}$ and $ N=0$.
          \end{enumerate}

\end{enumerate}

We can summarize the previous results of this article to prove the Prasad conjecture in the modular case.

\begin{thm}
    \label{thmPrasadModular}
    Let $\pi$ be an irreducible generic representation of $\PGL_2(\EE)$ over $\Fl$. Then $\pi$ is $\omegaEF$-distinguished if and only if there exists $\Psi_{F} \in \WDRep_{\Fl}(W_{\FF},\SL_2)$ such that $\Psi_{F|W_{\EE}} \sim P\circ PV(\pi)$.
\end{thm}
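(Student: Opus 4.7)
The plan is to prove the theorem by a case-by-case analysis following the three-fold classification of irreducible generic $\Fl$-representations of $\PGL_2(E)$ recalled immediately before the statement. In each case, the strategy is to translate $\omegaEF$-distinction into a twisted $\GL_2(F)$-distinction by tensoring $\pi$ with $\tilde\omega^{-1}\circ\det$ for some extension $\tilde\omega$ of $\omegaEF$ to $E^\times$, apply the appropriate distinction criterion from Section \ref{secGL2}, compute $P\circ PV(\pi)$ from the definition of $P$, invoke the corresponding lifting lemma from the previous subsection, and check that the two conditions coincide.

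The supercuspidal case is exactly Proposition \ref{proPrasadSupercusp}: for such $\pi$, $PV(\pi)$ has trivial $N$ and irreducible semisimple part, so $P\circ PV(\pi)=PV(\pi)$ (the ``otherwise'' branch of $P$), and the statement follows. For an irreducible principal series $\pi(\chi\nu^{-1/2},\chi^{-1}\nu^{1/2})$ with $\chi^2\neq\mathbf{1}$, a key observation is that for any extension $\tilde\omega$ of $\omegaEF$ to $E^\times$ one has $\tilde\omega\cdot\tilde\omega^\sigma=\mathbf{1}$ on $E^\times$ (because $\omegaEF\circ N_{E/F}=\mathbf{1}$); this makes the two disjuncts of Lemma \ref{lemPrincSer} applied to the twisted principal series reduce respectively to $\chi=\chi^\sigma$ and to $\chi|_{F^\times}=\omegaEF\nu^{1/2}|_{F^\times}$, which are precisely the two cases covered by Lemma \ref{lemLiftWDPrincSeries} applied to $P\circ PV(\pi)=[\chi\nu^{-1/2}\oplus\chi^{-1}\nu^{1/2},0]$.

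For the remaining case $\pi=\St_\chi$ or $\pi=\Spe_\chi$ with $\chi$ necessarily quadratic, I would split according to the position of $\ell$: the banal case $\ell\nmid q_E^2-1$ (where $\pi=\St_\chi$), the case $\ell\mid q_E-1$ (where $\pi=\St_\chi$), and the case $\ell\mid q_E+1$ (where $\pi=\Spe_\chi$). In each sub-case, Theorem \ref{thmCuspDist} applied to the twist $\St_{\chi\tilde\omega^{-1}}$ or $\Spe_{\chi\tilde\omega^{-1}}$ (noting that $\omegaEF^2=\mathbf{1}$) yields the distinction criterion in terms of $\chi|_{F^\times}$ and the residual congruence of $q_F$. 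On the Galois side, $P$ sends $PV(\pi)$ to an explicit class in $[\WDRep_{\Fl}(W_E,\SL_2)]$ and Lemma \ref{lemLiftWDBanal}, \ref{lemLiftWDSt}, or \ref{lemLiftWDSp} respectively yields the matching lifting criterion.

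The main delicacy I expect is the sub-case of $\Spe_\chi$ with $q_E\equiv q_F\equiv -1\pmod\ell$ and $\chi|_{F^\times}=\omegaEF\nu^{1/2}|_{F^\times}$, where the lift on the Weil-Deligne side is not a direct sum of characters but the irreducible induction $\Ind_{W_E}^{W_F}(\chi\nu^{-1/2})$ together with a non-nilpotent monodromy operator, and one must verify that this lands in $\SL_2(\Fl)$ via the sign identity $\chi\nu^{-1/2}(s^2)=-1$; this computation is already incorporated in Lemma \ref{lemLiftWDSp}. The injection $P$ has been designed precisely so that these matchings work in the non-banal cases, which is why, for $\St_\chi$ with $\ell\mid q_E-1$ and for $\Spe_\chi$ with $\ell\mid q_E+1$, $P$ replaces the standard nilpotent monodromy by a non-nilpotent matrix of the same semisimplification.
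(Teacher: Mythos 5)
Your proposal is correct and follows essentially the same route as the paper, which proves the theorem by citing Proposition \ref{proPrasadSupercusp} for supercuspidals, Lemmas \ref{lemPrincSer} and \ref{lemLiftWDPrincSeries} for irreducible principal series, and Theorem \ref{thmCuspDist} together with Lemmas \ref{lemLiftWDSp}, \ref{lemLiftWDSt}, \ref{lemLiftWDBanal} for the Steinberg and special representations. You have simply made explicit the bookkeeping (twisting by $\tilde\omega^{-1}$, using $\tilde\omega\tilde\omega^\sigma=\mathbf{1}$, and matching the resulting conditions on $\chi|_{F^\times}$ and the congruence class of $q_F$ against the lifting lemmas) that the paper leaves implicit.
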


\begin{proof}
    For supercuspidal {representations}, the result follows from {Proposition} \ref{proPrasadSupercusp}. For irreducible principal series {representations} it follows from Lemmas \ref{lemPrincSer} and \ref{lemLiftWDPrincSeries}. And for the Steinberg representations or the special representations, it follows from Theorem \ref{thmCuspDist} and Lemmas \ref{lemLiftWDSp}, \ref{lemLiftWDSt} and \ref{lemLiftWDBanal} (depending on the order of $q_\EE$ modulo $\ell$).
\end{proof}

\begin{rem}

    When $q_\EE\equiv -1 \pmod{\ell}$, $q_\FF\equiv -1 \pmod{\ell}$ and $\chi$ is a quadratic character of $\EE^{\times}$ such that $\chi_{|\FF^\times}=\omegaEF \nu_{|\FF^\times}^{1/2}$. We have proved that $P \circ PV(\Spe_{\chi})$ admits a lift to $W_\FF$ which is $\Psi_F=\left(\Ind_{W_{\EE}}^{W_{\FF}}(\chi \nu^{-1/2}),M\right)$ with $ M=\begin{pmatrix}
            0  & 1 \\
            -1 & 0
        \end{pmatrix}$. The semisimple part of $\Psi_F$ is irreducible and $M$ is non-zero, and so this lift is not the Langlands parameter of any representation of $\PGL_2(\FF)$ (nor it is in the image of $P$).

\end{rem}

\section{The \texorpdfstring{$\SL_2(F)$}{SL2(F)}-distinguished representations}

\label{secSL2}

In this section, assuming $\ell\neq2$, we classify all the representations of $\SL_2(\EE)$ distinguished by $\SL_2(F)$. For supercuspidal representations, we will use the restriction method of \cite{anandavardhanan2003distinguished}. Similar to the case $\GL_2(\EE)$, we will deal with principal series representations using Mackey theory.

\subsection{Modulo $\ell$ representations of $\SL_2$}
We start by recalling some general facts about modulo $\ell$ representations of $\SL_2$.

\bigskip

Recall that $E/F$ is a quadratic extension of locally compact non-archimedean local
fields of characteristic different from 2. Let $\mathfrak{o}_E$ be the ring of integers of $E$, and $\mathfrak{p}_{E}$ be the maximal ideal of $\mathfrak{o}_E$.  Let $\pi$ be an irreducible {cuspidal} $\Fl$-representation of $\GL_n(E)$. Thanks to \cite[Prop. 2.35]{C1}, we have that the restricted representation $\pi\vert_{\SL_n(E)}$ is semisimple with finite length {and multiplicity-free}. Denote by $\lg(\pi)$ the length of $\pi\vert_{\SL_n(E)}$. Let
\[
    Y(\pi)=\{\chi: \Fl\text{-character of } E^{\times},\pi\otimes\chi\circ\det\cong\pi\}.
\]
We call $Y(\pi)$ \emph{the twist isomorphism set of $\pi$}. By Corollary 3.8 of \cite{C1}, the cardinality $\vert Y(\pi)\vert$ is an integer prime to $\ell$, and by Proposition  2.37 of \cite{C1}, $\pi\vert_{\SL_n(F)}$ is multiplicity-free. Hence we deduce from part 3 of Corollary 3.8 of \cite{C1} that
\begin{equation}
    \label{equation lg(pi)}
    \vert Y(\pi) \vert=\lg(\pi)_{\ell'},
\end{equation}
where for any positive integer $m$, we denote by $m_{\ell'}$ the largest divisor of $m$ which is coprime to $\ell$.

\begin{lem}
    \label{lema001}
    When $n=2$, we have
    \[
        \vert Y(\pi) \vert=\lg(\pi).
    \]
\end{lem}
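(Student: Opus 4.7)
The plan is to show that $\lg(\pi)$ is a power of $2$; then, because $\ell$ is odd, $\lg(\pi)_{\ell'}=\lg(\pi)$, and equation \eqref{equation lg(pi)} immediately yields $\vert Y(\pi)\vert=\lg(\pi)$.

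To prove this, I would apply Clifford theory to the normal subgroup $\SL_2(E)\trianglelefteq\GL_2(E)$. Fix an irreducible component $\tau$ of $\pi\vert_{\SL_2(E)}$. Since $\pi$ is irreducible, every irreducible component of $\pi\vert_{\SL_2(E)}$ is $\GL_2(E)$-conjugate to $\tau$; combined with the multiplicity-one statement from \cite{C1},
\[
\lg(\pi)=[\GL_2(E):G_\tau]\quad\text{where}\quad G_\tau:=\{g\in\GL_2(E):\tau^g\simeq\tau\}.
\]
The stabilizer $G_\tau$ contains $\SL_2(E)$ (with intertwiner $\tau(h)$ for $h\in\SL_2(E)$) as well as the center $Z(E)$ (since conjugation by a central element is trivial on $\SL_2(E)$). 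Consequently, $\lg(\pi)$ divides the index $[\GL_2(E):Z(E)\SL_2(E)]$.

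Next, I would use the isomorphism
\[
\GL_2(E)/Z(E)\SL_2(E)\xrightarrow{\ \sim\ }E^\times/(E^\times)^2
\]
induced by the determinant, together with the fact that $E^\times/(E^\times)^2$ is a finite elementary abelian $2$-group whenever $E$ has characteristic different from $2$. It follows that $\lg(\pi)$ is a power of $2$, hence coprime to $\ell$, and equation \eqref{equation lg(pi)} gives the desired equality. There is no substantive obstacle here: the argument is a direct application of Clifford theory combined with the observation that $E^\times/(E^\times)^2$ is a $2$-group.
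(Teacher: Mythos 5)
Your proposal is correct and follows essentially the same path as the paper's own proof: both reduce the claim to showing $\lg(\pi)$ divides $[\GL_2(E):Z(E)\SL_2(E)]=[E^\times:(E^\times)^2]$, which is a power of $2$, and then invoke equation \eqref{equation lg(pi)} together with $\ell\neq 2$. The paper phrases the divisibility via the length of $\pi\vert_{Z(E)\SL_2(E)}$ while you phrase it via the Clifford-theory stabilizer $G_\tau$, but these are two ways of saying the same thing.
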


\begin{proof}
    By \eqref{equation lg(pi)}, it is sufficient to prove that the length $\lg(\pi)$ is coprime to $\ell$. Let $Z_{E^{\times}}$ be the center of $\GL_2(E)$. The length of $\pi\vert_{Z_{E^{\times}}\SL_2(E)}$ is equal to the length $\lg(\pi)$, which is a divisor of the index $[ \GL_2(E) : Z_{E^{\times}}\SL_2(E)]$, and the latter is equal to $[ E^{\times}: E^{\times 2}]$, where $E^{\times 2}$ consists of the elements of the form $x^2$ for $x\in E^{\times}$. By Corollary 5.8 of \cite{Ne}, when the characteristic of $E$ is different from $2$, the index $[ E^{\times}: E^{\times 2}]$ is equal to $2^{2+a}$ where $a$ is given by $2\mathfrak{o}_E=\mathfrak{p}_E^a$. Hence $\lg(\pi)$ is a power of $2$, and we have the desired identity under our assumption $\ell\neq 2$.
\end{proof}

\begin{defi}
    Define $\GL_2^{+}(E)$ to be a subgroup of $\GL_2(E)$, consisting of matrices whose determinant belongs to $F^{\times}E^{\times2}$, where $E^{\times 2}$ consists of the elements of the form $x^2$ for $x\in E^{\times}$. We have $\GL_2^{+}(E)=Z_{E^{\times}}\SL_2(E)\GL_2(F)$ where $Z_{E^{\times}}$ denotes the center of $\GL_2(E)$.
\end{defi}

\begin{lem}
    \label{corlgY+}
    Let $\lg_{+}(\pi)$ be the length of $\pi\vert_{\GL_2^{+}(E)}$, and
    \[
        Y_+(\pi)=\{\chi:\Fl\text{-character of } E^{\times}, \pi\otimes\chi\circ\det\cong\pi,\ \chi\vert_{F^{\times}}=1\}.
    \]
    Then we have
    \[
        \lg_{+}(\pi)=\vert Y_+(\pi)\vert.
    \]
\end{lem}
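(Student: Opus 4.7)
Plan: I would compute $\dim_{\Fl}\mathrm{End}_{\GL_2^+(E)}(\pi)$ in two independent ways. First, $\GL_2^+(E)$ is normal in $\GL_2(E)$ (being the preimage of $F^\times E^{\times 2}$ under $\det$), with quotient $E^\times/(F^\times E^{\times 2})$ a finite abelian $2$-group of order coprime to $\ell$: it is a quotient of $E^\times/E^{\times 2}$, whose order is a power of $2$ by \cite[Cor.~5.8]{Ne}. Consequently $\Ind_{\GL_2^+(E)}^{\GL_2(E)}\mathbf{1}$ decomposes as the direct sum of the characters $\chi\circ\det$ where $\chi$ runs over characters of $E^\times$ trivial on $F^\times E^{\times 2}$, and Frobenius reciprocity together with the projection formula gives
\[
    \mathrm{End}_{\GL_2^+(E)}(\pi)\;\cong\;\bigoplus_{\chi}\Hom_{\GL_2(E)}(\pi,\pi\otimes \chi\circ\det).
\]
Schur's lemma for irreducible admissible modular representations (cf.\ \cite{vigneras}) forces each summand to be zero or one-dimensional, with nonvanishing exactly when $\pi\otimes\chi\circ\det\cong\pi$. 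Any such $\chi$ automatically satisfies $\chi^2=\mathbf{1}$ (compare central characters, using that $\det$ sends the center of $\GL_2(E)$ onto $E^{\times 2}$), so the triviality of $\chi$ on $F^\times E^{\times 2}$ reduces to the condition $\chi\vert_{F^\times}=\mathbf{1}$ defining $Y_+(\pi)$. Therefore $\dim\mathrm{End}_{\GL_2^+(E)}(\pi)=\vert Y_+(\pi)\vert$.

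Second, I would show that $\pi\vert_{\GL_2^+(E)}$ is multiplicity-free: if some irreducible $\sigma\subset\pi\vert_{\GL_2^+(E)}$ appeared with multiplicity $\geq 2$, then any irreducible $\SL_2(E)$-subrepresentation of $\sigma\vert_{\SL_2(E)}$ would appear with multiplicity $\geq 2$ in $\pi\vert_{\SL_2(E)}$, contradicting the multiplicity-freeness given by \cite[Prop.~2.37]{C1}. Writing $\pi\vert_{\GL_2^+(E)}=\bigoplus_{i=1}^{\lg_+(\pi)}\sigma_i$ with the $\sigma_i$ pairwise non-isomorphic irreducibles, Schur's lemma applied componentwise gives $\mathrm{End}_{\GL_2^+(E)}(\pi)\cong\Fl^{\lg_+(\pi)}$. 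Equating the two computations yields $\lg_+(\pi)=\vert Y_+(\pi)\vert$, as desired. The main technical point is the clean character-by-character decomposition of $\Ind_{\GL_2^+(E)}^{\GL_2(E)}\mathbf{1}$, which relies on $E^\times/(F^\times E^{\times 2})$ having order coprime to $\ell$; this is automatic from $\ell\neq 2$.
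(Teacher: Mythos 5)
Your argument is correct, and it takes a genuinely different, more self-contained route than the paper. The paper deduces the equality by invoking the modular Clifford theory of \cite[Cor. 3.8]{C1}: it first observes that the components of $\pi\vert_{\GL_2^+(E)}$ are $\GL_2(E)$-conjugate so that $\lg_+(\pi)$ divides $\lg(\pi)$, which is coprime to $\ell$ by Lemma \ref{lema001}, then points out that this coprimality puts one in the ``same-as-complex'' regime where Clifford theory, together with multiplicity-freeness of $\pi\vert_{\GL_2^+(E)}$, directly yields $\lg_+(\pi)=\vert Y_+(\pi)\vert$. You instead compute $\dim\mathrm{End}_{\GL_2^+(E)}(\pi)$ twice: once via Frobenius reciprocity, Mackey's projection formula, and the semisimple character decomposition of $\Ind_{\GL_2^+(E)}^{\GL_2(E)}\mathbf{1}$ (valid because $E^\times/(F^\times E^{\times 2})$ is a finite $2$-group and $\ell\neq 2$), giving $\vert Y_+(\pi)\vert$ after Schur's lemma and the central-character observation that the relevant $\chi$ are automatically trivial on $E^{\times 2}$; and once via the multiplicity-free semisimple decomposition of $\pi\vert_{\GL_2^+(E)}$, giving $\lg_+(\pi)$. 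This is essentially re-deriving the required slice of Clifford theory from first principles, so your proof is longer but does not rely on unpacking \cite[Cor. 3.8]{C1}; both proofs use the same two essential inputs, namely $\ell\neq 2$ and the multiplicity-freeness of $\pi\vert_{\SL_2(E)}$ from \cite[Prop.\ 2.37]{C1}. One implicit step to flag: you write $\pi\vert_{\GL_2^+(E)}=\bigoplus_i\sigma_i$, which presupposes that this restriction is semisimple of finite length; this follows from \cite[Prop.\ 2.35]{C1} (restriction of a cuspidal to $\SL_2(E)$ is semisimple of finite length, hence so is the restriction to the intermediate group $\GL_2^+(E)$) and is used silently in the paper as well, but is worth stating.
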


\begin{proof}

    Since the direct components of $\pi\vert_{\GL_2^+(E)}$ are $\GL_2(E)$-conjugate, they share the same length after restricted to $\SL_2(E)$. We have that $\lg_+(\pi)$ divides $\lg(\pi)$,  which is coprime to $\ell$ by Lemma \ref{lema001}. Hence the Clifford theory in $\ell$-modular setting (see \cite[Cor. 3.8]{C1}) is the same as the complex setting in our case. Meanwhile, since $\pi\vert_{\SL_2(E)}$ is multiplicity-free, the restriction $\pi\vert_{\GL_2^+(E)}$ is multiplicity-free. Then the Clifford theory gives the desired equation $\lg_+(\pi)=\vert Y_+(\pi)\vert$.


\end{proof}

To compute the length $\lg(\pi)$ we will need to use the local Langlands correspondence.

\subsection{Local Langlands for supercuspidal representations of \texorpdfstring{$\SL_2$}{SL2}}

In this section, we use the local Langlands correspondence for $\GL_2$ to define a correspondence modulo $\ell$ for supercuspidal representations of $\SL_2$. As in the complex case, for $\SL_2$, this correspondence is not a bijection,  we give a description of the L-packet.

\bigskip

Let $\tau$ be a supercuspidal $\Fl$-representation of $\SL_2(E)$. Let $\pi$ be a \emph{lift} to $\GL_2(E)$ i.e., $\tau\subset \pi|_{\SL_2(E)}$. To $\pi$ we associate by the local Langlands correspondence of Vignéras its Langlands parameter $\varphi_{\pi}: W_{\EE} \to \GL_2(\Fl)$. Let $\gamma$ be the projection $\gamma : \GL_2(\Fl) \to \PGL_2(\Fl)$. Then we define $\varphi_\tau : W_{\EE} \to \PGL_2(\Fl)$ by $\varphi_\tau:= \gamma \circ \varphi_{\pi}$. The parameter $\varphi_\tau$ does not depend on the choice of the lift $\pi$ since two lifts differ by a character and so are their Langlands parameters.

\bigskip

Let  $\varphi : W_{\EE} \to \PGL_2(\Fl)$ be the parameter of a representation of $\SL_2(E)$. Denote by $S_{\varphi}:=C_{\PGL_2(\Fl)}(\varphi(W_\EE))$ the centralizer in $\PGL_2(\Fl)$ of the image of $\varphi$.

\begin{prop}
    \label{proLengthCentralizer}
    Let $\tau$ (resp. $\pi$) be a supercuspidal $\Fl$-representation of $\SL_2(E)$ (resp. $\GL_2(E)$) and $\tau\subset \pi|_{\SL_2(F)}$. Then we have an isomorphism
    \[
        S_{\varphi_\tau} \simeq Y(\pi)
    \]
\end{prop}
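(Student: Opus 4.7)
The plan is to construct mutually inverse group homomorphisms between $Y(\pi)$ and $S_{\varphi_\tau}$. The cornerstone is the compatibility of Vignéras's modulo $\ell$ Langlands correspondence for $\GL_2$ with character twists: for any $\Fl$-character $\chi$ of $E^\times$, identified with a character of $W_E$ via local class field theory, the parameter of $\pi \otimes (\chi \circ \det)$ is $\varphi_\pi \otimes \chi$. Hence $\chi \in Y(\pi)$ if and only if there exists $A_\chi \in \GL_2(\Fl)$ such that
\[
    A_\chi \varphi_\pi(w) A_\chi^{-1} = \chi(w) \varphi_\pi(w) \qquad \text{for all } w \in W_E.
\]

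For the forward map $\chi \mapsto \bar{A}_\chi$, observe that since $\pi$ is supercuspidal, $\varphi_\pi$ is irreducible, so Schur's lemma makes $A_\chi$ unique up to a scalar, and its image $\bar{A}_\chi \in \PGL_2(\Fl)$ is well-defined. Projecting the displayed relation modulo scalars shows that $\bar{A}_\chi$ commutes with $\gamma(\varphi_\pi(w)) = \varphi_\tau(w)$ for every $w \in W_E$, so $\bar{A}_\chi \in S_{\varphi_\tau}$. A direct computation shows that $A_{\chi_1} A_{\chi_2}$ conjugates $\varphi_\pi$ to $(\chi_1\chi_2)\cdot \varphi_\pi$, so the assignment $\chi \mapsto \bar{A}_\chi$ is a group homomorphism.

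For the inverse map, lift $\bar{A} \in S_{\varphi_\tau}$ arbitrarily to $A \in \GL_2(\Fl)$. For each $w \in W_E$, commutativity in $\PGL_2(\Fl)$ produces a scalar $\chi(w) \in \Fl^\times$ with $A \varphi_\pi(w) A^{-1} = \chi(w)\varphi_\pi(w)$. The map $w \mapsto \chi(w)$ is a continuous homomorphism: multiplicativity follows from the corresponding property of $\varphi_\pi$, and continuity from the fact that $\chi(w)$ is extracted as the (scalar) matrix $A\varphi_\pi(w) A^{-1} \varphi_\pi(w)^{-1}$, which is continuous in $w$. Via local class field theory, $\chi$ is a character of $E^\times$, and the compatibility recalled above gives $\chi \in Y(\pi)$. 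Replacing $A$ by another lift modifies it only by a scalar, which does not change $\chi$, so this assignment is well-defined, and it is visibly inverse to the forward map.

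The only non-formal input is the compatibility of Vignéras's correspondence with character twists for supercuspidal representations of $\GL_2(E)$; this is built into the construction, since the correspondence is obtained by reduction modulo $\ell$ from the $\ell$-adic local Langlands correspondence, which is twist-equivariant. Once this is in hand and one uses the irreducibility of $\varphi_\pi$ to invoke Schur's lemma, the argument is essentially formal, and the main technical care is simply the bookkeeping of passing between $\GL_2(\Fl)$ and $\PGL_2(\Fl)$ via the projection $\gamma$.
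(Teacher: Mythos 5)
Your proof is correct and uses essentially the same ingredients as the paper's: Schur's lemma applied to the irreducible parameter $\varphi_\pi$, twist-equivariance of the correspondence, and the extraction of a scalar character from a conjugation relation between $\varphi_\pi$ and its twists. The only cosmetic difference is direction: you construct the map $Y(\pi)\to S_{\varphi_\tau}$ first and then its explicit inverse, while the paper constructs $S_{\varphi_\tau}\to Y(\pi)$ (sending $s$ to the cocycle $\chi_s(w)=\tilde{s}\varphi_\pi(w)\tilde{s}^{-1}\varphi_\pi(w)^{-1}$) and then verifies surjectivity and injectivity separately; these are the same argument organized differently, and yours has the small virtue of making explicit the continuity of the extracted character.
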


\begin{proof}
    We follow the strategy of \cite[Thm. 4.3]{gelbart}. To simplify the notation here, we will simply denote $\varphi_\tau$ by $\varphi$. From the definition of $\varphi$, we have that $\varphi=\gamma \circ \varphi_\pi$. Let $s\in S_{\varphi}$ and $\tilde{s}\in \GL_2(\Fl)$ such that $\gamma(\tilde{s})=s$. We define a function $\chi_s:W_{\EE} \to \GL_2(\Fl)$ by
    \[
        \chi_s(w):=\tilde{s}\varphi_\pi(w) \tilde{s}^{-1}\varphi_\pi(w)^{-1}, \text{ for } w\in W_{\FF}.
    \]
    This definition is independent of the choice of $\tilde{s}$. Moreover, since $\gamma(\chi_s(w))=1$, $\chi_s(w)$ is a scalar times the identity. We will denote this scalar again $\chi_s(w)$. Hence, we have

    \[
        \tilde{s}\varphi_\pi(w) \tilde{s}^{-1} = \chi_s(w)\varphi_\pi(w).
    \]

    Let $w_1,w_2 \in W_{\EE}$. Then
    \begin{align*}
        \chi_s(w_1w_2)\varphi_\pi(w_1w_2) & =\tilde{s}\varphi_\pi(w_1w_2) \tilde{s}^{-1} = \tilde{s}\varphi_\pi(w_1)\varphi_\pi(w_2) \tilde{s}^{-1} \\
                                          & =\tilde{s}\varphi_\pi(w_1)\tilde{s}^{-1}\tilde{s}\varphi_\pi(w_2) \tilde{s}^{-1}                        \\
                                          & =\chi_s(w_1)\varphi_\pi(w_1) \chi_s(w_2)\varphi_\pi(w_2)                                                \\
                                          & =\chi_s(w_1)\chi_s(w_2)\varphi_\pi(w_1)\varphi_\pi(w_2) = \chi_s(w_1)\chi_s(w_2)\varphi_\pi(w_1w_2)
    \end{align*}
    Thus $\chi_s(w_1w_2)=\chi_s(w_1)\chi_s(w_2)$ and $\chi_s$ is a character.

    \medskip

    Since $\varphi_\pi \simeq \chi_s\varphi_\pi$ we have $\pi \simeq \pi\otimes (\chi_s \circ \det)$ and $\chi_s \in Y(\pi)$. This defines a morphism $S_{\varphi} \to Y(\pi)$, $s \mapsto \chi_s$.

    \medskip

    This morphism is surjective. Indeed, if $\omega \in Y(\pi)$ then $\pi \simeq \pi\otimes (\omega \circ \det)$ and thus $\varphi_\pi \simeq \omega \varphi_\pi$. If $\tilde{A}$ implements the equivalence then $\tilde{A} \varphi_\pi(w) \tilde{A}^{-1} = \omega(w) \varphi_\pi(w)$. Let $A:=\gamma(\tilde{A})$. Then $A \in S_{\varphi}$ and $\omega = \chi_A$.

    \medskip

    We are left to prove that $S_{\varphi} \to Y(\pi)$ is injective. Let $s\in S_{\varphi}$ such that $\chi_s=1$. This implies that $\tilde{s}$ centralizes the image of $\varphi_\pi$. Since $\varphi_\pi$ is irreducible, Schur's lemma tells us that $\tilde{s}$ is a scalar. Hence $s=1$ and we have the injectivity.
\end{proof}

\subsection{Explicit computation of the length}

By Proposition \ref{proLengthCentralizer}, we compute the length $\lg(\pi)$ by considering the cardinality of $S_{\varphi_\tau}$. The method in \cite{Shel} can be generalised to the case when $\ell$ is positive, based on which we also obtain the existence of good lift.

\begin{defi}
    Let $\tau$ be an irreducible supercuspidal $\Fl$-representation of $\mathrm{SL}_2(E)$, and $\tilde{\tau}$ an irreducible supercuspidal $\Ql$-representation of $\mathrm{SL}_2(E)$, which is $\ell$-integral. We say
    \begin{itemize}
        \item $\tilde{\tau}$ is a $\Ql$-lift of $\tau$, if  $\tau$ is a subquotient of the reduction modulo $\ell$ of $\tilde{\tau}$;
        \item $\tilde{\tau}$ is a good $\Ql$-lift of $\tau$, if the reduction modulo $\ell$ of $\tilde{\tau}$ is irreducible and isomorphic to $\tau$.
    \end{itemize}
\end{defi}

For the case of $\GL_n(E)$, an irreducible supercuspidal $\Fl$-representation of $\GL_n(E)$ always has a $\Ql$-lift, and every $\Ql$-lift is a good $\Ql$-lift. The latter property is not true for $\SL_2(E)$. However, we will show the existence of a good $\Ql$-lift of an irreducible supercuspidal $\Fl$-representation of $\SL_2(E)$.

\bigskip

Let $\pi$ be an irreducible supercuspidal $\Fl$-representation of $\GL_2(E)$ with Langlands parameter $\varphi_\pi$, and $\tau\subset\pi\vert_{\SL_2(E)}$. We say $\varphi_{\pi}$ is \emph{dihedral}  if the image of $W_E$ is of the form $\Ind_{W_K}^{W_{\EE}}\theta$ in $\mathrm{GL}_2(\Fl)$ where $K/E$ is a quadratic field extension, $W_K$ is the Weil group of $K$ and $\theta$ is a character of $W_K$ which is not invariant under the $\Gal(K/E)$-action. We say $\varphi_\pi$ is \emph{tetrahedral} (resp. \emph{octahedral}) if the image of $W_E$ under the map $W_E\to\GL_2(\Fl)\to \PGL_2(\Fl)$
is the alternating group $A_4$ (resp. the symmetric group $S_4$). (See \cite[Section 42]{BH} for more details.)
\begin{prop}
    \label{cprop 1.3}
    Let $\pi$ be an irreducible supercuspidal $\Fl$-representation of $\GL_2(E)$.
    \begin{enumerate}
        \item When $\varphi_{\pi}$ is dihedral, the length $\lg(\pi)$ is 2 or 4;
        \item When $\varphi_{\pi}$ is not dihedral, then it must be tetrahedral or octahedral, and the length $\lg(\pi)$ is 1.
    \end{enumerate}
\end{prop}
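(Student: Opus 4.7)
The plan is to reduce the computation of $\lg(\pi)$ to a question about the Langlands parameter $\varphi_\pi$. Combining Proposition~\ref{proLengthCentralizer} and Lemma~\ref{lema001} gives $\lg(\pi) = |Y(\pi)|$, and under the modulo $\ell$ local Langlands correspondence for $\GL_2(E)$ the set $Y(\pi)$ is in bijection with the set of $\Fl$-characters $\chi$ of $W_E$ satisfying $\varphi_\pi \otimes \chi \simeq \varphi_\pi$. Taking determinants of both sides forces $\chi^2 = \mathbf{1}$, so $\chi$ is either trivial or equal to $\omega_{K/E}$ for a unique quadratic extension $K/E$. For a non-trivial quadratic character $\chi = \omega_{K/E}$, the twist $\varphi_\pi \otimes \omega_{K/E}$ is isomorphic to $\varphi_\pi$ precisely when $\varphi_\pi|_{W_K}$ becomes reducible, i.e.\ when $\varphi_\pi \simeq \Ind_{W_K}^{W_E}(\theta)$ for some character $\theta$ of $W_K$ with $\theta \neq \theta^s$, where $s$ generates $\Gal(K/E)$. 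Hence if $\varphi_\pi$ is not dihedral for any quadratic extension of $E$, then $Y(\pi) = \{\mathbf{1}\}$ and $\lg(\pi) = 1$; the classification of irreducible two-dimensional continuous $\Fl$-representations of the Weil group of a $p$-adic field with $\ell \neq p$, adapting \cite[Section 42]{BH} from $\Ql$ to $\Fl$, then forces the projective image of $\varphi_\pi$ to be $A_4$ or $S_4$, that is, $\varphi_\pi$ is tetrahedral or octahedral.

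If instead $\varphi_\pi \simeq \Ind_{W_K}^{W_E}(\theta)$ is dihedral, I would apply the projection formula to get
\[
\varphi_\pi \otimes \chi \;\simeq\; \Ind_{W_K}^{W_E}\bigl(\theta \cdot \chi|_{W_K}\bigr),
\]
and then use Mackey's irreducibility criterion to conclude that $\varphi_\pi \otimes \chi \simeq \varphi_\pi$ if and only if $\theta \cdot \chi|_{W_K} \in \{\theta, \theta^s\}$. The condition $\chi|_{W_K} = \mathbf{1}$ yields exactly the two characters $\mathbf{1}$ and $\omega_{K/E}$. The condition $\chi|_{W_K} = \theta^s/\theta$ has a solution $\chi$ on $W_E$ precisely when $\theta^s/\theta$ is $s$-invariant, equivalently when $(\theta^s/\theta)^2 = \mathbf{1}$, and in that case there are exactly two extensions to $W_E$, differing by $\omega_{K/E}$. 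Since $\theta \neq \theta^s$ the two cases are disjoint, giving $|Y(\pi)| \in \{2,4\}$ and hence $\lg(\pi) \in \{2,4\}$.

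The main technical obstacle will be the classification invoked in the non-dihedral case: one must verify that for $\ell \neq p$ the only primitive projective images in $\PGL_2(\Fl)$ that can arise from an irreducible continuous two-dimensional $\Fl$-representation of $W_E$ are $A_4$ and $S_4$, ruling out the icosahedral $A_5$. This should follow by tracking the image of wild inertia $P_E$, which is a finite $p$-subgroup of $\PGL_2(\Fl)$, together with the pro-cyclic-by-$\widehat{\Z}$ structure of $W_E/P_E$, and then enumerating the solvable finite subgroups of $\PGL_2(\Fl)$ that are compatible with both constraints.
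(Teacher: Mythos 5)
Your plan reduces to the same skeleton as the paper's: compute $\lg(\pi) = \lvert Y(\pi)\rvert$ (Lemma~\ref{lema001}), transfer $Y(\pi)$ to the Galois side, then split into the dihedral and primitive cases. For dihedral $\varphi_\pi \simeq \Ind_{W_K}^{W_E}\theta$, your Mackey/projection-formula count of quadratic self-twists $\chi$ is exactly the content that the paper delegates to Shelstad's computation of $\lvert S_{\varphi_\tau}\rvert$ via Proposition~\ref{proLengthCentralizer}; the two accounts agree, with the same dichotomy governed by whether $(\theta^s/\theta)^2=\mathbf{1}$. (One small point you should verify: the two extensions of $\theta^s/\theta$ to $W_E$ are automatically quadratic, hence genuinely contribute to $Y(\pi)$; this drops out from comparing $\det(\varphi_\pi\otimes\chi)=\chi^2\det\varphi_\pi$ with $\det\Ind_{W_K}^{W_E}(\theta^s)=\det\Ind_{W_K}^{W_E}(\theta)$.) Where your route genuinely diverges is the primitive case. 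You propose to re-prove the classification of primitive two-dimensional $\Fl$-representations of $W_E$ directly and flag this as the main technical obstacle. The paper sidesteps it by lifting: since $\lvert Y(\pi)\rvert = 1$, any supercuspidal $\Ql$-lift $\tilde\pi$ also has $\lvert Y(\tilde\pi)\rvert = 1$, so $\varphi_{\tilde\pi}$ is primitive, the $\Ql$-classification of Bushnell--Henniart \S42 gives projective image $A_4$ or $S_4$ (forcing $p=2$), and because $\ell\neq 2$ the explicit integral matrix generators in $\PGL_2(\Zl)$ stay distinct after reduction mod $\ell$. This is substantially lighter than the enumeration of finite subgroups of $\PGL_2(\Fl)$ you outline. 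Also, the icosahedral case you worry about is ruled out immediately in either approach by pro-solvability of $\Gal(\bar E/E)$, independently of the coefficient field.
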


\begin{proof}


    Let $\tilde{\pi}$ be a $\Ql$-lift of $\pi$. Then the length $\lg(\tilde{\pi})$ divides the length $\lg(\pi)$. Suppose $\vert Y(\pi)\vert\neq 1$. Then there exists an $\Fl$-character $\chi$ of $E^{\times}$ such that $\pi\otimes\chi\circ\det\cong\pi$. By Local Class Field Theory, we identify $\chi$ with an $\Fl$-character of $W_E$ such that $\varphi_{\pi\otimes\chi\circ\det}\cong\chi\varphi_{\pi}\cong\varphi_{\pi}$. Considering the central character, we have that $\chi$ has order $2$. Then there exists a quadratic field extension $K\slash E$ such that $\ker(\chi)=W_K$, which, by Clifford theory implies that $\varphi_{\pi}\vert_{W_K}$ is a direct sum of two characters of $W_K$. Hence $\varphi_\pi=\Ind_{W_K}^{W_{\EE}}\theta$ where $W_K$ is the Weil group of $K$ and $\theta$ is an $\Fl$-character of $W_K$.
    Let $s$ be the non-trivial element in $\mathrm{Gal}(K\slash E)$. By a same computation as in \cite[\S11, part (ii)]{Shel}, we deduce that if
    $\theta\slash\theta^{s}$ has order two, then $\vert S_{\varphi_{\tau}}\vert=4$. When $(\theta\slash\theta^{s})^2\neq\mathbf{1}$, then $\vert S_{\varphi_{\tau}}\vert=2$. When  $\theta\cong\theta^s$ then $\varphi_{\pi}$ is reducible which is not an $L$-parameter of an irreducible supercuspidal representation of $\GL_{2}(E)$. Then we obtain the result by applying Lemma \ref{lema001} and Proposition \ref{proLengthCentralizer}. On the other hand, let $\mu_{K\slash E}$ be the unique non-trivial character on $\mathrm{Gal}(K\slash E)$, then we have $\mu_{K\slash E}\otimes\varphi_{\pi}\cong\varphi_{\pi}$ hence $\vert Y(\pi)\vert\neq 1$. In other words, we show that $\varphi_{\pi}$ is dihedral if and only if $\vert Y(\pi)\vert\neq 1$.

    Suppose that $\varphi_{\pi}$ is not dihedral, then $\vert Y({\pi})\vert=1$, which implies that $p=2$ by \cite[Section 42]{BH}. In this case, the image of a $\Ql$-lift $\tilde{\pi}$ under the projection from $\GL_2(\Ql)$ to $\PGL_2(\Ql)$ is either isomorphic to $S_4$ or $A_4$. For the second case, since any two subgroups of $\PGL_2(\Ql)$ being isomorphic to $A_4$ are conjugate to each other, we can choose $\varphi_{\tilde{\pi}}$ such that its image in $\PGL_2(\Ql)$ will be $N\rtimes C$, where
    \[
        N=\left\{\begin{pmatrix} \pm 1&0\\0&1\end{pmatrix},\begin{pmatrix}0&\pm 1\\ \pm 1&0\end{pmatrix} \right\}, C=\left\{ I,\begin{pmatrix} 1&\sqrt{-1}\\ 1 &-\sqrt{-1}\end{pmatrix},\begin{pmatrix} 1&1\\-\sqrt{-1}&\sqrt{-1} \end{pmatrix}\right\}.
    \]
    Since $\ell\neq 2$, after reduction modulo $\ell$ the image of $\varphi_{\pi}$ is isomorphic to $N\rtimes C\cong S_4$ as well. For the case of $S_4$, it is isomorphic to $\left\langle N\rtimes C,\begin{pmatrix} \sqrt{-1}&0\\ 0&1 \end{pmatrix} \right\rangle$. We repeat the similar argument to the case of $A_4$. This finishes the proof.
\end{proof}

\begin{prop}
    \label{propQoodLifting}
    Let $\pi$ be an irreducible supercuspidal $\Fl$-representation of $\GL_2(E)$ and $\tilde{\pi}$ a $\Ql$-lift of $\pi$. Let $\tau$ be an irreducible component of $\pi\vert_{\SL_2(E)}$ and $\tilde{\tau}$ an irreducible component of $\tilde{\pi}\vert_{\SL_2(E)}$.
    \begin{enumerate}
        \item Suppose that $\varphi_{\pi}$ is tetrahedral or octahedral. Then $\tilde{\pi}\vert_{\SL_2(E)}\cong\tilde{\tau}$ is a good $\Ql$-lift of $\tau$.
        \item Suppose that $\varphi_{\pi}$ is dihedral.
              \begin{enumerate}
                  \item If the cardinality of $S_{\varphi_{\tilde{\tau}}}$ is $4$, then the reduction modulo $\ell$ of $\tilde{\tau}$ is irreducible. In particular, there exists an irreducible component $\tilde{\tau}'\subset\pi\vert_{\SL_2(E)}$ which is a good $\Ql$-lift of $\tau$.

                  \item If the cardinality of $S_{\varphi_{\tilde{\tau}}}$ is $2$, then the reduction modulo $\ell$ of $\tilde{\tau}$ may be reducible. If it is irreducible, there exists an irreducible component $\tilde{\tau}'\subset\pi\vert_{\SL_2(E)}$ which is a good $\Ql$-lift of $\tau$. If it is reducible, then there exists another $\Ql$-lift $\tilde{\pi}'$ of $\pi$, such that the cardinality of $S_{\varphi_{\tilde{\pi}'}}$ is $4$, and there exists an irreducible component $\tilde{\tau}'\subset\tilde{\pi}'\vert_{\SL_2(F)}$ which is a good $\Ql$-lift of $\tau$.
              \end{enumerate}
    \end{enumerate}
\end{prop}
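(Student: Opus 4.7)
The plan is to split the argument according to whether $\varphi_\pi$ is tetrahedral/octahedral or dihedral, as in Proposition \ref{cprop 1.3}. A key common ingredient I would use is the identity $\lg(\tilde{\pi}) = |S_{\varphi_{\tilde{\tau}}}|$, which is the $\Ql$-analogue of Proposition \ref{proLengthCentralizer} combined with Lemma \ref{lema001} and works verbatim over $\Ql$, together with the bound $\lg(\tilde{\pi}) \leq \lg(\pi)$ (reduction modulo $\ell$ can only merge components of the restriction to $\SL_2(E)$).

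For part (1), I would argue that $\lg(\tilde\pi) = 1$ by repeating the $\PGL_2$-model argument used in the proof of Proposition \ref{cprop 1.3}: since $\ell\ne 2$, the image of $\varphi_{\tilde\pi}$ in $\PGL_2(\Ql)$ reduces to an isomorphic copy in $\PGL_2(\Fl)$, which is $A_4$ or $S_4$; hence $|Y(\tilde\pi)|=1$ and $\tilde\pi|_{\SL_2(E)} = \tilde\tau$ is already irreducible, and reducing any $\GL_2(E)$-stable $\Zl$-lattice in $\tilde\pi$ gives $r_\ell(\tilde\tau) = \tau$. For part (2), write $\varphi_\pi = \Ind_{W_K}^{W_E}(\theta)$ and $\varphi_{\tilde\pi} = \Ind_{W_K}^{W_E}(\tilde\theta)$ with $r_\ell(\tilde\theta) = \theta$, and set $\alpha := \tilde\theta/\tilde\theta^s$; the computation recalled in the proof of Proposition \ref{cprop 1.3} gives $|S_{\varphi_{\tilde\tau}}|$ equal to $4$ or $2$ according as $\alpha^2 = 1$ or $\alpha^2 \ne 1$. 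In case (2)(a) and the easy half of (2)(b) (whenever $\lg(\tilde\pi) = \lg(\pi)$), a length-counting argument on a $\GL_2(E)$-stable $\Zl$-lattice shows that each of the $\lg(\tilde\pi)$ components $\tilde\tau_i$ of $\tilde\pi|_{\SL_2(E)}$ has semisimplified reduction of length $1$, hence reduces irreducibly; $\GL_2(E)$-equivariance of reduction then matches them bijectively with the components of $\pi|_{\SL_2(E)}$.

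The main obstacle is the hard half of (2)(b), where $|S_{\varphi_{\tilde\tau}}|=2$ but $\lg(\pi)=4$, so that each $r_\ell(\tilde\tau_i)$ has length $2$ and hence is reducible. Here the plan is to construct a second lift $\tilde\pi'$ of $\pi$ with $|S_{\varphi_{\tilde\pi'}}|=4$ and then invoke case (2)(a) to conclude. Concretely, I would set $\tilde\pi' := \Ind_{W_K}^{W_E}(\tilde\theta\,\tilde\mu)$ for a character $\tilde\mu$ of $W_K$ of $\ell$-power order with $r_\ell(\tilde\mu) = 1$ (so that $r_\ell(\tilde\pi') = \pi$), chosen so that $(\tilde\theta\tilde\mu/(\tilde\theta\tilde\mu)^s)^2 = 1$. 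Writing $\alpha = \alpha_{\ell'}\alpha_\ell$ as the product of its prime-to-$\ell$ and $\ell$-power parts (where $\alpha_{\ell'}$ has order exactly $2$, since $r_\ell(\alpha) = \theta/\theta^s$ has order $2$ because $\lg(\pi)=4$), the desired condition becomes $\tilde\mu/\tilde\mu^s = \alpha_\ell^{-1}$. The compatibility $\alpha_\ell(\alpha_\ell)^s = 1$ reduces the existence of such $\tilde\mu$ to a Hilbert~90 question on the pro-$\ell$ character group of $W_K^{\mathrm{ab}}$, and verifying this surjectivity is the only real technical point — the rest is Clifford-theoretic bookkeeping and length counting.
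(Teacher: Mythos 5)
Your overall structure — split on the type of $\varphi_\pi$, use $|S_{\varphi_{\tilde\tau}}| = \lg(\tilde\pi)$ and $|S_{\varphi_\tau}| = \lg(\pi)$, and manufacture a second lift when these lengths disagree — matches the paper's, and parts (1), (2)(a), and the case of (2)(b) where $\lg(\tilde\pi)=\lg(\pi)$ are handled essentially as in the paper. The genuine divergence is in the hard half of (2)(b). You propose to correct $\tilde\theta$ by a pro-$\ell$ twist $\tilde\mu$ solving $\tilde\mu/\tilde\mu^s = \alpha_\ell^{-1}$, and you yourself flag that the existence of such a $\tilde\mu$ is a Hilbert~90-type surjectivity statement that you do not verify: as written this is a real gap, not mere bookkeeping. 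The surjectivity is in fact true — since $\alpha=\tilde\theta/\tilde\theta^{s}$ is trivial on $E^\times$ it factors through the compact group $\ker\Nm_{K/E}$, so it has finite order and the factorization $\alpha=\alpha_{\ell'}\alpha_\ell$ makes sense; transporting $\alpha_\ell^{-1}$ to a character of $\ker\Nm_{K/E}$ via $x\mapsto x/s(x)$ and extending to $K^\times$ using divisibility of the $\ell$-power roots of unity produces $\tilde\mu$ — but this argument must be supplied. The paper sidesteps it entirely by choosing $\tilde\theta_0$ to be the Teichmuller lift of $\theta$: because the Teichmuller section $\Fl^\times\hookrightarrow\Zl^\times$ is a group homomorphism commuting with the action of $s$, the ratio $\tilde\theta_0/\tilde\theta_0^{s}$ is itself the Teichmuller lift of $\theta/\theta^{s}$, which has order exactly $2$ (prime to $\ell$), so $(\tilde\theta_0/\tilde\theta_0^{s})^2=\mathbf{1}$ holds immediately and $\tilde\pi':=\Ind_{W_K}^{W_E}\tilde\theta_0$ gives the required lift with no further cohomological input. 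You could replace your construction by this one and shorten the proof considerably.
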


\begin{proof}
    \begin{enumerate}
        \item
              We have that $\varphi_{\tilde{\pi}}$ is tetrahedral or octahedral, hence the reduction modulo $\ell$ of $\tilde{\pi}\vert_{\SL_2(E)}$ is irreducible and isomorphic to $\tau$ by Proposition \ref{cprop 1.3}(2).
        \item \begin{enumerate}
                  \item
                        We first assume that the cardinality of $S_{\varphi_{\tilde{\tau}}}$ is $4$. We switch the order of restriction to $\SL_2(E)$ and reduction modulo $\ell$. Then by Proposition \ref{cprop 1.3}(1), we have $\lg(\pi)=\lg(\tilde{\pi})$. Hence every irreducible component of $\tilde{\pi}\vert_{\SL_2}$ is $\ell$-integral and its reduction modulo $\ell$ is irreducible. The unicity of Jordan-H\"older components implies the existence of a good $\Ql$-lift $\tilde{\tau}$.
                  \item
                        Now we assume that the cardinality of $S_{\varphi_{\tilde{\tau}}}$ is $2$. Then there exists a field extension $K/ E$ and an $\ell$-integral $\Ql$-quasicharacter $\tilde{\theta}$ of $K^{\times}$, such that $\varphi_{\tilde{\pi}}\cong\mathrm{Ind}_{W_K}^{W_{E}}\tilde{\theta}$, and $(\tilde{\theta}^s/{\tilde{\theta}})^2\neq \mathbf{1}$, where $\Gal(K/E)=\langle s\rangle$. Let $\theta$ be the reduction modulo $\ell$ of $\tilde{\theta}$. Then $\varphi_{\pi}\cong\mathrm{Ind}_{W_K}^{W_{E}}\theta$. Suppose $(\theta^s/{\theta})^2\neq \mathbf{1}$. Then the cardinality of $S_{\varphi_{\tau}}$ is $2$ as well. In this case, we apply the same argument as for case (a) in part $(2)$, and deduce the existence of a good lift. If $(\theta^s/{\theta})^2 =\mathbf{1}$, the cardinality of $S_{\varphi_{\tau}}$ is $4$, which implies that the length of the reduction modulo $\ell$ of each irreducible component in $\tilde{\pi}\vert_{\SL_2}$ is $2$, hence none of them is a good lift of $\tau$. Now fix a group embedding from $\Fl^{\times}$ to $\Zl^{\times}$ by sending an element of $\Fl^{\times}$ to its Teichmuller representative, which gives a natural $\Ql$-lift of $\theta$, denoted  by $\tilde{\theta}_{0}$. 
                        We deduce that $(\tilde{\theta}_{0}^s/{\tilde{\theta}}_{0})^2=\mathbf{1}$. Let $\tilde{\pi}'$ be the irreducible supercuspidal $\Ql$-representation of $\GL_2(E)$ corresponding to $\varphi_{0}=\mathrm{Ind}_{W_K}^{W_{E}}\tilde{\theta}_{0}$, which is $\ell$-integral with reduction modulo $\ell$ isomorphic to $\pi$, and the cardinality of $Y(\tilde{\pi}')$ is $4$. We apply the argument for case (a) in part $(2)$ to finish the proof.
              \end{enumerate}
    \end{enumerate}
\end{proof}

\subsection{Representations of $\GL_2(\EE)$ distinguished by $\SL_2(\FF)$}
In the remaining part of this paper, we follow the restriction method of \cite{anandavardhanan2003distinguished} since the main strategy of \cite{anandavardhanan2003distinguished} works for $\Fl$-representations as well. However in the modular setting, some modifications are still needed in the proofs. For convenience, we state the results which are required for further use.

\bigskip

For an irreducible $\Fl$-representation $\pi$ of $\GL_2(E)$, we denote by $X(\pi)$ the following set
\[
    X(\pi)=\{ \chi: \Fl\text{-character of } F^{\times}, \pi \text{ is } (\GL_2(F),\chi)\text{-distinguished}\}
\]

\begin{prop} \cite[Prop. 4.1]{anandavardhanan2003distinguished}
    \label{prop4.1AP}
    Let $\pi$ be an irreducible $\Fl$-representation of $\GL_2(E)$. Then
    \[
        \dim\mathrm{Hom}_{\SL_2(F)}(\pi,\mathbf{1})= \vert X(\pi)\vert.
    \]
\end{prop}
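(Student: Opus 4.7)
The plan is to introduce the action of $F^\times \cong \GL_2(F)/\SL_2(F)$ on $V := \Hom_{\SL_2(F)}(\pi,\mathbf{1})$ and decompose it into character eigenspaces indexed by $X(\pi)$. Since $\SL_2(F)$ is normal in $\GL_2(F)$, the formula $(g \cdot L)(v) := L(\pi(g^{-1})v)$ defines an action of $\GL_2(F)$ on $V$ which factors through $F^\times$ via $\det$. For a character $\chi$ of $F^\times$, the corresponding $\chi$-eigenspace $V_\chi$ for this action is precisely $\Hom_{\GL_2(F)}(\pi,\chi)$ (with $\chi$ viewed as a character of $\GL_2(F)$ through $\det$). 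This gives, essentially tautologically, an injection $\bigoplus_{\chi \in X(\pi)} V_\chi \hookrightarrow V$, so the real content of the proposition is the reverse inclusion together with the multiplicity-one bound $\dim V_\chi \leq 1$.

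The key observation is that the central subgroup $Z(\GL_2(F)) \cong F^\times$ acts on $V$ by the scalar character $\omega_\pi^{-1}$, so $F^{\times 2}$, the image of the center under $\det$, acts on $V$ by a fixed character $\alpha$. Since $\Fl^\times$ is a divisible abelian group, hence an injective $\mathbb{Z}$-module, $\alpha$ extends to a character $\tilde\alpha$ of $F^\times$. The twist $V \otimes \tilde\alpha^{-1}$ then becomes a module for the finite abelian quotient $F^\times/F^{\times 2}$, which is a $2$-group by \cite[Cor. 5.8]{Ne} (as already invoked in the proof of Lemma \ref{lema001}). Because $\ell \neq 2$, the group algebra $\Fl[F^\times/F^{\times 2}]$ is semisimple, so $V \otimes \tilde\alpha^{-1}$ decomposes as a direct sum of character eigenspaces; untwisting gives
$$V = \bigoplus_{\chi} V_\chi,$$
where $\chi$ ranges over the finite set of characters of $F^\times$ satisfying $\chi^2|_{F^\times} = \omega_\pi|_{F^\times}$.

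For each such $\chi$, I reduce to the $\GL_2(F)$-distinguished case by twisting: extending $\chi$ to a character $\tilde\chi$ of $E^\times$ (again by divisibility of $\Fl^\times$) yields $V_\chi \cong \Hom_{\GL_2(F)}(\pi \otimes (\tilde\chi^{-1} \circ \det), \mathbf{1})$, and by Theorem \ref{thmSecherreDist}(3) this space has dimension at most $1$, with equality precisely when $\chi \in X(\pi)$. Summing yields $\dim V = |X(\pi)|$. The main obstacle, and the reason the modular setting is more delicate than the classical setting of \cite{anandavardhanan2003distinguished}, is establishing the eigenspace decomposition of $V$ for the non-compact group $F^\times$; this is precisely where the hypothesis $\ell \neq 2$ enters essentially, via the semisimplicity of $\Fl[F^\times/F^{\times 2}]$.
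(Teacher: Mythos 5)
Your proof is correct and takes essentially the same route as the paper: both reduce the problem to the semisimplicity of $\Fl[F^\times/F^{\times 2}]$ (using $\ell\neq 2$ and the fact that this quotient is a finite $2$-group) and then invoke the multiplicity-one statement of Theorem \ref{thmSecherreDist}(3). The paper arranges for $F^{\times 2}$ to act trivially by twisting $\pi$ itself by a character of $E^\times$, whereas you twist the space $V$ by the extended character $\tilde\alpha$ — a cosmetic difference. One small slip: with your convention $(g\cdot L)(v)=L(\pi(g^{-1})v)$, the $\chi$-eigenspace is $\Hom_{\GL_2(F)}(\pi,\chi^{-1})$, not $\Hom_{\GL_2(F)}(\pi,\chi)$; this is harmless for the final count since $\chi\mapsto\chi^{-1}$ is a bijection.
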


\begin{proof}
    Assume that $\pi$ is $\mathrm{SL}_2(F)$-distinguished. Let $Z_{F^\times}$ be the center of $\mathrm{GL}_2(F)$ and $Z_{F^\times}'$ be the center of $\mathrm{SL}_2(F)$. The group $Z_{F^{\times}}'$ is the kernel of determinant that maps $Z_{F^{\times}}$ into $F^{\times}$. The central character $\omega_\pi$ of $\pi$ is trivial on $Z_{F^\times}'$ and so there exists a character $\chi_F$ of $F^\times$ such that $\chi_F^2=\omega_{\pi}|_{Z_{F^\times}}$. Since each smooth $\Fl$-character of $F^{\times}$ can be extended to a smooth $\Fl$-character of $E^{\times}$, we obtain that after twisting {by} a smooth $\Fl$-character of $E^{\times}$, the central character $\omega_{\pi}$ of $\pi$ is trivial on $Z_{F^{\times}}$. On the other hand, if $\pi$ is $(\GL_2(F),\chi)$-distinguished for an $\Fl$-character $\chi$ of $F^{\times}$, then $\pi$ is $\mathrm{SL}_2(F)$-distinguished, and $\omega_{\pi}$ is trivial on $Z_{F^{\times}}$ after twisting {by} a smooth $\Fl$-character of $E^{\times}$ as explained above. We obtain {the} fact that if $\omega_{\pi}$ is never trivial on $Z_{F^{\times}}$ after twisting {by} a smooth $\Fl$-character of $E^{\times}$, then $\dim\mathrm{Hom}_{\SL_2(F)}(\pi,1)= \vert X(\pi)\vert=0$.

    Now assume that the central character $\omega_{\pi}$ is trivial {on $Z_{F^\times}$}. As in the proof of Proposition 4.1 of \cite{anandavardhanan2003distinguished}, we consider the $\Fl$-space $\mathrm{Hom}_{\mathrm{SL}_2(F)}(\pi,\mathbf{1})$, which has a $\mathrm{GL}_2(F)$-module structure and $F^{\times}\cdot\mathrm{SL}_2(F)$ acts trivially. Since $F^{\times}\slash F^{\times 2}$ is a finite abelian group whose order is a power of $2$, our assumption that $\ell\neq 2$ implies that $\mathrm{Hom}_{\mathrm{SL}_2(F)}(\pi,\mathbf{1})$ can decompose into a direct sum of $\Fl$-characters of $F^{\times}\slash F^{\times 2}$, hence a direct sum of $\Fl$-characters of $\mathrm{GL}_2(F)$. In particular, from the definition, for an $\Fl$-character $\chi$ of $\mathrm{GL}_2(F)$ which appears in the direct sum above, we have that $\pi$ is $\chi$-distinguished with respect to $\mathrm{GL}_2(F)$. Due to Theorem \ref{thmSecherreDist}(3), the direct sum above is multiplicity-free as $\Fl$-representation of $\mathrm{GL}_2(F)$. Hence we obtain the desired equation.
\end{proof}

\subsection{Supercuspidal case}
\label{section supcusp SL2}

This subsection focuses on the distinction problems for supercuspidal representations.

\bigskip

Denote by $\mathrm{U}$ the subgroup of $\GL_2(E)$ consisting of the upper triangular matrices $\begin{pmatrix}
        1 & x \\0&1
    \end{pmatrix}$. Then $\mathrm{U}\cong E$. Fix an $\Fl$-character $\psi_0$ of $E$, which is non-trivial on $\mathfrak{o}_E$ and is trivial on both $\mathfrak{p}_E$ and $F$. Let $\pi$ be an irreducible infinite-dimensional $\Fl$-representation of $\mathrm{GL}_2(E)$. Then $\pi$ is $\psi_0$-generic, and $\pi$ has a unique Whittaker model $\mathcal{W}(\pi,\psi_0)$.

\begin{lem}
    \label{lem 001}
    Let $\pi_0$ be an irreducible $\Fl$-representation of $\mathrm{GL}_2^+(E)$ { which is of infinite dimension and distinguished by $\SL_2(F)$}. Then it has a Whittaker model with respect to $\psi_0$.
\end{lem}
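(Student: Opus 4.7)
The strategy is to exploit the fact that, when restricting an irreducible representation $\pi$ of $\GL_2(E)$ to $\GL_2^+(E)$, at most one irreducible component admits a $\psi_0$-Whittaker model; the goal reduces to identifying this unique component with the distinguished one.

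First, I would choose an irreducible $\Fl$-representation $\pi$ of $\GL_2(E)$ such that $\pi_0 \subset \pi|_{\GL_2^+(E)}$. Being infinite-dimensional, $\pi_0$ forces $\pi$ to be infinite-dimensional, and hence $\pi$ is $\psi_0$-generic, carrying a unique (up to scalar) Whittaker functional on $\mathbf{U}(E)$. By Clifford theory and multiplicity-freeness of $\pi|_{\GL_2^+(E)}$ (as in Lemma \ref{corlgY+}), this Whittaker functional is supported on exactly one component $\pi^{(0)}$ of $\pi|_{\GL_2^+(E)}$; the other components are each $\psi_a$-generic for some $a \in E^\times$ with class $[a] \in E^\times/F^\times E^{\times 2}$ non-trivial, since the $\GL_2(E)/\GL_2^+(E)$-action permutes the Whittaker characters via the diagonal torus. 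It therefore suffices to prove the equality $\pi_0 = \pi^{(0)}$.

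To establish this equality, I would argue by contradiction, assuming $\pi_0$ is $\psi_a$-generic for some $a \notin F^\times E^{\times 2}$. By Frobenius reciprocity, $\pi_0$ embeds into $\Ind_{\mathbf{U}(E)}^{\GL_2^+(E)} \psi_a$, giving
\[
    \Hom_{\SL_2(F)}\bigl(\pi_0,\mathbf{1}\bigr) \hookrightarrow \Hom_{\SL_2(F)}\bigl(\Ind_{\mathbf{U}(E)}^{\GL_2^+(E)}\psi_a,\mathbf{1}\bigr).
\]
Applying Mackey's formula to the double coset space $\mathbf{U}(E) \backslash \GL_2^+(E) / \SL_2(F)$, each contribution involves the triviality of $\psi_a$ on $g^{-1}\mathbf{U}(E)g \cap \SL_2(F)$ for a coset representative $g$. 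For the identity coset, this forces $\psi_a|_F = \mathbf{1}$; combining this with the $F^\times$-action through the diagonal torus of $\GL_2(F)$, which twists $\psi_a$ by elements of $F^\times$, would then pin down $a$ to lie in $F^\times E^{\times 2}$, giving the desired contradiction.

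The hard part will be the explicit analysis of the Mackey double cosets and of the induced characters on their small stabilizer subgroups $g^{-1}\mathbf{U}(E) g \cap \SL_2(F)$, especially at the closed cosets, where the $\ell$-modular setting removes the Fourier-analytic shortcuts available over $\C$ and demands a purely algebraic argument. An alternative route, in case the Mackey bookkeeping becomes unwieldy, is to argue through the Kirillov model $K(\pi,\psi_0)$: an $\SL_2(F)$-invariant functional on $\pi_0$ then corresponds to a distribution on $E^\times$ whose transformation law under the $F^\times$-action (via $\mathrm{diag}(a,1)$ for $a\in F^\times$) combined with its $\mathbf{U}(F)$-invariance (via the formula $(\rho(b)f)(x)=\psi_0(bx)f(x)$ for $b\in F$) forces its support to lie in the coset of $E^\times$ corresponding to $\pi^{(0)}$.
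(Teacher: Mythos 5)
Your main route has a genuine gap right at the start. Frobenius reciprocity gives you an \emph{injection} $\pi_0 \hookrightarrow \Ind_{\mathrm{U}(E)}^{\GL_2^+(E)}\psi_a$, i.e.\ $\pi_0$ sits as a \emph{sub}representation. That produces a restriction map $\Hom_{\SL_2(F)}\bigl(\Ind_{\mathrm{U}(E)}^{\GL_2^+(E)}\psi_a,\mathbf{1}\bigr)\to\Hom_{\SL_2(F)}(\pi_0,\mathbf{1})$, not the injection you claim in the other direction. Extending an $\SL_2(F)$-invariant linear functional from the submodule $\pi_0$ to the much larger induced representation is not automatic (invariants are not exact, and no injectivity of $\pi_0$ is available here, all the more so modulo $\ell$). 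So the reduction to computing $\Hom_{\SL_2(F)}\bigl(\Ind_{\mathrm{U}(E)}^{\GL_2^+(E)}\psi_a,\mathbf{1}\bigr)$ is not justified; and even if it were, you yourself flag that the Mackey bookkeeping over $\mathrm{U}(E)\backslash\GL_2^+(E)/\SL_2(F)$ is unresolved — you only discuss the identity coset, while vanishing must be established for every orbit.

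The paper takes a different and considerably shorter route that sidesteps Mackey theory entirely. After twisting $\pi_0$ (hence the ambient $\pi$ on $\GL_2(E)$) by a character of $F^\times$ to make it $\GL_2(F)$-distinguished, it invokes the explicit Kurinczuk--Matringe realization (\cite[\S 8.2]{KuMa}) of the unique $\GL_2(F)$-invariant form on the $\psi_0$-Whittaker model, $f(W)=\int_{F^\times}W\bigl(\mathrm{diag}(a,1)\bigr)\,d^\times a$, and then observes that any Whittaker function lying in a non-$\psi_0$-generic component of $\pi\vert_{\GL_2^+(E)}$ vanishes identically on $\GL_2^+(E)\supset F^\times$. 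Hence $f$ kills all non-$\psi_0$-generic components; by uniqueness of $f$ and multiplicity-freeness of $\pi\vert_{\GL_2^+(E)}$, the distinguished $\pi_0$ must be the $\psi_0$-generic one. Your alternative Kirillov-model suggestion is conceptually closer to this support-of-a-functional argument, but it too is left as a sketch and would need to be carried out on the $\psi_a$-model of $\pi_0$ (or on the Whittaker/Kirillov model of the ambient $\pi$, which is what the paper does) rather than on an ill-defined ``Kirillov model of $\pi_0$ with respect to $\psi_0$''.
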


\begin{proof}
    A {similar} method as in \cite[Lem. 3.1]{anandavardhanan2003distinguished} can be applied here. We repeat the proof for the convenience of the reader. We add some preliminaries at first.

    Let $\pi$ be an irreducible $\Fl$-representation of $\mathrm{GL}_2(E)$ such that $\pi|_{\GL_2^+(E)}\supset\pi_0$. Up to twisting {by} an ${\Fl}$-character of $F^{\times}$ on $\pi_0$, we can assume that $\pi_0$ is distinguished with respect to $\mathrm{GL}_2(F)$. So is $\pi$.
        {Recall that} $\psi_0$ is a fixed additive $\Fl$-character of $E$ which is trivial on $F$, and denote by $\mathcal{W}(\pi,\psi_0)$  the $\psi_0$-Whittaker model of $\pi$. Let $W\in\mathcal{W}(\pi,\psi_0)$. By \cite[\S 8.2]{KuMa} we know that the unique $\mathrm{GL}_2(F)$-invariant linear form can be realized as an integration form
    \[
        f(W)=\int_{F^{\times}}W(\begin{pmatrix} a&0\\0&1\end{pmatrix})d^{\times}a,
    \]
    where $d^\times a$ is a Haar measure on $F^\times$,
    which is non-zero (in \cite{KuMa} this integration form is denoted by $\mathcal{P}_{\pi}$) and  $\mathrm{GL}_2(F)$-invariant. There is a unique irreducible component of the restricted representation $\pi\vert_{\mathrm{GL}_2^+(E)}$ that is $\psi_0$-generic. Let $\pi_1$ be a non $\psi_0$-generic irreducible component of $\mathcal{W}(\pi,\psi_0)\vert_{\mathrm{GL}_2^+(E)}$, and $W\in\mathcal{W}(\pi,\psi_0)$ a function belonging to $\pi_1$. Given $g\in\mathrm{GL}_2^+(E)$, $W(g)$ must be zero. Otherwise it {will induce} a non-trivial morphism from $\pi_1$ to the space of $\psi_0$-Whittaker functions on $\mathrm{GL}_2^+(E)$, which contradicts the assumption that $\pi_1$ is not $\psi_0$-generic. Therefore, the $\mathrm{GL}_2(F)$-invariant linear form can be non-zero only on the $\psi_0$-generic part of $\pi\vert_{\mathrm{GL}^+_2(E)}$, and we conclude that $\pi_0$ is $\psi_0$-generic.
\end{proof}

\begin{lem}
    \label{lem3.2AP}
    Let $\pi$ be an irreducible $\Fl$-representation of $\GL_2^{+}(E)$. The restricted representation  $\pi\vert_{\SL_2(E)}$ is semisimple with finite length {and all } irreducible components are conjugate to each other under the action of $\GL_2(F)$. Hence the dimension $\dim\mathrm{Hom}_{\SL_2(F)}(\tau,\mathbf{1})$ is independent of the choice of irreducible component $\tau$ of $\pi\vert_{\SL_2(E)}$.
\end{lem}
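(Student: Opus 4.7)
The plan is to reduce the semisimplicity assertion to the known case of $\GL_2(E)\downarrow\SL_2(E)$, then exploit the decomposition $\GL_2^+(E)=Z_{E^{\times}}\SL_2(E)\GL_2(F)$ via Clifford theory to upgrade $\GL_2^+(E)$-conjugacy of $\SL_2(E)$-components to $\GL_2(F)$-conjugacy, and finally conclude by the fact that $\GL_2(F)$ normalises $\SL_2(F)$.

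First I would embed $\pi$ into a $\GL_2(E)$-representation. The index $[\GL_2(E):\GL_2^+(E)]=[E^{\times}:F^{\times}E^{\times 2}]$ is finite (a power of $2$ by the proof of Lemma~\ref{lema001}), so $\Ind_{\GL_2^+(E)}^{\GL_2(E)}\pi$ has finite length and admits an irreducible quotient $\Pi$. Frobenius reciprocity then yields an embedding $\pi\hookrightarrow \Pi|_{\GL_2^+(E)}$. Since $\Pi$ is cuspidal (being in the supercuspidal context of this subsection), \cite[Prop.~2.35]{C1} tells us that $\Pi|_{\SL_2(E)}$ is semisimple of finite length; hence the subrepresentation $\pi|_{\SL_2(E)}$ is also semisimple of finite length.

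Next I would apply Clifford theory to the normal inclusion $\SL_2(E)\triangleleft \GL_2^+(E)$ with abelian quotient: since $\pi|_{\SL_2(E)}$ is semisimple of finite length, its irreducible components form a single orbit under conjugation by $\GL_2^+(E)$. To upgrade this to a $\GL_2(F)$-orbit, I decompose $g=zs\gamma$ with $z\in Z_{E^{\times}}$, $s\in\SL_2(E)$, $\gamma\in \GL_2(F)$ using $\GL_2^+(E)=Z_{E^{\times}}\SL_2(E)\GL_2(F)$. For an irreducible component $\tau\subset \pi|_{\SL_2(E)}$ viewed as a subspace of the space of $\pi$, the operator $\pi(z)$ is a scalar by Schur's lemma (because $z$ is central in $\GL_2^+(E)$) and $\pi(s)$ stabilises $\tau$ setwise, so the subspace $\pi(g)\tau$ equals $\pi(\gamma)\tau$. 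Thus any two irreducible components are $\GL_2(F)$-conjugate.

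Finally, since $\GL_2(F)$ normalises $\SL_2(F)$, for $\gamma\in\GL_2(F)$ the map $v\mapsto \pi(\gamma)v$ intertwines the $\SL_2(F)$-action on $\tau$ (twisted via the automorphism $s\mapsto \gamma^{-1}s\gamma$ of $\SL_2(F)$) with the $\SL_2(F)$-action on $\pi(\gamma)\tau$, yielding
\[
\Hom_{\SL_2(F)}(\pi(\gamma)\tau,\mathbf{1})\;\simeq\;\Hom_{\SL_2(F)}(\tau,\mathbf{1}),
\]
so the dimension is independent of the choice of irreducible component. The only delicate point is the identification $\pi(g)\tau=\pi(\gamma)\tau$ in the second step, which rests on the scalar action of $Z_{E^{\times}}$ and the $\SL_2(E)$-stability of each $\SL_2(E)$-submodule of $\pi$.
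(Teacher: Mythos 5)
Your overall strategy is the expected one and, as far as one can tell, matches the argument the authors have in mind: they simply cite \cite[Lem.~3.2]{anandavardhanan2003distinguished} and say it transfers, and that proof runs on precisely the ingredients you use — semisimplicity of $\pi|_{\SL_2(E)}$, Clifford theory for the normal subgroup $\SL_2(E)\triangleleft\GL_2^+(E)$, the decomposition $\GL_2^+(E)=Z_{E^\times}\SL_2(E)\GL_2(F)$, and the fact that $\GL_2(F)$ normalises $\SL_2(F)$. So the shape of your proof is right.

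There is, however, a genuine gap in the semisimplicity step. You obtain semisimplicity by embedding $\pi$ into $\Pi|_{\GL_2^+(E)}$ and then quoting \cite[Prop.~2.35]{C1}, which applies only when $\Pi$ is \emph{cuspidal}. Your justification ``$\Pi$ is cuspidal (being in the supercuspidal context of this subsection)'' is not a hypothesis of the lemma: the lemma is stated for an arbitrary irreducible $\Fl$-representation of $\GL_2^+(E)$, and the $\Pi$ you construct as a quotient of $\Ind_{\GL_2^+(E)}^{\GL_2(E)}\pi$ has no reason to be cuspidal in general. To prove the statement as written, a cleaner route is to observe that $Z_{E^\times}\SL_2(E)$ is normal of finite index in $\GL_2^+(E)$, that this index divides $[E^\times:E^{\times 2}]$ which is a power of $2$ and hence coprime to $\ell$, so $\pi|_{Z_{E^\times}\SL_2(E)}$ is semisimple of finite length, and since $Z_{E^\times}$ acts by scalars (Schur), the same holds for $\pi|_{\SL_2(E)}$. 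This argument does not require any cuspidality and recovers the statement in full generality.

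One small slip in the conjugacy step: from $g=zs\gamma$ you get $\pi(g)\tau=\pi(z)\pi(s)\pi(\gamma)\tau$, so you need $\pi(s)$ to stabilise $\pi(\gamma)\tau$, not $\tau$. This is harmless — $\pi(\gamma)\tau$ is again an $\SL_2(E)$-submodule since $\GL_2(F)$ normalises $\SL_2(E)$ — but it is cleaner to rewrite $s\gamma=\gamma\cdot(\gamma^{-1}s\gamma)$ with $\gamma^{-1}s\gamma\in\SL_2(E)$, which makes the conclusion $\pi(g)\tau=\pi(\gamma)\tau$ immediate.
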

\begin{proof}
    The proof of	\cite[Lem. 3.2]{anandavardhanan2003distinguished} works for $\Fl$-representations as well.
\end{proof}

\begin{prop}
    \label{prop4.2AP}
    Let $\pi$ be an irreducible supercuspidal $\Fl$-representation of $\GL_2(E)$, which is distinguished with respect to $\SL_2(F)$. Then we have
    \[
        \vert X(\pi)\vert=\vert Y_+(\pi) \vert.
    \]
    Assume further that $\pi$ is distinguished with respect to $\GL_2(F)$. Then composition with the norm map $N_{E\slash F}$ induces a bijection from $X(\pi)$ to $Y_+(\pi)$.
\end{prop}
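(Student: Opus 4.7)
The plan is to prove the bijectivity of the norm map under the stronger hypothesis of $\GL_2(F)$-distinction first, and then deduce the cardinality equality in the more general case by a twisting argument. The key inputs will be the $\sigma$-selfduality of $\GL_2(F)$-distinguished supercuspidals (Theorem \ref{thmSecherreDist}(4)), the dichotomy theorem \ref{thmDichotomy} together with its crucial disjunction statement, and the $\GL_2$-specific observation that $Y(\pi)$ consists of characters of order dividing $2$ (since $\pi \otimes \mu \cong \pi$ forces $\mu^2 = \mathbf{1}$ by comparing central characters in the two-dimensional setting).

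For the reduction, I first note that for any character $\xi$ of $E^\times$, one has $X(\pi \otimes \xi\circ\det) = \xi|_{F^\times}^{-1} \cdot X(\pi)$ and $Y_+(\pi\otimes\xi\circ\det) = Y_+(\pi)$; hence both cardinalities are twist-invariant. Since $\pi$ is $\SL_2(F)$-distinguished, Proposition \ref{prop4.1AP} gives $X(\pi) \neq \emptyset$; picking $\chi_0 \in X(\pi)$ and any extension $\tilde\chi_0$ to $E^\times$, the twist $\pi \otimes \tilde\chi_0^{-1}\circ\det$ is $\GL_2(F)$-distinguished, so it suffices to work under this stronger hypothesis. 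Then Theorem \ref{thmSecherreDist}(1) yields $\omega_\pi|_{F^\times}=\mathbf{1}$, and comparing central characters forces every $\chi \in X(\pi)$ to satisfy $\chi^2 = \mathbf{1}$. The norm map $\Phi: \chi \mapsto \chi \circ \Nm_{E/F}$ then sends $X(\pi)$ into $Y_+(\pi)$: for $\chi \in X(\pi)$ with any extension $\tilde\chi$ to $E^\times$, both $\pi$ and $\pi \otimes \tilde\chi^{-1}\circ\det$ are $\sigma$-selfdual, and combining the two identifications via $\pi^\vee \cong \pi^\sigma$ yields $\pi \otimes (\tilde\chi\tilde\chi^\sigma)\circ\det \cong \pi$; since $\tilde\chi\tilde\chi^\sigma = \chi\circ\Nm_{E/F}$ and $\chi^2 = \mathbf{1}$, this places $\Phi(\chi)$ in $Y_+(\pi)$.

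The bijectivity of $\Phi$ rests on the chain of equivalences, valid for any character $\chi$ of $F^\times$: the condition $\chi\circ\Nm_{E/F} \in Y(\pi)$ holds iff $\pi \otimes \tilde\chi^{-1}\circ\det$ is $\sigma$-selfdual, iff (by Theorem \ref{thmDichotomy}) $\chi \in X(\pi)$ or $\chi\, \omegaEF \in X(\pi)$, these two alternatives being mutually exclusive by the disjunction part. Injectivity of $\Phi$ on $X(\pi)$ is then immediate. For surjectivity, given $\mu \in Y_+(\pi)$, the $2$-torsion of $Y(\pi)$ gives $\mu^2 = \mathbf{1}$, and the identity $\mu\mu^\sigma = \mu \circ \Nm_{E/F}$ is trivial (since $\Nm_{E/F}(E^\times) \subset F^\times$ and $\mu|_{F^\times}=\mathbf{1}$), so $\mu^\sigma = \mu^{-1} = \mu$; hence $\mu$ is Galois-invariant and factors as $\mu = \chi \circ \Nm_{E/F}$ for exactly two characters $\chi, \chi\,\omegaEF$ of $F^\times$, and the equivalence above picks out the unique one lying in $X(\pi)$. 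This establishes both the bijection $X(\pi) \to Y_+(\pi)$ and the equality $|X(\pi)| = |Y_+(\pi)|$.

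The main obstacle is the surjectivity step, which requires two inputs in tandem: the $\GL_2$-specific $2$-torsion of $Y(\pi)$ (forcing $\mu$ to be Galois-invariant so it lies in the image of the norm pullback) and the disjunction in the dichotomy theorem (singling out a unique preimage in $X(\pi)$). Without either, one would only obtain a one-sided cardinality inequality.
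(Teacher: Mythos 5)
Your proof is correct and follows essentially the same route as the paper's: reduce to the $\GL_2(F)$-distinguished case by a twisting argument, define the map $\chi\mapsto\chi\circ\Nm_{E/F}$ using $\sigma$-selfduality (Theorem \ref{thmSecherreDist}), and invert it by combining Hilbert 90 (to write $\mu\in Y_+(\pi)$ as a pullback along the norm) with the dichotomy/disjunction of Theorem \ref{thmDichotomy}. The only cosmetic difference is that you argue injectivity and surjectivity separately and spell out the $2$-torsion of $Y(\pi)$ explicitly, whereas the paper constructs the inverse map directly, but the underlying computations coincide.
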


\begin{proof}
    The strategy of \cite[Prop. 4.2]{anandavardhanan2003distinguished} can be applied, and we write the proof here for the convenience of the reader. By Proposition \ref{prop4.1AP}, after twisting $\pi$ by an $\Fl$-character of $E^{\times}$, we assume that $\pi$ is distinguished with respect to $\mathrm{GL}_2(F)$. As in the proof of \cite[Prop. 4.2]{anandavardhanan2003distinguished}, we give a bijection from $X(\pi)$ to $Y_+(\pi)$ {given by the composition} with the norm map.

    Recall that $\Nm_{E\slash F}$ is the norm map from $E^{\times}$ to $F^{\times}$ and $\chi\in X(\pi)$. Define a map
    \[
        \mathcal{N}: X(\pi)\rightarrow Y_+(\pi)
    \]
    via $\chi\mapsto\chi\circ \Nm_{E\slash F}$. Indeed, let $\chi\in X(\pi)$ and $\tilde{\chi}$ be a character of $E^{\times}$ such that $\tilde{\chi}|_{F^\times}=\chi$. By Theorem \ref{thmSecherreDist}, $\pi$ and $\pi \otimes \tilde{\chi}^{-1}$ are $\sigma$-selfdual. Therefore, $\pi \simeq \pi \otimes \chi\circ \Nm_{E\slash F}$ and $\chi\circ \Nm_{E\slash F}\in Y_+(\pi)$.

    Conversely, let $\mu\in Y_+(\pi)$. Since $\mu^{2}=\mathbf{1}$ and $\mu\vert_{F^{\times}}=\mathbf{1}$, by Hilbert's Theorem $90$, it is trivial on the kernel of $\Nm_{E\slash F}$. Hence there exists an $\Fl$-character $\eta$ of $F^{\times}$, such that {$\mu\cong\eta\circ \Nm_{E\slash F}$}. Let $\tilde{\eta}$ be an extension of $\eta$ to $E^{\times}$. Then $\mu\cong\tilde{\eta}\cdot\tilde{\eta}^{\sigma}$. Since $\pi$ is distinguished with respect to $\mathrm{GL}_2(F)$, we have
    \[
        (\pi\otimes\tilde{\eta})^{\vee}\cong(\pi\otimes\tilde{\eta})^{\sigma}.
    \]
    On the other hand, let $Z_{F^{\times}}$ be the center of $\GL_2(F)$. Then $\omega_{\pi\otimes\tilde{\eta}}\vert_{Z_{F^{\times}}}$ is trivial where $\omega_{\pi\otimes\tilde{\eta}}$ is the central character of $\pi\otimes\tilde{\eta}$. By Theorem \ref{thmDichotomy}, $\pi\otimes\tilde{\eta}$ is either distinguished or $\omega_{E\slash F}$-distinguished by $\GL_2(F)$ but not both. {We map $\mu$ to $\eta$ or $\eta\otimes\omega_{E\slash F}$ accordingly.} This gives a map from $Y_{+}(\pi)$ to $X(\pi)$ {which} is the inverse of $\mathcal{N}$. Hence we complete the proof.
\end{proof}

\begin{cor}
    \label{cor4.3AP}
    Let $\pi$ be an irreducible supercuspidal $\Fl$-representation of $\mathrm{GL}_2(E)$, distinguished by $\mathrm{SL}_2(F)$. The number of $\mathrm{SL}_2(F)$-invariant linear functionals on $\pi$ is equal to the length of $\pi\vert_{\mathrm{GL}_2^+(E)}$ {and both are} $\lg_{+}(\pi)$.
\end{cor}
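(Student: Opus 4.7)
The plan is to chain together the three identities established earlier in the paper: Proposition \ref{prop4.1AP}, Proposition \ref{prop4.2AP}, and Lemma \ref{corlgY+}. Each of these directly computes one of the two quantities we want to equate in terms of the auxiliary sets $X(\pi)$ and $Y_+(\pi)$, so there is essentially nothing to do beyond assembling them.

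First, by Proposition \ref{prop4.1AP} applied to the irreducible representation $\pi$ of $\GL_2(E)$, we have
\[
    \dim \Hom_{\SL_2(F)}(\pi, \mathbf{1}) = |X(\pi)|.
\]
Next, since $\pi$ is supercuspidal and distinguished by $\SL_2(F)$ by hypothesis, Proposition \ref{prop4.2AP} yields $|X(\pi)| = |Y_+(\pi)|$. Finally, Lemma \ref{corlgY+} gives $\lg_+(\pi) = |Y_+(\pi)|$. Concatenating these three equalities produces
\[
    \dim \Hom_{\SL_2(F)}(\pi, \mathbf{1}) = |X(\pi)| = |Y_+(\pi)| = \lg_+(\pi),
\]
which is exactly the statement of the corollary.

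There is no real obstacle here; the content of the corollary is entirely packaged in the earlier statements. The only small point worth verifying is that the hypothesis of Proposition \ref{prop4.2AP} is met, namely that $\pi$ is an irreducible supercuspidal $\Fl$-representation of $\GL_2(E)$ distinguished by $\SL_2(F)$, which is given. (Note that Proposition \ref{prop4.2AP} does not require $\pi$ itself to be $\GL_2(F)$-distinguished at the outset: its statement allows one first to twist $\pi$ by a character of $E^\times$, and this twisting affects neither $|X(\pi)|$ nor $|Y_+(\pi)|$ nor $\lg_+(\pi)$.) Thus the proof of Corollary \ref{cor4.3AP} will occupy only a couple of lines.
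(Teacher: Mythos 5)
Your proof is correct and matches the paper exactly: the paper's proof of this corollary is the single line ``It follows from Lemma~\ref{corlgY+}, Proposition~\ref{prop4.1AP} and Proposition~\ref{prop4.2AP},'' and your chain $\dim\Hom_{\SL_2(F)}(\pi,\mathbf{1})=|X(\pi)|=|Y_+(\pi)|=\lg_+(\pi)$ is precisely the intended assembly of those three facts. Your remark that Proposition~\ref{prop4.2AP} only needs the $\SL_2(F)$-distinguished hypothesis (with twisting handled inside that proposition) is a correct and worthwhile sanity check.
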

\begin{proof}
    It follows from Lemma \ref{corlgY+}, Proposition \ref{prop4.1AP} and Proposition \ref{prop4.2AP}.
\end{proof}

Recall that $\lg(\pi)$ is the length of $\pi\vert_{\SL_2(E)}$ and $\lg_+(\pi)$ is the length of $\pi\vert_{\GL_2^+(E)}$.
\begin{prop}
    Let $\pi$ be an irreducible supercuspidal $\Fl$-representation of $\mathrm{GL}_2(E)$ such that $\lg(\pi)$ is different from $1$.  Suppose that $\pi$ is distinguished by $\SL_2(F)$.  Then $\lg_{+}(\pi)$ is different from $1$, and the only irreducible component of $\pi\vert_{\GL_2^+(E)}$ which is distinguished by $\SL_2(F)$ is the one that is $\psi_0$-generic (see {the beginning of Section \ref{section supcusp SL2}} for the definition of $\psi_0$).
\end{prop}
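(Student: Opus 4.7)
The proof splits into two claims. For the uniqueness assertion, the plan is to combine Lemma \ref{lem 001} with the uniqueness of the $\psi_0$-generic component of $\pi|_{\GL_2^+(E)}$. Since $\SL_2(F) \subset \GL_2^+(E)$, decomposing a non-zero $\SL_2(F)$-invariant form across the semisimple restriction $\pi|_{\GL_2^+(E)} = \bigoplus_i \pi_i$ shows that at least one summand is $\SL_2(F)$-distinguished; by Lemma \ref{lem 001}, any such summand is $\psi_0$-generic. On the other hand, $\pi|_{\GL_2^+(E)}$ has exactly one $\psi_0$-generic irreducible component, because $\pi$ itself admits a unique $\psi_0$-Whittaker functional and the unipotent subgroup $\mathrm{U}$ lies in $\GL_2^+(E)$. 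Hence the $\psi_0$-generic component is the unique $\SL_2(F)$-distinguished one.

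For the first assertion $\lg_+(\pi) \neq 1$, I would argue by contradiction. The standard twisting argument from the proof of Proposition \ref{prop4.1AP} (extend a character $\eta \in X(\pi)$ to a character $\tilde{\eta}$ of $E^\times$ and replace $\pi$ by $\pi \otimes \tilde{\eta}^{-1}$) allows us to assume that $\pi$ is $\GL_2(F)$-distinguished, since this operation preserves $\lg(\pi)$, $\lg_+(\pi)$, $Y(\pi)$ and $Y_+(\pi)$. Assume toward contradiction that $Y_+(\pi) = \{\mathbf{1}\}$. The key input to establish is that for every $\chi \in Y(\pi)$, the restriction $\chi|_{F^\times}$ lies in $X(\pi)$: pick a $\GL_2(E)$-isomorphism $\phi : \pi \otimes \chi \xrightarrow{\sim} \pi$ and push a non-zero $\GL_2(F)$-invariant form on $\pi$ through $\phi$; since $\pi$ is supercuspidal with $\pi \otimes \chi \cong \pi$, the character $\chi$ is necessarily quadratic (from the central character), and the resulting form lies in $\Hom_{\GL_2(F)}(\pi,\chi|_{F^\times})$.

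Combining this with the Dichotomy Theorem \ref{thmDichotomy}, which forces $\omega_{E/F} \notin X(\pi)$ given that $\pi$ is $\GL_2(F)$-distinguished, then yields the desired contradiction. Indeed, under the standing assumption $Y_+(\pi) = \{\mathbf{1}\}$ the restriction map $Y(\pi) \to \widehat{F^\times}$ is injective, so any non-trivial $\chi \in Y(\pi)$ (which exists since $\lg(\pi) = |Y(\pi)| \neq 1$) yields a non-trivial element $\chi|_{F^\times}$ of $X(\pi)$ that is moreover distinct from $\omega_{E/F}$. Hence $|X(\pi)| \geq 2$, contradicting the equality $|X(\pi)| = |Y_+(\pi)| = 1$ given by Proposition \ref{prop4.2AP}. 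The only step that requires conceptual input is pairing the inclusion $\chi|_{F^\times} \in X(\pi)$ with the Dichotomy Theorem; once that is isolated, everything else is a counting argument inside the finite abelian 2-group $Y(\pi)$.
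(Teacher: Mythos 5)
Your proof is correct, and the two halves have different relationships to the paper's argument. The uniqueness assertion follows essentially the same path as the paper: a distinguished summand of $\pi|_{\GL_2^+(E)}$ exists because $\SL_2(F)\subset\GL_2^+(E)$ and the restriction is semisimple, Lemma \ref{lem 001} forces any such summand to be $\psi_0$-generic, and uniqueness of the Whittaker functional pins it down. (You should observe, as Lemma \ref{lem 001} requires, that each summand is infinite-dimensional; this follows since the summands are $\GL_2(E)$-conjugate and $\pi$ is.)

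For $\lg_+(\pi)\neq 1$, your route is genuinely different. The paper proves this by the clean numerical identity of Corollary \ref{cor4.3AP}, namely $\dim\Hom_{\SL_2(F)}(\pi,\mathbf{1})=\lg_+(\pi)$, together with Lemma \ref{lem3.2AP}: if $\lg_+(\pi)=1$ then all $\lg(\pi)>1$ components of $\pi|_{\SL_2(E)}$ are $\GL_2(F)$-conjugate, hence all distinguished, so $\dim\Hom_{\SL_2(F)}(\pi,\mathbf{1})\geq\lg(\pi)>1$, contradicting the identity. You instead reduce to $\pi$ being $\GL_2(F)$-distinguished, prove directly that every $\chi\in Y(\pi)$ restricts to an element of $X(\pi)$ (via pushing an invariant form through an intertwiner $\pi\otimes\chi\xrightarrow{\sim}\pi$ and using $\chi^2=\mathbf{1}$), note that the restriction map $Y(\pi)\to\widehat{F^\times}$ is injective when $Y_+(\pi)=\{\mathbf{1}\}$, and hence get $|Y(\pi)|\leq|X(\pi)|=|Y_+(\pi)|=1$ by Proposition \ref{prop4.2AP}, forcing $\lg(\pi)=1$. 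This avoids Corollary \ref{cor4.3AP} and Lemma \ref{lem3.2AP} and replaces them with an explicit character computation. One remark: your invocation of the Dichotomy Theorem to argue $\chi|_{F^\times}\neq\omegaEF$ is superfluous --- once you know the restriction map $Y(\pi)\hookrightarrow X(\pi)$ is injective with $|X(\pi)|=1$ and $|Y(\pi)|=\lg(\pi)>1$, the contradiction is immediate, regardless of whether the image hits $\omegaEF$. The argument would be cleaner and rest on fewer heavy results if that step were removed.
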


\begin{proof}
    The proof of Proposition 4.4 of \cite{anandavardhanan2003distinguished} can be applied, and we write the proof here for completeness. Suppose that $\lg_{+}(\pi)=1$ and denote $\pi\vert_{\mathrm{GL}_2^+(E)}$ by $\pi^+$. {Under this} assumption, we have that $\pi^+\vert_{\mathrm{SL}_2(E)}$ is not irreducible. By Lemma \ref{lem3.2AP} we have that the number of $\mathrm{SL}_2(F)$-invariant linear forms is strictly bigger than one. However by Corollary \ref{cor4.3AP}, there is only one $\mathrm{SL}_2(F)$-invariant linear form, which is a contradiction. Hence $\lg_{+}(\pi)\neq 1$, and the result follows from {Lemma \ref{lem 001} and } Lemma \ref{lem3.2AP}.
\end{proof}

\begin{thm}
    \label{thmDistGeneric}
    Let $\pi$ be an irreducible supercuspidal $\Fl$-representation of $\GL_2(E)$ distinguished by $\SL_2(F)$, and $\pi^+$ the unique irreducible component of $\pi\vert_{\GL_2^{+}(E)}$ that is $\psi_0$-generic. Then $\pi^+$ is distinguished by $\SL_2(F)$. Furthermore, let $\tau$ be an irreducible component of $\pi\vert_{\SL_2(E)}$, distinguished by $\SL_2(F)$. Then $\tau$ is an irreducible component of $\pi^+\vert_{\SL_2(E)}$.
\end{thm}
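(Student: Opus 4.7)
The plan is to reduce the theorem to the preceding proposition, splitting the argument into two cases according to whether $\lg(\pi)=1$. In the trivial case $\lg(\pi)=1$, the restriction $\pi|_{\SL_2(E)}$ is already irreducible, hence $\pi|_{\GL_2^{+}(E)}$ is irreducible and coincides with $\pi^+$; the distinction of $\pi$ by $\SL_2(F)$ passes immediately to $\pi^+ = \pi|_{\GL_2^{+}(E)}$, and the unique irreducible component $\tau = \pi|_{\SL_2(E)}$ is tautologically a component of $\pi^+|_{\SL_2(E)}$.

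For the non-trivial case $\lg(\pi)\neq 1$, the first assertion is essentially a restatement of the preceding proposition together with Lemma \ref{lem 001}: the unique $\SL_2(F)$-distinguished irreducible component of $\pi|_{\GL_2^{+}(E)}$ is the $\psi_0$-generic one, which by definition is $\pi^+$. For the second assertion, I would argue as follows. Given an $\SL_2(F)$-distinguished irreducible component $\tau$ of $\pi|_{\SL_2(E)}$, the semisimplicity and multiplicity-freeness of $\pi|_{\SL_2(E)}$ (guaranteed by \cite[Prop. 2.35 and Prop. 2.37]{C1}) imply that $\tau$ sits inside a unique irreducible component $\pi^+_0$ of $\pi|_{\GL_2^{+}(E)}$, and that $\pi^+_0|_{\SL_2(E)}$ is itself semisimple. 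Hence there is a projection of $\SL_2(E)$-modules $\pi^+_0 \twoheadrightarrow \tau$, which is a fortiori a projection of $\SL_2(F)$-modules. Composing any nonzero $\SL_2(F)$-invariant form on $\tau$ with this projection produces a nonzero $\SL_2(F)$-invariant form on $\pi^+_0$, so $\pi^+_0$ is $\SL_2(F)$-distinguished. The preceding proposition then forces $\pi^+_0 = \pi^+$, so $\tau$ is indeed a component of $\pi^+|_{\SL_2(E)}$.

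The only delicate step is ensuring the existence of the $\SL_2(E)$-equivariant projection $\pi^+_0 \twoheadrightarrow \tau$, which reduces to the semisimplicity of $\pi^+_0|_{\SL_2(E)}$; this follows because $\pi^+_0|_{\SL_2(E)}$ is a direct summand of $\pi|_{\SL_2(E)}$, known to be semisimple in our setting. Once that is settled, the rest of the argument is formal and the theorem drops out of the preceding proposition.
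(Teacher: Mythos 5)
Your argument is correct and essentially reproduces the paper's intended proof, which rests on Lemma~\ref{lem 001} (a distinguished irreducible component of $\pi|_{\GL_2^+(E)}$ must be $\psi_0$-generic) together with the semisimplicity of the restrictions guaranteed by Lemma~\ref{lem3.2AP}. Routing the first assertion through the unnumbered proposition preceding the theorem is a harmless but unnecessary detour, since that proposition is itself a consequence of the same two lemmas; invoking Lemma~\ref{lem 001} directly renders your case split on $\lg(\pi)$ superfluous.
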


\begin{proof}
    It follows from  Lemma \ref{lem 001} and Lemma \ref{lem3.2AP} directly.
\end{proof}

\begin{thm}
    \label{thmDistSupercuspSL}
    Let $\pi$ be an irreducible supercuspidal $\Fl$-representation of $\GL_2(E)$ and $\tau$ an irreducible component of $\pi\vert_{\SL_2(E)}$. Suppose $\tau$ is distinguished by $\SL_2(F)$. Then,
    \[
        \mathrm{dim}\mathrm{Hom}{_{\SL_2(F)}}(\tau,\mathbf{1})= \left\{
        \begin{array}{ll}

            1, & \mbox{if } \pi\vert_{\SL_2(E)}\cong\tau;           \\
            1, & \mbox{if } \lg_{+}(\pi)=2 \mbox{ and } \lg(\pi)=4; \\
            2, & \mbox{if } \lg_{+}(\pi)=\lg(\pi)=2;                \\
            4, & \mbox{if } \lg_{+}(\pi)=\lg(\pi)=4.
        \end{array}
        \right.
    \]
    The first case and the last case arises only when $p=2$.
\end{thm}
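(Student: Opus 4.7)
The plan is to compute $\dim\Hom_{\SL_2(F)}(\tau,\mathbf{1})$ by a counting argument, comparing the total number of $\SL_2(F)$-invariant linear forms on $\pi$ given by Corollary \ref{cor4.3AP} with the number of distinguished $\SL_2(E)$-components controlled by Theorem \ref{thmDistGeneric}.

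Since $\tau \subset \pi|_{\SL_2(E)}$ is $\SL_2(F)$-distinguished, the representation $\pi$ itself is $\SL_2(F)$-distinguished, so Corollary \ref{cor4.3AP} yields $\dim\Hom_{\SL_2(F)}(\pi,\mathbf{1}) = \lg_+(\pi)$. Decompose $\pi|_{\GL_2^+(E)} = \bigoplus_{i=1}^{\lg_+(\pi)} \pi_i^+$ into irreducibles; these components are $\GL_2(E)$-conjugate, hence their $\SL_2(E)$-restrictions share a common length $k$, and $\lg(\pi) = \lg_+(\pi)\cdot k$. Let $\pi^+ := \pi_1^+$ be the unique $\psi_0$-generic component. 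By Theorem \ref{thmDistGeneric}, the $\SL_2(F)$-distinguished irreducible components of $\pi|_{\SL_2(E)}$ are exactly the $k$ irreducible components of $\pi^+|_{\SL_2(E)}$, and by Lemma \ref{lem3.2AP} these components are pairwise $\GL_2(F)$-conjugate; since conjugation by $\GL_2(F)$ preserves $\SL_2(F)$-invariance, they share a common dimension $d := \dim\Hom_{\SL_2(F)}(\tau,\mathbf{1})$. Summing contributions across $\pi|_{\SL_2(E)}$ gives $\lg_+(\pi) = k \cdot d$, hence
\[
d = \frac{\lg_+(\pi)^2}{\lg(\pi)}.
\]
Integrality of $d$ forces $\lg(\pi) \mid \lg_+(\pi)^2$; combined with $\lg_+(\pi) \mid \lg(\pi)$ and $\lg(\pi) \in \{1,2,4\}$ from Proposition \ref{cprop 1.3}, this restricts the admissible pairs to $(\lg_+(\pi),\lg(\pi)) \in \{(1,1),(2,2),(2,4),(4,4)\}$, giving the four stated values of $d$, with the first case corresponding precisely to $\pi|_{\SL_2(E)}\cong\tau$.

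For the $p=2$ assertion, $\lg(\pi)=1$ means $\varphi_\pi$ is non-dihedral (tetrahedral or octahedral), which forces $p=2$ by the classification recalled in the proof of Proposition \ref{cprop 1.3} citing \cite[Section 42]{BH}. Similarly $\lg(\pi)=4$ means $|Y(\pi)|=4$, so $\pi$ admits three distinct nontrivial quadratic self-twists, and the same classification of supercuspidal types shows this can only occur when $p=2$. The main obstacle will be the rigorous justification of the $p=2$ claim in case $(4,4)$, since it requires a careful enumeration of the quadratic characters of $E^\times$ that fix $\pi$ via twisting, but this is a standard input from the theory of tame supercuspidal representations of $\GL_2$.
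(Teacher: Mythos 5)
Your computation of the multiplicity is essentially identical to the paper's: both start from $\dim\Hom_{\SL_2(F)}(\pi,\mathbf{1})=\lg_+(\pi)$ (Corollary \ref{cor4.3AP}), use the fact that the distinguished components of $\pi|_{\SL_2(E)}$ are precisely the $k=\lg(\pi)/\lg_+(\pi)$ irreducible components of the $\psi_0$-generic $\pi^+$ (Theorem \ref{thmDistGeneric}), and use their mutual $\GL_2(F)$-conjugacy (Lemma \ref{lem3.2AP}) to equidistribute the multiplicities, giving $d=\lg_+(\pi)^2/\lg(\pi)$. Your additional step of listing the admissible pairs $(\lg_+(\pi),\lg(\pi))$ via integrality and divisibility is sound and makes the ``direct computation'' explicit.

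The gap is in the $p=2$ assertion for the case $\lg_+(\pi)=\lg(\pi)=4$. You argue that $\lg(\pi)=4$, i.e.\ $|Y(\pi)|=4$, forces $p=2$. This is \emph{false}: for odd $p$, the group $E^\times/E^{\times 2}$ already has order $4$, and there exist dihedral supercuspidals (with $\theta/\theta^s$ of order two in the notation of Proposition \ref{cprop 1.3}) whose self-twist group $Y(\pi)$ is all of it, so $|Y(\pi)|=4$ genuinely occurs when $p$ is odd. What actually forces $p=2$ is the constraint $\lg_+(\pi)=4$: by Lemma \ref{corlgY+}, $\lg_+(\pi)=|Y_+(\pi)|$, and $Y_+(\pi)$ consists of quadratic characters of $E^\times$ trivial on $F^\times$, hence is a subset of the characters of $E^\times/F^\times E^{\times 2}$. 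When $p$ is odd this quotient has order $2$, so $\lg_+(\pi)\le 2$ and the pair $(4,4)$ is impossible. This is exactly the paper's argument. So the ``main obstacle'' you flagged is not a matter of enumerating quadratic self-twists of $\pi$ (a statement about $Y(\pi)$, which does not suffice); it is the elementary group-theoretic fact about $E^\times/F^\times E^{\times 2}$ applied to $Y_+(\pi)$. Your argument for the first case ($\lg(\pi)=1$ forcing $p=2$ via non-dihedrality and \cite[Section 42]{BH}) agrees with the paper.
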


\begin{proof}
    Let $\pi^+\subset \pi|_{\GL_2^+(E)}$ be irreducible and $\psi_0$-generic.
    Let $\lg(\pi)'$ be the length of $\pi^+\vert_{\SL_2(E)}$. {Since the components of $\pi\vert_{\SL_2(E)}$ are $\GL_2(E)$-conjugate, we deduce} that $\lg(\pi)'=\lg(\pi)\slash \lg_+(\pi)$. By Lemma \ref{lem3.2AP}, Corollary \ref{cor4.3AP} and Theorem \ref{thmDistGeneric}, we summarize that $\mathrm{dim}\mathrm{Hom}{_{\SL_2(F)}}(\pi,\mathbf{1})=\lg_{+}(\pi)$, and
    \[
        \mathrm{dim}\mathrm{Hom}{_{\SL_2(F)}}(\tau,\mathbf{1})=\lg_{+}(\pi)\slash \lg(\pi)'=\lg_{+}(\pi)^2\slash \lg(\pi).
    \]
    We obtain the result by a direct computation.

    When $p$ is odd, $E^{\times}\slash F^{\times}E^{\times 2}$ is of order 2. Since $\lg_{+}(\pi)=\vert Y_+(\pi)\vert$ and the latter is a subset of $\Fl$-characters of $E^{\times}\slash F^{\times}E^{\times 2}$, the case $\lg_+(\pi)=4$ can exist only when $p$ is equal to $2$.

        {If $\pi\vert_{\SL_2(E)}$ is irreducible which implies that $\pi$ is primitive, then $p=2$.}
\end{proof}

\subsection{The principal series representations}

In this subsection, we will use Mackey Theory to prove the following theorem, following \cite{lu2018pacific}.

\begin{thm} \label{prin}
    Let $I(\chi)$ be  a principal series representation of $\SL_2(E)$.
    \begin{enumerate}
        \item
              Let $\tau$ be an irreducible principal series representation of $\SL_2(E)$.
              \begin{enumerate}
                  \item If $\tau=I(\chi)$ with $\chi|_{F^\times}=\mathbf{1}$ and $\chi\neq\mathbf{1}$, then $\dim\Hom_{\SL_2(F)}(\tau,\mathbf{1})=1$.
                  \item If $\tau=I(\chi)$ with $\chi^\sigma=\chi$, then $\dim\Hom_{\SL_2(F)}(\tau,\mathbf{1})=2$.
              \end{enumerate}
        \item Let $\tau$ be an irreducible subrepresentation of $I(\chi)$ distinguished by $\SL_2(F)$ with {$q_F\not\equiv 1 \pmod{\ell}$}.
              \begin{enumerate}
                  \item If $\chi=\nu^{\pm1}$ and {$\ell\nmid q_E^2-1$}, then $\dim\Hom_{\SL_2(F)}(\tau,\mathbf{1})=1$.
                  \item If $\chi^2=\mathbf{1},\chi=\chi_F\circ \Nm_{E/F}\neq\mathbf{1}$ and $\ell\nmid q_E^2-1$, then
                        \[
                            \dim\Hom_{\SL_2(F)}(\tau,\mathbf{1})
                            =\begin{cases}
                                3, & \mbox{ if }\chi_F^2=\mathbf{1};   \\
                                1, & \mbox{ if }\chi_F^2=\omega_{E/F}.
                            \end{cases}
                        \]

                  \item If {$\ell\mid q_E+1$} and $\chi=\nu$, then $\tau$ is the trivial character and $\dim\Hom_{\SL_2(F)}(\tau,\mathbf{1})=1$.
                        In this case,
                        there are two cuspidal  (not supercuspidal) representations inside the Jordan-Holder series of $I(\nu)$ which are not distinguished by $\SL_2(F)$ if $E/F$ is unramified. If $E/F$ is ramified, then only one of two cuspidal representations is distinguished by $\SL_2(F)$ with multiplicity two.
                  \item If $\ell\mid q_E-1$ and $\ell\mid q_F+1$, then
                        \[
                            \dim\Hom_{\SL_2(F)}(\tau,\mathbf{1})=\begin{cases}
                                2, & \mbox{ if }\tau\mbox{ is the Steinberg representation; }                              \\
                                3, & \mbox{ if }\chi=\chi_F\circ \Nm_{E/F}\neq\mathbf{1} \mbox{ with }\chi_F^2=\mathbf{1}; \\
                                1, & \mbox{ if }\chi=\chi_F\circ \Nm_{E/F}\mbox{ with }\chi_F^2=\omega_{E/F}.
                            \end{cases}
                        \]
              \end{enumerate}
    \end{enumerate}
\end{thm}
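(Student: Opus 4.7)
The approach is Mackey-theoretic, adapting the strategy of \cite{lu2018pacific} to the $\ell$-modular setting. The starting point is the decomposition of $\SL_2(E)$ into $(B(E)\cap\SL_2(E))$--$\SL_2(F)$ double cosets: analogous to the $\GL_2$ situation of Lemma \ref{lemPrincSer}, there are exactly two orbits (one closed, one open). The associated Mackey filtration produces a short exact sequence of $\SL_2(F)$-modules relating $I(\chi)|_{\SL_2(F)}$ to a compactly induced representation (open orbit) and a parabolically induced representation (closed orbit). Applying $\Hom_{\SL_2(F)}(-,\mathbf{1})$ and Frobenius reciprocity converts this into a long exact sequence involving $\Hom$ and $\Ext^1$ of characters of small subgroups of $\SL_2(F)$.

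For part (1), where $I(\chi)$ is irreducible, I would evaluate both contributions directly: the open orbit yields a term involving the restriction of $\chi$ to the norm-one subgroup of $E^\times$, while the closed orbit yields a term involving $\chi|_{F^\times}$. The hypothesis $\chi|_{F^\times}=\mathbf{1}$ in case~(a) activates only the closed-orbit contribution, giving dimension $1$; the hypothesis $\chi^\sigma=\chi$ in case~(b) activates both contributions, giving dimension $2$, once the relevant $\Ext^1$-term is shown to vanish via an $\SL_2(F)$-analogue of Lemma \ref{lemExt}.

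For part (2), for each irreducible subrepresentation $\tau$ of a reducible $I(\chi)$, I would apply $\Hom_{\SL_2(F)}(-,\mathbf{1})$ to the short exact sequence $0\to\tau\to I(\chi)\to\sigma\to 0$ and compare with the part~(1) computation of $\Hom_{\SL_2(F)}(I(\chi),\mathbf{1})$. Subquotient-level multiplicities are controlled by the twist set $X(\tau)$ through Proposition \ref{prop4.1AP}, which for a principal series subquotient can be extracted from the twist set of the corresponding $\GL_2(E)$-representation via Theorem \ref{thmCuspDist}, specifying which characters of $F^\times$ arise as $\GL_2(F)$-distinguishing characters. Cases (a) and (c) with $\chi=\nu^{\pm 1}$ are handled by passing to the Steinberg or special subquotient of the $\GL_2(E)$ principal series and restricting to $\SL_2(E)$; in case~(c) the ramified versus unramified distinction of $E/F$ determines which components of $\Spe_\chi|_{\SL_2(E)}$ remain distinguished, directly via Theorem \ref{thmCuspDist}(2).

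The main obstacle will be the anomalous multiplicity $3$ appearing in cases (2)(b) and (2)(d). A naive count via $|X(\tau)|$ predicts multiplicity $2$ or $4$, so the extra contribution must come from a non-vanishing $\Ext^1_{\SL_2(F)}$-term in the Mackey long exact sequence, completely analogous to the $q_F\equiv 1\pmod\ell$ phenomenon producing dimension $2$ in Lemma \ref{lemPrincSer}. The delicate step is to verify that in precisely these non-banal configurations this $\Ext$ term contributes exactly one extra dimension, and that the extra invariant functional lands on the correct subquotient $\tau$ rather than on its complement. Once pinned down by a direct cocycle computation analogous to the proof of Lemma \ref{lemExt}, all the enhanced multiplicities follow.
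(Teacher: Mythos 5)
Your setup contains a critical error in the orbit count for the Mackey decomposition. You assert that there are ``exactly two orbits (one closed, one open), analogous to the $\GL_2$ situation of Lemma~\ref{lemPrincSer}.'' This is false for $\SL_2(F)$ acting on $P^1(E)=B(E)\backslash\SL_2(E)$: the closed orbit is $P^1(F)$, but the complement $P^1(E)\setminus P^1(F)$, which is a \emph{single} $\GL_2(F)$-orbit isomorphic to $E^\times\backslash\GL_2(F)$, breaks into \emph{two} $\SL_2(F)$-orbits indexed by $F^\times/\Nm_{E/F}(E^\times)\cong\{\pm1\}$. This is precisely the new phenomenon relative to the $\GL_2$ analysis, and the paper's Lemma~\ref{lem:prin} records it as the exact sequence
\[0 \to \Hom_{F^\times}(\chi,\mathbf{1}) \to \Hom_{\SL_2(F)}(I(\chi),\mathbf{1}) \to \Hom_{E^1}(\chi,\mathbf{1})\oplus \Hom_{E^1}(\chi^{-1},\mathbf{1}) \to  \Ext^1_{F^\times}(\chi,\mathbf{1}),\]
with \emph{two} summands in the open-orbit term. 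Your two-orbit picture has only one such summand, and this already gives the wrong answer in part~(1)(b): for a generic $\chi=\eta\circ\Nm_{E/F}$ with $\eta^2\neq\mathbf{1}$, the closed-orbit term $\Hom_{F^\times}(\chi,\mathbf{1})$ vanishes (since $\chi|_{F^\times}=\eta^2\neq\mathbf{1}$), and the dimension $2$ arises purely from the \emph{two} open orbits, each contributing $1$; your accounting (``activates both contributions'') would give only $1$. Similarly, the $\Ext$ obstruction in this sequence lives on the torus, namely $\Ext^1_{F^\times}(\chi,\mathbf{1})$, which vanishes if and only if $\chi|_{F^\times}\neq\mathbf{1}$; it is not an $\SL_2(F)$-level analogue of Lemma~\ref{lemExt}.

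The second gap concerns your explanation of the multiplicity $3$ in cases~(2)(b) and~(2)(d). You suggest it arises from a non-vanishing $\Ext^1_{\SL_2(F)}$-term contributing one extra dimension on a specific subquotient. This does not match what happens, and I do not see how to make it work: in the paper's proof the total $\dim\Hom_{\SL_2(F)}(\pi|_{\SL_2(E)},\mathbf{1})$ is computed as $|X(\pi)|$ via Proposition~\ref{prop4.1AP} (e.g.\ $|X(\pi(\mathbf{1},\chi))|=3$ when $\chi_F^2=\mathbf{1}$), and then one determines \emph{how these functionals distribute among the constituents of} $I(\chi)$ using the uniqueness of the $\psi_0$-generic constituent and the fact that the $\GL_2(F)$-invariant functional, realised on the Whittaker model as the integral of $W$ over the diagonal torus of $\GL_2(F)$, is supported on the $\psi_0$-generic component of $\pi|_{\GL_2^+(E)}$ (this is the content of the preceding lemma and requires the hypothesis $q_F\not\equiv1\pmod\ell$ so that the integral does not vanish). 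Thus all three functionals concentrate on a single constituent $\tau$, giving $3$, while the complementary constituent gets $0$; no $\Ext$ correction is involved. Without this genericity localisation step your proposal cannot determine which subquotient the extra functional lands on, which you yourself flag as the ``delicate step'' but leave unaddressed.

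In short: correct the orbit decomposition (three orbits, two of them open, with stabiliser $E^1$), replace the proposed $\Ext^1_{\SL_2(F)}$ argument for the multiplicity-$3$ cases by the $|X(\pi)|$-count from Proposition~\ref{prop4.1AP} combined with the Whittaker-model argument identifying the $\psi_0$-generic constituent as the unique one carrying all the $\SL_2(F)$-invariant functionals, and keep in mind that this last step is exactly where the hypothesis $q_F\not\equiv1\pmod\ell$ is used.
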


Let $B(E)$ be the standard Borel subgroup of $\SL_2(E)$ and $B(E)\backslash\SL_2(E)\cong P^1(E)$. Recall that there are two $F$-rational $\GL_2(F)$-orbits in $P^1(E)$ which are $P^1(F)$ and $P^1(E)-P^1(F)$. Moreover, the open orbit $P^1(E)-P^1(F)$ is isomorphic to $E^\times\backslash\GL_2(F)$. There is an exact sequence
\[1 \to E^1\backslash\SL_2(F) \to E^\times\backslash\GL_2(F) \to \Nm_{E/F}E^\times\backslash F^\times \to 1  \]
where $E^1=\{e\in E^\times:\Nm_{E/F}(e)=1 \}$.
Thus  $P^1(E)$ decomposes into $3$ $F$-rational $\SL_2(F)$-orbits: one closed orbit $P^1(F)$ and two open orbits corresponding to $F^\times/\Nm_{E/F}E^\times=\{\pm1\}$.
\begin{lem}\label{lem:prin}
    Let $I(\chi)$ be a principal series representation of $\SL_2(E)$. Then $\Hom_{\SL_2(F)}(I(\chi),\mathbf{1})\neq0$ if and only if either $\chi|_{F^\times}=\mathbf{1}$ or $\chi|_{E^1}=\mathbf{1}$.
\end{lem}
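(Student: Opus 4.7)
The plan is to adapt the Mackey-theoretic argument of Lemma \ref{lemPrincSer} to the pair $(\SL_2(E),\SL_2(F))$. The essential input is the $\SL_2(F)$-orbit decomposition of $P^1(E)=B(E)\backslash\SL_2(E)$ recalled just before the statement: one closed orbit $P^1(F)$ with stabilizer $B(F)$, together with two open orbits $\mathcal{O}_+$ and $\mathcal{O}_-$, each with stabilizer $E^1$, indexed by $F^\times/\Nm_{E/F}(E^\times)$. The geometric lemma applied to this decomposition yields a short exact sequence of $\SL_2(F)$-modules
\[
0 \to \cInd_{E^1}^{\SL_2(F)}(\mu_+) \oplus \cInd_{E^1}^{\SL_2(F)}(\mu_-) \to I(\chi)|_{\SL_2(F)} \to \Ind_{B(F)}^{\SL_2(F)}(\xi) \to 0,
\]
where $\mu_\pm$ and $\xi$ are characters of $E^1$ and $B(F)$ obtained by restricting $\chi\cdot\delta_{B(E)}^{1/2}$ together with the appropriate modulus twists.

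The first step is to identify these characters. Using the identity $\delta_{B(E)}^{1/2}|_{F^\times}=\delta_{B(F)}$ already exploited in Lemma \ref{lemPrincSer}, together with the compactness of $E^1$ (every modulus character restricts trivially to $E^1$), a direct computation of the stabilizer and its action on the stalk at a point of $\mathcal{O}_\pm$ shows that $\mu_+=\mu_-=\chi^{-1}|_{E^1}$, which is trivial precisely when $\chi|_{E^1}=\mathbf{1}$, and that $\xi$ is trivial on $F^\times$ precisely when $\chi|_{F^\times}=\mathbf{1}$. Next, I apply $\Hom_{\SL_2(F)}(-,\mathbf{1})$ to the short exact sequence. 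Frobenius reciprocity on both ends (valid because $P^1(F)$ is compact, so $\Ind$ equals $\cInd$ on the right, and the left-hand side is a compact induction by construction) produces the long exact sequence
\[
0 \to \Hom_{B(F)}(\xi,\mathbf{1}) \to \Hom_{\SL_2(F)}(I(\chi),\mathbf{1}) \to \Hom_{E^1}(\mu_+,\mathbf{1}) \oplus \Hom_{E^1}(\mu_-,\mathbf{1}) \to \cdots.
\]
This immediately gives the ``only if'' direction: a nonzero element of the middle term either has nonzero preimage on the left (so $\chi|_{F^\times}=\mathbf{1}$) or has nonzero image on the right (so $\chi|_{E^1}=\mathbf{1}$).

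For the converse, when $\chi|_{F^\times}=\mathbf{1}$ the invariant integration on the compact set $P^1(F)$ provides a nonzero form on the quotient $\Ind_{B(F)}^{\SL_2(F)}(\mathbf{1})$ that pulls back through the surjection to an invariant form on $I(\chi)$. When $\chi|_{E^1}=\mathbf{1}$ one has invariant forms on the subrepresentation $\cInd_{E^1}^{\SL_2(F)}(\mathbf{1})$; the task is to lift these to $I(\chi)$, and this is the main obstacle, since the lift is a priori obstructed by the connecting map into $\Ext^1_{B(F)}(\xi,\mathbf{1})$. I plan to resolve this either by constructing the invariant form directly as an open-orbit period integral on all of $I(\chi)$ (in the spirit of the measure-theoretic argument at the end of Lemma \ref{lemPrincSer}), or by invoking $\Ext^1$-vanishing for torus characters via \cite[Prop.~8.4]{SecherreDrevon}; the hypothesis $\ell\neq 2$ ensures the obstruction vanishes for the specific characters involved.
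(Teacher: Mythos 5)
Your approach matches the paper's almost exactly: Mackey theory on the three $\SL_2(F)$-orbits of $P^1(E)$ (one closed with stabilizer $B(F)$, two open with stabilizer $E^1$), the long exact sequence obtained by applying $\Hom_{\SL_2(F)}(-,\mathbf{1})$, the ``only if'' direction immediate from exactness, and the ``if'' direction split into a closed-orbit case and an open-orbit case.

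The one thing to fix is your explanation of why the obstruction in $\Ext^1_{F^\times}(\chi,\mathbf{1})$ vanishes. It has nothing to do with $\ell\neq 2$: \cite[Prop.~8.4]{SecherreDrevon}, which you cite, says $\Ext^1_{F^\times}(\chi,\mathbf{1})\neq 0$ if and only if $\chi|_{F^\times}$ is trivial. The reduction to worry about is precisely the case $\chi|_{E^1}=\mathbf{1}$ with $\chi|_{F^\times}\neq\mathbf{1}$ (the case $\chi|_{F^\times}=\mathbf{1}$ being already handled by the closed orbit), and in that case the Ext group vanishes by the very hypothesis $\chi|_{F^\times}\neq\mathbf{1}$. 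So the Ext-vanishing route goes through cleanly and you do not need to fall back on the open-orbit period integral construction, and $\ell\neq 2$ plays no role in this particular step.
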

\begin{proof}
    Applying Mackey Theory, one has an exact sequence
    \[0 \to \Hom_{F^\times}(\chi,\mathbf{1}) \to \Hom_{\SL_2(F)}(I(\chi),\mathbf{1}) \to \Hom_{E^1}(\chi,\mathbf{1})\oplus \Hom_{E^1}(\chi^{-1},\mathbf{1}) \to  \Ext^1_{F^\times}(\chi,\mathbf{1})  \]
    of $\Fl$-vector spaces. If $\Hom_{\SL_2(F)}(I(\chi),\mathbf{1})$ is nonzero, then either $\chi|_{F^\times}$ or $\chi|_{E^1}$ is trivial. Conversely, it suffices to show that $\chi|_{F^\times}\neq\mathbf{1}$ and $\chi|_{E^1}=\mathbf{1}$ imply $\Hom_{\SL_2(F)}(I(\chi),\mathbf{1})\neq0$. Note that $\Ext^1_{F^\times}(\chi,\mathbf{1})=0$ if and only if  $\chi|_{F^\times}$ is nontrivial; see \cite[Prop. 8.4]{SecherreDrevon}. By the exact sequence, one have
    \[\dim\Hom_{\SL_2(F)}(I(\chi),\mathbf{1})=2\dim \Hom_{E^1}(\chi,\mathbf{1}) \]
    when $\chi|_{F^\times}\neq\mathbf{1}$. This finishes the proof.
\end{proof}

\begin{lem}
    Suppose that $\ell\nmid q_F-1$. Assume that $\pi$ (resp. $\tau$) is a principal series representation of $\GL_2(E)$ distinguished by $\GL_2(F)$
    (resp. $\SL_2(F)$) and $\tau\subset \pi|_{\SL_2(E)}$. Set $\pi|_{\GL_2^+(E)}=\oplus_i\pi_i$.  Then
    $\tau\subset \pi_0|_{\SL_2(E)}$ if and only if $\pi_0$ is $\psi_0$-generic where $\psi_0$ is a non-degenerate character on $E/F$.
\end{lem}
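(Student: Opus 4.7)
The plan is to mirror the argument of Theorem \ref{thmDistGeneric} in the principal series setting, with Lemma \ref{lem 001} replaced by its principal series analog. First I would observe that uniqueness of the $\psi_0$-Whittaker model guarantees at most one irreducible component of $\pi|_{\GL_2^+(E)}$ is $\psi_0$-generic, so the statement amounts to showing that the unique $\psi_0$-generic component of $\pi|_{\GL_2^+(E)}$ (which we call $\pi_0$) is exactly the one containing every $\SL_2(F)$-distinguished irreducible $\SL_2(E)$-subrepresentation $\tau$ of $\pi$.

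Assume $\tau \subset \pi_i|_{\SL_2(E)}$ is $\SL_2(F)$-distinguished. By Frobenius reciprocity, $\pi_i$ itself is $\SL_2(F)$-distinguished. Exactly as in the proof of Proposition \ref{prop4.1AP}, the $\GL_2(F)$-module $\Hom_{\SL_2(F)}(\pi_i,\mathbf{1})$ decomposes as a sum of $\Fl$-characters of $F^\times/F^{\times 2}$ (using $\ell \neq 2$), so there exists an $\Fl$-character $\chi$ of $F^\times$ with $\pi_i$ being $(\GL_2(F),\chi)$-distinguished. Extending $\chi$ to an $\Fl$-character $\tilde\chi$ of $E^\times$ and twisting, the principal series $\pi' := \pi \otimes \tilde\chi^{-1}$ contains $\pi_i' := \pi_i \otimes \tilde\chi^{-1}$ as an irreducible $\GL_2^+(E)$-component that is $\GL_2(F)$-distinguished. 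Extending by zero on the other $\GL_2^+(E)$-components produces a non-zero $\GL_2(F)$-invariant linear form on the whole $\pi'$.

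Next I would invoke the integration formula of Kurinczuk--Matringe from \cite[\S 8.2]{KuMa}: on the $\psi_0$-Whittaker model of $\pi'$, any $\GL_2(F)$-invariant form is realized as
\[
   W \longmapsto \int_{F^\times} W\!\begin{pmatrix} a & 0 \\ 0 & 1 \end{pmatrix} d^\times a.
\]
Under the hypothesis $\ell \nmid q_F-1$, Lemma \ref{lemPrincSer} guarantees that $\dim \Hom_{\GL_2(F)}(\pi',\mathbf{1}_2) = 1$, so this is the only invariant form up to scalar. Because it is defined via the Whittaker model, it vanishes identically on any component of $\pi'|_{\GL_2^+(E)}$ that is not $\psi_0$-generic (exactly as in the proof of Lemma \ref{lem 001}). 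Its non-vanishing on $\pi_i'$ therefore forces $\pi_i'$ to be $\psi_0$-generic. Since $\psi_0$-genericity is preserved under twisting by a character of $E^\times$, the original $\pi_i$ is $\psi_0$-generic, and by uniqueness of the generic component we conclude $\pi_i = \pi_0$.

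The main obstacle I anticipate is checking that the integration formula of \cite{KuMa}, originally set up for distinguished representations of $\GL_2(E)$, applies uniformly when $\pi'$ is a reducible principal series, and that its image genuinely exhausts the whole space of $\GL_2(F)$-invariant forms. The hypothesis $\ell \nmid q_F-1$ is precisely what rules out the exceptional configuration in Lemma \ref{lemPrincSer} where the invariant-form space has dimension two; in that borderline case the Whittaker integral could detect only one of the two invariant forms, and the argument above would break down.
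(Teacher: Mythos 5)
Your proof is correct and follows the same essential strategy as the paper: realize the $\GL_2(F)$-invariant form on the (possibly twisted) principal series as the Kurinczuk--Matringe Whittaker integral, note that the hypothesis $\ell \nmid q_F-1$ together with Lemma~\ref{lemPrincSer} guarantees the invariant form is unique, and observe that the Whittaker integral vanishes on the non-$\psi_0$-generic $\GL_2^+(E)$-constituents. The paper's own proof is terser and works directly with the $\GL_2(F)$-invariant form on $\pi$ itself, whereas you interpolate an explicit twisting step in the spirit of Proposition~\ref{prop4.1AP} and Lemma~\ref{lem 001}: from $\SL_2(F)$-distinction of $\pi_i$ you extract a character $\chi$ of $F^\times$ so that $\pi_i\otimes\tilde\chi^{-1}$ is $\GL_2(F)$-distinguished, and only then invoke the Whittaker argument on the twist $\pi'$. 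This extra step is in fact what makes the "only if'' direction fully rigorous; the paper's proof as written only directly shows that the generic component is $\GL_2(F)$-distinguished and leaves implicit why a non-generic $\GL_2^+(E)$-component could not acquire an $\SL_2(F)$-invariant form from a nontrivial twist. Your concern in the last paragraph about the Whittaker integral on a reducible principal series is shared by the paper (which also cites \cite{KuMa} in this situation without further comment), but note that once $\dim\Hom_{\GL_2(F)}(\pi',\mathbf{1}_2)=1$ is known, any nonzero $\GL_2(F)$-invariant form is automatically a scalar multiple of the Whittaker integral, so the real point is the nonvanishing of that integral, not exhaustion of the Hom space.
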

\begin{proof}
    Suppose that $\pi=\pi(\chi_1,\chi_2)$ with $\chi_1\chi_2$ trivial on $F^\times$. Then the $\GL_2(F)$-invariant linear functional on $\pi$, up to a constant,  is given by
    \[f(W)=\int_{F^\times} W(\begin{pmatrix}
            a & 0 \\0&1
        \end{pmatrix})d^\times a  \]
    where $W$ is the unique right $\GL_2(\mathcal{O}_E)$-invariant $\psi_0$-Whittaker function on $\pi$.
    Since there is a unique $\psi_0$-Whittaker functional on the space of $\pi$, exactly one constituent of the restriction of $\pi$ to $\GL_2^+(E)$ is $\psi_0$-generic. Thus  the $\GL_2(F)$-invariant functional can be non-zero only on the $\psi_0$-generic part of the restriction of $\pi$ to $\GL_2^+(E)$. Therefore $\tau\subset \pi_0|_{\SL_2(E)}$ i.e., $\pi_0$ is $\SL_2(F)$-distinguished if and only if  $\pi_0$ is $\psi_0$-generic.
\end{proof}
\begin{rem} Let $\varpi_F$ denote the uniformizer of the ring $\mathfrak{o}_F$ of integers of $F$.
    Nadir Matringe pointed out that when {$q_F\equiv 1 \pmod{\ell}$}, $\pi$ is distinguished by $\GL_2(F)$ but
    \[
        \int_{F^\times}W(\begin{pmatrix}
            a & 0 \\0&1
        \end{pmatrix})da=\frac{Vol(\mathcal{O}_F^\times)}{(1-\chi_1(\varpi_F))(1-\chi_2(\varpi_F))}=0.
    \]
    Thus, {this method does not work} when we try to determine the multiplicity for the irreducible constituent of $I(\chi)$ with $\chi^2=\mathbf{1}$ when {$q_F\equiv 1 \pmod{\ell}$}.
\end{rem}
Now we are ready to prove Theorem \ref{prin}.
\begin{proof}[Proof of Theorem \ref{prin}]
    \begin{enumerate}
        \item Note that if $\chi\neq\nu^{\pm1}$ and $\chi^2\neq\mathbf{1}$, then $I(\chi)$ is irreducible. Then it follows from Lemma \ref{lem:prin} except that $\chi=\mathbf{1}$ and $\ell\nmid q_E-1$. It is enough to show that $\dim\Hom_{\SL_2(F)}(I(\mathbf{1}),\mathbf{1})=2$ when $\ell\nmid q_E-1$. Note that $\pi(\mathbf{1},\mathbf{1})$ is both $\GL_2(F)$-distinguished and $(\GL_2(F),\omega_{E/F})$-distinguished with multiplicity one. Thus $\dim\Hom_{\SL_2(F)}(I(\mathbf{1}),\mathbf{1})=2$ {by Proposition \ref{prop4.1AP}}.
        \item If $\tau$ is a trivial character, then $\dim\Hom_{\SL_2(F)}(\tau,\mathbf{1})=1$. Let $\tau$ be an infinite-dimensional  subrepresentation of $I(\chi)$.
              \begin{enumerate}
                  \item If $\chi=\nu^{-1}$, then $\tau$ is the trivial character.
                        If $\chi=\nu$ and $\tau$ is the Steinberg representation $\St$, then there exists a short exact sequence
                        \[0 \to \St \to I(\nu) \to \mathbf{1} \to 0  \]
                        of $\SL_2(E)$-representations. Taking the functor $\Hom_{\SL_2(F)}(-,\mathbf{1})$, one has an exact sequence
                        \[0 \to \Hom_{\SL_2(F)}(\mathbf{1},\mathbf{1}) \to \Hom_{\SL_2(F)}(I(\nu),\mathbf{1}) \to \Hom_{\SL_2(F)}(\St,\mathbf{1}) \to \Ext^1_{\SL_2(F)}(\mathbf{1},\mathbf{1}).
                        \]
                        Note that $\Ext^1_{\SL_2(F)}(\mathbf{1},\mathbf{1})=0$. Thanks to Lemma \ref{lem:prin}, we have
                        \[\dim\Hom_{\SL_2(F)}(I(\nu),\mathbf{1})=2. \]
                        Therefore, $\dim\Hom_{\SL_2(F)}(\St,\mathbf{1})=2-1=1$.
                  \item If $\chi=\chi_F\circ \Nm_{E/F}$ with $\chi_F^2=\mathbf{1}$, then the irreducible principal series representation $\pi(\mathbf{1},\chi)$ is distinguished by $\GL_2(F)$. Furthermore,
                        \[\dim\Hom_{\GL_2(F)}(\pi(\mathbf{1},\chi),\chi_F)=1=\dim\Hom_{\GL_2(F)}(\pi(\mathbf{1},\chi),\chi_F\omega_{E/F}). \]
                        There is only one subrepresentation in $I(\chi)$ which is $(\mathrm{U},\psi_0)$-generic, which implies that only one constituent  in $I(\chi)$ is distinguished by $\SL_2(F)$. Thus $\dim\Hom_{\SL_2(F)}(\tau,\mathbf{1})=3$.

                        If $\chi=\chi_F\circ \Nm_{E/F}$ with $\chi_F^2=\omega_{E/F}$, then $\pi(\mathbf{1},\chi)$ is both $(\GL_2(F),\chi_F)$-distinguished and $(\GL_2(F),\chi_F^{-1})$-distinguished. Thus
                        \[\dim\Hom_{\SL_2(F)}(I(\chi),\mathbf{1})=2. \]
                        Note that two constituents in $I(\chi)$ are $(\mathrm{U},\psi_0)$-generic. Thus each one in $I(\chi)$ is distinguished by $\SL_2(F)$ with multiplicity one.

                  \item This case is trivial.

                        If $\ell\mid q_E+1$ and $\tau$ is a cuspidal nonsupercuspidal representation of $\SL_2(E)$,
                        then there is a special representation $\Spe$ of $\GL_2(E)$ such that $\tau\subset \Spe|_{\SL_2(E)}$. If $E/F$ is unramified, then $\Spe$ is neither $\GL_2(F)$-distinguished nor $(\GL_2(F),\omega_{E/F})$-distinguished. (See \cite[Rem. 2.8]{vincent2019supercusp}.) Thus $\dim\Hom_{\SL_2(F)}(\Spe|_{\SL_2(E)},\mathbf{1})=0$ {by Proposition \ref{prop4.1AP}}. If $E/F$ is ramified, then $\Spe$ is not $\GL_2(F)$-distinguished but $(\GL_2(F),\omega_{E/F})$-distinguished. It is also
                        $(\GL_2(F),\nu\vert_{F^{\times}}^{1/2})$-distinguished; see Theorem \ref{thmCuspDist}(2). Due to Proposition \ref{prop4.1AP}, $\dim\Hom_{\SL_2(F)}(\Spe|_{\SL_2(E)},\mathbf{1})=2$. Note that there is only one constituent in $\Spe|_{\SL_2(E)}$ which is $(\mathrm{U},\psi_0)$-generic, denoted by $\tau$. Thus
                        \[\dim\Hom_{\SL_2(F)}(\tau,\mathbf{1})=2. \]
                        This {finishes} the proof.
                  \item If $\ell\mid q_E-1$ and $\ell\mid q_F+1$, then $E/F$ is an unramified field extension. In this case, $I(\mathbf{1})=\mathbf{1}\oplus \St$. Thanks to Theorem \ref{thmCuspDist}(3),
                        \[\dim\Hom_{\SL_2(F)}(\St,\mathbf{1})=2.  \]
                        If $\tau\subset I(\chi_F\circ \Nm_{E/F})$ with $\chi_F^2=\omega_{E/F}$ or $\mathbf{1}$, this  case is similar to case (b).
              \end{enumerate}

    \end{enumerate}

\end{proof}
\begin{rem}
    If $\ell\mid q_F-1$, one can still prove that $\St$ is distinguished by $\SL_2(F)$ with multiplicity two due to Theorem \ref{thmCuspDist} and that the constituents in $I(\chi_F\circ \Nm_{E/F})$ with $\chi_F^2=\omega_{E/F}$ are both distinguished by $\SL_2(F)$ with multiplicity one. But we can not determine the situation for $\tau\subset I(\chi_F\circ \Nm_{E/F})$ with $\chi_F^2=\mathbf{1}$.
\end{rem}

\bibliography{sl2}
\bibliographystyle{alpha}

\end{document}